\DeclareRobustCommand{\arr}{%
 \mathrel{\mathpalette\short@to\relax}%
}
\newcommand{\short@to}[2]{%
  \mkern2mu
  \clipbox{{.3\width} 0 0 0}{$\m@th#1\vphantom{+}{\shortrightarrow}$}%
  }
\def\ol#1{\overline{#1}}
\def\wh#1{\widehat{#1}}
\def\wt#1{\widetilde{#1}}
\theoremstyle{plain}
    \newtheorem{thmint}{Theorem}
    \newtheorem{theorem}{Theorem}[section]
    \newtheorem{proposition}[theorem]{Proposition}
    \newtheorem{lemma}[theorem]{Lemma}
    \newtheorem{corollary}[theorem]{Corollary}
    \newtheorem*{question*}{Question}
\theoremstyle{definition}
    \newtheorem{definition}[theorem]{Definition}
    \newtheorem{example}[theorem]{Example}
    \newtheorem{remark}[theorem]{Remark}
\def\Alphabet{A,B,C,D,E,F,G,H,I,J,K,L,M,N,O,P,Q,R,S,T,U,V,W,X,Y,Z}
\def\grabet{a,b,c,d,e,f,g,h,i,j,k,l,m,n,o,p,q,r,s,t,u,v,w,x,y,z}
\def\endpiece{xxx}
\def\makeAlphabet[#1]{\expandafter\makeA#1,xxx,}
\def\makealphabet[#1]{\expandafter\makea#1,xxx,}
\def\makeA#1,{\def\temp{#1}\ifx\temp\endpiece\else%
\mkbb{#1}\mkfrak{#1}\mkbf{#1}\mkcal{#1}\mkscr{#1}\mkbs{#1}\expandafter\makeA\fi}%
\def\makea#1,{\def\temp{#1}\ifx\temp\endpiece\else\mkfrak{#1}\mkbf{#1}\mkbs{#1}\expandafter\makea\fi}%
\def\mkbb#1{\expandafter\def\csname bb#1\endcsname{\mathbb{#1}}}
\def\mkfrak#1{\expandafter\def\csname fr#1\endcsname{\mathfrak{#1}}}
\def\mkbf#1{\expandafter\def\csname b#1\endcsname{\mathbf{#1}}}
\def\mkcal#1{\expandafter\def\csname c#1\endcsname{\mathcal{#1}}}
\def\mkscr#1{\expandafter\def\csname s#1\endcsname{\mathscr{#1}}}
\def\mkbs#1{\expandafter\def\csname bs#1\endcsname{{\boldsymbol{#1}}}}
\def\makeop[#1]{\xmakeop#1,xxx,}
\def\mkop#1{\expandafter\def\csname #1\endcsname{{\mathrm{#1}}}} %
\def\xmakeop#1,{\def\temp{#1}\ifx\temp\endpiece\else\mkop{#1}\expandafter\xmakeop\fi}%
\def\makeup[#1]{\xmakeup#1,xxx,}
\def\mkup#1{\expandafter\def\csname #1\endcsname{{\mathrm{#1}\,}}} %
\def\xmakeup#1,{\def\temp{#1}\ifx\temp\endpiece\else\mkup{#1}\expandafter\xmakeup\fi}%
\def\diam#1{\operatorname{diam}(#1)}
\def\La{\Lambda}
\def\nabe{\nabla_{\!e}}
\def\FC{\sF\kern-0.5mm\sC}
\def\R{\bbR}
\def\Z{\bbZ}
\def\E{\bbE}
\def\pair#1{\langle#1\rangle}
\def\abs#1{|#1|}
\def\dabs#1{\Vert#1\Vert}
\def\Dabs#1{\bigl|\kern-0.3mm\bigl|#1\bigr|\kern-0.3mm\bigr|}
\def\vp{\varphi}
\def\N{\bbN}
\def\bsxi{\boldsymbol{\xi}}
\def\C{\Consv^\phi}
\def\G{G}
\def\e{\eta}
\def\Fr{\sN}
\begin{document}

\setcounter{tocdepth}{1}
\newpage
\title[Varadhan's Decomposition]{Varadhan's Decomposition of Shift-Invariant Closed $L^2$-forms for Large Scale Interacting Systems on Euclidean Lattices}
\author[Bannai]{Kenichi Bannai}\email{bannai@math.keio.ac.jp}
\author[Sasada]{Makiko Sasada}\email{sasada@ms.u-tokyo.ac.jp}
\thanks{This work was supported in part by JST CREST Grant Number JPMJCR1913, KAKENHI 18H05233, 24K21515 and the UTokyo Global Activity Support Program for Young Researchers.}
\address[Bannai]{Department of Mathematics, Faculty of Science and Technology, Keio University, 3-14-1 Hiyoshi, Kouhoku-ku, Yokohama 223-8522, Japan.}
\address[Sasada]{Department of Mathematics, University of Tokyo, 3-8-1 Komaba, Meguro-ku, Tokyo 606-8502, Japan.}
\address[Bannai, Sasada]{Mathematical Science Team, RIKEN Center for Advanced Intelligence Project (AIP),1-4-1 Nihonbashi, Chuo-ku, Tokyo 103-0027, Japan.}

\date{\today}

\begin{abstract}
	We rigorously formulate and prove for a relatively general 
	class of interactions 
	\emph{Varadhan's Decomposition of shift-invariant closed $L^2$-forms}
	for a large scale interacting system on the Euclidean lattice
	with finite range. 
	Such decomposition
	of closed forms has played an essential role in proving the diffusive scaling
	limit of nongradient systems. 
	A general expression in terms of conserved quantities was sought from 
	observations for specific models, but a precise formulation or rigorous proof up until now had been elusive. 
	Our result is based on a general decomposition theorem
	of shift-invariant closed \emph{uniform} forms studied in our previous article \cite{BKS20}.
	In the present article, we show that the same universal structure also appears for $L^2$-forms.
	The essential assumptions are: (i) the set of states on each vertex is a finite set, (ii) the measure on the
	configuration space 
	is the product measure, and (iii) there is a certain uniform spectral gap estimate for the mean field version of the interaction. 
	As a special case, our result gives Varadhan's decomposition for the case of the multi-species exclusion process, 
which is a new result that could not be proved by existing methods.
Our result also gives complete proofs of Varadhan's decompositions
for finite range interactions, whose detailed proofs have been missing in the literature --
presumably because there exists an obstruction to the standard method of proof.

\end{abstract}

\subjclass[2020]{Primary: 82C22, Secondary: 05C63} 
\maketitle

\tableofcontents

%
%
%
\section{Introduction}\label{sec: intro}
%
%
%

In the last three decades, the theory of the hydrodynamic behavior of interacting particle systems has been well developed, and a rigorous understanding of the various models has been achieved. 
However, we still do not have a sufficiently conceptual understanding of the diffusive scaling limit for nongradient models. In particular, the origin of specific expressions in the 
variational formula for diffusion coefficients in terms of conserved quantities was a mystery. 
There are not many results for nongradient models with more than one conserved quantity, and it was not certain how common such formula holds for models with multiple conserved quantities. 

In our previous article \cite{BKS20}, we discovered that there is indeed a universal conceptual
structure for a certain general class of models which leads to a general
\emph{decomposition of shift-invariant closed uniform forms} in terms of a group action on the 
configuration space and conserved quantities. 
If this decomposition 
extends to $L^2$-forms, then this should also appear in the variational formula for the diffusion coefficient, and it can be said that the extension to $L^2$-form allows us to capture this universal structure
 of nongradient models. 

The purpose of this article is to formulate and prove for relatively general interactions
having a uniformly bounded spectral gap
\emph{Varadhan's Decomposition of shift-invariant closed $L^2$-forms}
for a large scale interacting system on an Euclidean lattice with finite range, 
when the set of states on each vertex is a finite set.
As a special case, our result gives Varadhan's decomposition for the case of the multi-species exclusion process, 
which is a new result that could not be proved by existing methods. We emphasize that the number of conserved quantities of the multi-species exclusion process can be arbitrary large. For such case, the exact algebraic computation used in the existing methods to obtain the explicit expression of the decomposition is impractical.
Our result also gives a complete proof of Varadhan's decomposition
for finite range (in particular not necessarily nearest neighbour) interactions, whose detailed proof has also been missing in the literature --
presumably because there exists an obstruction to the standard method of proof. The obstruction is discussed in detail in Remark \ref{rem:finite-range} precisely. 

In the following, we summarize our setting and main result without giving complete definitions. The precise definitions are given in later sections.

We let $S$ be a finite nonempty set,
the typical example being $S=\{0,\ldots,\kappa\}$ for some integer $\kappa>0$ with base point $*=0$,
and we let 
\[
	S^{\bbZ^d}\coloneqq\prod_{x\in \bbZ^d}S
\] 
be the configuration space on $\Z^d$.
We define an \emph{interaction} to be a map $\phi\colon S\times S\rightarrow S\times S$
satisfying a certain symmetry condition which expresses the interaction between states
on adjacent vertices. 
Models such as the \emph{multi-species exclusion process} and the \emph{generalized exclusion process}
may be described in a unified manner using the interaction.
Let $E\subset \Z^d\times\Z^d$ be a set of directed edges, such that the pair $(\Z^d,E)$ defines a 
\emph{locale}, in other words, a connected
locally finite simple symmetric directed graph.
For 
\[
	\E^d\coloneqq\{ x,y\in\Z^d\mid \sum_{j=1}^d\abs{x_j-y_j}=1\},
\]
where $x_j$ and $y_j$ are the $j$-th components of $x=(x_j),y=(y_j)\in\Z^d$,
the graph $(\Z^d,\E^d)$ is simply the usual nearest neighbor Euclidean lattice.
The set $E$ stipulates the pairs of vertices of $\Z^d$ where an interaction may occur.
For any configuration $\e=(\eta_x)\in S^{\Z^d}$ and directed edge $e=(x,y)\in E$,
we denote by $\e^e$ the configuration obtained by interacting
the states on the vertices of $e$.  In other words, $\e^e$ and $\e$
coincides outside the $x$ and $y$ components, and if we denote by $\eta^e_x$ and $\eta^e_y$
the $x$ and $y$ components of $\e^e$, then we have $(\eta^e_x,\eta^e_y)=\phi(\eta_x,\eta_y)$,
where $\eta_x$ and $\eta_y$ are the $x$ and $y$ components of $\e$.
We call such $\e^e$ \emph{a transition} of $\e$.

A \emph{local function} $f\colon S^{\Z^d}\rightarrow\R$ is any function on $S^{\Z^d}$
whose value depends on the states of the configuration at only a finite number of vertices,
and we denote by $C_\loc(S^{\bbZ^d})$ the $\R$-linear space of local functions on $S^{\bbZ^d}$.
For any local function $f$ and directed edge $e\in E$, we define the differential $\nabe f$ to be the 
local function given for any $\e\in S^{\Z^d}$ by
\[
	\nabe f(\e)\coloneqq f(\e^e)-f(\e).
\]
In \cite{BKS20}, we constructed the space of \emph{uniform functions} $C_\unif(S^{\bbZ^d})$,
a class of functions which gives a rigorous framework to consider 
certain infinite sums of normalized
local functions which appear in the formulation of the decomposition of closed forms.
This space allows for the rigorous consideration of infinite sums such as
\begin{equation}\label{eq: 1}
	\Gamma_f\coloneqq\sum_{x\in\Z^d}\tau_x(f),
\end{equation}
which in \cite{KL99}*{p.144}, one of the most pedagogical textbooks for the hydrodynamic limit, is stated ``does not make sense''.
Here, $f$ is any certainly normalized local function and
$\tau_x$ denotes the translation by $x=(x_1,\ldots,x_d)\in\Z^d$.
The differential $\nabe$ extends $\R$-linearly to a differential $\nabe\colon C_\unif(S^{\bbZ^d})\rightarrow C_\loc(S^{\bbZ^d})$
giving the differential $\partial\Gamma_f=(\nabe\Gamma_f)_{e\in E}$.

One key ingredient in the formulation of the decomposition of closed forms
are certain forms defined using the conserved quantities.
We define a \emph{conserved quantity} to be any certainly normalized function 
$\xi\colon S\rightarrow\R$ satisfying 
\[
	\xi(s'_1)+\xi(s'_2)=\xi(s_1)+\xi(s_2)
\]
for any $(s_1,s_2)\in S\times S$, where $(s_1',s_2')=\phi(s_1,s_2)$ (see \cref{def: CQ}).
Our key insight is that we may characterize conserved quantities simply as 
invariants preserved by interaction on adjacent vertices.
If we denote by $\Consv^\phi(S)$ the $\R$-linear space of conserved quantities,
then $\Consv^\phi(S)$ is a finite dimensional space since we have assumed that $S$ is finite,
and the number of linearly independent conserved quantities of the interaction
may be deduced from the dimension $c_\phi\coloneqq\Consv^\phi(S)$ of this space.

For each $x\in\Z^d$, any conserved quantity defines a local function $\xi_x(\e)\coloneqq\xi(\eta_x)$ for any
$\e=(\eta_x)\in S^{\Z^d}$. The infinite sum
\begin{equation}\label{eq: 2}
	\xi_{\Z^d}\coloneqq\sum_{x\in\Z^d}\xi_x
\end{equation}
defines a uniform function on $S^{\Z^d}$ which satisfies $\nabe\xi_{\Z^d}=0$
for any $e\in E$.
We say that two configurations $\e=(\eta_x)$ and $\e'=(\eta'_x)$ have the \emph{same conserved quantities}, 
if their components
differ only at a finite set of vertices $\La\subset\Z^d$, and for any conserved quantity $\xi$,
we have $\sum_{x\in\La}\xi_x(\eta)=\sum_{x\in\La}\xi_x(\eta')$.
A \emph{path} in a configuration space is a sequence of transitions, and we 
say that an interaction is \emph{irreducibly quantified}, if there exists a path, in the configuration
space between any two distinct configurations with the same conserved quantities.

A \emph{uniform form} is a system $(\omega_e)_{e\in E}$ of local functions $\omega_e\colon S^{\Z^d}\rightarrow \R$ for $e\in E$
satisfying certain conditions (see \cref{def: Uniform Form}). 
For a path in a configuration space, we 
define the integration $\int_{\vec\gamma}\omega$ of a form $\omega=\{\omega_e\}$ with respect to the path $\vec\gamma$
to be the sum of values $\omega_e(\e)$ over the transitions $(\e,\e^e)$ which appear in the path.
We say that a path is \emph{closed} if the configuration at the beginning and the ending of the path coincide,
and we say that a form $\omega$ is \emph{closed}, if the integral of $\omega$ vanishes for
any closed path in the configuration space.  

Next, we assume that the set of directed edges $E$ is preserved by translation
$\tau_x$ for any $x\in\Z^d$.  This is equivalent to $(\Z^d,E)$ having an action of the group $G=\Z^d$ 
by translation.
We call such $(\Z^d,E)$ a Euclidean lattice with \emph{finite range}, recalling that $(\Z^d, E)$ is assumed to be a locally finite graph.
We say that a form is \emph{shift-invariant}, 
if it is invariant with respect to the translation.
Our notion of shift-invariant closed $L^2$-forms for the case of the 
generalized exclusion process coincides with the notion of germs of closed 
forms given in \cite{KL99}*{\S Appendix 4}.
The main result of our article concerns the \emph{decomposition of shift-invariant closed $L^2$ forms},
which is a generalization of \cite{KL99}*{Theorem 4.14}.  

We first review the result of our previous article
\cite{BKS20}.  We fix a basis $\xi^{(1)},\ldots,\xi^{(c_\phi)}$ of $\Consv^\phi(S)$.
The main result of \cite{BKS20}, specialized to the case of $(\Z^d,E)$ is given as follows.
\begin{thmint}[\cite{BKS20}*{Theorem 5}]\label{thm: old}
	Let $(\Z^d, E)$ be a Euclidean lattice with finite range,
	which by definition has an action of the group $G=\Z^d$
	given by translation.
	Let $\phi$ be an irreducibly quantified interaction,
	and assume in addition that $\phi$ is simple if $d=1$.
	Then for any shift-invariant closed uniform form $\omega=(\omega_e)_{e\in E}\in\prod_{e\in E}C_{\loc}(S^{\Z^d})$,
	there exist unique constants $a_{ij}\in\R$ for $i=1,\ldots,c_\phi$ and $j=1,\ldots,d$,
	and a local function $f\in C_\loc(S^{\Z^d})$
	such that for any $e\in E$, we have
	\[
		\omega_e=\nabe\Bigl(\sum_{x\in\Z^d}\tau_x(f)+\sum_{i=1}^{c_\phi}
		\sum_{j=1}^d a_{ij} 
		\sum_{x\in\Z^d}x_j\xi^{(i)}_{x}\Bigr)
	\]
	in $C_{\loc}(S^{\Z^d})$, where $\xi^{(i)}_{x}$ for any $x=(x_1,\ldots,x_d)\in\Z^d$ is the local function defined 
	as $\xi^{(i)}_{x}(\e)\coloneqq\xi^{(i)}(\eta_x)$ for $\e=(\eta_x)\in S^{\Z^d}$.
\end{thmint}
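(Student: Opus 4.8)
The plan is to treat a uniform form $(\omega_e)_{e\in\bbE^d}$ as a $1$-cochain on the \emph{configuration graph} $\mathcal G$ whose vertices are the configurations $\s\in S^{\Z^d}$ and whose edges are the transitions $(\s,\s^e)$; closedness of $\omega$ then means exactly that $\omega$ is a $1$-cocycle, so that the integral $\int_{\vec\gamma}\omega$ depends only on the endpoints of $\vec\gamma$ within each connected component of $\mathcal G$. Since $\phi$ is irreducibly quantified (and simple when $d=1$), two configurations agreeing off a finite set lie in the same component of $\mathcal G$ precisely when they carry the same conserved quantities, so fixing a base vertex in each component and integrating $\omega$ produces a \emph{potential} $h$ with $\nabe h=\omega_e$ for every $e$, unique up to adding a function constant on each component. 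First I would show, using the uniform control built into the definition of a uniform form and the combinatorics of paths in $\mathcal G$, that $h$ can be organized into a (normalized) element of $C_\unif(S^{\Z^d})$; then shift-invariance of $\omega$ forces $h-\tau_z h$ to have vanishing differential for every $z\in\Z^d$, hence --- being a normalized uniform function that is constant on the components of $\mathcal G$ --- to be an $\R$-linear combination $\sum_{i}b_i(z)\,\xi^{(i)}_{X}$ of the conserved-quantity sums $\xi^{(i)}_X=\sum_{x}\xi^{(i)}_x$, with no constant term.

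Next I would extract the coefficients. The cocycle identity $h-\tau_{z+z'}h=(h-\tau_z h)+\tau_z(h-\tau_{z'}h)$ together with $\tau_z\xi^{(i)}_X=\xi^{(i)}_X$ makes $z\mapsto(b_1(z),\ldots,b_{c_\phi}(z))$ additive in $z$, so there are unique $a_{ij}\in\R$ with $b_i(z)=\sum_j a_{ij}z_j$; these are the constants in the statement. A direct computation gives $H^{(i)}_j-\tau_z H^{(i)}_j=z_j\,\xi^{(i)}_X$ for the ``tilted'' uniform function $H^{(i)}_j\coloneqq\sum_x x_j\,\xi^{(i)}_x$, so $h'\coloneqq h-\sum_{i,j}a_{ij}H^{(i)}_j$ is a normalized uniform function with $h'-\tau_z h'=0$ for all $z$, i.e.\ an honestly shift-invariant uniform function.

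It then remains to see that a shift-invariant normalized element $h'\in C_\unif(S^{\Z^d})$ is of the form $\Gamma_f=\sum_{x}\tau_x(f)$ for some $f\in C_\loc(S^{\Z^d})$; granting this, $\omega_e=\nabe h=\nabe\bigl(\Gamma_f+\sum_{i,j}a_{ij}H^{(i)}_j\bigr)$, which is the asserted decomposition (here $\nabe H^{(i)}_j$ is genuinely a local function, the cross terms telescoping because $\xi^{(i)}$ is conserved). For uniqueness, if $\nabe\Gamma_f+\sum_{i,j}a_{ij}\nabe H^{(i)}_j=0$ for all $e$, then $g\coloneqq\Gamma_f+\sum_{i,j}a_{ij}H^{(i)}_j$ has vanishing differential, hence equals $\sum_i c_i\xi^{(i)}_X$ up to a constant and in particular is shift-invariant; applying $g\mapsto g-\tau_z g$ and using $\tau_z\Gamma_f=\Gamma_f$ gives $\sum_{i,j}a_{ij}z_j\xi^{(i)}_X=0$ for all $z$, so all $a_{ij}=0$ by linear independence of $\xi^{(1)}_X,\ldots,\xi^{(c_\phi)}_X$ (the function $f$ being determined only up to $\ker(f\mapsto\Gamma_f)$, which does not affect the statement).

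The hard part will be the two structural inputs flagged above: first, promoting the component-by-component potential on $\mathcal G$ to a single element of $C_\unif(S^{\Z^d})$ with the correct normalization and uniformity --- this is where the precise definition of $C_\unif$ from \cite{BKS20} and a careful analysis of paths in $\mathcal G$ are unavoidable; and second, identifying the shift-invariant part of $C_\unif(S^{\Z^d})$ with $\{\Gamma_f : f\in C_\loc(S^{\Z^d})\}$, which is the structural core of \cite{BKS20}, and is also the step where irreducible quantification (and simplicity for $d=1$) is indispensable, since without it the components of $\mathcal G$ are not classified by the conserved quantities and the potential need not even exist. Notably, no spectral gap hypothesis enters at this stage; that is the extra ingredient needed only for the $L^2$-version established in the body of this paper.
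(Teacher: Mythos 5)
This statement is not proved in the present paper at all: it is Theorem~5 of \cite{BKS20}, imported verbatim as input for the $L^2$-theory, so there is no in-paper argument to compare your sketch against; what you have written is an outline of the proof of the cited result. As an outline it organizes the problem in the natural way (closedness gives path-independence of $\int_{\vec\gamma}\omega$, irreducible quantification classifies the connected components of the transition graph on $S^{\Z^d}_*$ by conserved quantities, shift-invariance plus the identity $(1-\tau_y)\frA^j_\xi=y_j\xi_X$ isolates the $a_{ij}$, and shift-invariant normalized uniform functions are of the form $\sum_x\tau_x(f)$), but the two steps you explicitly defer are not technical afterthoughts — they are essentially the whole theorem. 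First, integration only produces a potential $h$ as a pro-local function in $C(S^{\Z^d}_*)$, defined up to an arbitrary constant on each of uncountably many components; proving that these constants can be chosen so that the exact-support expansion of $h$ has uniformly bounded diameter, i.e.\ that $h\in C^0_\unif(S^{\Z^d})$, is the substantive content of \cite{BKS20}, and "uniform control built into the definition of a uniform form plus combinatorics of paths" is not an argument. Second, your step that a normalized uniform function with vanishing differential (equivalently, constant on components) must lie in $\Span_\R\{\xi^{(1)}_X,\ldots,\xi^{(c_\phi)}_X\}$ is itself a nontrivial theorem about conserved quantities — a general function of the conserved quantities is constant on components but need not be uniform, and showing that uniformity forces linearity requires proof, not the parenthetical you give. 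You invoke both facts in the existence argument and again in the uniqueness argument, so without them the proposal is circular with respect to \cite{BKS20}.

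A further concrete inaccuracy: you locate the hypothesis "$\phi$ simple if $d=1$" in the classification of components and the existence of the potential ("without it the components of $\mathcal G$ are not classified by the conserved quantities and the potential need not even exist"). That is not correct: irreducible quantification alone, with no simplicity assumption and in any dimension, is exactly the statement that configurations with equal conserved quantities are connected, so the potential exists regardless. The simplicity hypothesis for $d=1$ is needed at a different, later stage of the argument of \cite{BKS20} (the one-dimensional decomposition genuinely requires it even though potentials exist), so as written your sketch gives no account of where this hypothesis would actually be used. Your closing observation that no spectral gap enters at this stage is correct and consistent with how the present paper uses the theorem.
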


Here, an interaction $\phi$ is \emph{simple} if and only if $c_\phi=1$, and
the monoid generated by $\xi(S)$ in $\R$ for any non-zero conserved quantity $\xi$ is isomorphism to $\N$ or $\Z$. 
In this article, we will use \cref{thm: old} to prove the decomposition for shift-invariant closed \emph{$L^2$-forms}. 

Now, we take a probability measure $\nu$ fully supported on $S$, i.e. $\nu(s)>0$ for any $s \in S$,  and 
consider the product measure $\mu\coloneqq\nu^{\otimes\Z^d}$ on $S^{\Z^d}$. Also, we fix a shift-invariant \emph{weight} 
$r=(r_e)_{e\in E}\in\prod_{e\in E}C_\loc(S^{\bbZ^d})$, which determines a weighted $L^2$-space, 
such that $r_e(\eta)>0$ for any $e\in E$ and $\eta \in S^{\bbZ^d}$. 
When applied to prove the diffusive scaling limit of a nongradient reversible process, $\mu$ is chosen to be a reversible measure for the process and the weight encodes the information of the frequency of the transition on each directed edge, hence
such data may correspond to the stochastic process given by the generator
\[
	Lf(\e)=\sum_{e\in  E}r_e(\e)\nabe f(\e)=\sum_{e\in  E}r_e(\e)(f(\e^e)-f(\e)).
\] 
However, for the main theorem of this article, we do not need the reversibility nor any relation between the weight $r=(r_e)$ and the probability measure $\mu$ which we consider in the main result.  
Thus we take the probability measure $\mu$ independently from the weight. Indeed, we do not need to consider any stochastic process in this article as the results only concern some forms in a weighted $L^2$-space.

We let $L^2(\mu)_{r_e}$ be the weighted $L^2$-space $L^2(r_e d\mu)$ of measurable functions on $S^{\Z^d}$
with a norm given by $\dabs{f}_{r_e}^2\coloneqq E_\mu[r_e f^2]$. Since $r_e$ is bounded by positive constants from above and below,
the space of local functions $C_\loc(S^{\Z^d})$ is a dense subspace of $L^2(\mu)_{r_e}$.
We rigorously extend the notion of closed forms to $L^2$-forms, which is precisely given in Section \ref{sec:co-local}.
Our main result is as follows.

\begin{thmint}[=\cref{thm: main}]\label{thm: intro}
	Let the assumptions be as in Theorem \ref{thm: old}.
	Furthermore, suppose $\nu$ has a uniformly bounded 
	spectral gap for the interaction $(S,\phi)$ in the sense of Definition \ref{def: SG}.
	Furthermore, we consider a weight $r=(r_e)_{e\in E}$ invariant with respect to the action of the group $G=\Z^d$.
	Then for any shift-invariant closed $L^2$-form $\omega=(\omega_e)_{e\in  E}\in\prod_{e\in  E}L^2(\mu)_{r_e}$,
	there exist unique constants $a_{ij}\in\R$ for $i=1,\ldots,c_\phi$ and $j=1,\ldots,d$,
	and a sequence of local functions $f_n\in C_\loc(S^{\Z^d})$,
	such that for any $e\in E$, we have
	\begin{equation}\label{eq: MT}
		\omega_e=\lim_{n\rightarrow\infty}
		\nabe\Bigl(\sum_{x\in\Z^d}\tau_x(f_n)+\sum_{i=1}^{c_\phi}
		\sum_{j=1}^d a_{ij} 
		\sum_{x\in\Z^d}x_j\xi^{(i)}_{x}\Bigr)
	\end{equation}
	in $L^2(\mu)_{r_e}$.
\end{thmint}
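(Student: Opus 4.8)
The plan is to reduce the assertion to the uniform case already settled in Theorem~\ref{thm: old}, via an $L^2$-approximation argument whose analytic core is a Varadhan-type energy estimate powered by the uniform spectral gap. First I would introduce the Hilbert space $\cH$ of shift-invariant $L^2$-forms, with $\dabs{\omega}_\cH^2\coloneqq\sum_{k=1}^{m}E_\mu[r_{e_k}\omega_{e_k}^2]$, where $e_1,\dots,e_m$ is a set of representatives for the orbits of the translation action of $\Z^d$ on $\bbE^d$. Let $\cH_{\mathrm{cl}}\subseteq\cH$ be the subspace of closed $L^2$-forms; since closedness is a family of $\mu$-a.e.\ identities stable under $L^2$-limits, $\cH_{\mathrm{cl}}$ is closed. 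Let $\cH_0\coloneqq\overline{\{(\nabe\Gamma_f)_{e\in\bbE^d}:f\in C_\loc(S^{\Z^d})\}}$, and let $\cC$ be the finite-dimensional subspace spanned by the $c_\phi d$ conserved currents $\bigl(\nabe\sum_{x\in\Z^d}x_j\xi^{(i)}_x\bigr)_{e\in\bbE^d}$, for $1\le i\le c_\phi$ and $1\le j\le d$. Because $\cC$ is finite-dimensional, $\cH_0+\cC$ is closed, and Theorem~\ref{thm: old} says exactly that $\cH_0+\cC$ is the $L^2$-closure of the space of shift-invariant closed \emph{uniform} forms. Hence the theorem is equivalent to the identity $\cH_{\mathrm{cl}}=\cH_0+\cC$. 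The inclusion $\cH_0+\cC\subseteq\cH_{\mathrm{cl}}$ is immediate (gradients telescope along closed paths, and $\sum_x x_j\xi^{(i)}_x$, although not summable, has well-defined single-valued differences, so its differential is a closed form), so the content is the reverse inclusion, which amounts to showing that closed uniform forms are dense in $\cH_{\mathrm{cl}}$.

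\textbf{Density via an energy estimate.} Fix $\omega\in\cH_{\mathrm{cl}}$ and a box $\La_N=\{-N,\dots,N\}^d$ with interior edge set $\bbE^d_N\coloneqq\bbE^d\cap(\La_N\times\La_N)$. By the definition of $L^2$-closedness, on $\La_N$ the form $\omega$ is exact: there is a local function $h_N\in L^2(S^{\La_N},\mu)$ with $\nabe h_N=\omega_e$ for $e\in\bbE^d_N$ (up to an $L^2$-term that tends to $0$ and which I suppress), normalized so that the $\mu$-conditional mean of $h_N$ on each conserved-quantity fiber vanishes (a closed form is exact on every connected component of a graph). I would then partition $\La_N$ into sub-blocks of side $\ell$ with $1\ll\ell\ll N$ and set $\bar{h}_N\coloneqq E_\mu[h_N\mid\text{the conserved quantities of each sub-block}]$. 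The decisive input is that $\nu$ has a uniformly bounded spectral gap for $(S,\phi)$: by the standard tensorization and moving-particle arguments this upgrades to a Poincar\'e inequality $E_\mu[(g-E_\mu[g\mid\text{conserved quantities}])^2]\lesssim\ell^2\sum_{e}E_\mu[r_e(\nabe g)^2]$, uniformly over conserved-quantity fibers, for functions $g$ on a sub-block of side $\ell$; applied to $h_N$ block by block this gives the energy estimate
\begin{equation*}
 E_\mu\bigl[(h_N-\bar{h}_N)^2\bigr]\ \lesssim\ \ell^2\sum_{e\in\bbE^d_N}E_\mu\bigl[r_e(\nabe h_N)^2\bigr]\ =\ \ell^2\sum_{e\in\bbE^d_N}E_\mu[r_e\omega_e^2]\ \lesssim\ \ell^2\,\abs{\La_N}\,\dabs{\omega}_\cH^2,
\end{equation*}
which is of strictly lower order than $E_\mu[h_N^2]$. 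This says that, to leading order, $h_N$ is a \emph{macroscopic} function of the conserved quantities; a cocycle computation (for $z\in\Z^d$ the function $\tau_z h_N-h_N$ has vanishing differential in the interior by shift-invariance of $\omega=\nabe h_N$, hence depends only on conserved quantities there, and the resulting additivity in $z$ forces affine linearity) then identifies that leading part with the combination $\sum_{i,j}a^{(N)}_{ij}\sum_{x\in\La_N}x_j\xi^{(i)}_x$, whose coefficients $a^{(N)}_{ij}$ are normalized averages of $\omega$ against the conserved currents. Subtracting this off and performing a further averaging (double-scale cutting) of the lower-order remainder produces genuine shift-invariant closed \emph{uniform} forms $\omega^{(N)}=\bigl(\nabe(\Gamma_{f_N}+\sum_{i,j}a^{(N)}_{ij}\sum_x x_j\xi^{(i)}_x)\bigr)_{e}$, with $f_N\in C_\loc(S^{\Z^d})$, such that $\omega^{(N)}\to\omega$ in $\cH$ as $N\to\infty$ (with $\ell=\ell(N)\to\infty$ slowly).

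\textbf{Conclusion and uniqueness.} By the density just established and Theorem~\ref{thm: old}, $\cH_{\mathrm{cl}}=\overline{\{\text{closed uniform forms}\}}=\cH_0+\cC$; writing $\omega=u+\sum_{i,j}a_{ij}\bigl(\nabe\sum_x x_j\xi^{(i)}_x\bigr)_e$ with $u\in\cH_0$ and choosing, by definition of $\cH_0$, local functions $f_n$ with $(\nabe\Gamma_{f_n})_e\to u$ in $\cH$, one obtains the claimed representation. For uniqueness of the $a_{ij}$ it suffices to check $\cC\cap\cH_0=\{0\}$: Theorem~\ref{thm: old} gives the $c_\phi d$ conserved currents linearly independent modulo differentials of uniform functions, and a translation-invariance pairing argument promotes this independence to the $L^2$-level; then the $a_{ij}$ are the (unique) coordinates of the $\cC$-component of $\omega$ in the orthogonal decomposition $\cH=\cH_0\oplus\cC\oplus(\cH_0+\cC)^{\perp}$.

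\textbf{Main obstacle.} The whole difficulty sits in the energy estimate of the second step: converting the abstract closedness of $\omega$ into quantitative control of the box potentials $h_N$ by the Dirichlet form $\sum_e E_\mu[r_e\omega_e^2]$ at the sharp rate $\ell^{-2}$ is precisely where the hypothesis of a uniformly bounded spectral gap is indispensable, and the subsequent bookkeeping — matching the conserved-quantity data across sub-blocks so that the extracted affine part is globally consistent and the limiting uniform form is genuinely shift-invariant, and carrying out the double-scale cutting that turns the lower-order remainder into an honest $\Gamma_{f_N}$ — is the part that demands real care. Once the precise definition of $L^2$-closedness is in place, the remaining steps are essentially formal.
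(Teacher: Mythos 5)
Your reduction of the theorem to two statements --- density of shift-invariant closed \emph{uniform} forms in the space $\sC_\mu$ of shift-invariant closed $L^2$-forms, plus the direct-sum property $\sE_\mu\cap\partial\cH=\{0\}$ --- is a correct reformulation, and your block Poincar\'e estimate is plausibly obtainable from the uniform spectral gap together with a moving-particle argument. The genuine gap is that the two decisive steps of your density argument are asserted rather than proved. First, the identification of the ``macroscopic'' part of the box potentials $h_N$ with an affine combination $\sum_{i,j}a^{(N)}_{ij}\sum_x x_j\xi^{(i)}_x$ does not follow from the cocycle remark you give: knowing that $\tau_z h_N-h_N$ has vanishing differential on (part of) the interior only shows, via irreducible quantification, that it is \emph{some} function of the conserved quantities of that region; the space of such functions is large, the additivity in $z$ holds only up to boundary corrections (which are exactly the dangerous terms), and extracting an affine function with coefficients consistent across sub-blocks is precisely the model-specific computation of ``closed but not exact germs'' that the literature performs case by case and that becomes intractable for general interactions with $c_\phi>1$ --- the very obstruction this paper is built to avoid. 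Second, the ``double-scale cutting'' that is supposed to turn the remainder into differentials of honest shift-invariant sums $\sum_x\tau_x(f_N)$ of local functions, while keeping closedness and shift-invariance, is not carried out at all; you acknowledge both points yourself. Your uniqueness step has the same character: linear independence of the $\partial\frA^j_{\xi^{(i)}}$ modulo exact \emph{uniform} forms (Theorem \ref{thm: old}) does not formally give independence modulo the $L^2$-closure $\sE_\mu$, and the ``translation-invariance pairing argument'' is not supplied; the paper needs a separate argument (Lemma \ref{lem: DS}, via the difference map $f\mapsto((1-\tau_{1_j})f)_j$ of \eqref{eq: delta} together with Lemma \ref{lem: fsg}) to conclude $\sE_\mu\cap\partial\cH=\{0\}$.

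For contrast, the paper never identifies the affine part by hand. It takes a co-local potential $F$ with $\partial F=\omega$, forms translation averages $\Psi_n$ of $\pi^{\La_n}F_n$ over a fundamental-domain approximation with tempered enlargement, and splits $\partial\Psi_n=\omega_n+\omega_n^\dagger$, where $\omega_n\rightarrow\omega$ strongly (Proposition \ref{prop: conv0}); the spectral gap, the Moving Particle Lemma and the Boundary Estimate are used only to bound the boundary forms $\omega_n^\dagger$ uniformly in $L^2$ (Proposition \ref{prop: conv1}). A weak limit $\omega^\dagger$ is then shown to be a shift-invariant closed \emph{uniform} form --- the locality being proved through the Hewitt--Savage $0$-$1$ law (Proposition \ref{prop: main}) and the lattice counting estimate of Lemma \ref{lem: twoedges} --- and Theorem \ref{thm: old} is applied to $\omega^\dagger$, not to $\omega$ directly. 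If you wish to complete your route instead, you must supply full proofs of the affine identification and of the double-scale construction, which in effect amounts to reproving the content of Theorem \ref{thm: old} inside the $L^2$ argument for each interaction.
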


\cref{thm: intro} is valid for the following example
of the multi-species exclusion process,
which could not be proven by existing methods.

\begin{example}\label{example: 1}
	For an integer $\kappa>0$, let $S=\{0,1,\ldots,\kappa\}$ with $*=0$, the map
	given by
	\[
		\phi\colon S\times S\rightarrow S\times S,  \quad (s_1,s_2)\mapsto(s_2,s_1)
	\]
	is an interaction which is \emph{irreducibly quantified} (see \cite{BKS20}*{Proposition 2.27} for a proof). 
	This interaction gives the 
	\emph{multi-species exclusion process}.
	We have $c_\phi=\dim_\R\C(S)=\kappa$ in this case, 
	and a basis of the space of conserved quantities $\C(S)$ is given by
	$\xi^{(i)}\colon S\rightarrow\R$
	for $i=1,\ldots,\kappa$, such that
	$
		\xi^{(i)}(s)\coloneqq 1
	$
	if $i=s$ and $\xi^{(i)}(s)\coloneqq 0$ if $i\neq s$. 
	Let $\nu$ be any probability measure fully supported on $S$.
	It is well known that $\nu$ has a uniformly bounded 
	spectral gap for the interaction $(S,\phi)$ in the sense of Definition \ref{def: SG} (cf, \cite{DS81}, Section 4.1.2. of \cite{CLR10}, Theorem 3.4. of \cite{Ca08}), 
	hence \cref{thm: intro} holds in this case for any Euclidean lattice with finite range $(\Z^d, E)$ with $d > 1$ and any shift-invariant weight $r=(r_e)_{e \in E}$. 

	This result is applicable to study the diffusive scaling limits for the following process. Fix a Euclidean lattice with finite range $(\Z^d, E)$, $d > 1$ and a 
	symmetric matrix $(\lambda_{ij})\in M_{\kappa+1}(\R)$ such that $\lambda_{ij}>0$ for $i,j=0,1,\ldots,\kappa$. Consider a Markov process with generator 
	\[
		Lf(\e)=\sum_{e\in  E}\lambda_{\e_{o(e)},\e_{t(e)}}(f(\e^e)-f(\e)).
	\]
	Then, the process is reversible for product measures $\mu=\nu^{\otimes\Z^d}$ for any $\nu$ and choosing $r_e(\eta)=\lambda_{\e_{o(e)},\e_{t(e)}}$, \cref{thm: intro} provides the key decomposition for the non-gradient method for this process. 
	
	For the case where $\kappa=2$ and $\lambda_{ij}=\lambda\mathbf{1}_{ij =0}$ is called the two-color simple exclusion process and studied by Quastel  \cite{Qua92}. For this case, the weight is degenerate and so \cref{thm: intro}  can not be applied directly. On the other hand, since $\lambda_{10}=\lambda_{01}=\lambda_{20}=\lambda_{02}$, the color-blind process is the symmetric simple exclusion process and this property enables analysis specific to this two-color simple exclusion process.
		\end{example}

\cref{thm: intro} is also valid for the following example
of the generalized exclusion process, which is also called the partial exclusion process.

\begin{example}\label{example: 2}
	For an integer $\kappa>0$, let $S=\{0,1,\ldots,\kappa\}$ with $*=0$, the map
	given by
	\[	
		\phi(s_1,s_2)=
		\begin{cases}
				(s_1-1,s_2+1)  & s_1>0, s_2<\kappa\\
				(s_1,s_2)&\text{otherwise}
		\end{cases}
	\]
	is an interaction which is \emph{irreducibly quantified} (see \cite{BKS20}*{Proposition 2.27} for a proof). 
	This interaction gives the 
	\emph{generalized exclusion process}.
	We have $c_\phi=\dim_\R\C(S)=1$ in this case, 
	and a basis of the space of conserved quantities $\C(S)$ is given by
	$\xi \colon S\rightarrow\R$ such that
	$
		\xi(s)\coloneqq s.
	$
	Let $\nu$ be any probability measure fully supported on $S$.
	It is shown in Theorem 3.1. of \cite{Ca04} that $\nu$ has a uniformly bounded 
	spectral gap for the interaction $(S,\phi)$ in the sense of Definition \ref{def: SG},
	hence \cref{thm: intro} holds in this case for any Euclidean lattice with finite range $(\Z^d, E)$ with $d > 1$ and any shift-invariant weight $r=(r_e)_{e \in E}$. 

	This result is applicable to study the diffusive scaling limits for the following process. Fix a Euclidean lattice with finite range $(\Z^d, E)$, $d > 1$ and two functions $a, b: \{0,1,2, \dots, \kappa\} \to \R_{\ge 0}$ such that $a(0)=b(0)=0$ and $a(s)>0,b(s)>0$ for $s \neq 0$ . Consider a Markov process with generator 
	\[
		Lf(\e)=\sum_{e\in  E} a(\eta_{o(e)})b(\kappa -\eta_{t(e)}) (f(\e^e)-f(\e)).
	\]
	Then, the process is reversible for a one-parameter family of product measures $\mu_{\rho}=\nu_{\rho}^{\otimes\Z^d}$ determined by functions $a,b$ and indexed by the density of particles $\rho$ (cf. Section 2.2. of  \cite{SS18}, but note that the dynamics is symmetric in our case, so we do not need any further assumption on $a, b$). Choosing $r_e(\eta)=a(\eta_{o(e)})b(\kappa -\eta_{t(e)})+\mathbf{1}_{ \{ \eta_{o(e)}(\kappa-\eta_{t(e)})=0 \} }$, \cref{thm: intro} provides the key decomposition for the non-gradient method for this process. The indicator function $\mathbf{1}_{ \{ \eta_{o(e)}(\kappa-\eta_{t(e)})=0 \}}$ is added to make sure that $r_e(\eta)>0$ for any $\eta$ and $e \in E$, and indeed if we replace $a(\eta_{o(e)})b(\kappa -\eta_{t(e)})$ by $r_e(\eta)$ in the generator, the generator itself does not change. For the special case $a(s)=b(s)=\mathbf{1}_{\{ s \ge 1\}}$ on the standard Euclidean lattice $(\Z^d,\E^d)$ is studied in Section 7 of \cite{KL99}. 
	\end{example}


To study the diffusive scaling limit, such as the hydrodynamic limit and the equilibrium fluctuation, for nongradient models, the decomposition of \cref{thm: intro}
is necessary in order to apply existing methods to general models regardless of whether we use the entropy method or the relative entropy method.
(In contrast, for the specific case of the exclusion process, a method to avoid the use of the
decomposition has recently been announced by Funaki-Gu-Wang \cite{FGW24}). 
In fact, over the past thirty years, this decomposition has been established for a variety of models. All models discussed in this paragraph satisfy $c_{\phi}=1$ unless explicitly stated otherwise.
The first results in the analysis of nongradient models were obtained for the Ginzburg-Landau model in \cite{Var93} by Varadhan, where the local state space $S=\R$, and for the two-color simple exclusion process \cite{Qua92} by Quastel, where $S=\{0,1,2\}$ with two conserved quantities ($c_{\phi}=2$). 
Since then, the decomposition as well as the diffusive scaling limits are established for several nongradient models with a finite local state space $S$ and product reversible measures, including the generalized exclusion process \cite{KLO94, KL99} with a special jump rate, the exclusion process with speed change \cite{FUY96}, the lattice gas with energy ($c_{\phi}=2$) \cite{N03}, and a two-species exclusion process with annihilation and creation mechanism \cite{Sas10} for which our results are all applicable. Among nongradient models with non-product reversible measures, the only rigorous result for the decomposition and the hydrodynamic limit is given
in \cite{VY97}, where the exclusion process with certain mixing condition was studied. For models with a continuous local state space $S$, besides the Ginzburg-Landau model mentioned above, as far as we know, the decomposition theorem is only established for some energy conserving stochastic model \cite{Her14}, anharmonic oscillators with stochastic noise \cite{OS13}, a disordered harmonic chain with noise (where the decomposition is established for a class of \lq\lq good" forms) \cite{EM19} and an active exclusion process ($c_{\phi}=\infty$ in some sense) \cite{E21}. There is also a preprint \cite{LOS15} in this direction. The generalization of nongradient method to nonreversible models are also known, and the most essential part of the proof, namely the decomposition of shift-invariant
closed forms, is typically reduced to the reversible case (cf.\ \cite{Xu93, K98, FGQ04, Sas11, OS13}). As we do not assume the reversibility nor any relation between the measure $\mu$ and the weight $r=(r_e)$, our result is also applicable for nonreversible models. 

There are many important nongradient models even with product reversible measures for which the diffusive scaling limits are not established, including the multi-species simple exclusion process (cf.\ \cite{I18})
-- even in cases where the required spectral gap estimate is obtained.  
In addition,
the generalization of the nongradient method from a nearest neighbor interaction version to a finite range interaction version on $\Z^d$ are often claimed to be ``straightforward,'' but in fact,
there is no clearly written proof in the literature. 
Here, by the finite range interaction, we mean that the jumps of particles can be finite range, but do not mean the reversible measures have a finite range interaction. We have discovered that
when generalizing to the finite range interaction version,
there is in general an obstruction in the proof of the locality of the boundary term, as we remark in Section 6 of this article (see Remark \ref{rem:finite-range}). Hence in this article, we avoid a direct proof and reduce
our proof of the main theorem to the case of the nearest neighbor interaction on the Euclidean lattice $\Z^d$, or in our terminology, the model on the locale $(\Z^d, \E^d)$ (see \S\ref{subsec: PMT}).  One of the major advantages of creating a general formalism is the development of systematic methods to solve difficult problems by reducing to an easier case.
This allows, as in our case, proof of results even in cases where parallel method of proof fails to follow through. 

With a similar spirit, we also reduce the problem to the case $r_e \equiv 1$ in the beginning of the proof. This is possible since the set itself and its topology of $L^2(\mu)_{r_e}$ is the same for any $r=(r_e)$. This is not the case if $r=(r_e)$ is degenerate or unbounded. 

The extension of the result on the Euclidean lattice $(\Z^d, E)$ with finite range to a general crystal lattice $(X,E)$, such as the hexagonal lattice, is a fundamental open problem, and we are currently preparing some articles for this extension. With this in mind, we write
the assertions and proofs in this article generally as possible with respect to the locale $(X,E)$. In the long term, we also hope that our main result may be extended to the case where $S$ is countable infinite, continuous, and $\nabla$ is a differential operator, an integral operator, and so on under suitable conditions.

As mentioned above, there have been many results on specific models, but up until now, there was no general framework for the nongradient method nor the decomposition of shift-invariant closed forms. The biggest factor that prevented full generalization was that the dimension and concrete expression of ``closed but not exact forms'' were obtained by calculating simultaneous linear equations in each model ``on a case-by-case basis,'' and there was no unified understanding of them. Such concrete calculations are impractical when the dimension of the space of conserved quantities is large, such as $c_{\phi}=100$ or $10,000$. The main reason we have succeeded in a complete generalization is that we have completely solved this problem in a previous article \cite{BKS20}, as a problem independent of probability measures. To connect the result for uniform forms to $L^2$-forms, in this article, we introduce several projective limits, which allow for us to make sense of infinite sums such as $\sum_{x\in\Z^d}\tau_x f$ and  $\sum_{x\in\Z^d}x_j\xi^{(i)}_{x}$ not only in a formal way but as a uniform function, which is a well-defined object. Also, to state the result in full generality, we define the notion of ``closed forms'' independent form specific models, just as the set of forms which are locally exact under restriction via conditional expectation. 
With such difficulties aside,
the proofs in this article are roughly in line with the standard methods introduced by Varadhan and developed in subsequent research mentioned above. The crucial difference is the treatment of the boundary term that remains at the end. In the existing proofs, it is necessary to find the limit of the boundary term specifically for each model, but in this article, we just show that the limit is a shift-invariant uniform
closed form, and the rest can be attributed to the result of the previous article \cite{BKS20}, thus 
completely avoiding model-specific discussions. We also note that the proof of the Boundary Estimate (Proposition \ref{prop: BE}), which is one of the key estimates of Varadhan’s non-gradient method, in the general setting is not 
a simple extension of existing methods, since we cannot use any specific property of the model.

The rest of the article is organized as follows. In Section \ref{sec:co-local}, we introduce our general setting and basic notions, such as uniform functions and forms as well as their norms. In this section, we consider a general locale $(X,E)$, and the existence of a group action is not assumed. In Section 3, we assume that a locale $(X,E)$ has an action of a group $G$, which is free and introduce some notions, such as exact/closed shift-invariant $L^2$-forms. Then, we introduce the notion of the uniform spectral gap estimate, which is an essential assumption of our main theorem, and the main theorem as well. We note that the uniform spectral gap estimate condition depends only on $(S,\phi)$ and $\nu$, but not on the locale $(X,E)$ and the rate $r=(r_e)$. In Section 4, we prove a few key estimates. All the results in this section hold for any locale $(X,E)$. Section 5 is devoted to the construction of converging sequence which directly gives the decomposition of \cref{thm: intro} for the locale $(X,E)=(\Z^d, \E^d)$. Finally, in Section 6 we give a proof of the main theorem. We first prove the result for the locale $(X,E)=(\Z^d, \E^d)$, and then show that the result for more general finite range Euclidean lattice $(\Z^d, E)$ can be reduced to the case of the standard Euclidean lattice $(\Z^d, \E^d)$. 
%
%
%
\section{Co-local Functions and Forms}\label{sec:co-local}
%
%
%

In this section, we introduce the data $((X,E),(S,\phi),r)$ which will be used to construct our model.
We will then introduce the notion of co-local functions and forms on the configuration space of our model.

%
\subsection{The Large Scale Interacting System}\label{subsec: IS}
%

In this subsection, we will introduce the data $((X,E),(S,\phi),r)$
which we use to model the large scale interacting system.

We define a \emph{directed graph} to be the pair $(X,E)$
consisting of a set $X$ which we call the \emph{set of vertices} and $E\subset X\times X$,
which we call the \emph{set of directed edges}.  We let $o,t\colon E\rightarrow X$ be the
projections to the first and second components of $X\times X$,
which we call the \emph{origin} and the \emph{target} of a directed edge.
We say that a graph $(X,E)$ is \emph{symmetric},
if the opposite $\bar e\coloneqq(t(e),o(e))\in E$ for any $e=(o(e),t(e))\in E$, \emph{locally finite}
if $E_x\coloneqq\{e\in E\mid o(e)=x\}$ is finite for any $x\in X$,
and \emph{simple} if $o(e)\neq t(e)$ for any
$e\in E$.

We define a \emph{path} on $(X,E)$ to be a finite sequence $\vec p\coloneqq(e^1,\ldots,e^N)$
of edges satisfying $t(e^i)=o(e^{i+1})$ for any $0<i<N$.  
We say that $\vec p$ is a path from $o(\vec p)\coloneqq o(e^1)$ to $t(\vec p)\coloneqq t(e^N)$ of 
length $\len(\vec p)\coloneqq N$.  
For any $x,x'\in X$, we let $P(x,x')$ the set of paths from $x$ to $x'$.
We define the \emph{distance} from $x$ to $x'$ by
\[
	d_{X}(x,x')\coloneqq\inf_{\vec p\in P(x,x')}\len(\vec p)
\]
if $P(x,x')\neq\emptyset$, and $d_X(x,x')\coloneqq\infty$ otherwise.
For any $Y\subset X$, we say that $Y$ is \emph{connected}, if for any $y,y'\in Y$,
there exists a path from $y$ to $y'$ consisting of edges whose origin and target are all elements of $Y$.

\begin{definition}
	We define a \emph{locale} to be a connected locally finite simple symmetric directed graph.
	The locale represents the space underlying our large scale interacting system.
\end{definition}

The most important example of a locale is the Euclidean lattice $(\Z^d,\E^d)$ given in \S\ref{sec: intro}.
Let $(X,E)$ be a graph.  For any $Y\subset X$, if we let $E_Y\coloneqq E\cap(Y\times Y)$,
then the pair $(Y,E_Y)$ defines a graph.  A \emph{subgraph of $(X,E)$} is any graph of the form
$(Y,E_Y)$
for some $Y\subset X$.  In particular, if $(X,E)$ is a locale and the set $Y\subset X$ is connected,
then $(Y,E_Y)$ is a locale.  A \emph{sublocale of $(X,E)$} is any locale of the form $(Y,E_Y)$ for some connected
$Y\subset X$. A \emph{finite locale} is a locale $(X,E)$ whose vertex set $X$ is finite.

As in \S\ref{sec: intro}, we define the \emph{set of states} $S$ to be a finite nonempty set,
and  we define the \emph{configuration space} of $S$ on $(X,E)$ to be the set
\[
	S^X\coloneqq\prod_{x\in X}S.
\]
An \emph{interaction} is defined to be a map $\phi\colon {S\times S}\rightarrow {S\times S}$,
satisfying $\bar\phi\circ\phi(s_1,s_2)=(s_1,s_2)$ for any $(s_1,s_2)\in S\times S$
such that $\phi(s_1,s_2)\neq(s_1,s_2)$, where we let
\[
	\bar\phi\coloneqq \iota\circ\phi\circ\iota
\]
for the bijection $\iota\colon S\times S\rightarrow S\times S$ obtained by
exchanging the components of $S\times S$.  The pair $(S,\phi)$ describes
the set of all possible states on a single vertex and the possible interactions
between adjacent vertices.

\begin{example}\label{example: interaction}
	Let $S=\{0,1,2,\ldots,\kappa\}$ for some integer $\kappa>0$.
	\begin{enumerate}\renewcommand{\labelenumi}{(\alph{enumi})}
		\item The map $\phi=\iota\colon S\times S\rightarrow S\times S$
		above
	obtained by exchanging the components
	of $S\times S$ defines an interaction giving the \emph{multi-species exclusion process}.
		\item
	 The map given by
	\[	
		\phi(s_1,s_2)=
		\begin{cases}
				(s_1-1,s_2+1)  & s_1>0, s_2<\kappa\\
				(s_1,s_2)&\text{otherwise}
		\end{cases}
	\]
	is also an interaction, giving the \emph{generalized exclusion process}.
	\end{enumerate}
	See \cite{BKS20}*{Example 2.18} for other examples of interactions.
\end{example}

We next introduce a certain invariant called the \emph{conserved quantity}
associated to the pair $(S,\phi)$.

\begin{definition}\label{def: CQ}
	We fix an element $*\in S$ which we call the \emph{base state}.
	We define a \emph{conserved quantity}, to be any map $\xi\colon S\rightarrow\R$
	satisfying $\xi(*)=0$ and
	\[
		\xi(s'_1)+\xi(s'_2)=\xi(s_1)+\xi(s_2)
	\]
	for any $(s_1,s_2)\in S\times S$, where $(s'_1,s'_2)\coloneqq\phi(s_1,s_2)$.
\end{definition}

We denote by $\C_*(S)$ the set of all conserved quantities, which has a natural structure
of an $\R$-linear space.  Given another base state $*'\in S$, we have a canonical $\R$-linear
isomorphism $\C_*(S)\cong\C_{*'}(S)$ mapping any $\xi\in\C_*(S)$ to $\xi'\coloneqq\xi-\xi(*')$.
Hence $\C_*(S)$ is independent up to canonical isomorphism of the choice of the base state.
For this reason, we will simply denote $\C_*(S)$ by $\C(S)$.  The dimension
\[
	c_\phi\coloneqq\dim_\R\C(S)
\]
is independent of the choice of the base state, and describes the number of independent
conserved quantities of the system.

The data $((X,E),(S,\phi))$ above gives the geometric data underlying our interacting system.
We next give some construction associated to this data.
For any locale $(X,E)$ and the pair $(S,\phi)$, we call any $\e\in S^X=\prod_{x\in X}S$ 
a \emph{configuration}.
For any element $\e=(\eta_x)\in  S^X$ and $e\in E$, we denote by $\e^e$
the element $\e^e=(\eta^e_x)\in S^X$ such that
\[
	\eta^e_x\coloneqq\begin{cases} 
		\eta_x    &  x \neq o(e),t(e)\\
		\eta'_{x} & x=o(e),t(e),  \\
		\end{cases}
\]
where $(\eta'_{o(e)},\eta'_{t(e)})=\phi(\eta_{o(e)}, \eta_{t(e)})$.
We define a subset $\Phi_E$ of $S^X\times S^X$ by
\[
	\Phi_E\coloneqq\{(\e,\e^e)\mid \e\in S^X, e\in E\}\subset S^X\times S^X.
\]
Then the pair $(S^X,\Phi_E)$ is a graph, which we call the \emph{configuration space with transition structure}.

\begin{lemma}\label{lem: symmetric}
	The graph $(S^X,\Phi_E)$ is a symmetric directed graph.
\end{lemma}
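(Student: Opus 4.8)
The claim is that $(S^X,\Phi_X)$ is a symmetric directed graph, i.e.\ that for every $(\sigma,\sigma^e)\in\Phi_X$ the reversed pair $(\sigma^e,\sigma)$ also lies in $\Phi_X$. The plan is to exhibit, for each $(\sigma,e)$, an edge $\bar e$ of the locale $(X,E)$ such that $(\sigma^e)^{\bar e}=\sigma$; since $(X,E)$ is itself symmetric, $\bar e\in E$, and then $(\sigma^e,(\sigma^e)^{\bar e})=(\sigma^e,\sigma)$ exhibits the reversed pair as an element of $\Phi_X$, as required.

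First I would dispose of the trivial case: if $\sigma^e=\sigma$ there is nothing to prove, since $(\sigma,\sigma)=(\sigma^e,(\sigma^e)^e)\in\Phi_X$. So assume $\sigma^e\neq\sigma$, which forces $\phi(\eta_{o(e)},\eta_{t(e)})\neq(\eta_{o(e)},\eta_{t(e)})$. Take $\bar e=(t(e),o(e))\in E$, which exists because the locale is symmetric. Now compute $(\sigma^e)^{\bar e}$: outside the two vertices $o(e),t(e)$ it agrees with $\sigma^e$, hence with $\sigma$; and at the pair of vertices $(o(\bar e),t(\bar e))=(t(e),o(e))$ its value is obtained by applying $\phi$ to $(\eta^e_{t(e)},\eta^e_{o(e)})$. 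Writing $(\eta'_{o(e)},\eta'_{t(e)})=\phi(\eta_{o(e)},\eta_{t(e)})$, the key computation is
\[
	\phi\bigl(\eta'_{t(e)},\eta'_{o(e)}\bigr)
	=\iota\circ\bar\phi\circ\iota\bigl(\eta'_{t(e)},\eta'_{o(e)}\bigr)
	=\iota\circ\bar\phi\bigl(\eta'_{o(e)},\eta'_{t(e)}\bigr)
	=\iota\circ\bar\phi\circ\phi\bigl(\eta_{o(e)},\eta_{t(e)}\bigr),
\]
where I used $\bar\phi=\iota\circ\phi\circ\iota$, i.e.\ $\phi=\iota\circ\bar\phi\circ\iota$, and $\iota\circ\iota=\id$. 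The defining condition on an interaction says precisely that $\bar\phi\circ\phi(s,\eta_2)=(s,\eta_2)$ whenever $\phi(s,\eta_2)\neq(s,\eta_2)$; applying this with $(s,\eta_2)=(\eta_{o(e)},\eta_{t(e)})$, which we are assuming is moved by $\phi$, gives $\bar\phi\circ\phi(\eta_{o(e)},\eta_{t(e)})=(\eta_{o(e)},\eta_{t(e)})$, hence the displayed expression equals $\iota(\eta_{o(e)},\eta_{t(e)})=(\eta_{t(e)},\eta_{o(e)})$. Therefore the $(t(e),o(e))$-components of $(\sigma^e)^{\bar e}$ are $(\eta_{t(e)},\eta_{o(e)})$ in that order, meaning the $t(e)$-component is $\eta_{t(e)}$ and the $o(e)$-component is $\eta_{o(e)}$; so $(\sigma^e)^{\bar e}=\sigma$ everywhere.

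The only subtlety — and the one point to handle with care rather than a genuine obstacle — is the bookkeeping of which state sits at which vertex when passing from $e$ to $\bar e$: because $o(\bar e)=t(e)$ and $t(\bar e)=o(e)$, the ordered pair fed to $\phi$ gets swapped, which is exactly why the reversed interaction $\bar\phi$ (and the condition $\bar\phi\circ\phi=\id$ on the support of $\phi$) is the right hypothesis to invoke. Once the indices are tracked correctly the verification is the short computation above, and this establishes that $(\sigma^e,\sigma)\in\Phi_X$, completing the proof that $(S^X,\Phi_X)$ is symmetric.
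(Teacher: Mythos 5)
Your proof is correct and follows essentially the same route as the paper, which simply cites \cite{BKS20}*{Lemma 2.5} and notes that when $\s^e\neq\s$ the defining condition on $\phi$ yields $(\s^e)^{\bar e}=\s$, so that $(\s^e,\s)=(\s^e,(\s^e)^{\bar e})\in\Phi_X$. Your write-up just makes explicit the index bookkeeping between $e$ and $\bar e$ and the identity $\phi=\iota\circ\bar\phi\circ\iota$, together with the trivial case $\s^e=\s$, all of which is consistent with the paper's argument.
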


\begin{proof}
	This is proved in \cite{BKS20}*{Lemma 2.5}, and can be seen from the fact that
	for any $\e\in S^X$ and $e\in E$, if $\e^e\neq\e$, then from the condition
	on $\phi$, we have $(\e^e)^{\bar e}=\e$,
	hence $(\e^e,\e)=(\e^e,(\e^e)^{\bar e})\in\Phi_E$.
\end{proof}

The configuration space $S^X$ describes all of the possible configuration of states
of our system, and the transition structure describes all of the possible transitions which
may occur at an instance in time.  A condition that we often consider for the pair 
$(S,\phi)$ is the following.

\begin{definition}\label{def: FQ}
	We say that an interaction $(S,\phi)$ is \emph{irreducibly quantified}, if it 
	satisfies the following property:  For any finite locale $(X,E)$ and any 
	configurations $\e,\e'\in S^X$,  if $\sum_{x\in X}\xi(\eta_x)=\sum_{x\in X}\xi(\eta'_x)$
	for any conserved quantity $\xi\in\C(S)$, then there exists a path $\vec\gamma$ from
	$\e$ to $\e'$ in $S^X$.
\end{definition}

\emph{Irreducibly quantified} is equivalent to the condition that the associated stochastic process on any finite locale $(X,E)$ with fixed conserved quantities is \emph{irreducible}.  This condition plays an important role in the proof of the main theorem.

\begin{example} The interactions in Example \ref{example: interaction} are both irreducibly quantified.
	We let $*=0$ be the base state of $S=\{0,\ldots,\kappa\}$.
	\begin{enumerate}\renewcommand{\labelenumi}{(\alph{enumi})}
		\item For the multi-species exclusion process, we have $c_\phi=\kappa$, and a basis of $\C(S)$
		is given by $\xi^{(1)},\ldots,\xi^{(\kappa)}\colon S\rightarrow\R$ such that 
		$\xi^{(i)}(s)=\delta_{si}$ for $i=1,\ldots,\kappa$,
		where $\delta_{si}\coloneqq1$ if $s=i$ and $\delta_{si}\coloneqq0$ if $s\neq i$.
		\item For the generalized exclusion process, we have $c_\phi=1$, and a basis of $\C(S)$
		is given by $\xi\colon S\rightarrow\R$ such that $\xi(s)=s$.
	\end{enumerate}
	See \cite{BKS20}*{Example 2.18} for other examples of irreducibly quantified interactions.
\end{example}

Next, we introduce certain metric like data called the \emph{weight}. 
For any $\La\subset X$ let $C(S^\La)\coloneqq\Map(S^\La,\R)$
be the $\R$-linear space of $\R$-valued maps on $S^\La$.
The natural projection $S^X\rightarrow S^\La$ induces an injection $C(S^\La)\hookrightarrow C(S^X)$,
which allows us to view any $f\in C(S^\La)$ as a function on $S^X$ whose value on $\e=(\eta_x)$
depends only on the components for $x\in\La$.   A \emph{local function} on $S^X$
is any function in $C(S^\La)$ for a finite $\La\subset X$.

\begin{definition}\label{def: weight}
	We define the \emph{weight} of the system $((X,E),(S,\phi))$ to be a family of local
	functions $r=(r_e)_{e\in E}$ satisfying $r_e(\e)>0$ for any $e \in E$ and $\e\in S^X$.
	\end{definition}

 



In terms of application to the diffusive scaling limit for nongradient models, the weight should be computed by the frequency of each transition from $\eta$ to $\eta^e$, which is so-called the jump rate, and it is natural to consider an invariant or stationary probability measure $\mu$ on $S^X$ for the process given by this jump rate. However, since we can prove the main result 
without the stationarity nor reversibility condition, and instead require $\mu$ to be a product measure, we choose a probability measure
$\nu$ on $S$, satisfying $\nu(\{s\})>0$ for any $s\in S$ independently from the weight $r$, and consider the 
product measure $\mu\coloneqq\nu^{\otimes X}$. 

\begin{remark}
For any given interaction $(S,\phi)$, locale $(X,E)$ and probability measure $\nu$ fully supported on $S$, we can construct a large class of processes which are reversible with respect to $\mu=\nu^{\otimes X}$. 
\end{remark}

The following constants will be used in the calculation of the bound of norms later.

\begin{definition}\label{def: MB}
	For an interaction $(S,\phi)$ and a probability measure $\nu$ fully supported on $S$, we define 
		\begin{equation}\label{eq: MB}
	 	C_{\phi,\nu}:=\max_{(s_1,s_2) \in S^2} \max \left\{ \frac{\nu(s'_1)\nu(s'_2)}{\nu(s_1)\nu(s_2)}, \frac{\nu(s_1)\nu(s_2)}{\nu(s'_1)\nu(s'_2)} \right\}
		\end{equation}
		where $(s'_1,s'_2)=\phi(s_1,s_2)$. Since $S$ is finite and $\nu$ is supported on $S$, 
		we have $1 \leq C_{\phi,\nu} <\infty$. 
\end{definition}

%
\subsection{Uniform Functions}\label{subsec: functions}
%

We will study the property of the configuration space with transition structure using
a class of functions which we call the \emph{uniform co-local functions}.
This is a variant of the uniform function originally defined in \cite{BKS20}*{\S3}.
In this subsection, we will review the definition of co-local functions and forms,
as well as the definition of uniform functions.  Detailed proofs of the results
of this section are given in \cite{BS21}*{\S2 and \S3}.  

Let $(X,E)$ be a locale, and let $\sI$ be the set of finite subsets of $X$.
For any $\La\in\sI$, as in \S\ref{subsec: IS}, we let $C(S^\La)=\Map(S^\La,\R)$ be the 
$\R$-linear space of $\R$-valued functions on $S^\La$.  We may regard 
any $f\in C(S^\La)$ as a function on $S^X$ whose value depends only
on the components of $\e=(\eta_x)$ for $x\in\La$.
For any $\La,\La'\in\sI$ such that $\La\subset\La'$, the natural projection
$S^{\La'}\rightarrow S^\La$ induces an inclusion $C(S^\La)\hookrightarrow C(S^{\La'})$,
which forms a direct system with respect to inclusions in $\sI$.

\begin{definition}
	We define the  \emph{space of local functions} by 
	\[
		C_\loc(S^X)\coloneqq\varinjlim_{\La\in\sI}C(S^\La)=\bigcup_{\La\in\sI}C(S^\La),
	\]
	where the direct limit is taken with respect to the inclusions $C(S^\La)\hookrightarrow C(S^{\La'})$.
\end{definition}

We let $\cF$ be the set of subsets of $S$, which is a $\sigma$-algebra.
For any $\La\subset X$, we let $\cF_\La\coloneqq\cF^{\otimes\La}$ be the product 
$\sigma$-algebra on $S^\La$ obtained from $\cF$. We note that $\cF_{\La}$ is the sub-$\sigma$-algebra of $\cF_{X}$ generated by the projection $S^X\rightarrow S^\La$. 

Next, let $\mu$ 
be a probability measure on $(S^X,\cF_X)$.
For any $\La\subset X$, we define $C(S^\La_\mu)$ to be the $\R$-linear space of $\R$-valued
\emph{measurable} functions on $(S^\La, \cF_{\La})$.  If $\La$ is finite, then we have $C(S^\La_\mu)=C(S^\La)$,
since we have assumed that $S$ is finite.
The natural projection $S^X\rightarrow S^\La$ induces an inclusion $C(S^\La)\hookrightarrow C(S^X_\mu)$.

For any $\La,\La'\in \sI$ such that $\La\subset\La'$ and any
\emph{integrable} function $f\in C(S^{\La'})$, we let
$
	\pi^\La f\coloneqq E_\mu[f|\cF_\La]
$
be the \emph{conditional expectation} associated to 
the projection $\pr_\La\colon S^{\La'}\rightarrow S^\La$. Note that for any  $\La \in \sI$ and $f \in C(S^{\La})$, $f$ is integrable since $S$ is finite.
If $\La,\La'\in\sI$, then $\pi^\La$ induces the homomorphism
\[
	\pi^\La\colon C(S^{\La'})\rightarrow C(S^\La).
\]
Note that for any $\La\subset\La'\subset\La''$ and integrable function $f\in C(S^{\La''})$, 
the tower property of the conditional expectation gives
$\pi^\La f=\pi^\La(\pi^{\La'}f)$, which implies that $\pi^\La$ satisfies the compatibility
condition for the projective system.
Hence as in \cite{BS21}*{Definition 1.4}, we may
define the space of co-local functions $ C_\col(S^X_\mu)$
as follows.

\begin{definition}
	We define the space of \textit{co-local functions} by
	\[
		C_\col(S^X_\mu)\coloneqq\varprojlim_{\La\in\sI}C(S^\La),
	\] 
	where the projective limit is taken with respect to the projections $\pi^\La$.
	We call any element in $C_\col(S^X_\mu)$ a co-local function.
\end{definition}

By definition, a co-local function $f=(f^\La)\in  C_\col(S^X_\mu)$ is a system of functions $f^\La\in C(S^\La)$
for $\La\in\sI$ such that $\pi^\La f^{\La'}=f^\La$ for any $\La\subset\La'$.  In other words,
co-local functions are \emph{Martingales} indexed by $\La\in\sI$ for the filtration $(\cF_{\La})_{\La \in \sI}$.
Any integrable function $f\in C(S^X_\mu)$ defines the co-local function $(\pi^\La f)\in  C_\col(S^X_\mu)$.
This gives an injection 
\[
	C_\loc(S^X)\hookrightarrow  C_\col(S^X_\mu).
\]
When $\mu$ is a product measure $\mu=\nu^{\otimes X}$ where $\nu$ is a probability measure fully supported on $S$, the co-local functions may be expanded uniquely as an infinite sum of certain local functions.

\begin{proposition}[\cite{BS21}*{Proposition 2.3}]\label{prop: expansion}
	Suppose $\mu$ is a product measure $\mu=\nu^{\otimes X}$ on $S^X$, 
	where $\nu$ is a probability measure
	fully supported on $S$.
	For any $\La\in\sI$, let
	\[
		C_{\La}(S^X_\mu)\coloneqq\{f\in C(S^\La)\mid\pi^{\La'} f\equiv0\text{ if }\La\not\subset\La' \ \forall \La' \in \sI \}.
	\]
	Then for any co-local function 
	$(f^\La)\in  C_\col(S^X_\mu)$, there exists a unique family of functions $f_{\La''}\in C_{\La''}(S^X_\mu)$
	such that
	\begin{equation}\label{eq: satisfy}
		f^\La=\sum_{\La''\subset\La} f_{\La''}
	\end{equation}
	for any $\La\in\sI$.
\end{proposition}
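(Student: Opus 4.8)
The plan is to reduce the statement to the classical Hoeffding (ANOVA) decomposition of functions on a finite product space, and then to verify that the finitary decompositions glue along the directed system $\sI$. First I would introduce, for each vertex $x\in X$, the conditional-expectation operator $\mathsf{E}_x\coloneqq E_\mu[\,\cdot\mid\cF_{X\setminus\{x\}}]$ that averages out the coordinate at $x$, together with $\mathsf{D}_x\coloneqq\id-\mathsf{E}_x$. Because $\mu=\nu^{\otimes X}$ is a \emph{product} measure, each $\mathsf{E}_x$ restricts to an operator on every $C(S^\La)$, $\La\in\sI$ (lowering the support from $\La$ to $\La\setminus\{x\}$), is idempotent, and---this is the one genuinely structural point---satisfies $\mathsf{E}_x\mathsf{E}_y=\mathsf{E}_y\mathsf{E}_x$ for $x\neq y$; this commutativity fails for a non-product $\mu$. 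It then follows formally that all the $\mathsf{E}_x$ and $\mathsf{D}_x$ (over any finite set of vertices) commute, that $\mathsf{E}_x\mathsf{D}_x=0$, and that $\pi^\La=\prod_{x\in\La'\setminus\La}\mathsf{E}_x$ as a map $C(S^{\La'})\to C(S^\La)$ whenever $\La\subset\La'$.

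Next, for a fixed $\La\in\sI$ and each $\La''\subset\La$, I would set
\[
 P^\La_{\La''}\coloneqq\Bigl(\prod_{x\in\La''}\mathsf{D}_x\Bigr)\Bigl(\prod_{x\in\La\setminus\La''}\mathsf{E}_x\Bigr)\colon C(S^\La)\longrightarrow C(S^\La),
\]
and check, using the previous step, that $\{P^\La_{\La''}\}_{\La''\subset\La}$ is a family of mutually orthogonal idempotents with $\sum_{\La''\subset\La}P^\La_{\La''}=\prod_{x\in\La}(\mathsf{D}_x+\mathsf{E}_x)=\id$, and that the range of $P^\La_{\La''}$ is exactly $C_{\La''}(S^X_\mu)=\{g\in C(S^{\La''})\mid\mathsf{E}_xg=0\text{ for all }x\in\La''\}$. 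This yields the direct sum decomposition $C(S^\La)=\bigoplus_{\La''\subset\La}C_{\La''}(S^X_\mu)$, so that $f^\La=\sum_{\La''\subset\La}P^\La_{\La''}f^\La$ is the \emph{unique} way of writing $f^\La$ as a sum of elements of the subspaces $C_{\La''}$.

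Finally I would glue: given $(f^\La)\in C_\col(S^X_\mu)$, define $f_{\La''}\coloneqq P^\La_{\La''}f^\La$ using any $\La\in\sI$ with $\La''\subset\La$, and show this is independent of $\La$. For $\La\subset\La'$ one splits $\La'\setminus\La''=(\La\setminus\La'')\sqcup(\La'\setminus\La)$ and uses commutativity to factor $P^{\La'}_{\La''}=P^\La_{\La''}\circ\prod_{x\in\La'\setminus\La}\mathsf{E}_x=P^\La_{\La''}\circ\pi^\La$ on $C(S^{\La'})$; since $(f^\La)$ is co-local, $\pi^\La f^{\La'}=f^\La$, hence $P^{\La'}_{\La''}f^{\La'}=P^\La_{\La''}f^\La$. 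Then $f_{\La''}\in C_{\La''}(S^X_\mu)$ by the second step (one also reads off $\pi^{\La'}f_{\La''}\equiv0$ when $\La''\not\subset\La'$ from $\mathsf{E}_{x_0}f_{\La''}=0$ for $x_0\in\La''\setminus\La'$), and $\sum_{\La''\subset\La}f_{\La''}=f^\La$ for every $\La$, giving existence; uniqueness follows because any other admissible family $(g_{\La''})$ must, for each fixed $\La$, decompose $f^\La$ into the $C_{\La''}$ and therefore agree with $(f_{\La''})$ term by term. The only non-bookkeeping ingredient in the whole argument is the commutativity $\mathsf{E}_x\mathsf{E}_y=\mathsf{E}_y\mathsf{E}_x$; the step I expect to require the most care in writing is the factorization $P^{\La'}_{\La''}=P^\La_{\La''}\circ\pi^\La$, since it is exactly this that converts the Martingale consistency $\pi^\La f^{\La'}=f^\La$ into consistency of the components $f_{\La''}$.
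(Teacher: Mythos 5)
Your argument is correct, and it proves the same decomposition as the paper, but it gets there by a somewhat different route. The paper (following \cite{BS21}*{Proposition 2.3}) defines the components recursively, $f_\emptyset\coloneqq f^\emptyset$ and $f_\La\coloneqq f^\La-\sum_{\La''\subsetneq\La}f_{\La''}$ by induction on $\abs{\La}$, and verifies membership in $C_\La(S^X_\mu)$ and consistency using the single key identity $\pi^{\La'}(\pi^{\La''}f)=\pi^{\La'\cap\La''}f$ for $\La',\La''\subset\La$, valid because $\mu$ is a product. You instead set up the Hoeffding/ANOVA formalism: commuting single-site operators $\mathsf{E}_x$, $\mathsf{D}_x=\id-\mathsf{E}_x$, the orthogonal idempotents $P^\La_{\La''}$ summing to the identity, the resulting direct-sum decomposition of each $C(S^\La)$, and the factorization $P^{\La'}_{\La''}=P^\La_{\La''}\circ\pi^\La$ which converts the martingale consistency $\pi^\La f^{\La'}=f^\La$ into consistency of the components. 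The two structural inputs are essentially equivalent (commutativity of the $\mathsf{E}_x$ is the same product-measure fact as the intersection property of the $\pi^\La$), but your version buys an explicit closed form $f_{\La''}=P^\La_{\La''}f^\La=\sum_{\La'\subset\La''}(-1)^{\abs{\La''\setminus\La'}}\pi^{\La'}f^{\La''}$ and makes uniqueness immediate from the direct sum, whereas the paper's recursion is shorter on the page and delegates the verifications to \cite{BS21}. One small point to write out carefully: your uniqueness step needs both directions of the equivalence between the paper's definition of $C_{\La''}(S^X_\mu)$ (vanishing of $\pi^{\La'}$ whenever $\La''\not\subset\La'$) and your kernel characterization ($\mathsf{E}_{x}g=0$ for all $x\in\La''$); you state the direction you need for existence, and the converse is the one-line observation that taking $\La'=\La''\setminus\{x_0\}$ gives $\pi^{\La'}g=\mathsf{E}_{x_0}g$ for $g\in C(S^{\La''})$ under the product measure.
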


\begin{proof}
	See the proof of \cite{BS21}*{Proposition 2.3} for details. 
	We let $f_\emptyset\coloneqq f^\emptyset=E_\mu[f]$ for the case $\La=\emptyset$,
	and we define inductively with respect to the number of elements of $\La$ as
	\[
		f_{\La}\coloneqq f^\La-\sum_{\La''\subsetneq\La}f_{\La''}
	\]
	for any $\La\in\sI$.  To prove that $f_\La\in C_\La(S^X_\mu)$ and satisfies the desired property,
	we use the fact proved in \cite{BS21}*{Lemma 2.2}
	that for $\La\in\sI$ and $\La',\La''\subset\La$, we have
	\[
		\pi^{\La'}(\pi^{\La''}f)=\pi^{\La'\cap\La''}f
	\]
	for any $f\in C(S^\La)$, which is true since $\mu$ is a product measure.
\end{proof}

\begin{remark}
Though we do not use the fact in this article, Proposition \ref{prop: expansion} holds for $\mu=\Pi_{x \in X} \nu_x$ where $\nu_x$ is a fully supported probability measure on $S$ for any $x \in X$. Actually, the proof above works directly. This will be useful in future works for the cases with inhomogeneous invariant measures.
\end{remark}

In the rest of the article, we always assume that $\mu$ is a product measure $\mu=\nu^{\otimes X}$ for some probability measure $\nu$ fully supported on $S$. 

For any co-local function $f=(f^\La)$, we will often write
$
	f=\sum_{\La''\in\sI}f_{\La''}
$
to imply that \eqref{eq: satisfy} holds for any $\La\in\sI$.
We let $C^0_\col(S^X_\mu)\coloneqq \{f\in  C_\col(S^X_\mu)\mid f_\emptyset=f^\emptyset=0\}$,
which we call the space of co-local functions of \emph{mean zero}.

We define the \emph{diameter} of $\La\subset X$ by 
$\diam{\La}\coloneqq\sup_{x,x'\in\La}d_X(x,x')$.
As in \cite{BS21}*{Definition 2.4}, we define the space of uniform co-local functions as follows.

\begin{definition}\label{def: UL}
	We say that a co-local function $f$ is \textit{uniform co-local},
	if there exists $R\geq0$ such that the expansion of Proposition \ref{prop: expansion}
	is given by
	\begin{equation}\label{eq: expansion}
		f=\sum_{\La\in\sI_R}f_\La,
	\end{equation}
	where we denote by $\sI_R$ the set consisting of $\La\in\sI$ such that $\diam{\La}\leq R$.
	We call the infimum of such $R$ the \emph{diameter} of $f$.
	We denote by $C_\unif(S^X_\mu)$ the $\R$-linear space of uniform co-local functions,
	and by $C^0_\unif(S^X_\mu)$ the subspace of \emph{normalized}
	uniform co-local functions satisfying $f_\emptyset=f^\emptyset=0$.
\end{definition}

We will next give an algebraic construction of the space of uniform co-local functions.
Namely, we will review the construction of the space of uniform functions
$C^0_\unif(S^X_*)$ denoted
$C^0_\unif(S^X)$ in \cite{BKS20}*{Definition 3.5}.   We will show in Proposition
\ref{prop: UL} that there exists a canonical isomorphism which we call the
\emph{renormalization} between $C^0_\unif(S^X_*)$ and $C^0_\unif(S^X_\mu)$.

We fix a point $*\in S$ which we call the \emph{base point}, and we denote by $\star\in S^X$
the configuration whose components are all at base state.
For any configuration  $\e\in S^X$, we define the support of $\e$ by $\Supp(\e)\coloneqq\{x\in X\mid \eta_x\neq*\}$,
and we let
\[
	S^X_*\coloneqq\{\e\in S^X\mid \abs{\Supp(\e)}<\infty \}.
\]
We let
$C(S^X_*)\coloneqq\Map(S^X_*,\R)$ to be the $\R$-linear space of maps from $S^X_*$ to $\R$.

For any $\La\in\sI$, the projection $S^X_*\rightarrow S^\La$ induces an inclusion $C(S^\La)\hookrightarrow C(S^X_*)$, hence we may view $C_\loc(S^X)$ as an $\R$-linear subspace of $C(S^X_*)$.
For any $\La,\La'\in\sI$ such that $\La\subset\La'$, we have an inclusion
$\iota_\La\colon S^\La\hookrightarrow S^{\La'}$ given by extending any configuration 
$\e\in S^\La$ by $\eta_x\coloneqq*$ for $x\in\La'\setminus\La$.
This induces a projection $\iota^\La\colon C(S^{\La'})\rightarrow C(S^\La)$. In particular, for any $\La, \La' \in \sI$, the projection $\iota^{\La'} : C(S^{\La})\rightarrow C(S^{\La'}) $ is well-defined as $\iota^{\La'}=  \iota^{\La'} \circ \iota_{\La' \cup \La}$. 

\begin{definition}
	For any $\La\in\sI$, we define the space of local functions with \emph{exact support $\La$} by
	\[
		C_{\La}(S^X_*)\coloneqq\{f\in C(S^\La)\mid\iota^{\La'} f\equiv0\text{ if }\La\not\subset\La'  \ \forall \La' \in \sI\}.
	\]
\end{definition}

We have an expansion of functions in $C(S^X_*)$ via local functions with exact support as follows.

\begin{proposition}\label{prop: expansion2}
	Let $(f_\La)_{\La\in\sI}$ be a family of functions $f_\La\in C_\La(S^X_*)$.  Then
	\[
		f\coloneqq\sum_{\La\in\sI}f_\La
	\]
	defines a function in $C(S^X_*)$.  Conversely, 
	for any function $f\in C(S^X_*)$, there exists a unique family of functions $f^*_\La\in C_{\La}(S^X_*)$
	such that
	\begin{equation}\label{eq: satisfy2}
		 f=\sum_{\La\in\sI} f^*_{\La}.
	\end{equation}
\end{proposition}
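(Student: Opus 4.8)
The plan is to run the finite inclusion--exclusion over the poset $\sI$ of finite subsets of $X$ in parallel with the proof of Proposition \ref{prop: expansion}, with the conditional expectations $\pi^\La$ replaced throughout by the ``restriction to the base state'' maps. For $\La\subset\La'$ in $\sI$ we have the inclusion $\iota_\La\colon S^\La\hookrightarrow S^{\La'}$ extending a configuration by $*$, hence the $\R$-linear projection $\iota^\La\colon C(S^{\La'})\rightarrow C(S^\La)$; extending $\iota_\La$ by $*$ all the way to $X$ gives $\iota_\La\colon S^\La\hookrightarrow S^X_*$ and a projection $\iota^\La\colon C(S^X_*)\rightarrow C(S^\La)$ compatible with the former. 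I would first record two elementary facts: (a) for $\La\subset\La'$ one has $\iota^\La\circ\iota^{\La'}=\iota^\La$, both between finite levels and on $C(S^X_*)$, since extending by $*$ in two steps agrees with extending by $*$ directly; and (b) for $f\in C(S^\La)$, viewed inside $C(S^X_*)$, and any $\La'\in\sI$, the function $\iota^{\La'}f$ depends only on the $\La\cap\La'$-components and is the pullback of $\iota^{\La\cap\La'}f$ along $S^{\La'}\twoheadrightarrow S^{\La\cap\La'}$. Fact (b) immediately gives the reformulation: $f\in C(S^\La)$ lies in $C_\La(S^X_*)$ if and only if $\iota^{\La_1}f\equiv0$ for every $\La_1\subsetneq\La$.

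For the first assertion, fix $\s\in S^X_*$ with $\Supp(\s)=\La_0$. If $\La\not\subset\La_0$ then $\s$ agrees with $\star$ on $\La\setminus\La_0$, so its $\La$-components are obtained from its $(\La\cap\La_0)$-components by $\iota_{\La\cap\La_0}$; hence $f_\La(\s)=(\iota^{\La\cap\La_0}f_\La)(\s|_{\La\cap\La_0})=0$, since $\La\not\subset\La\cap\La_0$ and $f_\La\in C_\La(S^X_*)$. As there are only finitely many $\La\subset\La_0$, the sum $\sum_{\La\in\sI}f_\La(\s)$ has only finitely many nonzero terms, so $f\coloneqq\sum_\La f_\La$ is a well-defined element of $C(S^X_*)$.

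For the converse, given $f\in C(S^X_*)$ put $f^\La\coloneqq\iota^\La f\in C(S^\La)$; by (a) these satisfy $\iota^\La f^{\La'}=f^\La$ for $\La\subset\La'$. I would define $f^*_\La$ by induction on $\abs\La$ by $f^*_\emptyset\coloneqq f^\emptyset=f(\star)$ and $f^*_\La\coloneqq f^\La-\sum_{\La''\subsetneq\La}f^*_{\La''}$, exactly as in Proposition \ref{prop: expansion}; equivalently $f^\La=\sum_{\La''\subset\La}f^*_{\La''}$ for all $\La$. To see $f^*_\La\in C_\La(S^X_*)$, apply $\iota^{\La'}$ with $\La'\subsetneq\La$ to the defining relation: by (a), (b) and the inductive hypothesis $\iota^{\La'}f^\La=f^{\La'}$, while $\iota^{\La'}f^*_{\La''}=f^*_{\La''}$ when $\La''\subset\La'$ and $\iota^{\La'}f^*_{\La''}=0$ when $\La''\not\subset\La'$, so $\iota^{\La'}f^*_\La=f^{\La'}-\sum_{\La''\subset\La'}f^*_{\La''}=0$; the reformulation above then gives $f^*_\La\in C_\La(S^X_*)$. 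That $f=\sum_\La f^*_\La$ follows by evaluating at $\s$ with $\Supp(\s)=\La_0$: by the first assertion the right-hand side equals $\sum_{\La\subset\La_0}f^*_\La(\s)=f^{\La_0}(\s|_{\La_0})=f(\s)$, using $\s=\iota_{\La_0}(\s|_{\La_0})$. For uniqueness, if $f=\sum_\La g_\La$ with $g_\La\in C_\La(S^X_*)$, applying $\iota^\La$ and using the first assertion together with (b) gives $f^\La=\sum_{\La''\subset\La}g_{\La''}$, and induction on $\abs\La$ forces $g_\La=f^\La-\sum_{\La''\subsetneq\La}g_{\La''}=f^*_\La$. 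The only point needing care is the bookkeeping in fact (b) --- that restricting a function with coordinates in $\La$ to the coordinates in $\La'$ only ever sees $\La\cap\La'$ --- and keeping the finite-level and $C(S^X_*)$-level projections $\iota^\La$ consistent; the rest is the same finite inclusion--exclusion as in Proposition \ref{prop: expansion}, so I do not anticipate a genuine obstacle.
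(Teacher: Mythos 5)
Your proposal is correct and takes essentially the same route as the paper's proof (which defers details to \cite{BKS20}): the same inductive inclusion--exclusion $f^*_\La=\iota^\La f-\sum_{\La''\subsetneq\La}f^*_{\La''}$, the same pointwise-finiteness observation that $f_\La(\s)\neq0$ forces $\La\subset\Supp(\s)$, and your reformulation of $C_\La(S^X_*)$ via vanishing of $\iota^{\La_1}$ for $\La_1\subsetneq\La$ is equivalent to the characterization the paper records.
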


\begin{proof}
	The first statement is \cite{BKS20}*{Lemma 3.2}, and follows from the fact that $f(\e)$
	for any $\e\in S^X_*$ is a finite sum since $f_\La(\e)\neq0$
	only if $\La\subset\Supp(\e)$.
	The second statement is \cite{BKS20}*{Proposition 3.3}, noting that 
	\[
		C_{\La}(S^X_*)=\{ f\in C(S^\La)\mid f(\e)=0 \text{ if $\exists x\in\La$ such that $\eta_x=*$}\}.
	\]
	For any $f\in C(S^X_*)$, we let 
	$f^*_\emptyset\coloneqq \iota^\emptyset f=f(\star)$ for the case $\La=\emptyset$.
	We then define $f^*_\La$ by induction on the order of $\La$ as
	\[
		f^*_{\La}\coloneqq \iota^\La f-\sum_{\La''\subsetneq\La}f^*_{\La''}
	\]
	for any $\La\in\sI$.  This construction gives the desired property.
\end{proof}

By \cite{BS21}*{Lemma 1.3}, any function $C(S^X_*)$ may be expressed as a projective limit
of local functions.  Hence we will refer to a function in $C(S^X_*)$ as a \emph{pro-local function}.
As in \cite{BKS20}*{Definition 3.5}, we define the space of uniform functions as follows.

\begin{definition}\label{def: UL2}
	We say that a function $f\in C(S^X_*)$ is \textit{uniform pro-local},
	if there exists $R\geq0$ such that the expansion of Proposition \ref{prop: expansion2}
	is given by
	\[
		f=\sum_{\La\in\sI_R}f^*_\La.
	\]
	We call the infimum of such $R$ the \emph{diameter} of $f$.
	We denote by $C_\unif(S^X_*)$ the set of all uniform pro-local functions in $C(S^X_*)$,
	and by $C^0_\unif(S^X_*)$ the subspace of \emph{normalized} 
	uniform pro-local functions satisfying $f^*_\emptyset=\iota^\emptyset f=f(\star)=0$.
\end{definition}

Let $\xi\in\C(S)$ be a conserved quantity.   Then for any $x\in X$, 
the function $\xi_x\colon S\rightarrow\R$ given by $\xi_x(\e)\coloneqq\xi(\eta_x)$ for any $\e\in S^X$
defines a function in $C_{\{x\}}(S^X_*)$.  By Proposition \ref{prop: expansion2}, the infinite sum
\[
	\xi_X\coloneqq\sum_{x\in X}\xi_x
\]
defines a uniform pro-local function of diameter $0$ on $S^X$.

We define the \emph{renormalization} as follows.

\begin{proposition}\label{prop: UL}
	We have a canonical isomorphism which we call the renormalization
	\begin{equation}\label{eq: isom}
		\sR\colon C^0_\unif(S^X_*)\cong C^0_\unif(S^X_\mu).
	\end{equation}
\end{proposition}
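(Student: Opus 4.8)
The plan is to construct the renormalization map $\sR$ explicitly on the level of the expansions into exact-support pieces, and then verify it is a well-defined linear isomorphism. Both $C^0_\unif(S^X_*)$ and $C^0_\unif(S^X_\mu)$ carry, by Proposition \ref{prop: expansion2} and Proposition \ref{prop: expansion} respectively, canonical direct-sum decompositions $\bigoplus_{\La\in\sI,\,\La\neq\emptyset}C_\La(S^X_*)$ and $\bigoplus_{\La\in\sI,\,\La\neq\emptyset}C_\La(S^X_\mu)$, where a uniform function of diameter $\leq R$ is precisely a family supported on $\sI_R$. The first observation I would record is that \emph{the two exact-support spaces $C_\La(S^X_*)$ and $C_\La(S^X_\mu)$ are literally the same subspace of $C(S^\La)$}: the first is $\{f\in C(S^\La)\mid f(\s)=0\text{ whenever some }\eta_x=*\}$ (noted in the proof of Proposition \ref{prop: expansion2}), and the second, since $\mu=\nu^{\otimes X}$ is product, is the space of $f\in C(S^\La)$ with $E_\nu[f\mid \text{omit coordinate }x]=0$ for every $x\in\La$ — and one checks both describe the same codimension-pattern subspace. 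Actually the cleanest route is not to identify them but to \emph{define} $\sR$ coordinatewise by a linear isomorphism $\sR_\La\colon C_\La(S^X_*)\xrightarrow{\ \sim\ }C_\La(S^X_\mu)$ for each nonempty $\La$, and then set $\sR\bigl(\sum_\La f_\La\bigr)\coloneqq\sum_\La \sR_\La(f_\La)$; because $\sR_\La$ preserves the index $\La$, it carries diameter-$\leq R$ functions to diameter-$\leq R$ functions, hence maps $C^0_\unif(S^X_*)$ into $C^0_\unif(S^X_\mu)$, and its inverse is built the same way from $\sR_\La^{-1}$.

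For the per-$\La$ map, I would use the change-of-basis between the "evaluate remaining coordinates at $*$" operators $\iota^\La$ and the "average remaining coordinates against $\nu$" operators $\pi^\La$. Concretely, for $f\in C(S^X)$ one has two families $(f^*_\La)$ and $(f_\La)$ from the two propositions, and both are obtained from $f$ by inclusion–exclusion over subsets — in one case against the point mass $\delta_*$, in the other against $\nu$. The matrix relating the sub-$\La$ partial evaluations $\iota^{\La'}f$ to the conditional expectations $\pi^{\La'}f$ (as $\La'$ ranges over subsets of a fixed finite $\La$) is unipotent triangular, hence invertible, and its restriction to each graded piece gives $\sR_\La$. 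Equivalently: given $g\in C_\La(S^X_*)$, view it as a function on $S^\La$; subtract off its $\nu$-conditional expectations onto proper faces to land in $C_\La(S^X_\mu)$; this is an invertible linear operation on the finite-dimensional space $C(S^\La)$ that fixes the top grade. I would verify linearity, bijectivity, and the identity $\sR_\La^{-1}\sR_\La=\id$ by a short triangular-matrix argument, and check that the composite is the identity on the common subspace $C_\loc(S^X)$ (where both expansions are finite and the two normalizations agree on mean-zero local functions up to the lower-order correction, which the map precisely accounts for) — this last point pins down the word "canonical".

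The main obstacle I anticipate is \emph{well-definedness of the infinite sum} $\sum_\La\sR_\La(f_\La)$ as an element of $C(S^X_\mu)$, i.e.\ checking it really is a co-local function (a consistent martingale family) and not merely a formal series: one must verify $\pi^{\La_0}\bigl(\sum_{\La}\sR_\La(f_\La)\bigr)=\sum_{\La\subset\La_0}\sR_\La(f_\La)$ for every $\La_0\in\sI$, using that each $\sR_\La(f_\La)\in C_\La(S^X_\mu)$ satisfies $\pi^{\La_0}\sR_\La(f_\La)=0$ unless $\La\subset\La_0$. Since $f$ is uniform, only $\La\in\sI_R$ with $\La\subset\La_0$ contribute and the sum is finite, so this is bookkeeping rather than analysis; but getting the triangular relation between $\iota^\bullet$ and $\pi^\bullet$ stated cleanly — and confirming it is exactly the isomorphism promised — is where the real content sits, and I would lean on \cite{BS21}*{Lemma 2.2} (the identity $\pi^{\La'}\pi^{\La''}=\pi^{\La'\cap\La''}$, valid because $\mu$ is product) together with its point-mass analogue for the $\iota$-operators to make the two inclusion–exclusion systems match up grade by grade.
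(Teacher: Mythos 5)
Two things need fixing. First, your opening ``observation'' that $C_\La(S^X_*)$ and $C_\La(S^X_\mu)$ are \emph{literally the same subspace} of $C(S^\La)$ is false: already for $\La=\{x\}$ the first space is $\{f\in C(S^{\{x\}})\mid f(*)=0\}$ while the second is $\{f\mid E_\nu[f]=0\}$, and these differ for every non-degenerate $\nu$ (they merely have the same dimension). You abandon this identification immediately, so by itself it is not fatal, but it signals the deeper problem below.

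The substantive gap is that the map you actually construct is the \emph{grade-preserving (diagonal)} one: each exact-$*$-support piece $f^*_{\La''}$ is sent to its top $\mu$-exact-support component $(f^*_{\La''})_{\La''}$. The paper's renormalization is the full triangular transformation: it keeps \emph{all} nonempty lower components, $\sR(f)_{\La'}\coloneqq\sum_{\La''\in\sI_R,\,\La'\subset\La''}(f^*_{\La''})_{\La'}$, equivalently $\sR(g)=g-E_\mu[g]$ on each piece $g=f^*_{\La''}$, so that $\sR(f-f(\star))=f-E_\mu[f]$ for local $f$. Your diagonal map is indeed a diameter-preserving linear isomorphism (per site, the projection of $\{f\mid f(*)=0\}$ onto $\{f\mid E_\nu[f]=0\}$ along constants is bijective), so bare existence of \emph{an} isomorphism survives; but the verification you propose --- that on $C_\loc(S^X)$ the two normalizations agree ``up to the lower-order correction, which the map precisely accounts for'' --- fails for it. Concretely, with $S=\{0,1\}$, $*=0$ and $f=1_{\{\eta_x=1\}}1_{\{\eta_y=1\}}$, your map returns $(1_{\{\eta_x=1\}}-\nu(1))(1_{\{\eta_y=1\}}-\nu(1))$, whereas the renormalization must return $f-\nu(1)^2$; the discrepancy $-\nu(1)\bigl(1_{\{\eta_x=1\}}+1_{\{\eta_y=1\}}\bigr)+2\nu(1)^2$ is non-constant. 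Hence your map is not compatible with the differential $\partial$ (Lemma \ref{lem: WD} would break), and that compatibility is precisely what makes $\sR$ ``canonical'' in the sense the paper needs when it later identifies uniform functions across the two pictures. The repair is to take the whole triangular change of basis rather than its diagonal part, i.e.\ define $\sR(f)_{\La'}$ as the sum of $(f^*_{\La''})_{\La'}$ over all $\La''\in\sI_R$ containing $\La'$ --- a finite sum by local finiteness --- which is exactly the paper's construction; your well-definedness bookkeeping then goes through unchanged.
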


\begin{proof}
	Let $f\in C^0_\unif(S^X_*)\subset C^0(S^X_*)$.  Then by Proposition \ref{prop: expansion2}
	and the definition of uniform pro-local functions given in Definition \ref{def: UL2}, there exists $R\geq0$ such that
	\[
		f=\sum_{\La''\in\sI_R} f^*_{\La''}.
	\]
	By Proposition \ref{prop: expansion}, for each $\La'' \in \sI_R$, there is the unique expansion
	of $f^*_{\La''}$ considered as a co-local function as
	\[
		f^*_{\La''}=\sum_{\La'\subset\La''}f^*_{\La'',\La' },
	\]
       where $f^*_{\La'',\La'} \in C_{\La'}(S^X_{\mu})$. 
	Now, for each $\La'\in\sI$ such that $\La'\neq\emptyset$, we construct $\sR(f)_{\La'}$ by
	\[
		\sR(f)_{\La'}\coloneqq\sum_{\substack{\La''\in \sI_R\\\La'\subset\La''}}f^*_{\La'',\La'}.
	\]
	Note that since the number of $\La''\in\sI_R$ such that $\La'\subset\La''$  is finite,
	$\sR(f)_{\La'}$ gives a function in $C_{\La'}(S^X_{\mu})$.
	We define $\sR(f)\in  C_\col(S^X_\mu)$ by
	\[
		\sR(f)=\sum_{\La'\in\sI^0}\sR(f)_{\La'},
	\]
	where $\sI^0\coloneqq\sI\setminus\{\emptyset\}$.
	Note that $\sR(f)_{\La'}=0$ if $\diam{\La'}>R$, hence $\sR(f)$ is a co-local function in $C^0_\unif(S^X_\mu)$
	as desired. 
	We may construct the inverse isomorphism of $\sR$ in a similar manner, proving that \eqref{eq: isom}
	is an isomorphism.
\end{proof}

Given two base states $*,*'\in S$, one may also construct a canonical 
isomorphism $C^0_\unif(S^X_*)\cong C^0_\unif(S^X_{*'})$ by the same manner as the last proposition.
In what follows, we will denote $C^0_\unif(S^X_*)$ simply as $C^0_\unif(S^X)$, and will regard
any element $f\in C^0_\unif(S^X)$ also as elements in $C^0_\unif(S^X_{*'})$ or $C^0_\unif(S^X_\mu)$
through the renormalization isomorphisms. 

\begin{definition}
A \textit{uniform function} is any element of $C^0_\unif(S^X)$ or $C^0_\unif(S^X_\mu)$, where they are identified by the canonical 
isomorphism.
\end{definition}

%
\subsection{Uniform Forms}\label{subsec: forms}
%

In this subsection, we consider the space of forms on the configuration space
with transition structure. 
For any connected $\La\in\sI$, the 
graph $(\La, E_\La)$ is a finite locale.  We let $(S^\La,\Phi_{E_{\La}})$ be the associated
configuration space with transition structure, where
\[
	\Phi_{E_{\La}}\coloneqq\{(\e,\e^e)\mid\e\in S^\La,e\in E_\La\}.
\]
For any $\vp=(\e,\e^e)\in\Phi_{E_{\La}}$, let $\ol\vp\coloneqq(\e^e,\e)$, which by Lemma \ref{lem: symmetric}
is an element in $\Phi_{E_{\La}}$.

\begin{definition}
	We define \textit{the space of forms} on $S^\La$ by
	\[
		C^1(S^\La)\coloneqq\Map^\alt(\Phi_{E_{\La}},\R),
	\]
	where $\Map^\alt(\Phi_{E_{\La}},\R)\coloneqq\{\omega
	\in\Map(\Phi_{E_{\La}},\R)\mid\omega(\ol\vp)=-\omega(\vp)\}$.
\end{definition}

We have a natural inclusion
\begin{equation}\label{eq: inclusion}
	C^1(S^\La)\hookrightarrow\prod_{e\in E_\La}C(S^\La)
\end{equation}
given by $\omega\mapsto (\omega_e)_{e\in E_\La}$, where $\omega_e\in C(S^\La)$
is the map given by $\omega_e(\e)\coloneqq \omega((\e,\e^e))$
for any $e\in E_\La$ and $\e\in S^\La$.
The image of any $\omega\in C^1(S^\La)$ with respect to the
inclusion \eqref{eq: inclusion}
consists of $(\omega_e)$ satisfying $\omega_e(\e)=0$ if $\e^e=\e$,
$\omega_e(\e)=-\omega_{\bar e}(\e^e)$ if $\e^e\neq\e$, and $\omega_e(\e)=\omega_{e'}(\e)$
if $\e^e=\e^{e'}$.

\begin{lemma}\label{lem: preserve}
	For any $\La,\La'\in\sI$ such that $\La\subset\La'$, the projection 
	$\pi^\La\colon C(S^{\La'})\rightarrow C(S^\La)$ induces via \eqref{eq: inclusion} 
	an homomorphism
	\[
		\pi^\La\colon C^1(S^{\La'})\rightarrow C^1(S^\La)
	\]
	given by $(\pi^\La\omega)_e=\pi^\La \omega_e$ for $\omega \in C^1(S^{\La'})$ and $e \in E_{\Lambda}$.
\end{lemma}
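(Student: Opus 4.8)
The statement is that the conditional expectation $\pi^\Lambda = E_\mu[\,\cdot\,|\mathcal F_\Lambda]$ sends forms to forms, i.e.\ preserves the alternating/compatibility conditions that cut out $C^1(S^{\Lambda'})$ inside $\prod_{e\in E_{\Lambda'}}C(S^{\Lambda'})$. The natural approach is: given $\omega=(\omega_e)_{e\in E_{\Lambda'}}\in C^1(S^{\Lambda'})$, set $(\pi^\Lambda\omega)_e \coloneqq \pi^\Lambda(\omega_e)$ for $e\in E_\Lambda$, and verify that the resulting family lies in the image of \eqref{eq: inclusion} for $S^\Lambda$. Concretely there are three conditions to check on the family $(\pi^\Lambda\omega_e)_{e\in E_\Lambda}$: (a) $\pi^\Lambda\omega_e(\s)=0$ whenever $\s^e=\s$ for $\s\in S^\Lambda$; (b) $\pi^\Lambda\omega_e(\s)=-\pi^\Lambda\omega_{\bar e}(\s^e)$ whenever $\s^e\neq\s$; and (c) $\pi^\Lambda\omega_e(\s)=\pi^\Lambda\omega_{e'}(\s)$ whenever $\s^e=\s^{e'}$ as configurations in $S^\Lambda$.

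\textbf{Key steps.} First I would fix $e\in E_\Lambda$; note that since $\Lambda$ is connected and $e\in E_\Lambda$, both endpoints $o(e),t(e)$ lie in $\Lambda$, so the transition $\s\mapsto\s^e$ is defined purely in terms of the $\Lambda$-coordinates and commutes with the projection $S^{\Lambda'}\to S^\Lambda$ as well as with the extension/averaging over coordinates in $\Lambda'\setminus\Lambda$. The core computation is that the map $T_e\colon S^{\Lambda'}\to S^{\Lambda'}$, $\s\mapsto\s^e$, is $\mu$-measure-preserving on the fiber over $S^\Lambda$ — indeed $T_e$ only permutes the values at $o(e),t(e)$ via $\phi$, leaving all coordinates outside $\{o(e),t(e)\}$ fixed, and $\mu=\nu^{\otimes\Lambda'}$ is a product measure, so pushing forward by $T_e$ and then conditioning on $\mathcal F_\Lambda$ is the same as conditioning first and then applying $T_e$ on $S^\Lambda$. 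This gives the intertwining relation $\pi^\Lambda(\omega_e\circ T_e) = (\pi^\Lambda\omega_e)\circ T_e$, from which (a) and (b) follow immediately by applying $\pi^\Lambda$ to the corresponding identities $\omega_e(\s)=0$ (resp.\ $\omega_e(\s)=-\omega_{\bar e}(\s^e)$) valid on $S^{\Lambda'}$. For (c), if $\s^e=\s^{e'}$ for all $\s\in S^\Lambda$, the same equality holds for all $\s\in S^{\Lambda'}$ (the transition structure only sees the coordinates at the endpoints, which lie in $\Lambda$), so $\omega_e=\omega_{e'}$ on $S^{\Lambda'}$ by the form condition there, and applying $\pi^\Lambda$ preserves this. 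Wait — more carefully, the correct reading of condition (c) in \eqref{eq: inclusion} is that $\s^e=\s^{e'}$ holds as elements of $S^\Lambda$ for the particular $\s$; one should observe that this event is $\mathcal F_{\{o(e),t(e),o(e'),t(e')\}}$-measurable and these vertices lie in $\Lambda$, so it pulls back along $S^{\Lambda'}\to S^\Lambda$ and the argument goes through.

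\textbf{Main obstacle.} The only subtlety — and the step I'd be most careful about — is the measure-preserving/intertwining property $\pi^\Lambda\circ T_e^* = T_e^*\circ\pi^\Lambda$. This requires that $T_e$ does not mix $\Lambda$-coordinates with $(\Lambda'\setminus\Lambda)$-coordinates and that $\mu$ is a product measure, so that conditioning (averaging over $S^{\Lambda'\setminus\Lambda}$) and applying $T_e$ genuinely commute; this is exactly where the standing hypothesis $\mu=\nu^{\otimes X}$ is used, analogous to the role it plays in \cite{BS21}*{Lemma 2.2}. Everything else is a routine transcription of the three defining conditions of a form through the (linear, positivity-preserving) operator $\pi^\Lambda$. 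I expect the proof in the paper to simply record that $\pi^\Lambda$ acts componentwise and cite the compatibility of conditional expectation with the transition maps, perhaps deferring details to \cite{BS21}.
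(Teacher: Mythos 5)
Your proposal is correct and takes essentially the same route as the paper, whose proof simply cites \cite{BS21}*{Lemma 3.4} and \cite{BS21}*{Lemma 3.5} together with the product-measure assumption — i.e., precisely the componentwise verification you carry out, where the key point is that for $e\in E_\La$ the transition map only alters coordinates inside $\La$, so conditioning (averaging over $S^{\La'\setminus\La}$ with the product weights) commutes with it and the three conditions cutting out the image of \eqref{eq: inclusion} pass through $\pi^\La$. One minor remark: no measure-preservation of the transition map is actually used (and $\phi$ need not be a bijection); your computation only needs that the map fixes the $\La'\setminus\La$-coordinates and that its action on the $\La$-coordinates depends only on those coordinates, so the phrase about it being ``measure-preserving on the fiber'' can be dropped.
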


\begin{proof}
	This follows from \cite{BS21}*{Lemma 3.4}, \cite{BS21}*{Lemma 3.5},
	and the fact that we have assumed that $\mu$ is the product measure.
\end{proof}

Using Lemma \ref{lem: preserve}, we define the space of co-local forms as follows.

\begin{definition}
	We define \textit{the space of co-local forms} on $S^\La$ by
	\[
		C^1_\col(S^X_\mu)\coloneqq\varprojlim_{\La\in\sI}C^1(S^\La),
	\]
	where the projective limit is taken with respect to the projections $\pi^\La$.
 \end{definition}

The inclusion \eqref{eq: inclusion} gives the inclusion 
\begin{equation}\label{eq: co-local inclusion}
	C^1_\col(S^X_\mu)\hookrightarrow\prod_{e\in E} C_\col(S^X_\mu).
\end{equation}
For any $\La\in\sI$, we define the differential
\begin{equation}\label{eq: differential}
	\partial_\La\colon C(S^\La)\rightarrow C^1(S^\La)
\end{equation}
to be the $\R$-linear homomorphism given by $\partial_\La f(\vp)\coloneqq f(t(\vp))-f(o(\vp))$.
The image of $\partial_\La f(\vp)$ with respect to the inclusion \eqref{eq: inclusion}
coincides with $(\nabe f)_{e\in E_\La}$, where $\nabe f(\e)\coloneqq f(\e^e)-f(\e)$
for any $\e\in S^\La$.   Since the measure $\mu$ is a product measure, we have the following.

\begin{proposition}\label{prop: compatible}
	The projection $\pi^\La$ is compatible with the differentials $\partial_\La$ on $C(S^\La)$, namely $\partial_\La \pi^\La f= \pi^{\La}\partial_{\La'}f$ for any $\La \subset \La'$ and $f \in C(S^{\La'})$.
	Hence the differential on each $C(S^\La)$ induces the differential
	\[
		\partial\colon  C_\col(S^X_\mu)\rightarrow C^1_\col(S^X_\mu)
	\]
	such that $(\partial f)^{\Lambda}=\partial_{\Lambda}f^{\Lambda}$ for $f=(f^{\Lambda}) \in C_\col(S^X_\mu)$.
\end{proposition}

\begin{proof}
	This follows from \cite{BS21}*{Proposition 3.7}, noting that a product measure is 
	\emph{ordinary}, as shown in \cite{BS21}*{Lemma 3.4}.
\end{proof}

Since the projection $\pi^\La$ on forms is induced from the embedding \eqref{eq: inclusion},
the projection is also compatible with $\nabe$ for any $e\in E_\La$.  This implies that $\nabe$ induces
a differential
\begin{equation}\label{eq: nabe}
	\nabe\colon  C_\col(S^X_\mu)\rightarrow  C_\col(S^X_\mu)
\end{equation}
such that we have $\partial f=(\nabe f)_{e\in E}\in C^1_\col(S^X_\mu)\subset\prod_{e\in E} C_\col(S^X_\mu)$ 
for any $f\in  C_\col(S^X_\mu)$.

We next define the notion of closed forms.
Let $\La\in\sI$, and let $(S^\La,\Phi_{E_{\La}})$ be a configuration space with transition structure
for a pair $(S,\phi)$.   Recall that a path in $(S^\La,\Phi_{E_{\La}})$ is a sequence of transitions
$\vec\gamma=(\vp^1,\ldots,\vp^N)$ in $\Phi_{E_{\La}}$ such that $t(\vp^i)=o(\vp^{i+1})$
for any integer $0<i<N$.     For any form $\omega\in C^1(S^\La)$, we define the integration
of $\omega$ with respect to the path $\vec\gamma$ by
\[
	\int_{\vec\gamma}\omega\coloneqq\sum_{i=1}^N\omega(\vp^i).
\]
We say that a path $\vec\gamma$ is \emph{closed}, if 
$t(\vec\gamma)=o(\vec\gamma)$, where $o(\vec\gamma)=o(\vp^1)$ and $t(\vec\gamma)=t(\vp^N)$.
As in \cite{BKS20}*{Definition 2.14}, we define the closed forms on $(S^\La,\Phi_{E_{\La}})$ as follows.

\begin{definition}
	We say that a form $\omega\in C^1(S^\La)$ is \textit{closed}, if 
	\[
		\int_{\vec\gamma}\omega=0
	\]
	for any closed path $\vec\gamma$ in $(S^\La,\Phi_{E_{\La}})$.
\end{definition}

We denote by $Z^1(S^\La)$ the $\R$-linear space of closed forms in $C(S^\La)$.
The notion of closed forms is compatible with the projections $\pi^\La$.

\begin{lemma}
	The projections $\pi^\La$ induce $\R$-linear homomorphisms
	\[
		\pi^\La\colon Z^1(S^{\La'})\rightarrow Z^1(S^\La)
	\]
	for any $\La,\La '$ in $\sI$ such that $\La\subset\La'$.
\end{lemma}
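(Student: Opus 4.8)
The plan is to reduce the statement to the compatibility of conditional expectation with the operation of integrating a form along a closed path. First I would recall that for $\La\subset\La'$ the projection $\pi^\La\colon C^1(S^{\La'})\to C^1(S^\La)$ is already defined (Lemma~\ref{lem: preserve}), so the only content is to check that $\pi^\La$ carries closed forms to closed forms. Fix a closed form $\omega\in Z^1(S^{\La'})$ and a closed path $\vec\gamma=(\vp^1,\ldots,\vp^N)$ in $(S^\La,\Phi_\La)$; I must show $\sum_{i=1}^N(\pi^\La\omega)(\vp^i)=0$.

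The key step is to lift the closed path $\vec\gamma$ in $(S^\La,\Phi_\La)$ to a family of closed paths in $(S^{\La'},\Phi_{\La'})$ indexed by the configurations on $\La'\setminus\La$. Concretely, every directed edge $e\in E_\La$ is also a directed edge in $E_{\La'}$ since $\La\subset\La'$ implies $E_\La=E\cap(\La\times\La)\subset E\cap(\La'\times\La')=E_{\La'}$; moreover the transition $\s\mapsto\s^e$ only touches the $o(e),t(e)$ components, which lie in $\La$. Hence for each fixed $\zeta\in S^{\La'\setminus\La}$ the path $\vec\gamma$ determines a path $\vec\gamma_\zeta$ in $(S^{\La'},\Phi_{\La'})$ obtained by running the same sequence of edges starting from the configuration whose $\La$-part is $o(\vec\gamma)$ and whose $(\La'\setminus\La)$-part is $\zeta$; because $\vec\gamma$ is closed and the $\zeta$-part is never altered, $\vec\gamma_\zeta$ is again closed. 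Now by definition of $\pi^\La$ via \eqref{eq: inclusion}, for $\vp^i=(\s,\s^{e^i})$ with $\s\in S^\La$ we have $(\pi^\La\omega)(\vp^i)=(\pi^\La\omega_{e^i})(\s)=E_\mu[\omega_{e^i}\mid\cF_\La](\s)$, which since $\mu$ is a product measure equals the $\mu$-average of $\omega_{e^i}$ over the $(\La'\setminus\La)$-coordinates, i.e.\ $\sum_{\zeta}\mu(\zeta)\,\omega(\vp^i_\zeta)$ where $\vp^i_\zeta$ is the $i$-th transition of $\vec\gamma_\zeta$. Summing over $i$ and interchanging the two finite sums gives $\sum_{i=1}^N(\pi^\La\omega)(\vp^i)=\sum_{\zeta}\mu(\zeta)\int_{\vec\gamma_\zeta}\omega=0$, each inner integral vanishing because $\omega$ is closed and $\vec\gamma_\zeta$ is a closed path.

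I would close by noting that alternatively one can invoke Proposition~\ref{prop: compatible}: on $S^{\La'}$ a form is closed on the connected components of $(S^{\La'},\Phi_{\La'})$ iff it is locally of the form $\partial_{\La'}g$ on each component, and $\pi^\La$ intertwines $\partial_{\La'}$ with $\partial_\La$, so the image is again locally exact, hence closed --- but some care is needed about connected components, so the direct path-lifting argument above is cleaner. The main obstacle, and the only place where the product-measure hypothesis is genuinely used, is the identification of $\pi^\La\omega_{e^i}$ with the coordinate-wise average $\sum_\zeta\mu(\zeta)\omega(\vp^i_\zeta)$: this is exactly where one needs that conditional expectation with respect to $\cF_\La$ acts by averaging the remaining coordinates independently, which is the content of $\mu$ being ``ordinary'' in the sense of \cite{BS21}; without it the formula for $\pi^\La$ on forms in Lemma~\ref{lem: preserve} would not even be well-defined.
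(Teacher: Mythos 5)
Your argument is correct. The paper itself disposes of this lemma by citing the companion article (\cite{BS21}*{Lemma 3.10}, together with the fact that a product measure is \emph{ordinary}), so it gives no self-contained argument to compare with; your path-lifting proof is a valid direct substitute and makes explicit exactly where the product structure enters. The two ingredients you need are both in place: well-definedness of $\pi^\La\omega$ as an alternating form on $\Phi_\La$ is exactly Lemma \ref{lem: preserve}, which you invoke, and the identification $\pi^\La(\omega_{e})(\s)=\sum_{\zeta\in S^{\La'\setminus\La}}\mu(\zeta)\,\omega_{e}(\s,\zeta)$ is the standard description of conditional expectation under a product measure. Since every $e\in E_\La$ lies in $E_{\La'}$ and the transition along $e$ leaves the $\La'\setminus\La$ coordinates untouched, each lift $\vec\gamma_\zeta$ of a closed path $\vec\gamma$ is again closed, and the finite interchange of sums finishes the proof. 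Your alternative sketch via exactness is in fact also unproblematic: by \cite{BKS20}*{Lemma 2.15} the differential $\partial_{\La'}\colon C^0(S^{\La'})\rightarrow Z^1(S^{\La'})$ is surjective globally (no case-by-case treatment of connected components is needed), and then Proposition \ref{prop: compatible} gives $\pi^\La(\partial_{\La'}g)=\partial_\La(\pi^\La g)\in Z^1(S^\La)$; so either route works, with the path-lifting argument having the advantage of using only the definition of closedness.
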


\begin{proof}
	This is \cite{BS21}*{Lemma 3.10}, since a product measure is \emph{ordinary}
	by \cite{BS21}*{Lemma 3.4}.
\end{proof}

\begin{definition}
	We define the space of \emph{closed co-local forms} $Z^1_\col(S^X_\mu)$ by
	\[
		Z^1_\col(S^X_\mu)\coloneqq\varprojlim_{\La\in\sI}Z^1(S^\La),
	\]
	where the limit is the projective limit with respect to the projections $\pi^\La$.
\end{definition}

By \cite{BKS20}*{Lemma 2.14}, we see that for any $\La\in\sI$ and $f\in C(S^\La)$, we have
$\partial_\La f\in Z^1(S^\La)$, where $\partial_\La$ is the differential defined in
\eqref{eq: differential}.  Moreover, by \cite{BKS20}*{Lemma 2.15}, 
the differential $\partial_\La\colon C(S^\La)\rightarrow Z^1(S^\La)$ is surjective
and induces the isomorphism
\begin{equation}\label{eq: SES}
	C(S^\La)/\Ker\partial_\La\cong Z^1(S^\La).
\end{equation}
The compatibility of the differential $\partial_\La$ with the projection $\pi^\La$ gives the following.

\begin{proposition}\label{prop: SES}
	The differential $\partial\colon	C^0_\col(S^X_\mu)\rightarrow C^1_\col(S^X_\mu)$ 
	of Proposition \ref{prop: compatible}
	induces the isomorphism
	\[
		C^0_\col(S^X_\mu)/H^0_\col(S^X_\mu)\cong Z^1_\col(S^X_\mu),
	\]
	where $H^0_\col(S^X_\mu)\coloneqq\varprojlim_{\La\in\sI}\Ker\partial_\La$.
\end{proposition}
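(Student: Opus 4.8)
The plan is to assemble the level-wise isomorphisms \eqref{eq: SES} into an inverse system of short exact sequences over $\sI$ and pass to the projective limit; the only nontrivial point will be exactness on the right. For each $\La\in\sI$ the surjectivity of $\partial_\La$ together with \eqref{eq: SES} gives the short exact sequence
\[
	0\longrightarrow\Ker\partial_\La\longrightarrow C^0(S^\La)\xrightarrow{\ \partial_\La\ }Z^1(S^\La)\longrightarrow0,
\]
and by Lemma \ref{lem: preserve}, Proposition \ref{prop: compatible}, and the compatibility of the projections $\pi^\La$ with closedness, the maps $\pi^\La$ for $\La\subset\La'$ restrict to morphisms of these sequences, so we obtain an inverse system of short exact sequences indexed by the directed set $\sI$. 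Since $\varprojlim$ is left exact, applying it yields the exact sequence
\[
	0\longrightarrow\varprojlim_{\La}\Ker\partial_\La\longrightarrow C^0_\col(S^X_\mu)\xrightarrow{\ \partial\ }Z^1_\col(S^X_\mu),
\]
where the middle arrow is, by construction, the differential of Proposition \ref{prop: compatible} restricted to mean-zero co-local functions, and $\varprojlim_\La\Ker\partial_\La=H^0_\col(S^X_\mu)$ by definition. Hence $\Ker\partial=H^0_\col(S^X_\mu)$, and it remains only to prove that $\partial$ is surjective.

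For surjectivity I would show that the inverse system $(\Ker\partial_\La)_{\La\in\sI}$ has vanishing $\varprojlim^{1}$, equivalently argue by a stabilization (Mittag--Leffler) argument as follows. Fix $\omega=(\omega^\La)\in Z^1_\col(S^X_\mu)$. Because a locale is connected and locally finite its vertex set $X$ is countable, so $\sI$ admits a cofinal sequence $\La_1\subset\La_2\subset\cdots$ with $\bigcup_n\La_n=X$, and it suffices to lift $\omega$ compatibly along this tower. By \eqref{eq: SES} each fiber $T_n\coloneqq\partial_{\La_n}^{-1}(\omega^{\La_n})\subset C^0(S^{\La_n})$ is a nonempty coset of $\Ker\partial_{\La_n}$, and $\pi^{\La_n}$ maps $T_{n+1}$ into $T_n$ since $\partial_{\La_n}\circ\pi^{\La_n}=\pi^{\La_n}\circ\partial_{\La_{n+1}}$. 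As $S$ is finite, each $C^0(S^{\La_n})$, and hence each $\Ker\partial_{\La_n}$, is finite-dimensional; therefore the descending chain of nonempty cosets $\pi^{\La_n}(T_m)\subset T_n$ for $m\ge n$ stabilizes, say to $T_n^\infty$, and the induced tower $(T_n^\infty)_n$ has surjective transition maps and thus nonempty inverse limit.

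Choosing $(f^{\La_n})_n\in\varprojlim_n T_n^\infty$ and setting $f^\La\coloneqq\pi^\La f^{\La_n}$ for any $n$ with $\La\subset\La_n$ (well defined by cofinality and the tower property of conditional expectation) produces a co-local function $f=(f^\La)\in C^0_\col(S^X_\mu)$ with $\partial_\La f^\La=\omega^\La$ for every $\La\in\sI$, i.e.\ $\partial f=\omega$. This proves surjectivity and hence the asserted isomorphism $C^0_\col(S^X_\mu)/H^0_\col(S^X_\mu)\cong Z^1_\col(S^X_\mu)$. I expect the only step requiring genuine care to be the stabilization argument guaranteeing $\varprojlim^{1}=0$, which is exactly where the finiteness of $S$ enters (through finite-dimensionality of each $C(S^\La)$); everything else is a formal consequence of the level-wise description \eqref{eq: SES} and the compatibilities already recorded in Lemma \ref{lem: preserve} and Proposition \ref{prop: compatible}.
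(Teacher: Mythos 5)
Your proposal is correct and follows essentially the same route as the paper: the paper's proof (deferring details to \cite{BS21}*{Proposition 3.13}) likewise passes the level-wise isomorphism \eqref{eq: SES} to the projective limit and obtains surjectivity from the Mittag--Leffler condition for $\{\Ker\partial_\La\}$, which holds because each $\Ker\partial_\La$ is finite dimensional as $S$ is finite. Your explicit coset-stabilization argument along a countable cofinal chain is just the standard proof of that Mittag--Leffler step written out in detail.
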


\begin{proof}
	This is \cite{BS21}*{Proposition 3.13},
	noting that a product measure is \emph{ordinary} by \cite{BS21}*{Lemma 3.4}.
	The proof follows from the fact that the projective system $\{\Ker\partial_\La\}$ satisfies
	the Mittag-Leffler condition, which follows immediately from the fact that
	 $\Ker\partial_\La$ is finite dimensional for any $\La\in\sI$, since
	we have assumed that $S$ is finite.
\end{proof}

Next, we consider the differential of uniform functions.
Again, fix a base state $*\in S$.
We let $C^1(S^X_*)\coloneqq\Map^\alt(\Phi^*_E,\R)$, where $\Phi^*_E\coloneqq\Phi_E\cap(S^X_*\cap S^X_*)$.
Similarly to the case of \eqref{eq: inclusion}, there exists an embedding
\begin{equation}\label{eq: inclusion2}
	C^1(S^X_*)\hookrightarrow\prod_{e\in E}C(S^X_*)
\end{equation}
mapping $\omega\in C^1(S^X_*)$ to $(\omega_e)_{e\in E}$, where $\omega_e(\e)\coloneqq\omega((\e,\e^e))$
for any $e\in E$ and $\e\in C(S^X_*)$.  
We define the differential
\[
	\partial\colon C^0(S^X_*)\rightarrow C^1(S^X_*)
\]	
by $\partial f(\vp)\coloneqq f(t(\vp))-f(o(\vp))$.  Again, we have $(\partial f)_e=\nabe f$
through the inclusion \eqref{eq: inclusion2},
where $\nabe f(\e)=f(\e^e)-f(\e)$ for any $\e\in S^X_*$.

We next define the notion of uniform forms.
For any $x\in X$ and $R \geq 0$, 
we let $B(x,R)\coloneqq\{x'\in X\mid d_X(x,x')\leq R\}$ be the ball with center $x$ and radius $R \geq 0$.
Following \cite{BKS20}*{Definition 3.10}, for any $R>0$, we let
\begin{align*}
	C^1_R(S^X)&\coloneqq C^1_\col(S^X_\mu)\cap\Bigl(\prod_{e\in E}C\bigl(S^{B(o(e),R)}\bigr)\Bigr).
\end{align*}

\begin{definition}\label{def: Uniform Form}
	We define the space of \emph{uniform forms} by
	\begin{align*}
		C^1_\unif(S^X)&\coloneqq\bigcup_{R>0} C^1_R(S^X) \subset C^1_\col(S^X_\mu).
	\end{align*}
\end{definition}
Since the space of local functions $C(S^{B(o(e),R)})$ is also a subset of $C(S^X_*)$,
we may also view $C^1_\unif(S^X)$ as a subspace of $C^1(S^X_*)$.
This space is defined independently of the choice of the probability measure $\mu$.

\begin{lemma}\label{lem: UL}
	If $f\in C^0_\unif(S^X_*)$, then we have $\partial f\in C^1_\unif(S^X)$.
\end{lemma}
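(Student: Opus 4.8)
The plan is to unwind both the definition of $C^0_\unif(S^X_*)$ and the definition of $C^1_\unif(S^X)$ and verify the membership on the level of the expansions. Let $f\in C^0_\unif(S^X_*)$ have diameter $R_0$, so that by Proposition \ref{prop: expansion2} we may write $f=\sum_{\La\in\sI_{R_0}}f^*_\La$ with $f^*_\La\in C_\La(S^X_*)$. First I would observe that the key finiteness input is that the sum is supported on $\sI_{R_0}$, i.e.\ only on $\La$ of diameter at most $R_0$. I then fix a directed edge $e\in E$ and compute $\nabe f=\sum_{\La\in\sI_{R_0}}\nabe f^*_\La$. The point is that $\nabe f^*_\La$ depends only on the components of a configuration at vertices in $\La\cup\{o(e),t(e)\}$, and moreover vanishes identically unless $\La$ meets $\{o(e),t(e)\}$, since otherwise the interaction along $e$ does not alter any component that $f^*_\La$ sees. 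Hence only $\La$ with $\La\cap\{o(e),t(e)\}\neq\emptyset$ and $\diam\La\le R_0$ contribute; each such $\La$ is contained in $B(o(e),R_0+1)$ because any vertex of such a $\La$ is within $R_0$ of a point of $\{o(e),t(e)\}$, and $o(e),t(e)$ are at distance $1$. Therefore $\nabe f\in C\bigl(S^{B(o(e),R_0+1)}\bigr)$, uniformly in $e$.

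Next I would check the remaining structural requirements for membership in $C^1_\unif(S^X)$, namely that $(\nabe f)_{e\in E}$ actually lies in $C^1_\col(S^X_\mu)$ — that is, it is the image under the inclusion \eqref{eq: inclusion} of a genuine co-local form and the family $(\nabe f)_e$ satisfies the alternating/compatibility relations. This is where I invoke the already-established machinery: by Proposition \ref{prop: UL} (the renormalization isomorphism), $f$ corresponds to a uniformly co-local function $\sR(f)\in C^0_\unif(S^X_\mu)\subset C^0_\col(S^X_\mu)$, and by Proposition \ref{prop: compatible} the differential $\partial$ on $C^0_\col(S^X_\mu)$ produces $\partial\sR(f)=(\nabe\sR(f))_{e\in E}\in C^1_\col(S^X_\mu)$. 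One then needs that $\nabe$ applied to a uniform function (in the pro-local picture $C^0(S^X_*)$) and $\nabe$ applied to its renormalization (in the co-local picture $C^0_\col(S^X_\mu)$) agree after restriction — but both are just $f(\s^e)-f(\s)$ evaluated pointwise on finitely-supported configurations, and the renormalization is the identity on local functions up to additive constants which $\nabe$ kills; so the two differentials are compatible with $\sR$. Combining with the diameter bound $R_0+1$ from the first paragraph gives $\partial f\in C^1_{R_0+1}(S^X)\subset C^1_\unif(S^X)$.

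The main obstacle I anticipate is not the diameter estimate, which is essentially bookkeeping, but making rigorous the claim that $\nabe f^*_\La\equiv 0$ when $\La\cap\{o(e),t(e)\}=\emptyset$ and, more subtly, the interaction with the renormalization: a priori $f^*_\La$ has exact support $\La$ in the $S^X_*$-sense, but its co-local expansion via Proposition \ref{prop: expansion} spreads it over all $\La'\subset\La$, and one must be sure the differential still only sees vertices near $e$. The clean way around this is to argue entirely in the pro-local model $C(S^X_*)$: there $\nabe f^*_\La(\s)=f^*_\La(\s^e)-f^*_\La(\s)$, and since $\s$ and $\s^e$ differ only at $o(e),t(e)$, this is zero whenever $f^*_\La$ does not depend on the $o(e)$- or $t(e)$-component, i.e.\ whenever $\{o(e),t(e)\}\cap\La=\emptyset$ — an elementary pointwise statement. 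Then the fact that membership in $C^1_\unif(S^X)$ was \emph{defined} (following \cite{BKS20}*{Definition 3.10}) as $C^1_\col(S^X_\mu)\cap\prod_e C(S^{B(o(e),R)})$ means the only two things to verify are precisely the co-local-form membership (handed to us by Proposition \ref{prop: compatible} via renormalization) and the uniform locality bound (the first paragraph), so no genuinely new estimate is required.
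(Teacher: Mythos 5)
Your proof is correct and follows essentially the same route as the paper: the key point in both is that $\nabe f^*_\La$ vanishes unless $\La$ meets $\{o(e),t(e)\}$, so only finitely many terms of diameter $\le R$ contribute and $\nabe f\in C\bigl(S^{B(o(e),R+1)}\bigr)$, giving $\partial f\in C^1_{R+1}(S^X)$. Your second paragraph, verifying membership in $C^1_\col(S^X_\mu)$ via the renormalization and Proposition \ref{prop: compatible}, is a sound elaboration of a point the paper leaves implicit there (it is essentially the content of Lemma \ref{lem: WD}, stated immediately afterwards).
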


\begin{proof}
	Since $\nabe f^*_\La\neq 0$ only if $\{o(e),t(e)\}\cap\La\neq\emptyset$, 
	\[
		\nabe f=\sum_{\substack{\La\in\sI\\ \{o(e),t(e)\}\cap\La\neq\emptyset}}\nabe f^*_\La.
	\]
	Then, since $f$ is uniform, the right hand side is
	a finite sum and $\nabe f$ is a local function. Moreover, if the diameter of $f$ is $R$, then $\partial f\in C^1_{R+1}(S^X)$, hence
	$\partial f$ is a uniform form.
\end{proof}

\begin{lemma}\label{lem: WD}
	The isomorphism $\sR$ of \eqref{eq: isom} is compatible with the differential.
	In other words, we have a commutative diagram
	\[
		\xymatrix{
			C^0_\unif(S^X_*)\ar[r]^\partial\ar[d]_{\sR} & C^1_\unif(S^X)\ar@{=}[d]\\
			C^0_\unif(S^X_\mu)\ar[r]^\partial& C^1_\unif(S^X).
		}
	\]
\end{lemma}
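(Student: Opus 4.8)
\emph{Proof strategy.} The plan is to check commutativity edge by edge. Both differentials appearing in the diagram are given by $g\mapsto(\nabe g)_{e\in E}$ and the right vertical arrow is the identity, so the square commutes — and, along the way, the bottom arrow is seen to land in $C^1_\unif(S^X)$ — as soon as we show that $\nabe f=\nabe(\sR f)$ as elements of $C_\loc(S^X)$ for every $e\in E$ and every $f\in C^0_\unif(S^X_*)$ (recall $\nabe f\in C_\loc(S^X)$ by the proof of Lemma~\ref{lem: UL}). Three ingredients do the work: (i) the last identity in the proof of Proposition~\ref{prop: UL}, namely $\sR(g-g(\star))=g-E_\mu[g]$ for $g\in C_\loc(S^X)$, so that on a local function already normalized in the $*$-sense the renormalization merely subtracts the $\mu$-mean; (ii) $\nabe$ annihilates constants; and (iii) both the $\sI_R$-expansion of a uniform function and the operator $\nabe$ are local, so for a fixed $e$ only the finitely many expansion terms whose support meets $\{o(e),t(e)\}$ contribute.

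\emph{Localization.} Fix $e\in E$ and $f\in C^0_\unif(S^X_*)$ of diameter $R$, and write $f=\sum_{\La\in\sI_R}f^*_\La$ as in Proposition~\ref{prop: expansion2}. Put $N_e\coloneqq\{\La\in\sI_R\mid\La\cap\{o(e),t(e)\}\neq\emptyset\}$, a finite set, and $f_e\coloneqq\sum_{\La\in N_e}f^*_\La$. Then $f_e\in C_\loc(S^X)$, and the computation in the proof of Lemma~\ref{lem: UL} gives $\nabe f=\sum_{\La\in N_e}\nabe f^*_\La=\nabe f_e$. On the other side, the formula $\sR(f)_{\La'}=\sum_{\La''\in\sI_R,\ \La'\subset\La''}(f^*_{\La''})_{\La'}$ from the proof of Proposition~\ref{prop: UL} shows that whenever $\La'\cap\{o(e),t(e)\}\neq\emptyset$, every $\La''\in\sI_R$ with $\La'\subset\La''$ already lies in $N_e$; hence $\sR(f)_{\La'}=\sR(f_e)_{\La'}$ for all such $\La'$. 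Since $\nabe$ of a uniform co-local function involves only the pieces indexed by sets meeting $\{o(e),t(e)\}$, this yields $\nabe(\sR f)=\nabe(\sR f_e)$.

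\emph{Conclusion.} Every $\La\in N_e$ is nonempty, so $f^*_\La(\star)=0$ because $f^*_\La\in C_\La(S^X_*)$; therefore $f_e(\star)=0$ and $f_e=f_e^*$ is already $*$-normalized. By ingredient~(i), $\sR(f_e)=f_e-E_\mu[f_e]$, and by ingredient~(ii), $\nabe(\sR f_e)=\nabe f_e$. Chaining the equalities, $\nabe(\sR f)=\nabe(\sR f_e)=\nabe f_e=\nabe f$ for every $e\in E$, which is precisely the asserted commutativity. The one point that must be spelled out with some care — though it is already implicit in Lemma~\ref{lem: UL} and in the cited results of \cite{BS21} — is the termwise description of $\nabe$ on a uniform function (pro-local or co-local), i.e.\ the interchange of $\nabe$ with the locally finite sums $\sum_{\La\in\sI_R}$ used twice above; once that is in hand, the remainder is bookkeeping, so I do not anticipate a genuine obstacle.
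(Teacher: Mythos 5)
Your proof is correct and takes essentially the same route as the paper: the paper reduces by linearity to a single exact-support summand $f^*_\La$, observes $\sR(f^*_\La)=f^*_\La-E_\mu[f^*_\La]$, and concludes because $\partial$ annihilates constants — exactly your ingredients (i) and (ii). Your edgewise localization to the finite sum $f_e$ merely makes explicit the interchange of $\nabe$ with the locally finite expansions that the paper's one-line linearity reduction leaves implicit, so there is no gap.
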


\begin{proof}
	Since $\partial$ is linear, and since any $f\in C^0_\unif(S^X_*)$ by Proposition \ref{prop: expansion2}
	may be expressed as a sum of $f^*_\La$
	for $\La\in\sI$, 
	it is sufficient to prove our assertion for $f^*_\La\in C(S^\La)$.
	By definition of $\sR$, we have
	\[
		\sR(f^*_\La)=\sum_{\La''\subset\La,\La''\neq\emptyset}(f^*_\La)_{\La''}
		=f^*_\La-E_\mu[f^*_\La].
	\]
	Our assertion now follows from the fact that the differential $\partial$ of a constant is \emph{zero}.
\end{proof}

Lemma \ref{lem: WD} assures that for any uniform function $f\in C^0_\unif(S^X)$ or $C^0_\unif(S^X_\mu)$,
the differential $\partial f$ is well-defined in $C^1_\unif(S^X)$ and does not depend on the representative of $f$.

\begin{definition}
	We define the space of \emph{closed uniform form} by
	\[
		Z^1_\unif(S^X)\coloneqq Z^1_\col(S^X_\mu)\cap C^1_\unif(S^X).
	\]
\end{definition}

%
\subsection{Norms on Functions and Forms}
%
In this subsection, we define norms on the spaces of co-local functions and forms.
Recall that $\mu=\nu^{\otimes X}$ is the product measure on $S^X$.
For any measurable function $f\in C(S^X_\mu)$, we let
\begin{equation}\label{eq: norm}
	\dabs{f}_\mu^2\coloneqq E_\mu[f^2]=\int_{S^X}f^2d\mu.
\end{equation}

\begin{definition}
	The space of $L^2$-functions $L^2(\mu)$ is given as the quotient
	\[
		L^2(\mu)\coloneqq\{f\in C(S^X_\mu)\mid\dabs{f}_\mu<\infty\}/
		\{f\in C(S^X_{\mu})\mid\dabs{f}_\mu=0\}
	\]
	of the space of measurable functions on $S^X$ bounded for the norm $\dabs{\cdot}_\mu$ divided 
	by the subspace of measurable functions with norm \emph{zero}.
\end{definition}

The space $L^2(\mu)$ is a Hilbert space for the inner product 
\begin{equation}\label{eq: inner}
	\pair{f,g}_\mu\coloneqq E_\mu[fg].
\end{equation}
In particular, $L^2(\mu)$ is complete for the topology given by the norm $\dabs{\cdot}_\mu$.
Moreover, any function $f\in L^2(\mu)$ is integrable for the measure $\mu$.
The conditional expectation $\pi^\La$ satisfies the following properties.
\begin{lemma}\label{lem: 1}
	For any $\La\in\sI$,
	the homomorphism $\pi^\La$ is an orthogonal projection of $L^2(\mu)$ to $C(S^\La)$
	with respect to the inner product \eqref{eq: inner}.
\end{lemma}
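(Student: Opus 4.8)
The plan is to verify the three defining properties of an orthogonal projection onto a closed subspace: idempotency, self-adjointness, and identification of the image with $C(S^\La)$. Since $S$ is finite and $\La\in\sI$ is finite, $S^\La$ is a finite set, so every function on it is bounded and $\cF_\La$-measurable; hence $C(S^\La)$ sits inside $L^2(\mu)$ as a finite-dimensional, hence closed, subspace, and a class in $L^2(\mu)$ is $\cF_\La$-measurable if and only if it has a representative in $C(S^\La)$. Recall also that on $L^2(\mu)$ the map $\pi^\La$ is the conditional expectation $E_\mu[\,\cdot\mid\cF_\La]$, which is well-defined because every $f\in L^2(\mu)$ is $\mu$-integrable.

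First I would record that $\pi^\La f=f$ for every $f\in C(S^\La)$, since such an $f$ is already $\cF_\La$-measurable; combined with the inclusion $\Im\pi^\La\subset C(S^\La)$ this gives $\Im\pi^\La=C(S^\La)$ and $\pi^\La\circ\pi^\La=\pi^\La$. Next I would prove self-adjointness: for $f,g\in L^2(\mu)$ the function $\pi^\La f$ is $\cF_\La$-measurable and bounded (being a function on the finite set $S^\La$), so the defining property of the conditional expectation yields
\[
	\pair{\pi^\La f,g}_\mu=E_\mu\bigl[(\pi^\La f)\,g\bigr]=E_\mu\bigl[(\pi^\La f)\,E_\mu[g\mid\cF_\La]\bigr]=E_\mu\bigl[(\pi^\La f)(\pi^\La g)\bigr],
\]
and the last expression is symmetric in $f$ and $g$, so $\pair{\pi^\La f,g}_\mu=\pair{f,\pi^\La g}_\mu$. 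Boundedness of $\pi^\La$ as an operator on $L^2(\mu)$ then follows from the conditional Jensen inequality $(\pi^\La f)^2\le\pi^\La(f^2)$, which upon integration gives $\dabs{\pi^\La f}_\mu\le\dabs{f}_\mu$.

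Finally I would conclude: idempotency, self-adjointness, and $\Im\pi^\La=C(S^\La)$ together identify $\pi^\La$ as the orthogonal projection of $L^2(\mu)$ onto $C(S^\La)$. Concretely, for $g\in L^2(\mu)$ orthogonal to $C(S^\La)$ the displayed identity applied with $f=g$ gives $\dabs{\pi^\La g}_\mu^2=\pair{\pi^\La g,\pi^\La g}_\mu=\pair{g,\pi^\La g}_\mu=0$, since $\pi^\La g\in C(S^\La)$; hence $\pi^\La$ annihilates the orthogonal complement of $C(S^\La)$ and fixes $C(S^\La)$ pointwise, which is exactly what it means to be the orthogonal projection onto $C(S^\La)$. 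I do not expect a genuine obstacle here; the only points needing a little care are the measure-theoretic identification of $\cF_\La$-measurable $L^2$-classes with $C(S^\La)$ (immediate since $S$ is finite) and the use of the conditional-expectation identity $E_\mu[(\pi^\La f)g]=E_\mu[(\pi^\La f)\pi^\La g]$ for a $g$ known only to lie in $L^2(\mu)$, which is legitimate precisely because $\pi^\La f$ is bounded.
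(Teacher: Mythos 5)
Your proof is correct and is essentially the argument the paper has in mind: the paper simply states that the assertion follows directly from the definition of the conditional expectation and defers the standard verification to \cite{BS21}*{Lemma 2.1}, whereas you spell out that verification (idempotency, self-adjointness via the tower/pull-out property, conditional Jensen for boundedness, and identification of the image with the finite-dimensional, hence closed, subspace $C(S^\La)$). No gaps; the points you flag as needing care (boundedness of $\pi^\La f$ and the identification of $\cF_\La$-measurable classes with $C(S^\La)$) are handled correctly using the finiteness of $S$ and $\La$.
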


\begin{proof}
	The assertions follow directly from the definition of the conditional expectation.
	Detailed proofs are given in \cite{BS21}*{Lemma 2.1}.
\end{proof}

For any $f\in L^2(\mu)$, the system $(f^\La)_{\La\in\sI}$ for $f^\La\coloneqq\pi^\La f$ defines
an element in $ C_\col(S^X_\mu)$.  This shows that we have a homomorphism $L^2(\mu)\rightarrow  C_\col(S^X_\mu)$.
The martingale convergence theorem for $L^2$-bounded martingales gives the following.

\begin{theorem}
	Let
	\[
		C_{L^2}(S^X_\mu)
		\coloneqq\{ (f^\La)_{\La\in\sI}\in  C_\col(S^X_\mu)\mid\sup_{\La\in\sI}\dabs{f^\La}_\mu<\infty\}.
	\]
	Then $C_{L^2}(S^X_\mu)$ coincides with the image of $L^2(\mu)$ in $ C_\col(S^X_\mu)$.
\end{theorem}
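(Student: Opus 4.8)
The plan is to show the two inclusions. The easier direction is that the image of $L^2(\mu)$ lies in $C_{L^2}(S^X_\mu)$: for $f\in L^2(\mu)$, Lemma \ref{lem: 1} says each $\pi^\La$ is an orthogonal projection, so $\dabs{f^\La}_\mu = \dabs{\pi^\La f}_\mu \leq \dabs{f}_\mu$ for every $\La\in\sI$, whence $\sup_{\La\in\sI}\dabs{f^\La}_\mu \leq \dabs{f}_\mu < \infty$ and $(f^\La)_{\La\in\sI}\in C_{L^2}(S^X_\mu)$. This also records that the map $L^2(\mu)\to C_\col(S^X_\mu)$ is norm-nonincreasing onto its image.

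For the reverse inclusion, I would take $(f^\La)_{\La\in\sI}\in C_{L^2}(S^X_\mu)$ with $M\coloneqq\sup_{\La\in\sI}\dabs{f^\La}_\mu<\infty$, and produce an actual $f\in L^2(\mu)$ with $\pi^\La f = f^\La$ for all $\La$. The key point is that $(f^\La)$ is a martingale (this is the defining co-local compatibility $\pi^\La f^{\La'} = f^\La$ for $\La\subset\La'$) which is $L^2$-bounded by hypothesis. First I would fix an increasing exhausting sequence $\La_1\subset\La_2\subset\cdots$ of finite subsets with $\bigcup_n\La_n = X$ (possible since $X$ is countable — it is a locally finite connected graph). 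Then $(f^{\La_n})_{n}$ is an $L^2$-bounded martingale for the filtration $(\cF_{\La_n})_n$, so by the $L^2$ martingale convergence theorem it converges both $\mu$-a.s.\ and in $L^2(\mu)$ to some limit $f\in L^2(\mu)$, with $\pi^{\La_n}f = f^{\La_n}$ for every $n$ (the $L^2$ convergence passes through the bounded operator $\pi^{\La_n}$, using that $\pi^{\La_n}f^{\La_m} = f^{\La_n}$ for $m\geq n$). It remains to check that $\pi^\La f = f^\La$ for an \emph{arbitrary} $\La\in\sI$, not just the ones in the chosen exhaustion: given $\La$, pick $n$ with $\La\subset\La_n$, and then the tower property gives $\pi^\La f = \pi^\La(\pi^{\La_n}f) = \pi^\La f^{\La_n} = f^\La$ by co-local compatibility. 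Hence $(f^\La)_{\La\in\sI}$ is the image of $f\in L^2(\mu)$, completing the proof.

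The main obstacle — really the only subtlety — is the passage from the sequential (countable) martingale convergence theorem, which applies to the chosen exhaustion $(\La_n)$, to a statement about the full uncountable directed system indexed by all of $\sI$. This is handled cleanly by the tower property as above: the limit $f$ obtained along one cofinal sequence automatically has the correct conditional expectations onto every finite $\La$, because every such $\La$ is eventually contained in the exhaustion and conditional expectations compose. One should also note that the limit $f$ does not depend on the choice of exhausting sequence (any two give functions with the same $\pi^\La$-images for all $\La$, hence are $\mu$-a.s.\ equal since the $\cF_{\La_n}$ generate $\cF_X$), so the map $C_{L^2}(S^X_\mu)\to L^2(\mu)$ is well-defined and is a genuine inverse to $L^2(\mu)\to C_{L^2}(S^X_\mu)$ on the level of the quotient $L^2(\mu)$. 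Everything else is routine.
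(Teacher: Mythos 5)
Your proof is correct and follows essentially the same route as the paper, which states the result as a consequence of the martingale convergence theorem for $L^2$-bounded martingales (deferring details to \cite{BS21}); your use of an exhausting sequence $\La_n\uparrow X$, $L^2$ martingale convergence along it, and the tower property to recover $\pi^\La f=f^\La$ for arbitrary $\La\in\sI$ is exactly the standard argument being invoked there.
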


\begin{proof}
	See \cite{BS21}*{Theorem 5.2} and  \cite{BS21}*{Corollary 5.2} for a detailed proof.
\end{proof}

In what follows, we will identify $L^2(\mu)$ with $C_{L^2}(S^X_\mu)$.
We next introduce norms on the space of forms $C^1_\col(S^X_\mu)$.
We let $r=(r_e)_{e\in E}$ be a weight as in Definition \ref{def: weight}.
For any $e\in E$, we define a weighted $L^2$-norm for $f$ in $C(S^X_{\mu})$ as follows.
\begin{equation}\label{eq: norm2}
	\dabs{f}_{r_e}^2\coloneqq E_\mu[r_e f^2]=\int_{S^X}r_e f^2d\mu.
\end{equation}
If the weight $r=(r_e)$ is the \emph{trivial weight} satisfying 
$r_e\equiv1$ for any $e\in E$, then the norm $\dabs{\cdot}_{r_e}$
coincides with the norm $\dabs{\cdot}_\mu$ of \eqref{eq: norm}.

\begin{definition}
	We define the space of $L^2$-functions $L^2(\mu)_{r_e}$ to be the quotient
	\[
		L^2(\mu)_{r_e}\coloneqq\{f\in C(S^X_\mu)\mid\dabs{f}_{r_e}<\infty\}/\{f\in C(S^X_{\mu})\mid\dabs{f}_{r_e}=0\}.
	\]
\end{definition}

The space $L^2(\mu)_{r_e}$ is a Hilbert space for the inner product 
\begin{equation}\label{eq: inner2}
	\pair{f,g}_{r_e}\coloneqq E_\mu[r_efg].
\end{equation}
Since $r_e$ is a positive local function, it is bounded from above and below by some positive constants.  Hence the norms $\dabs{\cdot}_{r_e}$ 
and $\dabs{\cdot}_\mu$ are equivalent,
and we have $L^2(\mu)_{r_e}=L^2(\mu)$.
Again using the inclusion $C^1_\col(S^X_\mu)\hookrightarrow\prod_{e\in E} C_\col(S^X_\mu)$
of \eqref{eq: co-local inclusion}, 
we may define the space of $L^2$-forms as follows.
\begin{definition}
	We define the \emph{space of $L^2$-forms} $C^1_{L^2}(S^X_\mu)$ by
	\begin{equation}\label{eq: LF1}
		C^1_{L^2}(S^X_\mu)\coloneqq C^1_\col(S^X_\mu)\cap\Bigl(\prod_{e\in E}L^2(\mu)_{r_e}\Bigr),
	\end{equation}
	which is independent from the choice of the weight $r=(r_e)$.
\end{definition}

The following constants will be used in the calculation of the bound of norms.

\begin{definition}\label{def: TB}
	Let $r=(r_e)$ be a weight for the system $((X,E),(S,\phi))$.  
	  \begin{enumerate}\renewcommand{\labelenumi}{(\alph{enumi})}
		\item For any $e\in E$, we define the \emph{weight bound}
 		to be the infimum $M_{r_e}$ of constants $M\geq 1$ satisfying
		\begin{equation}\label{eq: RB}
	 	M^{-1} \leq r_e(\eta) \leq M
		\end{equation}
		for any $\e\in S^X$.
		Such $M_{r_e}$ exists since $r_e$ is a positive
		local function. 
	
		\item For any $e\in E$, 
		we define the \emph{transition bound}
 		to be the infimum $A_{r_e}$ of constants $A\geq 1$ satisfying
		\begin{equation}\label{eq: TB}
	 		A^{-1}\leq\frac{(\pi^\La r_e)(\e)\mu(\e)}{(\pi^\La r_{\bar e})(\e^e)\mu(\e^e)}\leq A
		\end{equation}
		for any $\La\in \sI$ such that $e\in E_\La$ and $\e\in S^\La$.
		Such $A_{r_e}$ exists since $r_e$ and $r_{\bar e}$ are
		local functions. 
	\end{enumerate}
\end{definition}
\begin{remark}\label{rem: TB}
	\begin{enumerate}\renewcommand{\labelenumi}{(\alph{enumi})}
		\item If the weight $r=(r_e)$ is the trivial weight 
		given by $r_e\equiv 1$ for any $e\in E$, then $A_{r_e}=C_{\phi,\nu}$ where $C_{\phi,\nu}$ is the constant defined in Definition \ref{def: MB}.
		\item For any weight $r=(r_e)$ and $e \in E$, 
		we see from the definition that $A_{r_e} \le M_{r_e}M_{r_{\bar e}} C_{\phi,\nu}$.
	\end{enumerate}
\end{remark}

Let $\La\in\sI$ and $(\La,E_\La)$ be the corresponding finite locale.
For any $f\in C(S^\La)$ and $e\in E_\La$, recall that $\nabe f(\e)=f(\e^e)-f(\e)$ for any $\e\in S^\La$.
We may prove the following.

\begin{lemma}\label{lem: MPL0}
	Let $\La\in\sI$ and $e\in E_\La$.
	Then for any $f\in C(S^\La)$, we have
	\[
		A_{r_{\bar e}}^{-1}\dabs{\nabla_{\!\bar{e}} f}_{r_{\bar e}}^2\leq \dabs{\nabe f}^2_{r_e}\leq 
		A_{r_e}\dabs{\nabla_{\!\bar{e}} f}^2_{r_{\bar e}}.
	\]
\end{lemma}

\begin{proof}
	We have
	\begin{align*}
		\dabs{\nabe f}^2_{r_e}&=\sum_{\e\in S^\La} (f(\e^e)-f(\e))^2(\pi^\La r_e)(\e)\mu(\e)=\sum_{\substack{\e\in S^\La\\\e^e\neq \e}} (f(\e^e)-f(\e))^2 (\pi^\La r_e)(\e)\mu(\e)
		\\&=\sum_{\substack{\e\in S^\La\\\e^e\neq (\e^e)^{\bar e}, \e^e \neq \e}} (f(\e^e)-f((\e^e)^{\bar e}))^2 (\pi^\La r_e) (\e)\mu(\e)\\
		& = \sum_{\substack{\e'\in S^\La\\\e'\neq (\e')^{\bar e}}} (f(\e')-f((\e')^{\bar e}))^2 (\pi^\La r_e) ((\e')^{\bar e})\mu((\e')^{\bar e})\\
		&\leq A_{r_e}\sum_{\e'\in S^\La} (f(\e')-f((\e')^{\bar e}))^2(\pi^\La r_{\bar e})(\e') \mu(\e')
		=A_{r_e}\dabs{\nabla_{\bar e} f}^2_{r_{\bar e}}.
	\end{align*}
	Here, we use the fact that the map $\e \to \e^e$ is the bijection from $\{\e \in S^{\La} ; \e^e \neq \e\}$ to $\{ \e' \in S^{\La} ; \e' \neq (\e')^{\bar{e}} \}$. This gives the second inequality of the statement.  The first inequality follows by replacing $e$ by $\bar e$.
\end{proof}

We may prove the continuity of $\nabe$ as follows.

\begin{lemma}\label{lem: bound}
	For any $\La\in\sI$, $f\in C(S^\La)$ and $e\in E_\La$,  we have
	\[
		\dabs{\nabe f}^2_{r_e}\leq 4M_{r_e}C_{\phi,\nu}\dabs{f}^2_\mu.
	\]
	In particular, $\nabe$ gives a continuous map from $L^2(\mu)$ to $L^2(\mu)_{r_e}$.
\end{lemma}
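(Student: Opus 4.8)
The plan is to expand $\dabs{\nabe f}^2_{r_e} = E_\mu[r_e(\nabe f)^2]$ directly, bound $r_e$ pointwise by $M_{r_e}$ using the rate bound, and then control $E_\mu[(\nabe f)^2] = E_\mu[(f(\s^e)-f(\s))^2]$ by the standard device of splitting the square and using a change of variables on the configuration space.

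First I would write, using $r_e < M_{r_e}$ pointwise,
\[
	\dabs{\nabe f}^2_{r_e} = \sum_{\s\in S^\La} (f(\s^e)-f(\s))^2 \,\pi^\La r_e(\s)\,\mu(\s)
	\leq M_{r_e}\sum_{\s\in S^\La} (f(\s^e)-f(\s))^2 \mu(\s),
\]
so it suffices to prove $\sum_{\s} (f(\s^e)-f(\s))^2 \mu(\s) \leq 4 C_{\phi,\nu} \dabs{f}^2_\mu$. I would then use the elementary inequality $(a-b)^2 \leq 2a^2 + 2b^2$ to split into $2\sum_\s f(\s^e)^2\mu(\s) + 2\sum_\s f(\s)^2 \mu(\s)$. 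The second sum is exactly $2\dabs{f}^2_\mu$. For the first sum, the point is that the map $\s \mapsto \s^e$ is \emph{not} measure-preserving, but the Radon–Nikodym factor is controlled: on the part where $\s^e = \s$ it contributes at most $\dabs{f}^2_\mu$, and on the part where $\s^e \neq \s$ the map is an involution on that set (by Lemma \ref{lem: symmetric}), so reindexing by $\s' = \s^e$ gives $\sum_{\s' : (\s')^{\bar e}\neq \s'} f(\s')^2 \mu((\s')^{\bar e})$, and since $\mu = \nu^{\otimes X}$ and $\s'$ and $(\s')^{\bar e}$ differ only at the two vertices of $e$, the ratio $\mu((\s')^{\bar e})/\mu(\s')$ is bounded by $C_{\phi,\nu}$ by Definition \ref{def: MB}. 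Hence $\sum_\s f(\s^e)^2\mu(\s) \leq C_{\phi,\nu}\dabs{f}^2_\mu$, and combining gives $\sum_\s(f(\s^e)-f(\s))^2\mu(\s) \leq (2C_{\phi,\nu}+2)\dabs{f}^2_\mu \leq 4C_{\phi,\nu}\dabs{f}^2_\mu$ since $C_{\phi,\nu}\geq 1$. Multiplying by $M_{r_e}$ yields the claimed bound.

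For the final sentence, continuity of $\nabe\colon L^2(\mu)\to L^2(\mu)_{r_e}$ follows immediately: the inequality just proved holds for all $f\in C(S^\La)$ for every finite $\La$, hence on the dense subspace $C_\loc(S^X)$ of $L^2(\mu)$, and since $\nabe$ is linear and bounded there with operator norm at most $2\sqrt{M_{r_e}C_{\phi,\nu}}$, it extends uniquely to a bounded (hence continuous) operator on all of $L^2(\mu)$; the extension agrees with the already-defined $\nabe$ on co-local functions by Proposition \ref{prop: compatible}.

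I do not expect any serious obstacle here; the only mild subtlety is the bookkeeping in the change of variables, namely correctly restricting to the subset $\{\s : \s^e\neq\s\}$ on which $\s\mapsto\s^e$ is a genuine involution, and observing that the discarded part $\{\s : \s^e = \s\}$ contributes a term bounded by $\dabs{f}^2_\mu$ on its own — this mirrors exactly the manipulation already carried out in the proof of Lemma \ref{lem: MPL0}, so the argument is essentially a variant of that computation with $r_{\bar e}$ replaced by the constant rate and the transition bound replaced by $C_{\phi,\nu}$.
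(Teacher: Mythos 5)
Your overall route is the paper's: bound $\pi^\La r_e$ by $M_{r_e}$, split the square, and transfer $\mu(\s)$ to $\mu(\s^e)$ at the cost of $C_{\phi,\nu}$ via the change of variables $\s\mapsto\s^e$. However, one intermediate claim is false as stated, and with it you lose the stated constant. You assert $\sum_{\s}f(\s^e)^2\mu(\s)\le C_{\phi,\nu}\dabs{f}^2_\mu$ by treating $\{\s^e=\s\}$ and $\{\s^e\neq\s\}$ separately; but the fixed set of $e$ and the image set $\{\s^{\bar e}\neq\s\}$ of your reindexing need not be disjoint, because for an interaction in this paper's sense $\s^e=\s$ does not force $\s^{\bar e}=\s$ (the axiom $\bar\phi\circ\phi=\id$ is imposed only off the fixed points of $\phi$; the case analysis in Lemma \ref{lem: exists} explicitly allows $\s^e=\s$ with $\s^{\bar e}\neq\s$). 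Concretely, take $S=\{0,1\}$, $\phi(0,1)=(1,0)$ and $\phi$ the identity on the other three pairs, $\nu$ uniform, $\La=\{o(e),t(e)\}$, and $f$ the indicator of the configuration $(1,0)$: then $\sum_\s f(\s^e)^2\mu(\s)=\tfrac12$ while $C_{\phi,\nu}\dabs{f}^2_\mu=\tfrac14$. Adding the two contributions correctly only gives $(1+C_{\phi,\nu})\dabs{f}^2_\mu$ for that sum, hence $(2C_{\phi,\nu}+4)M_{r_e}\dabs{f}^2_\mu$ overall, which is weaker than the claimed $4M_{r_e}C_{\phi,\nu}\dabs{f}^2_\mu$ whenever $C_{\phi,\nu}<2$ (the continuity conclusion of course survives with any constant).

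The fix is exactly the paper's move, which you gesture at in your closing paragraph but do not carry out in the computation: since $(f(\s^e)-f(\s))^2=0$ on $\{\s\colon\s^e=\s\}$, restrict the whole sum to $\{\s^e\neq\s\}$ \emph{before} applying $(a-b)^2\le 2a^2+2b^2$. The map $\s\mapsto\s^e$ is then a bijection from $\{\s^e\neq\s\}$ onto $\{\s^{\bar e}\neq\s\}$ with inverse $(\cdot)^{\bar e}$ (not an involution of that set unless the interaction is symmetric), so the reindexed sum is at most $C_{\phi,\nu}\dabs{f}^2_\mu$ and the remaining sum at most $\dabs{f}^2_\mu$, giving $2M_{r_e}(C_{\phi,\nu}+1)\le 4M_{r_e}C_{\phi,\nu}$ since $C_{\phi,\nu}\ge1$. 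With that single adjustment your argument coincides with the paper's proof.
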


\begin{proof}
	By definition, we have
	\begin{align*}
		\dabs{\nabe f}^2_{r_e}&=\sum_{\substack{\e\in S^\La\\\e^e\neq\e}} 
		\bigl(f(\e^e)-f(\e)\bigr)^2\pi^\La r_e(\e)\mu(\e)
		\leq M_{r_e}\sum_{\substack{\e\in S^\La\\\e^e\neq\e}} 
		\bigl(f(\e^e)-f(\e)\bigr)^2\mu(\e) \\
		&\leq 2M_{r_e} \sum_{\substack{\e\in S^\La\\\e^e\neq\e}}\bigl(f(\e^e)^2\mu(\e)+f(\e)^2\mu(\e)\bigr)\\
	     &\leq 2M_{r_e}C_{\phi,\nu} \sum_{\substack{\e\in S^\La\\\e^e\neq\e}}\bigl(f(\e^e)^2\mu(\e^e)+f(\e)^2\mu(\e)\bigr)\\
		&=  2M_{r_e}C_{\phi,\nu}\Bigl(\sum_{\substack{\e\in S^\La\\\e^{\bar e}\neq\e}} f(\e)^2\mu(\e)+\sum_{\substack{\e\in S^\La\\\e^e\neq\e}}f(\e)^2\mu(\e)\Bigr)
		\leq 4M_{r_e}C_{\phi,\nu}\dabs{f}^2_\mu,
	\end{align*}
	where the first inequality follows from the definition of the weight bound $M_{r_e}$
	given in Definition \ref{def: TB}, the second inequality follows from Schwartz inequality, and the third inequality follows from the definition of $C_{\phi,\nu}$ given in Definition \ref{def: MB}.
\end{proof}

Next, we introduce the notion of the \emph{spectral gap},
which plays an important role in the assumption for our main theorem.
Let  $(\La,E_\La)$ be a finite locale, with interaction $(S,\phi)$
and a product measure $\mu=\nu^{\otimes\La}$.
For any $\La\in\sI$, we denote by $\dabs{\cdot}_{\La}$ the norm on
\[
	A^0(S^\La)\coloneqq C(S^\La)/\Ker\partial_\La
\]
induced from the norm $\dabs{\cdot}_\mu$ on $C(S^\La)$ as $\dabs{f}_{\La}=\inf_{g \in \Ker\partial_\La} \dabs{f-g}_\mu$.
In general, $\dabs{h}_\La\leq\dabs{h}_\mu$, and we have $\dabs{h}_\La=\dabs{h}_\mu$
if and only if $h\in(\Ker\partial_\La)^\perp$, where $(\Ker\partial_\La)^\perp$
is the orthogonal complement of $\Ker\partial_\La$ for the inner product \eqref{eq: inner}.

To define the spectral gap, we first give a simple lemma.

\begin{lemma}\label{lem: fsg}
	For any finite locale $(\La, E_{\Lambda})$, there exists a constant $C >0$ which 
	depends on the locale $(\La, E_{\La})$ and the weight $r=(r_e)$ such that for any $f\in A^0(S^{\Lambda})$,
	\[
		\dabs{f}^2_\La \le C \sum_{e \in E_{\La}}\dabs{\nabe f}^2_{r_e}.
	\]
\end{lemma}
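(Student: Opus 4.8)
The plan is to read the right-hand side as a nonnegative quadratic form on the finite-dimensional space $C(S^\La)$ whose kernel is exactly $\Ker\partial_\La$; it then descends to a positive-definite quadratic form on $K^0(S^\La)$, and the inequality becomes the standard fact that on a finite-dimensional space a positive-definite quadratic form dominates a constant multiple of any norm squared.

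Concretely, set $Q(f)\coloneqq\sum_{e\in E_\La}\dabs{\nabe f}^2_{r_e}$ for $f\in C(S^\La)$. Since $S$ and $\La$ are finite, $C(S^\La)$ is finite-dimensional and $Q$ is a nonnegative quadratic form on it. The key point is the identity $\Ker Q=\Ker\partial_\La$: if $\partial_\La f=0$, then $\nabe f=0$ for every $e\in E_\La$ and hence $Q(f)=0$; conversely, if $Q(f)=0$, then each term $E_\mu[r_e(\nabe f)^2]$ vanishes, and because $r_e>0$ everywhere and $\mu=\nu^{\otimes\La}$ has full support on the finite set $S^\La$, this forces $\nabe f(\s)=0$ for all $\s\in S^\La$ and all $e\in E_\La$, i.e.\ $\partial_\La f=0$. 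Moreover $Q(f)$ depends on $f$ only through the family $(\nabe f)_{e\in E_\La}$, so it is constant on cosets of $\Ker\partial_\La$ and induces a well-defined quadratic form $\bar Q$ on $K^0(S^\La)=C(S^\La)/\Ker\partial_\La$, which by the computation above is positive definite.

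It remains to compare $\bar Q$ with $\dabs{\cdot}^2_\La$ on the finite-dimensional space $K^0(S^\La)$. The unit sphere $\{h\in K^0(S^\La)\mid\dabs{h}_\La=1\}$ is compact, and $\bar Q$ is continuous and strictly positive there, so its minimum $m$ on this sphere is strictly positive; setting $C\coloneqq m^{-1}$ and using homogeneity gives $\dabs{h}^2_\La\le C\,\bar Q(h)$ for every $h\in K^0(S^\La)$. Taking $h$ to be the class of $f$ and recalling $\bar Q(h)=Q(f)=\sum_{e\in E_\La}\dabs{\nabe f}^2_{r_e}$ yields the assertion, with $C$ depending only on the finite graph $(\La,E_\La)$, the measure $\nu$, and the rate $r$, as required.

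I do not expect a genuine obstacle: the statement is finite-dimensional and the argument is compactness together with elementary linear algebra. The only point requiring care is the identity $\Ker Q=\Ker\partial_\La$, which rests precisely on positivity of the rate (Definition \ref{def: rate}) and full support of $\mu$. Equivalently one may argue componentwise: $(S^\La,\Phi_\La)$ is a finite graph by Lemma \ref{lem: symmetric}, $\Ker\partial_\La$ is spanned by the indicator functions of its connected components, and on each component the restriction of $Q$ to functions modulo constants is a positive-definite Dirichlet form, yielding a Poincar\'e inequality; summing over the finitely many components recovers the claim, though the compactness formulation packages this more uniformly.
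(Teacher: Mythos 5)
Your proof is correct and follows essentially the same route as the paper: the paper likewise reduces the claim to the strict positivity of the infimum of $f\mapsto\sum_{e\in E_\La}\dabs{\nabe f}^2_{r_e}$ over the unit sphere, working on $(\Ker\partial_\La)^\perp$ as a concrete model of $K^0(S^\La)$ rather than on the quotient itself, and invoking finite-dimensionality plus non-degeneracy exactly as you do. Your explicit verification that $\Ker Q=\Ker\partial_\La$ (via positivity of $r_e$ and full support of $\mu$) is merely the detail the paper leaves implicit in the phrase ``does not degenerate''.
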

\begin{proof}
Since $(\Ker\partial_\La)^\perp$ is a finite dimensional space and the quadratic form $f \mapsto \sum_{e \in E_{\La}}\dabs{\nabe f}^2_{r_e}$ does not degenerate on $(\Ker\partial_\La)^\perp$, we have
\[
 \inf_{f \in (\Ker\partial_\La)^\perp, \dabs{f}^2_\mu=1} \sum_{e \in E_{\La}}\dabs{\nabe f}^2_{r_e} >0.
\]
This implies the assertion.
\end{proof}

To state our assumption for the main theorem, we only need to introduce the notion of the spectral gap associated to the trivial weight $r_e \equiv 1$ as follows. 

\begin{definition}\label{def: fsg}
	For an interaction $(S,\phi)$, a finite locale $(\Lambda, E_{\Lambda})$ and a probability measure $\nu$ on $S$ supported on $S$, we define the spectral gap $C_{SG, (\Lambda, E_{\Lambda})}$ (which also depend on $(S,\phi)$ and $\nu$) to be the maximum of $C>0$ satisfying 
	\[
		\dabs{f}^2_\La \le C^{-1} \sum_{e \in E_{\La}}\dabs{\nabe f}^2_{\mu}
	\]
	for any $f\in A^0(S^{\La})$ where $\mu=\nu^{\otimes \La}$. 
\end{definition}

Finally, we introduce a norm on the space of $L^2$-forms $C^1_{L^2}(S^X_\mu)$.

\begin{definition}\label{def: r}
	We define the norm $\dabs{\cdot}_{r,\sp}$ to be the norm on $C^1_{L^2}(S^X_\mu)$ defined by
	\[
		\dabs{\omega}_{r,\sp}\coloneqq\sup_{e\in E}\dabs{\omega_e}_{r_e},
	\]
	and we let
	$
		C^1_{L^2}(S^X_\mu)_{r,\sp}
	$
	be the subspace of $C^1_{L^2}(S^X_\mu)$ given as
	\[
		C^1_{L^2}(S^X_\mu)_{r,\sp}\coloneqq\{\omega\in  C^1_{L^2}(S^X_\mu)\mid \dabs{\omega}_{r,\sp}<\infty\}.
	\]
	Furthermore, we let $Z^1_{L^2}(S^X_\mu)_{r,\sp}\coloneqq C^1_{L^2}(S^X_\mu)_{r,\sp} \cap Z^1_\col(S^X_\mu)$.
\end{definition}

We remark that although $C^1_\unif(S^X)\subset C^1_{L^2}(S^X_\mu)$, we have in general
\[
	C^1_\unif(S^X)\not\subset C^1_{L^2}(S^X_\mu)_{r,\sp}.
\]
Certain homogeneity condition such as shift-invariance of the weight as well as the form is 
necessary for an uniform form to be an element of  $C^1_{L^2}(S^X_\mu)_{r,\sp}$.
We will simply denote $\dabs{\cdot}_{r,\sp}$ by $\dabs{\cdot}_\sp$, if the weight $r$
is the trivial weight such that $r_e\equiv 1$ for any $e\in E$.

%
%
%
\section{The Main Theorem}
%
%
%

In this section, we introduce the norms on shift-invariant forms, and state our main theorem.

%
%
\subsection{Closed and Exact Shift-Invariant $L^2$-Forms}\label{subsec: CE}
%
%

In this subsection, in order to formulate our main theorem, we introduce the space 
of closed and exact  shift-invariant $L^2$-forms on $S^X$.
We will also introduce the notion of a uniformly bounded spectral gap.

In this subsection, we assume that the locale $(X,E)$ has an action of a group $G$.
An action of $G$ on $(X,E)$ gives for any $\tau\in G$ a bijection $\tau\colon X\rightarrow X$ 
inducing a bijection $\tau\colon E\rightarrow E$ on the set of directed edges,
such that the identity element of $G$ induces the identity map of $X$, 
and the operation of elements of $G$ maps to the composition of bijections.
In the rest of the paper, we always assume that the action of $G$ is free, and that the set $X/G$ of orbits 
of $X$ with respect to the action of $G$ is a finite set.
In this case,  the set $E/G$ of orbits 
of $E$ with respect to the action of $G$ is also a finite set.
The most typical example of such locale is given by the \emph{Euclidean lattice} $(\Z^d,\E^d)$,
with action of $G=\Z^d$ given by translation.  A locale which is a \emph{topological crystal}
in the sense of \cite{Sun13}*{\S 6.3}, also referred to as a \emph{crystal lattice}, satisfies this condition for some finitely generated free abelian group $G$. 

The action of $G$ on $(X,E)$ induces an action of $G$ on
$S^X$ given by mapping $\e=(\eta_x)_{x\in X}\in\prod_{x\in X}S$ to 
$\tau(\e)\coloneqq(\eta_{\tau^{-1}(x)})_{x\in X}$ for any $\tau\in G$.
For any subset $\La\subset X$, the element $\tau\in G$ induces 
an $\R$-linear isomorphism
\begin{equation}\label{eq: function}
	\tau\colon C\bigl(S^\La\bigr)\xrightarrow\cong C\bigl(S^{\tau(\La)}\bigr)
\end{equation}
given by
$
	\tau(f)(\e)=f(\tau^{-1}(\e))
$
for any $f\in C\bigl(S^\La\bigr)$ and $\e\in S^{\tau(\La)}$.  
Furthermore, the action of $G$ defines a map of graphs 
$\tau\colon (S^\La,\Phi_\La)\rightarrow (S^{\tau(\La)},\Phi_{\tau(\La)})$,
which induces an $\R$-linear isomorphism
\[
	\tau\colon C^1(S^\La)\xrightarrow\cong C^1(S^{\tau(\La)}),
\] 
given by $\tau(\omega)(\vp)\coloneqq\omega(\tau^{-1}(\vp))$ for any $\omega\in C^1(S^\La)$ and
$\vp\in\Phi_\La$.
Consider the image of $\omega$ in $\prod_{e\in E_{\La}}C(S^\La)$ through the embedding
\eqref{eq: inclusion}.
For any $\tau\in G$, we have
\[
	\tau(\omega)_e(\e)=\tau(\omega)(\vp)=\omega(\tau^{-1}(\vp))=\omega_{\tau^{-1}(e)}(\tau^{-1}(\e))
	=\tau(\omega_{\tau^{-1}(e)})(\e),
\]
where $\vp=(\e,\e^e)\in\Phi_\La$.
This shows that we have $\tau(\omega)_e=\tau(\omega_{\tau^{-1}(e)})$ for any $e\in E_{\La}$.

Since $\mu=\nu^{\otimes X}$ on $S^X$,
we have
$\mu(A)=\mu(\tau(A))$ for any $A\in\cF_X$ and $\tau\in G$.
In other words, $\mu$ is invariant with respect to the action of $G$.
The action of $G$ is compatible with the projection $\pi^\La$,
hence we have an action of $G$ on $C_\col(S^X_\mu)$ and $C^1_\col(S^X_\mu)$.
Since $\mu$ is invariant with respect to the action of $G$, we have
\[
	\dabs{f}_\mu=\dabs{\tau(f)}_\mu
\]
for any $f\in C_{L^2}(S^X_\mu)$ and $\tau\in G$, hence $G$ also acts on $L^2(\mu)$.

Next, we consider weights which are invariant with respect to the action of $G$.

\begin{definition}
	We say that a weight $r=(r_e)_{e\in E}$ is \emph{invariant with respect to the action of $G$},
	if
	\[
		r_e=\tau(r)_e=\tau(r_{\tau^{-1}(e)}) 
	\]
	for any $\tau\in G$ and $e\in E$.
\end{definition}

Let $\dabs{\cdot}_{r,\sp}$ be the norm on $C^1_{L^2}(S^X_\mu)$ defined in Definition \ref{def: r},
given by
\[
	\dabs{\omega}_{r,\sp}=\sup_{e\in E}\dabs{\omega_e}_{r_e}.
\]
If the weight $r=(r_e)$ is invariant with respect to the action of $G$, then for any $\tau\in G$, we have
\begin{align*}
	\dabs{\tau(\omega)_e}^2_{r_e}&=\dabs{\tau(\omega_{\tau^{-1}(e)})}^2_{r_e}
	=E_\mu[r_e\tau(\omega_{\tau^{-1}(e)})^2]
	=E_\mu[\tau(r_{\tau^{-1}(e)}\omega_{\tau^{-1}(e)}^2)]\\
	&=\dabs{\omega_{\tau^{-1}(e)}}^2_{r_{\tau^{-1}(e)}},
\end{align*}
where the last equality follows from the fact that $\mu$ is invariant with respect to the action of $G$.
This shows that $\dabs{\omega}_{r,\sp}<\infty$ if and only if $\dabs{\tau(\omega)}_{r,\sp}<\infty$,
hence $G$ also acts on the $\R$-linear space $C^1_{L^2}(S^X_\mu)_{r,\sp}\subset C^1_{L^2}(S^X_\mu)$
of Definition \ref{def: r}.

For any $\R$-linear space $V$ with an action of $G$, we let 
$V^G\coloneqq\{ v\in V\mid \tau(v)=v\,\forall\tau\in G\}$ be the shift-invariant subspace of $V$.

\begin{lemma}\label{lem: same}
If a weight $r=(r_e)$ is invariant with respect to the action of $G$, then
	the inclusion $C^1_{L^2}(S^X_\mu)_{r,\sp}\subset C^1_{L^2}(S^X_\mu)$ induces the identity
	\[
		C^1_{L^2}(S^X_\mu)_{r,\sp}^G= C^1_{L^2}(S^X_\mu)^G
	\]
	on the shift-invariant parts.  Moreover, the norms induced by $\dabs{\cdot}_{r,\sp}$ on 
	$C^1_{L^2}(S^X_\mu)^G$ are equivalent for any weight $r=(r_e)$ which is invariant with respect
	to the action of $G$.
\end{lemma}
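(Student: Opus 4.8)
The plan is to exploit the already-established pointwise equivalence of the weighted norms $\dabs{\cdot}_{r_e}$ and $\dabs{\cdot}_\mu$, combined with the fact that shift-invariance forces a form to be determined by its finitely many components over representatives of $E/G$. Concretely, the finiteness of $E/G$ and the $G$-invariance of the rate mean that, for a shift-invariant form, the supremum $\sup_{e\in E}\dabs{\omega_e}_{r_e}$ is actually a maximum over a finite set of orbit representatives $e_1,\ldots,e_m$, since $\dabs{\omega_e}_{r_e}=\dabs{\omega_{\tau^{-1}(e)}}_{r_{\tau^{-1}(e)}}$ as computed just above the statement. So the finiteness of the norm is equivalent to the finiteness of each $\dabs{\omega_{e_k}}_{r_{e_k}}$, $k=1,\ldots,m$.

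First I would prove the identity $C^1_{L^2}(S^X_\mu)_{r,\sp}^G = C^1_{L^2}(S^X_\mu)^G$. Take $\omega\in C^1_{L^2}(S^X_\mu)^G$; each component $\omega_e$ lies in $L^2(\mu)_{r_e}=L^2(\mu)$, so $\dabs{\omega_{e_k}}_{r_{e_k}}<\infty$ for each of the finitely many orbit representatives $e_k$. Since every $e\in E$ is $\tau(e_k)$ for some $\tau\in G$ and some $k$, the shift-invariance computation preceding the lemma gives $\dabs{\omega_e}_{r_e}=\dabs{\omega_{e_k}}_{r_{e_k}}$, whence $\dabs{\omega}_{r,\sp}=\max_k\dabs{\omega_{e_k}}_{r_{e_k}}<\infty$. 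Thus $\omega\in C^1_{L^2}(S^X_\mu)_{r,\sp}^G$, and the reverse inclusion is trivial from the definition.

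Next I would address the equivalence of the norms induced by $\dabs{\cdot}_{r,\sp}$ on $C^1_{L^2}(S^X_\mu)^G$ for two $G$-invariant rates $r=(r_e)$ and $r'=(r'_e)$. Because each $r_e$ and $r'_e$ is a local function, Definition \ref{def: TB}(a) gives rate bounds $M_{r_{e_k}}$ and $M_{r'_{e_k}}$, so on each orbit representative $M_{r_{e_k}}^{-1}M_{r'_{e_k}}^{-1}\dabs{f}_{r'_{e_k}}^2 \le \dabs{f}_{r_{e_k}}^2 \le M_{r_{e_k}} M_{r'_{e_k}}\dabs{f}_{r'_{e_k}}^2$ for any $f$. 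Setting $C\coloneqq\max_{k}M_{r_{e_k}}M_{r'_{e_k}}$ — a maximum over a finite set by finiteness of $E/G$ — and using that for shift-invariant forms the $\sp$-norm is the max of the norms of the representative components, I get $C^{-1}\dabs{\omega}_{r',\sp}^2\le\dabs{\omega}_{r,\sp}^2\le C\dabs{\omega}_{r',\sp}^2$ for all $\omega\in C^1_{L^2}(S^X_\mu)^G$, which is the claimed equivalence.

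The only mild subtlety — and the step I would be most careful about — is justifying that for a shift-invariant form the genuine supremum over the infinite set $E$ collapses to a maximum over orbit representatives; this is exactly where freeness of the action and the $G$-invariance of the rate enter, via the identity $\dabs{\tau(\omega)_e}_{r_e}=\dabs{\omega_{\tau^{-1}(e)}}_{r_{\tau^{-1}(e)}}$ established immediately before the lemma applied to $\tau(\omega)=\omega$. Everything else is a routine application of the local-function rate bounds and the finiteness of $E/G$; there is no real analytic obstacle here.
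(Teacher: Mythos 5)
Your proposal is correct and follows essentially the same route as the paper: shift-invariance of $\omega$ together with $G$-invariance of the rate and of $\mu$ collapses the supremum $\sup_{e\in E}\dabs{\omega_e}_{r_e}$ to a maximum over the finitely many orbit representatives in $E/G$, which gives both the identity of the invariant subspaces and the norm equivalence. The only cosmetic difference is that you compare two invariant rates directly via the rate bounds $M_{r_e}$, whereas the paper compares each $\dabs{\cdot}_{r_e}$ to $\dabs{\cdot}_\mu$; these are the same estimate.
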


\begin{proof}
	Let $\omega=(\omega_e)_{e\in E}\in C^1_{L^2}(S^X_\mu)^G$ be any shift-invariant $L^2$-form.
	For any $\tau\in G$, consider $\vp=(\e,\e^e)\in\Phi_X$.
	Since
	\[
		\tau(\omega)_e(\e)=\tau(\omega)(\vp)=\omega(\tau^{-1}(\vp))=\omega_{\tau^{-1}(e)}(\tau^{-1}(\e))
		=\tau(\omega_{\tau^{-1}(e)})(\e),
	\]
	the shift-invariance of $\omega$ shows that $\omega_e=\tau(\omega_{\tau^{-1}(e)})$ for any $e\in E$.
	Then we see that
	\[
		\dabs{\omega_{\tau^{-1}(e)}}^2_{r_{\tau^{-1}(e)}}=E_\mu[r_{\tau^{-1}(e)}(\omega_{\tau^{-1}(e)})^2]
		=E_\mu[\tau(r_{\tau^{-1}(e)}(\omega_{\tau^{-1}(e)})^2)]=\dabs{\omega_e}^2_{r_e},
	\]
	where the center equality follows from the fact that the probability measure
	$\mu$ is invariant with respect to the action of $G$.  This shows that
	\[
		\dabs{\omega}_{r,\sp}=\sup_{e\in E}\dabs{\omega}_{r_e}=\sup_{e\in E_0}\dabs{\omega}_{r_e},
	\]
	where $E_0$ is a set of representatives of the orbits of $E$ with respect to the action of $G$.
	Since $E_0$ is finite, we see that $\omega\in C^1_{L^2}(S^X)_{r,\sp}^G$ as desired.
	The equivalence of norms follows from the fact that the norms $\dabs{\cdot}_{r_e}$ 
	are equivalent to the norm $\dabs{\cdot}_{\mu}$ on $L^2(\mu)$ for any $e\in E_0$.
\end{proof}

If we let $E_0$ be a set of representatives of the orbits of $E$ with respect to the action of $G$,
then the projection to the $E_0$-component induces an injection
\[
	C^1_{L^2}(S^X_\mu)^G\hookrightarrow\prod_{e\in E_0}L^2(\mu)_{r_e}.
\]
The inner product $\pair{\cdot,\cdot}_{r_e}$ on $L^2(\mu)_{r_e}$ of \eqref{eq: inner2}
induces via linearity an inner product $\pair{\cdot,\cdot}_{r}$ on the product,
which in turn induces an inner product on $C^1_{L^2}(S^X_\mu)^G$.
We let $\dabs{\cdot}_{r}$ be the norm on $C^1_{L^2}(S^X_\mu)^G$
induced from the inner product $\pair{\cdot,\cdot}_{r}$, given as
\[
	\dabs{\omega}^2_{r}\coloneqq\pair{\omega,\omega}_{r}=\sum_{e\in E_0}\pair{\omega_e,\omega_e}_{r_e}=\sum_{e\in E_0}\dabs{\omega_e}_{r_e}^2
\]
for any $\omega\in C^1_{L^2}(S^X_\mu)^G$.  
Since $\omega$ is invariant with respect to the action of $G$,
the inner products $\pair{\cdot,\cdot}_{r}$ and the induced norm $\dabs{\cdot}_{r}$
is independent of the choice of $E_0$.
The relation
\[
	\dabs{\omega}^2_{r,\sp} \leq \dabs{\omega}^2_{r} \leq \abs{E_0}\dabs{\omega}^2_{r,\sp}
\]
shows that the norms $\dabs{\cdot}_{r}$ and $\dabs{\cdot}_{r,\sp}$ are equivalent on 
$C^1_{L^2}(S^X_\mu)^G$, hence induces the same topology.
Note that since this topology is the topology of a finite sum of Hilbert spaces $L^2(\mu)_{r_e}$,
for a sequence $\{\omega_n\}_{n\in\N}$ in $C^1_{L^2}(S^X_\mu)^G$, we have
\[
	\omega=\lim_{n\rightarrow\infty}\omega_n
\]
if and only if $\displaystyle\omega_e=\lim_{n\rightarrow\infty}\omega_{n,e}$ in $L^2(\mu)_{r_e}$
for any $e\in E$.

Next, we consider the compatibility of the action of the group with the differential.

\begin{lemma}\label{lem: differential}
	The action of $G$ is compatible with the 
	differential 
	\[
		\partial_\La\colon C(S^\La)\rightarrow C^1(S^\La),
	\]
	namely $\partial_{\tau \La}( \tau f )= \tau ( \partial_{\La} f)$ for any $f \in C(S^\La)$ and $\tau \in G$.
\end{lemma}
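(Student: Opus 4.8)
The plan is to unwind the definitions on both sides and verify that they agree pointwise on $\Phi_{\tau\La}$. First I would recall that the action of $G$ on $(X,E)$ furnishes, for each $\tau\in G$, a map of graphs $\tau\colon(S^\La,\Phi_\La)\rightarrow(S^{\tau\La},\Phi_{\tau\La})$, as noted in the excerpt. By definition a map of graphs intertwines the origin and terminus maps, so $o(\tau(\vp))=\tau(o(\vp))$ and $t(\tau(\vp))=\tau(t(\vp))$ for any $\vp\in\Phi_\La$; equivalently $o(\tau^{-1}(\vp))=\tau^{-1}(o(\vp))$ and $t(\tau^{-1}(\vp))=\tau^{-1}(t(\vp))$ for any $\vp\in\Phi_{\tau\La}$. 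This equivariance of $o$ and $t$ is the only structural input the proof needs.

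Next, fixing $f\in C(S^\La)$ and $\vp\in\Phi_{\tau\La}$ (so that $\tau^{-1}(\vp)\in\Phi_\La$), I would compute, using the definition of $\partial_{\tau\La}$ and of the isomorphism $\tau\colon C(S^\La)\xrightarrow{\cong}C(S^{\tau\La})$,
\[
	\partial_{\tau\La}(\tau f)(\vp)=(\tau f)(t(\vp))-(\tau f)(o(\vp))=f\bigl(\tau^{-1}(t(\vp))\bigr)-f\bigl(\tau^{-1}(o(\vp))\bigr).
\]
Applying the equivariance of $o$ and $t$ recalled above, the right-hand side equals $f\bigl(t(\tau^{-1}(\vp))\bigr)-f\bigl(o(\tau^{-1}(\vp))\bigr)=\partial_\La f\bigl(\tau^{-1}(\vp)\bigr)$, which by the definition of the action of $\tau$ on $C^1(S^\La)$ is precisely $\tau(\partial_\La f)(\vp)$. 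Since $\vp\in\Phi_{\tau\La}$ was arbitrary, this yields $\partial_{\tau\La}(\tau f)=\tau(\partial_\La f)$ in $C^1(S^{\tau\La})$, as claimed.

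I do not expect any genuine obstacle here: the lemma is a direct consequence of the equivariance of the origin and terminus maps that is already built into the definition of the $G$-action on the configuration space with transition structure. The only point requiring care is the bookkeeping of which space each object inhabits — $f$ lives on $S^\La$ while $\tau f$ lives on $S^{\tau\La}$, and correspondingly $\partial_\La f\in C^1(S^\La)$ while $\partial_{\tau\La}(\tau f)\in C^1(S^{\tau\La})$ — but the computation above keeps this explicit throughout.
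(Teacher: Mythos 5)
Your proof is correct and follows essentially the same route as the paper, which simply cites \cite{BS21}*{Lemma 4.1} and notes that the claim "follows from the definition of the action of $G$"; your computation is the natural unwinding of that definition, with the equivariance of $o$ and $t$ coming from the fact that $\tau$ is a map of graphs $(S^\La,\Phi_\La)\rightarrow(S^{\tau\La},\Phi_{\tau\La})$.
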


\begin{proof}
	This is \cite{BS21}*{Lemma 4.1}, and follows from the definition of the action of $G$.
\end{proof}

Since the action of $G$ is compatible with the projection $\pi^\La$, the differential
\[
	\partial\colon C^0_\col(S^X_\mu)\rightarrow C^1_\col(S^X_\mu)
\]
is also compatible with the action of $G$.
By Proposition \ref{prop: SES},  the space of closed co-local forms
$Z^1_\col(S^X_\mu)$ coincides with the image of $\partial$, hence
we see that $Z^1_\col(S^X_\mu)$ also has an action of $G$.
This induces an action of $G$ on $Z^1_{L^2}(S^X_\mu)=Z^1_\col(S^X_\mu)\cap C^1_{L^2}(S^X_\mu)$.

\begin{definition}\label{def: CE}
	We denote the space of
	\emph{shift-invariant closed $L^2$-forms} by
	\[
		\sC_{\mu}\coloneqq Z^1_{L^2}(S^X_\mu)^G,
	\] 
	and the space of \emph{exact forms} by
	\[
		\sE_{\mu}\coloneqq\ol{\partial(C^0_\unif(S^X)^G)},
	\] 
	where the bar indicates the closure of 
	$\partial(C^0_\unif(S^X)^G)$ with respect to the topology of $C^1_{L^2}(S^X_\mu)^G$.
\end{definition}

Our main theorem concerns the decomposition of forms
in $\sC_{\mu}$ as a sum of an exact form in $\sE_{\mu}$ and certain forms obtained as 
differentials of uniform functions. For this, we first prove that $\sE_\mu\subset\sC_\mu$.

\begin{lemma}\label{lem: closed}
	The space of \emph{shift-invariant closed $L^2$-forms} $\sC_\mu$
	is closed in $\prod_{e\in E/G}L^2(\mu)$.
	In particular, $\sC_\mu$ is a Hilbert space equipped with the inner product induced from $\prod_{e\in E/G}L^2(\mu)$ and we have $\sE_\mu\subset\sC_\mu$.
\end{lemma}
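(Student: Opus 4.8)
The plan is to realize $\sC_\mu$ as an intersection of closed \emph{linear} conditions inside the Hilbert space $\prod_{e\in E/G}L^2(\mu)$, deduce that it is closed (hence a Hilbert space for the induced inner product), and then check by hand that the differential of every shift-invariant uniform function meets all of those conditions, so that $\partial(C^0_\unif(S^X)^G)$ — and therefore its closure $\sE_\mu$ — is contained in $\sC_\mu$.

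First I would reduce to pointwise statements on finite configuration spaces. Fix a set $E_0$ of representatives for $E/G$; since the $G$-action on $X$, hence on $E$, is free, a shift-invariant form is determined by and freely reconstructed from its components along $E_0$ via $\omega_e\coloneqq\tau(\omega_{\tau^{-1}(e)})$ for the unique $\tau$ with $\tau^{-1}(e)\in E_0$. Suppose $\omega^{(n)}\to\omega$ in $\prod_{e\in E/G}L^2(\mu)$ with each $\omega^{(n)}\in\sC_\mu$. Convergence in the product is componentwise, and since $\mu=\nu^{\otimes X}$ is $G$-invariant each $\tau\in G$ acts isometrically on $L^2(\mu)$, so $\omega^{(n)}_e\to\omega_e$ in $L^2(\mu)$ for every $e\in E$, and in particular $\omega\in\prod_{e\in E}L^2(\mu)_{r_e}$ by completeness. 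Because $\pi^\La$ is the orthogonal projection onto $C(S^\La)$ (Lemma \ref{lem: 1}) it is continuous, so $\pi^\La\omega^{(n)}_e\to\pi^\La\omega_e$ in $L^2(\mu)$; and since $C(S^\La)$ is finite-dimensional and $\dabs{\cdot}_\mu$ restricts to a genuine norm on it ($\mu$ having full support), this convergence is pointwise: $\pi^\La\omega^{(n)}_e(\s)\to\pi^\La\omega_e(\s)$ for all $\La\in\sI$, $e\in E_\La$ and $\s\in S^\La$.

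Then I would observe that each condition cutting out $\sC_\mu$ is the vanishing of a fixed finite $\R$-linear combination of values $\pi^\La\omega_e(\s)$: the alternating and consistency relations describing the image of $C^1(S^\La)$ in $\prod_{e\in E_\La}C(S^\La)$ under \eqref{eq: inclusion}; the relations $\int_{\vec\gamma}\omega=0$ over closed paths $\vec\gamma$ in $(S^\La,\Phi_\La)$ that define $Z^1(S^\La)$; and the shift-invariance relations $\omega_e=\tau(\omega_{\tau^{-1}(e)})$. (The projective-limit compatibility $\pi^\La\omega^{\La'}=\omega^\La$ is automatic once each $\omega_e$ is identified with its system of conditional expectations.) All these relations survive the pointwise limit above, so $\omega\in Z^1_\col(S^X_\mu)\cap\bigl(\prod_{e\in E}L^2(\mu)_{r_e}\bigr)^G=Z^1_{L^2}(S^X_\mu)^G=\sC_\mu$. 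Hence $\sC_\mu$ is a closed subspace of the Hilbert space $\prod_{e\in E/G}L^2(\mu)$, so it is itself a Hilbert space for the induced inner product — namely the inner product $\pair{\cdot,\cdot}_r$ of \S\ref{subsec: CE}, the choice of $G$-invariant rate being irrelevant by Lemma \ref{lem: same}.

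Finally, to get $\sE_\mu\subset\sC_\mu$ it is enough, $\sC_\mu$ being closed, to show $\partial(C^0_\unif(S^X)^G)\subset\sC_\mu$: for $f\in C^0_\unif(S^X)^G$ we have $\partial f\in C^1_\unif(S^X)\subset C^1_\col(S^X_\mu)$ by Lemma \ref{lem: UL}, each component $\nabe f$ is a local function and so lies in $L^2(\mu)_{r_e}$, the $\La$-component $\partial_\La(\pi^\La f)$ lies in $Z^1(S^\La)$ by Proposition \ref{prop: compatible} together with \eqref{eq: SES}, and $\partial f$ is shift-invariant since $\partial$ is $G$-equivariant (Lemma \ref{lem: differential}) and $f$ is $G$-invariant; hence $\partial f\in Z^1_{L^2}(S^X_\mu)^G=\sC_\mu$, and taking closures finishes the proof. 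The only delicate point in this program is the passage, in the reduction step, from $L^2(\mu)$-convergence on the infinite configuration space $S^X$ to pointwise convergence of all the conditional expectations $\pi^\La\omega_e$ — which is exactly where the finiteness of $S$ (so that $\dim_\R C(S^\La)<\infty$) and the full support of $\mu$ enter; everything after that is the routine fact that a closed linear subspace is stable under limits, together with a bookkeeping check against the earlier lemmas.
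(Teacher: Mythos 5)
Your argument is correct, and its skeleton matches the paper's: take a convergent sequence, push it through the conditional expectations $\pi^\La$, and use finiteness of $S^\La$ to conclude that $\pi^\La\omega$ is still a closed form. Where you differ is in the key finite-$\La$ step. The paper does not argue via linear relations passing to pointwise limits; instead it uses the exactness $Z^1(S^\La)=\partial_\La\bigl(C^0(S^\La)\bigr)$ (the isomorphism \eqref{eq: SES}): it lifts $\pi^\La\omega_n$ to potentials $F^\La_n\in(\Ker\partial_\La)^\perp$, invokes Lemma \ref{lem: fsg} to see that $\{F^\La_n\}$ converges to some $F^\La_\infty$, and then uses continuity of $\partial_\La$ (Lemma \ref{lem: bound}) to get $\partial_\La F^\La_\infty=\pi^\La\omega\in Z^1(S^\La)$; this yields membership in $C^1(S^\La)$ and closedness in one stroke. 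Your route replaces this by the observation that membership in $\sC_\mu$ is cut out by finitely supported linear conditions (the alternating/consistency relations, the closed-path relations, shift-invariance) on the values $\pi^\La\omega_e(\s)$, which survive the pointwise convergence forced by $L^2$-convergence in the finite-dimensional spaces $C(S^\La)$ with $\mu$ of full support. Both are valid; yours is more elementary in that it needs neither Lemma \ref{lem: fsg} nor the surjectivity of $\partial_\La$, while the paper's potential-lifting argument reuses machinery ($\Ker\partial_\La$, Lemma \ref{lem: fsg}) that recurs elsewhere, e.g.\ in Lemma \ref{lem: DS}. You also spell out the verification $\partial\bigl(C^0_\unif(S^X)^G\bigr)\subset\sC_\mu$ behind the ``in particular'' clause, which the paper leaves implicit; that check (locality of $\nabe f$, Proposition \ref{prop: compatible} with \eqref{eq: SES}, and $G$-equivariance of $\partial$) is accurate.
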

\begin{proof}
	Suppose we have a sequence $\{\omega_n\}_{n\in\N}$ in $\sC_\mu=Z^1_{L^2}(S^X_\mu)^G$
	such that 
	\begin{equation}\label{eq: converge0}
		\lim_{n\rightarrow\infty}\omega_n=\omega
	\end{equation}
	for some element $\omega=(\omega_e)\in\prod_{e\in E}L^2(\mu)$.
	Since the measure $\mu$ is invariant with respect to the action of $G$,
	we see that $\omega$ is also invariant with respect to the action of $G$.
	In order to prove our assertion, by definition of closed co-local forms,
	it is sufficient to prove that $\pi^\La\omega\in Z^1(S^\La)$ for any $\La\in\sI$.
	The convergence \eqref{eq: converge0} is  equivalent to the condition that $\lim_{n\rightarrow\infty}\omega_{n,e}=\omega_e$
	in $L^2(\mu)$ for any $e\in E$, which implies that
	$\lim_{n\rightarrow\infty}\pi^\La\omega_{n,e}=\pi^\La\omega_e$ in $C(S^\La)$
	for any $e\in E$ and $\La\in\sI$.
	Since $\omega_n\in Z^1_{L^2}(S^X_\mu)^G$, for any $\La\in\sI$, we have
	$\pi^\La\omega_n\in Z^1(S^\La)$.
	We may take $F_n^\La\in(\Ker\partial_\La)^\perp$ such that $\partial_\La F_n^\La=\pi^\La\omega_n$.
	Lemma \ref{lem: fsg} and the fact that $\pi^\La\omega_n\rightarrow\pi^\La\omega$
	imply that $F_n^\La$ converges to some $F^\La_\infty$ in $(\Ker\partial_\La)^\perp$ .
	By the continuity of $\partial_\La$ which follows from Lemma \ref{lem: bound},
	we have $\partial_\La F^\La_\infty=\pi^\La\omega$.
	This shows that $\pi^\La\omega\in Z^1(S^\La)$ as desired. Now, to prove $\sE_\mu\subset\sC_\mu$, we only need to show that
	$ \partial(C^0_\unif(S^X)^G) \subset Z^1_{L^2}(S^X_\mu)^G$. Noting $ \partial C^0_\unif(S^X) \subset C^1_\unif(S^X) \cap Z^1_\col(S^X_\mu)$, the assertion follows.
\end{proof}

Next, we introduce the notion of the \emph{uniformly bounded spectral gap},
which is an important assumption for our main theorem.  We stress that we view the
condition of the spectral gap as a condition on the \emph{interaction}, and not a
condition of the underlying \emph{locale}.

We say that a locale $(\La,E_\La)$ is \emph{complete}, if it is complete
as a graph.  In other words, the set of edges satisfies $E_\La=(\La\times\La)\setminus\Delta_\La$,
where $\Delta_\La\coloneqq\{(x,x)\mid x\in \La\}$.
Since a locale by definition is locally finite, a complete locale is always a finite locale.

\begin{definition}\label{def: SG}
	We say that the data $((S,\phi),\nu)$ has a \emph{uniformly bounded spectral gap},
	if there exists a constant $C_{SG}>0$ such that for any complete locale $(\La,E_\La)$
	\[
	 C_{SG, (\La,E_{\La})} \geq C_{SG} |\La|,
	\]
	where $C_{SG, (\La,E_{\La})}$ is the spectral gap for $(\Lambda, E_{\Lambda})$ given in Definition \ref{def: fsg}.
\end{definition}

\begin{example}
	Below are examples of interactions that are known to have a \emph{uniformly bounded spectral gap}.
	\begin{enumerate}\renewcommand{\labelenumi}{(\alph{enumi})}
		\item Multi-species exclusion process introduced in Example \ref{example: interaction} with any probability measure $\nu$ supported on 
		$S$  has a uniformly bounded spectral gap, as shown in \cite{CLR10}.
		\item Generalized exclusion process introduced in Example \ref{example: interaction} with any probability measure $\nu$ supported on $S$ has a uniformly bounded spectral gap. The spectral gap estimate for a special choice of measure $\nu$ is given in \cite{KLO94, KL99}. For general $\nu$, it is shown in Section 3 of \cite{Ca04}. 
		\item More generally, Theorem 2.1 of \cite{Ca08} and Theorem 3 of \cite{Sas13} 
		 is applicable for our model. Hence, if the spectral gap for the corresponding simple averaging dynamics of $(S,\nu)$ (which is called a binary collision process in \cite{Ca08}) on the complete graph with $3$ sites is sufficiently large, 
		 then we can conclude that the interaction $((S,\phi), \nu)$ has a uniformly bounded spectral gap.
	\end{enumerate}
\end{example}

%
\subsection{The Statement of the Main Theorem}\label{subsec: MT}
%

In this subsection, we state our main theorem.
Consider a finite range Euclidean lattice $(\Z^d,E)$.
We fix an interaction $(S,\phi)$ 
which is irreducibly quantified in the sense of Definition \ref{def: FQ}.  We will also assume that $\mu$ is a product measure $\mu=\nu^{\otimes X}$ such that
$((S,\phi),\nu)$ has a uniformly bounded spectral gap in the sense
of Definition \ref{def: SG}.
We let $r=(r_e)_{e\in E}$ be a weight for the system $((\Z^d,E),(S,\phi))$
as introduced in Definition \ref{def: weight}.

By definition,
the finite range Euclidean lattice $(\Z^d,E)$ has a free action of the group $G=\Z^d$
acting via translation.  For any $x\in\Z^d$, we denote by $\tau_x\in G$ the translation by $x$,
given by $\tau_x(y)=x+y$ for any $y\in\Z^d$. 
Note that since $\mu$ is the product measure on $S^{\Z^d}$, it is 
invariant with respect to the action of $G$.
We assume that the rate is invariant with respect to 
the action of $G$, in other words that we have $r_e=\tau(r_{\tau^{-1}(e)})$ for any $e\in E$ and $\tau\in G$.

We let $\sC_\mu$ and $\sE_\mu$ be the space of closed and exact shift-invariant $L^2$-forms on $S^{\Z^d}$
given in Definition \ref{def: CE}.  In other words, we let $\sC_\mu\coloneqq Z^1_{L^2}(S^{\Z^d}_\mu)^G$
and 
\[
	\sE_\mu\coloneqq\overline{\partial(C^0_\unif(S^{\Z^d})^G)}\subset\sC_\mu,
\]
where the inclusion follows from Lemma \ref{lem: closed}.

\begin{lemma}\label{lem: tau}
	For any conserved quantity $\xi\in\C(S)$ and $j=1,\ldots, d$, let $\frA^j_\xi$ be the uniform function given by
	\begin{equation}\label{eq: A}
		\frA^j_\xi = \sum_{x\in \Z^d} x_j\xi_x, 	
	\end{equation}
	where $x=(x_1,\ldots,x_d)\in\Z^d$.  Then for any $y=(y_1,\ldots,y_d)\in\Z^d$,
	we have 
	\[
		(1-\tau_y)\frA^j_\xi=y_j\sum_{x\in\Z^d}\xi_x.
	\]
	In particular,	
	we have $\partial	\frA^j_\xi \in Z^1_\unif(S^{\Z^d})^G \subset \sC_{\mu}$.
\end{lemma}

\begin{proof}
	By definition, $\frA^j_\xi=\sum_{x\in \Z^d} x_j\xi_x$ is a uniform function with diameter $R=0$,
	hence
	\[
		\partial\frA^j_\xi\in Z^1_\unif(S^{\Z^d})=Z^1_\col(S^{\Z^d}_\mu)\cap C^1_\unif(S^{\Z^d})
	\] 
	by Lemma \ref{lem: UL}.
	For any $y=(y_1,\ldots,y_d)\in\Z^d$, we have
	\begin{align*}
		(1-\tau_y)\frA^j_\xi& =\sum_{x\in\Z^d} x_j\xi_x-\sum_{x\in\Z^d} x_j\xi_{x+y}
		=\sum_{x\in\Z^d} x_j\xi_x-\sum_{x\in\Z^d} (x_j-y_j)\xi_x
		=y_j\sum_{x\in\Z^d}\xi_x 
	\\&= y_j\xi_{\Z^d}
	\end{align*}
	for $j=1,\ldots, d$. Note that the infinite sums are all well-defined. Since $\partial\xi_{\Z^d}=0$, the compatibility of the group action with respect
	to the differential gives $(1-\tau_y)\partial \frA^j_\xi=0$.  Our assertion follows from the fact
	that any $\tau\in G$ is of the form $\tau=\tau_y$ for some $y\in\Z^d$.
\end{proof}

We let $c_\phi=\dim_\R\C(S)$, and we fix a basis $\xi^{(1)},\ldots,\xi^{(c_\phi)}$ of $\C(S)$.
Denote by 
\[
	\cV\coloneqq\Span_\R\{  \frA^j_{\xi^{(i)}}\mid i=1,\ldots,c_\phi, j=1,\ldots, d \}
\] 
the $\R$-linear subspace
of $C^0_\unif(S^{\Z^d})$ spanned by $\frA^j_{\xi^{(i)}}$ for $i=1,\ldots, c_\phi$ and $j=1,\ldots, d$.
Our main result, Varadhan's decomposition of shift-invariant closed $L^2$-forms, is given as follows.
\begin{theorem}\label{thm: main}
	Let $(\Z^d,E)$ be the lattice with action of $G=\Z^d$ by translation.
	We assume that $(S,\phi)$ is an interaction which is irreducibly quantified,
	and that $((S,\phi),\nu)$ has a uniformly bounded spectral gap in the sense of Definition \ref{def: SG}.  
	We assume in addition that $\phi$ is simple if $d=1$.
	Furthermore, we consider a weight $r=(r_e)_{e\in E}$ invariant with respect to the action of $G$.
	If we let $\sC_{\mu}$ and $\sE_{\mu}$ be the spaces of closed and exact shift-invariant $L^2$-forms,
	then we have a decomposition
	\[
		\sC_{\mu}\cong\sE_{\mu}\oplus\partial\cV
	\]
	where $\partial\cV$ is an $\R$-linear space of dimension $c_\phi d$ with basis $\partial\frA^j_{\xi^{(i)}}$
	for $i=1,\ldots,c_\phi$ and $j=1,\ldots,d$.
\end{theorem}

We remark that we do not need to assume that the weight is reversible for the measure $\mu$. Actually, we prove the result by reducing the proof to the trivial weight $r_{e} \equiv 1$. 
Theorem \ref{thm: main} for the case of the exclusion process
was proved in \cite{FUY96}*{Theorem 4.1}, for the generalized exclusion process
in \cite{KL99}*{Theorem 4.14} with a specific choice of $\nu$, for the two-species exclusion process with one-conserved quantity in \cite{Sas10}, and for the lattice gas with energy in \cite{N03}. Our result generalizes such results
to general irreducibly quantified interactions satisfying the spectral gap estimate.
In particular, our result holds for the case of the multi-species exclusion process
whose space of conserved quantities is multi-dimensional and its dimension can be arbitrary large.

We may recover \cref{thm: intro} from \cref{thm: main} as follows.
For any shift-invariant closed $L^2$-form $\omega=(\omega_e)_{e\in E}\in Z^1_{L^2}(S^{\Z^d}_\mu)^G=\sC_\mu$, \cref{thm: main} shows that we have a decomposition 
\[
	\omega=\omega'+\omega'', \qquad \omega'\in\sE_\mu, \quad\omega''\in\partial\cV.
\]
By definition of $\sE_\mu$, there exists a series of uniform functions $\{F_n\}_{n\in\N}$ in $C^0_\unif(S^{\Z^d})^G$
such that $\displaystyle\lim_{n\rightarrow\infty}\partial F_n=\omega'$ for the topology of $\sC_\mu$.
By \cite{BKS20}*{Lemma 5.15}, any $F_n\in C^0_\unif(S^X)^G$ is of the form $F_n=\sum_{\tau\in G}\tau(f_n)=\sum_{x\in\Z^d}\tau_x(f_n)$
for some local function $f_n$.  Furthermore, by definition of $\partial\cV$,
we see that
\[
	\omega''=\sum_{i=1}^{c_\phi}\sum_{j=1}^d a_{ij}\partial\frA^j_{\xi^{(i)}}
\]
for some $a_{ij}\in\R$ for $i=1,\ldots,c_\phi$ and $j=1,\ldots,d$.
This shows that we have
\begin{equation}\label{eq: total}
	\omega=\lim_{n\rightarrow\infty}\partial\Bigl(\sum_{x\in\Z^d}\tau_x(f_n)+\sum_{i=1}^{c_\phi}\sum_{j=1}^d a_{ij}\frA^j_{\xi^{(i)}}\Bigr).
\end{equation}
Equation \eqref{eq: MT} of \cref{thm: intro} now follows from the definition \eqref{eq: A} of $\frA^j_{\xi^{(i)}}$,
by taking the $e$-component of \eqref{eq: total} for any $e\in E$. The uniqueness of $\{a_{ij}\}$ follows from the fact that $\{ \partial\frA^j_{\xi^{(i)}} \}$ is a basis of 
$\partial\cV$.

Note by Lemma \ref{lem: same}, the topology hence the closure is independent 
of the choice of the various norms $\dabs{\cdot}_{r}$, $\dabs{\cdot}_{r,\sp}$
on $C^1_{L^2}(S^{\Z^d}_\mu)^G$.  Hence it is sufficient to prove the theorem for the case of trivial weight,
such that $r_e\equiv1$ for any $e\in E$.
The main strategy is to reduce the proof to the case of the Euclidean lattice $\Z^d=(\Z^d,\bbE^d)$.
The latter case will be proved roughly as follows.
Assume the conditions of Theorem \ref{thm: main}.
Given a closed shift-invariant $L^2$-form $\omega\in\sC_\mu=Z^1_{L^2}(S^{\Z^d}_\mu)^G$,
we follow the strategy originally due to Varadhan, and construct from $\omega$ certain sequence of shift-invariant
uniform functions $\{\Psi_n\}_{n\in\N}$ in $C^0_\unif(S^{\Z^d})^G$ 
such that we have
\[
	\lim_{n\rightarrow\infty}\partial\Psi_n = \omega + \omega^\dagger
\]
for some $\omega^\dagger\in\cC\coloneqq Z^1_{\unif}(S^{\Z^d})^G$.  
In the original strategy by Varadhan, the key point was to 
use the explicit description of the interaction $(S,\phi)$ and obtain explicit conditions which must be satisfied by $\omega^\dagger$ to show that $\omega^\dagger$ may be described in terms
of linear sums of the differential of $\frA^j_{\xi^{(i)}}$.   
Once we have this, if we let $\displaystyle \omega_\psi\coloneqq\lim_{n\rightarrow\infty}\partial\Psi_n$, then
this would show that $\omega$ splits as
\begin{equation}\label{eq: split}
	\omega =\omega_\psi + (-\omega^\dagger)  \in \sE_\mu\oplus\partial\cV.
\end{equation}
However, in our case of a general interaction, especially for such case of the multi-species
exclusion process when $c_\phi \gg 1$, such explicit calculation
are too much complicated to follow through via brute calculation. Also, these direct computations do not explain the reason why we have the differential of $\frA^j_{\xi^{(i)}}$ universally. 
Instead, we will use a general decomposition theorem
for $\cC$, which was proved in our previous article \cite{BKS20}.
We let $\cC= Z^1_\unif(S^{\Z^d})^G$ and $\cE\coloneqq\partial(C^0_\unif(S^{\Z^d})^G)$
be the spaces of closed and exact shift-invariant \emph{uniform forms}.  
The main result of \cite{BKS20} is the following.

\begin{theorem}[\cite{BKS20}*{Theorem 5}]\label{thm: previous}
	Let the assumptions be as in Theorem \ref{thm: main}.	
	If we let $\cC$ and $\cE$ be the spaces of closed and exact shift-invariant uniform forms,
	then we have a decomposition
	\[
		\cC\cong\cE\oplus\partial\cV,
	\]
	where $\partial\cV$ is an $\R$-linear space of dimension $c_\phi d$ with basis $\partial\frA^j_{\xi^{(i)}}$
	for $i=1,\ldots,c_\phi$ and $j=1,\ldots,d$.
\end{theorem}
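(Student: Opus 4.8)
The plan is to prove three things: $\partial\cH\subset\cC$ (which is Lemma~\ref{lem: tau}); the ``directness'' statement $\cE\cap\partial\cH=0$ together with $\dim\partial\cH=c_\phi d$ and the identification of the basis; and the surjectivity $\cC=\cE+\partial\cH$, which is the substantial point. Two finite-locale facts are used throughout. The first is \eqref{eq: SES}: a closed form on a connected finite locale has a primitive, unique modulo $\Ker\partial_\La$. The second is the description of $\Ker\partial_\La$: since $\phi$ is irreducibly quantified, two configurations of $S^\La$ lie in the same component of $(S^\La,\Phi_\La)$ exactly when the totals $\sum_{x\in\La}\xi^{(i)}(\eta_x)$ agree for all $i$, so $\Ker\partial_\La$ is precisely the set of functions of $\bigl(\sum_{x\in\La}\xi^{(i)}_x\bigr)_{i=1}^{c_\phi}$.

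The next step is the key algebraic computation governing both directness and dimension, namely
\[
	\Ker\bigl(\partial\colon C^0_\unif(S^X_*)\to C^1_\unif(S^X)\bigr)=\bigoplus_{i=1}^{c_\phi}\R\,\xi^{(i)}_X .
\]
To see this, $\partial g=0$ forces $g$ constant on components of $(S^X_*,\Phi^*_X)$; applying irreducible quantification to a box containing $\Supp\s\cup\Supp\s'$ shows these components are the level sets of $(\xi^{(i)}_X)_i$, so $g=P\bigl((\xi^{(i)}_X)_i\bigr)$. Computing the expansion of Proposition~\ref{prop: expansion2}, one finds that for a two-point set $\{x,y\}$ the component $g^*_{\{x,y\}}$ depends only on $\eta_x,\eta_y$ and equals $P(a+b)-P(a)-P(b)$ with $a=(\xi^{(i)}(\eta_x))_i$, $b=(\xi^{(i)}(\eta_y))_i$; uniformity of $g$ makes this vanish once $\diam{\{x,y\}}$ is large, hence for all $x,y$, so $P$ is additive, and an induction on $|\La|$ gives $g^*_\La=0$ for $|\La|\ge2$. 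Since $\{\xi^{(i)}\}$ is a basis of $\Con(S)$, additivity forces $P$ linear, which is the claim. Granting this: the $\frA^j_{\xi^{(i)}}$ are linearly independent (evaluate $\sum a_{ij}\frA^j_{\xi^{(i)}}$ on one-site configurations); $\partial$ is injective on $\cH$, because if $\sum a_{ij}\frA^j_{\xi^{(i)}}\in\bigoplus_i\R\,\xi^{(i)}_X$ then applying $1-\tau_y$ and using $(1-\tau_y)\frA^j_{\xi^{(i)}}=y_j\xi^{(i)}_X$ gives $\sum_j a_{ij}y_j=0$ for all $y$; hence $\dim\partial\cH=c_\phi d$ with the stated basis. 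The same argument applied to $\partial f=\sum a_{ij}\partial\frA^j_{\xi^{(i)}}$ with $f\in C^0_\unif(S^X)^G$ (using that $f$ and the $\xi^{(i)}_X$ are $G$-invariant) yields $a_{ij}=0$, so $\cE\cap\partial\cH=0$.

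For surjectivity, given $\omega\in\cC$ of diameter $R$, exhaust $\Z^d$ by cubes $\La_n$ and restrict, $\omega|_{\La_n}:=(\iota^{\La_n}\omega_e)_{e\in E_{\La_n}}$; this is a closed form on $S^{\La_n}$, so by \eqref{eq: SES} it has a primitive $F_n\in C^0(S^{\La_n})$, well-defined modulo $\Ker\partial_{\La_n}$. For $n<m$ one checks $\partial_{\La_n}(\iota^{\La_n}F_m)=\omega|_{\La_n}=\partial_{\La_n}F_n$, so $\iota^{\La_n}F_m-F_n$ lies in $\Ker\partial_{\La_n}$, i.e.\ is a function $g_{n,m}$ of the conserved quantities $(\xi^{(i)}_{\La_n})_i$; telescoping over scales, these assemble into a cocycle that measures the failure of the $F_n$ to glue into a pro-local primitive of $\omega$. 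Shift-invariance of $\omega$, compatible with $\partial_\La$ by Lemma~\ref{lem: differential}, together with the uniqueness-modulo-kernel of primitives, forces this cocycle to be \emph{affine-linear in the position}, its first-order part in the $j$-th direction being a combination $\sum_i a_{ij}\xi^{(i)}$ of conserved quantities; the precise statement one proves is that $\omega':=\omega-\sum_{i,j}a_{ij}\partial\frA^j_{\xi^{(i)}}$ has local primitives whose incompatibilities all vanish. Then $F':=\varinjlim F'_n$ is a well-defined pro-local function with $\partial F'=\omega'$, the diameter-$R$ bound on $\omega$ propagates to make $F'$ uniform, and since $\tau_yF'-F'\in\Ker\partial$ by shift-invariance of $\omega'$ we may absorb it into $F'$ without changing $\partial F'$, obtaining $f\in C^0_\unif(S^X)^G$ with $\partial f=\omega'$; hence $\omega=\partial f+\sum a_{ij}\partial\frA^j_{\xi^{(i)}}\in\cE+\partial\cH$.

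The main obstacle is the last paragraph's shift-invariance bookkeeping: extracting from the family $\{g_{n,m}\}$ the single linear-in-$y$ datum that pins down the $a_{ij}$, verifying the incompatibilities genuinely vanish after subtraction, and — most delicately — controlling the diameter of the glued primitive $F'$ so that it is uniform rather than merely pro-local. This is exactly where the full hypotheses are needed, including simplicity of $\phi$ when $d=1$: in that case the conserved-quantity values form a monoid isomorphic to $\N$ or $\Z$, which makes the one-dimensional bookkeeping tractable, while for $d\ge2$ the additional cube directions supply the room to carry out the argument.
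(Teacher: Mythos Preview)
This theorem is not proved in the present paper: it is quoted verbatim from \cite{BKS20}*{Theorem 5} and used as a black box in the proof of Theorem~\ref{thm: main}. So there is no ``paper's own proof'' to compare against here; the substantive argument lives entirely in the cited article.

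On the merits of your sketch: your computation of $\Ker\partial$ on $C^0_\unif(S^X_*)$ and the resulting directness $\cE\cap\partial\cH=0$ and $\dim\partial\cH=c_\phi d$ are correct and cleanly argued. The surjectivity paragraph, however, is not a proof but an outline of where a proof should go, and you say so yourself. The three points you flag --- extracting the $a_{ij}$ from the family $\{g_{n,m}\}$, showing the corrected primitive is genuinely \emph{uniform} (bounded diameter) rather than merely pro-local, and arranging $G$-invariance --- are precisely the content of \cite{BKS20}, and none of them is routine. In particular, the uniformity of $F'$ does not follow from the diameter bound on $\omega$ by any soft argument: a closed form being uniform of diameter $R$ does not a priori force its primitive to have controlled diameter, and establishing this is the technical core of that paper (carried out there by group-cohomological methods, with the simplicity assumption in $d=1$ entering in a specific way you do not indicate). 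Likewise, the claim that the obstruction cocycle is ``affine-linear in position'' is an assertion of the conclusion, not a mechanism for proving it. As written, the proposal correctly identifies the architecture of the argument but does not supply the load-bearing steps.
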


By Theorem \ref{thm: previous}, we see that
the closed form $\omega^\dagger\in\cC$ of \eqref{eq: split} may
be decomposed as
\[
	\omega^\dagger=\omega' + \omega^\ddagger\in\cE\oplus\partial\cV.
\]
Since $\cE\subset\sE_\mu$, we have $\omega_\psi-\omega'\in\sE_\mu$.  This would give a decomposition
\[
	\omega= (\omega_\psi-\omega')+ (-\omega^\ddagger)\in \sE_\mu
	\oplus\partial\cV
\]
as desired.  

The rest of this article is devoted to formalizing the above argument as follows. In the next section, we prove some key lemmas for our proof, which hold for any locale $(X,E)$. In Section \ref{sec: boundary}, we construct a sequence of shift-invariant
uniform functions $\{\Psi_n\}_{n\in\N}$ appeared in the above strategy for the case $(X,E)=(\Z^d,\mathbb{E}^d)$. Finally, in Section \ref{sec: proof}, we give a proof of \cref{thm: main}. More precisely, in Subsection \ref{subsec : Euclidean}, we prove \cref{thm: main} for the Euclidean lattice $(X,E)=(\Z^d,\mathbb{E}^d)$, and in Subsection \ref{subsec: PMT}, by reducing the problem for a general finite range Euclidean lattice $(\Z^d, E)$ to that for the Euclidean lattice $(\Z^d,\mathbb{E}^d)$, we give a proof of \cref{thm: main} in general.

\section{Key lemmas}\label{sec: key-lemmas}

In this section, we prove some general results concerning norms on spaces of local functions and forms, and also a criterion of locality. 
More precisely, assuming that the weight $r=(r_e)_{e\in E}$ is trivial,
we prove the \emph{Moving Particle Lemma} (Lemma \ref{lem: MPL})
and the \emph{Boundary Estimate} (Proposition \ref{prop: BE}), 
which gives the relation between certain differentials and operators on local functions.

The result in this section holds for any locale $(X,E)$, so we consider any general locale. We fix a data $((X,E),(S,\phi))$ and a probability measure $\mu=\nu^{\otimes X}$ on $S^X$,
where $\nu$ is a probability measure fully supported on $S$.
We assume that $\phi$ is \emph{irreducibly quantified} in the sense of Definition \ref{def: FQ}.

%
%
\subsection{Moving Particle Lemma}
%
%

In this subsection, we prove the \emph{Moving Particle Lemma} (Lemma \ref{lem: MPL}), which 
bounds the change of potential 
induced by interactions between distant vertices 
in terms of the change of potentials induced by interactions on adjacent vertices of the locale.
Let $(X,E)$ be a locale. For any $\e\in S^X$ and $x,y\in X$ such that $x\neq y$, 
consider the configuration
\[
	\e^{x\arr y}\in S^X
\]
whose components outside $x,y\in X$
coincides with that of $\e$, and 
$
	(\eta^{x\arr y}_x, \eta^{x\arr y}_y)\coloneqq\phi(\eta_x,\eta_y).
$
If $x=y$, then we let $\e^{x\arr y}\coloneqq\e$.
For any local function $f \in C_\loc(S^X)$ and $x,y\in X$, we let $f_{x\arr y}$ be the 
local function given by
$f_{x\arr y}(\e)\coloneqq f(\e^{x\arr y})$ for any $\e\in X$.
Furthermore, we let
\[
	\nabla_{\!x\arr y} f \coloneqq f_{x\arr y}-f.
\]
Note that by definition, $\nabla_{\!x\arr y}f=0$ if $x=y$.
The Moving Particle Lemma is the following.

\begin{lemma}[Moving Particle Lemma]\label{lem: MPL}
	There exists a constant $C_{M\!P}>0$ depending only on $((S,\phi), \nu)$ such that for any
	finite locale $(\La,E_\La)$, vertices $x,y\in \La$, and function $f\in C(S^\La)$,
	we have
	\[
		\dabs{\nabla_{\!x\arr y}f}^2_\mu\leq C_{M\!P}\diam{\La}^2\dabs{\partial_\La f}^2_\sp,
	\]
	where
	\[
		\dabs{\omega}_\sp\coloneqq\sup_{e\in E_\La}\dabs{\omega_e}_\mu
	\]
	for any $\omega\in C^1(S^\La)$, which corresponds to the norm $\dabs{\cdot}_{r,\sp}$ when $r$ is 
	the trivial rate.
\end{lemma}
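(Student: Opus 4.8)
The plan is to realize the change $\sigma\mapsto\sigma^{x\arr y}$ as a path in the configuration graph $(S^\Lambda,\Phi_\Lambda)$ that travels along a geodesic from $x$ to $y$, and then to telescope. First I would dispose of the case $x=y$, where both sides vanish, and fix a geodesic $z_0=x,z_1,\dots,z_k=y$ in $(\Lambda,E_\Lambda)$, so that $k=d_\Lambda(x,y)\le\diam{\Lambda}$, and set $e_i\coloneqq(z_{i-1},z_i)\in E_\Lambda$. Because a geodesic has $d_\Lambda(z_i,z_j)=|i-j|$, the only edges of $E_\Lambda$ with both endpoints in $\{z_0,\dots,z_k\}$ are $e_1,\dots,e_k$ and their reverses. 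Since $\phi$ preserves $\xi(\eta_x)+\xi(\eta_y)$ for every $\xi\in\Con(S)$, the configurations $\sigma$ and $\sigma^{x\arr y}$ agree off $\{x,y\}$ and carry the same conserved quantities; hence, applying \emph{irreducible quantification} to the path sublocale on $\{z_0,\dots,z_k\}$, there is a path from $\sigma$ to $\sigma^{x\arr y}$ in $(S^\Lambda,\Phi_\Lambda)$ consisting only of transitions along $e_1,\dots,e_k$ and their reverses.

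The technical heart is to choose such a path \emph{canonically} in $\sigma$ so that (i) each edge $e_i$ is used at most $m$ times, with $m=m(S,\phi)$ a constant, whence the path has length $N\le mk$; and (ii) every intermediate configuration differs from $\sigma$ in at most $m'=m'(S,\phi)$ coordinates. For exclusion-type interactions this is the classical ``bubble-sort'' path $z_0\to z_1\to\cdots\to z_k\to z_{k-1}\to\cdots\to z_0$ built from nearest-neighbour swaps, which uses each edge twice and keeps intermediate configurations within two coordinates of $\sigma$; for a general interaction one transports the bounded amount $|\xi(\eta^{x\arr y}_x)-\xi(\eta_x)|$ of conserved quantity (bounded because $S$ is finite) from $x$ to $y$ by a sequence of ``chain pushes'' along the geodesic, pushing the content of a momentarily blocked intermediate vertex forward as part of the same chain so that the intermediate configurations stay within boundedly many coordinates of $\sigma$. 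This is where the finiteness of $S$ and irreducible quantification really enter, and I would either invoke the corresponding construction from \cite{BKS20} or carry it out directly, being careful with the degenerate cases.

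Granting such a path, write $\nabla_{\!x\arr y}f(\sigma)=\sum_{j=1}^{N}\nabla_{\!e^{(j)}}f(\sigma^{(j-1)})$, where $e^{(j)}\in\{e_1,\dots,e_k\}$ is the edge traversed at step $j$ and $\sigma=\sigma^{(0)},\dots,\sigma^{(N)}=\sigma^{x\arr y}$ are the intermediate configurations. Grouping the sum by which geodesic edge is used and applying Cauchy--Schwarz twice — once over the $k$ distinct edges and once over the at most $m$ uses of each — gives $(\nabla_{\!x\arr y}f(\sigma))^2\le mk\sum_{j=1}^{N}(\nabla_{\!e^{(j)}}f(\sigma^{(j-1)}))^2$. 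Integrating against $\mu$ and regrouping the right-hand side over transition events $(\zeta,e)$ reduces the estimate to the \emph{congestion bound}
\[
	\sum_{\sigma}\mu(\sigma)\,\#\{\,j:\ \sigma^{(j-1)}=\zeta,\ e^{(j)}=e\,\}\ \le\ C_2\,\mu(\zeta)
	\qquad(\zeta\in S^\Lambda,\ e\in\{e_1,\dots,e_k\}),
\]
with $C_2=C_2(S,\phi,\nu)$: each count is at most $m$ by (i), and by (ii) the contributing $\sigma$ differ from $\zeta$ in at most $m'$ coordinates, so there are at most $|S|^{m'}$ of them, each with $\mu(\sigma)\le C_{\phi,\nu}^{m'}\mu(\zeta)$ since $\mu=\nu^{\otimes\Lambda}$ with $\nu$ supported on $S$. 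Feeding this back, $\sum_\zeta\mu(\zeta)(\nabla_{\!e}f(\zeta))^2=\dabs{\nabla_{\!e}f}_\mu^2\le\dabs{\partial_\Lambda f}_\sp^2$ for each of the $k$ geodesic edges, so $\dabs{\nabla_{\!x\arr y}f}_\mu^2\le mk\cdot C_2k\,\dabs{\partial_\Lambda f}_\sp^2\le C_{M\!P}\diam{\Lambda}^2\dabs{\partial_\Lambda f}_\sp^2$ with $C_{M\!P}\coloneqq mC_2$, which depends only on $((S,\phi),\nu)$. The main obstacle is exactly properties (i) and (ii): producing, for a \emph{general} interaction and uniformly in $\Lambda$, a canonical realizing path along the geodesic with bounded edge-multiplicity whose intermediate configurations stay within boundedly many coordinates of the source; everything after that — the two Cauchy--Schwarz steps and the congestion count — is routine bookkeeping.
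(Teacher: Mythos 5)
Your telescoping-along-a-geodesic skeleton is the same as the paper's, but the step you yourself flag as the ``technical heart'' --- a canonical path of $\phi$-transitions from $\s$ to $\s^{x\arr y}$ using each geodesic edge at most $m(S,\phi)$ times, with intermediate configurations differing from $\s$ in at most $m'(S,\phi)$ coordinates --- is a genuine gap, not routine. Irreducible quantification only yields the \emph{existence} of some connecting path on the path sublocale; it gives no control on its length, on edge multiplicities, or on how far intermediate configurations stray from $\s$, and \cite{BKS20} does not supply such quantitative, $\La$-uniform bounds (its connectivity statements are purely qualitative). Your ``chain push'' description is model-specific --- it is essentially the classical construction for exclusion-type dynamics --- and is exactly the kind of case-by-case argument the general setting cannot rely on. Note also that even your congestion count needs the set of coordinates where $\s$ may differ from $\zeta$ to be pinned down by the pair $(e,j)$ (otherwise the number of contributing $\s$ grows like $\binom{k}{m'}$, not $|S|^{m'}$), which again presupposes the explicit construction you have not given. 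So, as written, the proof does not go through for a general irreducibly quantified interaction.

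The paper sidesteps this entirely. It realizes $\s^{x\arr y}$ by \emph{exchanges} (plain swaps of states, defined independently of $\phi$): swap the state at $x$ forward along the path to $o(e_N)$, apply the interaction once on the final edge $e_N$, and swap back; each edge is used at most twice, the intermediate configurations are explicit, and the construction is uniform in $(\La,\phi)$. The price is paid locally: Lemma \ref{lem: next} shows $\dabs{\nabla_{\!x,y}f}^2_\mu\leq \wt{C}_{\phi,\nu}\dabs{\nabla_{x\arr y}f}^2_\mu$ with a constant depending only on $((S,\phi),\nu)$, proved by conditioning on $\cF_{\La\setminus\{x,y\}}$ to reduce to the finite-dimensional space $C(S^{\{x,y\}})$ and comparing the two quadratic forms there: irreducible quantification, applied only to the two-point locale, shows the kernel of the interaction form is contained in that of the exchange form, and finite-dimensionality then bounds the ratio. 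That soft comparison is precisely the substitute for your missing properties (i)--(ii); once it is in place, your Cauchy--Schwarz bookkeeping yields $\dabs{\nabla_{\!x\arr y}f}^2_\mu\leq 6N^2\max\{1,\wt{C}_{\phi,\nu}\}\dabs{\partial_\La f}^2_\sp$ with $N\leq\diam{\La}$, which is the paper's conclusion.
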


We will give the proof of Lemma \ref{lem: MPL} at the end of this section.
We first define the notion of an exchange.
Let $\e\in S^X$.
For any $x,y\in X$, we define the \emph{exchange}
\[
	\e^{x,y}\in S^X
\]
to be the configuration whose components coincides with that of $\e$ outside $x,y\in X$,
and $(\eta^{x,y}_x,\eta^{x,y}_y)\coloneqq (\eta_y,\eta_x)$.
For any local function $f\in C_\loc(S^X)$ and $x,y\in X$, we let $\nabla_{\!x,y}f$
be the local function given by
\[
	\nabla_{\!x,y} f(\e)\coloneqq f(\e^{x,y})-f(\e).
\]
Note that by definition, $\nabla_{\!x,y}f=0$ if $x=y$.

\begin{lemma}\label{lem: next}
	There exists a constant $\wt{C}_{\phi,\nu} >0$ depending only on $((S,\phi), \nu)$ such that for any finite locale $(\La,E_\La)$, $x,y \in \La$ and $f\in C(S^\La)$, we have
	\begin{equation}
		\dabs{\nabla_{\!x,y}f}^2_\mu\leq \wt{C}_{\phi,\nu}\dabs{\nabla_{x\arr y} f}^2_\mu.
	\end{equation}
\end{lemma}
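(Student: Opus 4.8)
The plan is to realise the transposition $\s\mapsto\s^{x,y}$ as a composition of boundedly many elementary interactions and then telescope. We may assume $x\neq y$, the case $x=y$ being trivial. First I would note that for every conserved quantity $\xi\in\Con(S)$ the quantity $\xi(\eta_x)+\xi(\eta_y)$ is unchanged by transposing $\eta_x$ and $\eta_y$; hence, viewed as configurations on the two-site complete locale $(\{x,y\},E_{\{x,y\}})$ with $E_{\{x,y\}}=\{(x,y),(y,x)\}$, the configurations $\s$ and $\s^{x,y}$ carry the same conserved quantities. Since $\phi$ is irreducibly quantified, Definition \ref{def: FQ} applied to this finite locale yields a path in $S^{\{x,y\}}=S\times S$ from $(\eta_x,\eta_y)$ to $(\eta_y,\eta_x)$; deleting loops, we may take it simple, hence of length at most $|S|^2$. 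Carrying this out for each of the finitely many pairs $(s_1,s_2)\in S\times S$ and taking the largest of the lengths produces a constant $N_0\le|S|^2$ depending only on $(S,\phi)$, together with, for each $(s_1,s_2)$, a sequence $(s_1,s_2)=u^0,\dots,u^N=(s_2,s_1)$ with $N\le N_0$ in which each $u^{i+1}$ is obtained from $u^i$ by an application of $\phi$ or of $\bar\phi$ that actually changes the pair.

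Given $f\in C(S^\La)$ and $\s\in S^\La$, I would transport the path attached to $(\eta_x,\eta_y)$ to a sequence $\s=\s^0,\dots,\s^N=\s^{x,y}$ in $S^\La$, each $\s^i$ differing from $\s$ only at $x,y$, with $\s^{i+1}=(\s^i)^{x\arr y}$ or $\s^{i+1}=(\s^i)^{y\arr x}$ according to the type of the $i$-th step. For a step of the first type one has $f(\s^{i+1})-f(\s^i)=\nabla_{\!x\arr y}f(\s^i)$; for a step of the second type, applying Lemma \ref{lem: symmetric} on the two-site locale gives $(\s^{i+1})^{x\arr y}=\s^i$, so $f(\s^{i+1})-f(\s^i)=-\nabla_{\!x\arr y}f(\s^{i+1})$. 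Thus every increment is $\pm\nabla_{\!x\arr y}f$ evaluated at one of the $\s^k$, and telescoping together with the Cauchy--Schwarz inequality gives
\[
	\bigl(\nabla_{\!x,y}f(\s)\bigr)^2=\Bigl(\sum_{i=0}^{N-1}\bigl(f(\s^{i+1})-f(\s^i)\bigr)\Bigr)^2\le 2N_0\sum_{k=0}^{N}\bigl(\nabla_{\!x\arr y}f(\s^k)\bigr)^2,
\]
the factor $2$ accounting for the fact that each $\s^k$ serves as evaluation point for at most two increments.

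Finally, I would multiply by $\mu(\s)$, sum over $\s\in S^\La$, and re-index the resulting double sum over pairs $(\s,k)$ by the value $\zeta\coloneqq\s^k$. Since $\s$ and $\zeta$ then agree off $\{x,y\}$, there are at most $|S|^2$ admissible $\s$ for a given $\zeta$ (and along a simple path $\zeta$ is attained at most once), while $\mu(\s)\le\nu_{\min}^{-2}\mu(\zeta)$ with $\nu_{\min}\coloneqq\min_{s\in S}\nu(\{s\})>0$. This yields
\[
	\dabs{\nabla_{\!x,y}f}_\mu^2\le 2N_0\,|S|^2\,\nu_{\min}^{-2}\,\dabs{\nabla_{\!x\arr y}f}_\mu^2,
\]
so that $\wt{C}_{\phi,\nu}\coloneqq 2N_0|S|^2\nu_{\min}^{-2}$, which depends only on $((S,\phi),\nu)$, suffices. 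The one genuinely substantive step is the first: extracting from the irreducibly quantified hypothesis a family of transposition-realising paths whose lengths are bounded uniformly in the pair of states, and hence independently of the ambient locale $\La$. Everything after that is a telescoping $\ell^2$ estimate combined with the bounded Radon--Nikodym distortion of a product measure under a change of two coordinates.
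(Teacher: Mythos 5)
Your proof is correct, but it takes a genuinely different route from the paper's. The paper first reduces to the two-site locale $(\{x,y\},E_{\{x,y\}})$ via the tower property of conditional expectation, and then argues softly: it compares the two quadratic forms $\cD_{x,y}$ and $\cD_{x\arr y}$ on the finite-dimensional space $C(S^{\{x,y\}})$, uses irreducible quantification (together with Lemma \ref{lem: MPL0} for the trivial rate, to pass from $\nabla_{\!x\arr y}f=0$ to $\nabla_{\!y\arr x}f=0$) only to show that the kernel of $\cD_{x\arr y}$ is contained in that of $\cD_{x,y}$, and then defines $\wt{C}_{\phi,\nu}$ as the supremum of the ratio, finite by finite-dimensionality; in particular the constant is not explicit. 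You instead extract from the irreducibly quantified hypothesis, once and for all, a transposition-realising simple path in $S\times S$ for each pair of states, with length bounded by $N_0\leq\abs{S}^2$, transport it to the two coordinates $x,y$ inside $S^\La$ (no conditional-expectation reduction is needed since the path only alters those two coordinates), rewrite the $y\arr x$ steps as $-\nabla_{\!x\arr y}f$ at the target using the involution property behind Lemma \ref{lem: symmetric}, and conclude by telescoping, Cauchy--Schwarz, and a change of variables $\s\mapsto\s^k$ controlled by the product structure of $\mu$. What your argument buys is an explicit constant $\wt{C}_{\phi,\nu}=2N_0\abs{S}^2\nu_{\min}^{-2}$ and a fully constructive proof; what the paper's argument buys is brevity, since after the reduction to two sites no bookkeeping of paths, multiplicities of evaluation points, or Radon--Nikodym distortion is required. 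Both proofs rest on the same two pillars: irreducible quantification applied to the two-site locale, and the boundedness away from zero of $\nu$ on the finite set $S$.
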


\begin{proof}
	Let $x,y\in\La$.
	By the tower property of the conditional expectation, we have
	\begin{align*}
		\dabs{\nabla_{\!x,y}f}^2_\mu=E_\mu[(\nabla_{x,y}f)^2]&=E_\mu[E_\mu[(\nabla_{x,y}f)^2|\cF_{\La\setminus\{x,y\}}]],\\
		\dabs{\nabla_{\!x\arr y}f}^2_\mu=E_\mu[(\nabla_{x\arr y}f)^2]&=E_\mu[E_\mu[(\nabla_{x\arr y}f)^2|\cF_{\La\setminus\{x,y\}}]],
	\end{align*}
	hence it is sufficient to prove our result for the case when the locale is $(\{x,y\},E_{\{x,y\}})$
	with  $E_{\{x,y\}}=\{ (x,y), (y,x) \}\subset \{x,y\}\times\{x,y\}$. 
	 Consider two quadratic forms on $C(S^{\{x,y\}})$ given by
	\begin{align*}
		\cD_{x,y}(f) \coloneqq & E_\mu[(\nabla_{x,y}f)^2],  &\cD_{x \arr y} (f)\coloneqq & E_\mu[(\nabla_{x\arr y}f)^2].
	\end{align*}
	We first prove that $\cD_{x \arr y} (f)=0$ implies $\cD_{x, y} (f) =0$. 
	If this holds true, then our assertion follows since we may take 
	\[
	\wt{C}_{\phi,\nu}:= \sup_{f \in C(S^{\La}), \cD_{x \arr y} (f) \neq 0} \frac{\cD_{x, y} (f)}{\cD_{x \arr y} (f)}
	\]	
	which is finite since $C(S^{\La})$ is a finite dimensional space. Suppose $f\in C(S^{\{x,y\}})$ satisfied $\cD_{x \arr y} (f)=0$, namely that $E_\mu[(\nabla_{x \arr y} f)^2]=0$,
	which implies that $\nabla_{x \arr y}f=0$, hence $f(\eta^{x \arr y})=f(\eta)$ for any $\eta\in S^{\{x,y\}}$.  
	By Lemma \ref{lem: MPL0} for the case $r$ is trivial (see also Remark
	\ref{rem: TB} (b)),
	we have $\dabs{\nabla_{y \arr x} f}^2_\mu\leq C_{\phi, \nu}\dabs{\nabla_{x \arr y} f}^2_\mu=0$,
	which shows that $\nabla_{y \arr x}f=0$.
	Any $\eta=(\eta_x,\eta_y)\in S\times S$ has the same conserved quantities as $\eta^{x,y}=(\eta_y,\eta_x)$.
	Since $(S,\phi)$ is irreducibly quantified,
	there exists a path $\vec\gamma$ in $S^{\{x,y\}}=S\times S$ from $\eta$ to $\eta^{x,y}$.
	This implies that $\eta^{x,y}$ is obtained from $\eta$ by executing $e=(x,y)$ and $\bar e=(y,x)$
	a finite number of times, which shows that $f(\eta^{x,y})=f(\eta)$,
	hence $\nabla_{x,y} f=0$.  This proves that $\cD_{x,y}(f)=0$.
\end{proof}

We may now prove Lemma \ref{lem: MPL}.

\begin{proof}[Proof of Lemma \ref{lem: MPL}]
	For any $x,y\in \La$, since $\La$ is connected, there exists a path $\vec p=(e_1,\ldots,e_N)$ 
	from $x$ to $y$ such that $N=\len(\vec p)\leq\diam{\La}$.
	For any $\e\in S^\La$, let $\e^0\coloneqq\e$ and $\e^i\coloneqq(\e^{i-1})^{o(e_i),t(e_i)}$ for any $i=1,\ldots, N-1$.
	Furthermore, let $\e^N\coloneqq(\e^{N-1})^{e_N}$, and $\e^{i}=(\e^{i-1})^{o(e_{2N-i}),t(e_{2N-i})}$
	for $i=N+1,\ldots,2N-1$. 
	Note that the configurations $\e^i$ for $i\neq N$ is simply given by exchanging certain
	components of $\e^{i-1}$ where as $\e^N$ is given by as the transition of $\e^{N-1}$ induced by $e_N$.
	By construction, we have $\e^{x\arr y}=\e^{2N-1}$.
	This shows that we have
	\[
		\nabla_{\!x\arr y} f=\Bigl(\sum_{i=1}^{N-1}\nabla_{\!o(e_i),t(e_i)}f\Bigr)+\nabla_{\!e_N}f+
		\Bigl(\sum_{i=N+1}^{2N-1}\nabla_{\!o(e_i),t(e_i)}f\Bigr).
	\]
	By Schwartz's inequality, we have
	\begin{align*}
		\dabs{\nabla_{\!x\arr y}f}^2_\mu&\leq 3(N-1)\Bigl(\sum_{i=1}^{N-1}\dabs{\nabla_{\!o(e_i),t(e_i)}f}_\mu^2\Bigr)+
		3\dabs{\nabla_{\!e_N}f}_\mu^2+
		3(N-1)\Bigl(\sum_{i=N+1}^{2N-1}\dabs{\nabla_{\!o(e_i),t(e_i)}f}^2_\mu\Bigr)\\
		&\leq 3N \max\{1, \wt{C}_{\phi,\nu}\} \sum_{i=1}^{2N-1}\dabs{\nabla_{\!e_i}f}_\mu^2
		\leq 6N^2 \max\{1, \wt{C}_{\phi,\nu}\} \sup_{e\in E_\La} \dabs{\nabe f}^2_\mu\\
		&=6 N^2 \max\{1, \wt{C}_{\phi,\nu}\}\dabs{\partial_\La f}^2_\sp,
	\end{align*}
	where the second inequality follows by Lemma \ref{lem: next}.
	If we let $C_{M\!P}=6  \max\{1, \wt{C}_{\phi,\nu}\} >0$, then this gives the desired constant.
\end{proof}

%
%
\subsection{Boundary Estimates}
%
%

In this subsection, we prove Proposition \ref{prop: BE},
which gives the bound of differentials of local functions with respect
to edges which intersect the boundary of the locality.
We let $(X,E)$ be a locale. 

For any $\La\subset X$, we define the boundary $\partial\La$ of $\La$ by 
\[
	\partial\La\coloneqq\{e\in E\mid o(e)\in\La, t(e)\not\in\La\}.
\]

\begin{proposition}[Boundary Estimate]\label{prop: BE}
	There exists a constant $C_{BE}>0$ depending only on $(S,\phi)$ and the probability measure
	$\nu$ on $S$ satisfying the following property.
	Consider any $\La,\Sigma\in\sI$ such that $\La\subset\Sigma$.
	For any $h\in C(S^\Sigma)$ and $e\in\partial\La\cap E_\Sigma$, we have
	\[
		\dabs{\nabe(\pi^{\La}h)}^2_\mu\leq\frac{C_{BE}}{|\Sigma\setminus\La|}
		\Bigl(\dabs{h}^2_\mu+\sum_{y\in\Sigma\setminus\La}\dabs{\nabla_{o(e)\arr y}h}^2_\mu\Bigr).
	\]
	Similarly,
	\[
		\dabs{\nabla_{\bar e}(\pi^{\La}h)}^2_\mu\leq\frac{C_{BE}}{|\Sigma\setminus\La|}
		\Bigl(\dabs{h}^2_\mu+\sum_{x\in\Sigma\setminus\La}\dabs{\nabla_{x\arr t(\bar{e})}h}^2_\mu\Bigr).
	\]
\end{proposition}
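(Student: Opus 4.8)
The plan is to estimate $\dabs{\nabe(\pi^\La h)}_\mu^2$ for $e\in\partial\La\cap E_\Sigma$, with $o(e)\in\La$ and $t(e)\in\Sigma\setminus\La$, by comparing the transition $\eta\mapsto\eta^e$ with transitions $\eta\mapsto\eta^{o(e)\arr y}$ for $y$ ranging over $\Sigma\setminus\La$, then averaging over $y$ to gain the factor $1/|\Sigma\setminus\La|$. First I would observe that since $\pi^\La$ is the conditional expectation onto $\cF_\La$ and $o(e)\in\La$ while $t(e)\notin\La$, for a configuration $\eta$ the value $\nabe(\pi^\La h)(\eta)=(\pi^\La h)(\eta^e)-(\pi^\La h)(\eta)$ compares the conditional expectation of $h$ given the $\La$-configuration with $o(e)$-component $\eta'_{o(e)}$ (the first component of $\phi(\eta_{o(e)},\eta_{t(e)})$) against that with $o(e)$-component $\eta_{o(e)}$. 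The key point is that applying $\phi$ across the edge $(o(e),y)$ for \emph{any} $y\in\Sigma\setminus\La$ — i.e. the transition $\eta^{o(e)\arr y}$ — produces (after conditioning on $\cF_\La$, which forgets $y$) a change of the same flavour in the $\La$-marginal, up to an error controlled by whether $\phi(\eta_{o(e)},\eta_y)$ actually moves the $o(e)$-component to the right value. This is where irreducible quantification enters: the $o(e)$-component can be changed to any value consistent with the conserved quantities by a path of transitions, so averaging over many sites $y$ in $\Sigma\setminus\La$ will, on average, produce the desired modification of the $o(e)$-component, with the $\diam$-type combinatorial loss absorbed.

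Concretely, the key steps, in order, are: (i) rewrite $\nabe(\pi^\La h)(\eta)$ as a conditional expectation identity — use that $\pi^\La h(\eta^e)$ and $\pi^\La h(\eta)$ differ only through the $o(e)$-slot, and express the difference as $E_\mu[\,h(\cdot^{?})-h(\cdot)\mid\cF_\La\,]$ for an appropriate coupling; (ii) for each $y\in\Sigma\setminus\La$, insert $\nabla_{o(e)\arr y}h$ as the ``microscopic'' move and bound $\dabs{\nabe(\pi^\La h)}_\mu^2$ by a constant times $\dabs{\nabla_{o(e)\arr y}h}_\mu^2$ plus an error term of size $O(\dabs{h}_\mu^2)$ coming from the event that the transition across $(o(e),y)$ is null (does not change the configuration) or changes $o(e)$ to the ``wrong'' value — using the product structure of $\mu$ and the uniform comparability constant $C_{\phi,\nu}$ of Definition \ref{def: MB} to control the change-of-variables Jacobians; (iii) average the resulting bound over all $y\in\Sigma\setminus\La$, which converts $\sum_y\bigl(C\dabs{\nabla_{o(e)\arr y}h}_\mu^2+O(\dabs{h}_\mu^2)\bigr)$, divided by $|\Sigma\setminus\La|$, into exactly the asserted right-hand side; (iv) take the constant $C_{BE}$ to be the maximum of the constants produced, depending only on $(S,\phi)$ and $\nu$ (using finiteness of $S$ so that all the pointwise ratios and the combinatorial constants are bounded). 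The second inequality, for $\nabla_{\bar e}(\pi^\La h)$, follows by the symmetric argument with the roles of $o(e)$ and $t(e)$ interchanged (or by applying the first inequality to $\bar e$ and using Lemma \ref{lem: MPL0} with trivial rate, as in Lemma \ref{lem: next}).

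The main obstacle I expect is step (ii): making precise and quantitative the claim that a transition across $(o(e),y)$, after conditioning on $\cF_\La$, approximates the ``ideal'' modification of the $o(e)$-marginal that appears in $\nabe(\pi^\La h)$. One cannot hope that a single edge $(o(e),y)$ realizes the needed change of the $o(e)$-component; the point is that averaging over $y$ together with irreducible quantification of $(S,\phi)$ (Definition \ref{def: FQ}) guarantees that enough sites $y$ carry a state which, when interacted with $\eta_{o(e)}$, \emph{does} move the $o(e)$-component — and the fraction of such $y$ is bounded below uniformly by a constant depending only on $(S,\phi)$ and $\nu$ (here finiteness of $S$ and $\nu(\{s\})>0$ for all $s$ are essential). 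Handling the complementary ``bad'' sites $y$ is what produces the $\dabs{h}_\mu^2$ term; bounding that term uniformly, again via $C_{\phi,\nu}$ and the product structure, is the technical heart of the argument. Once this per-$y$ estimate is in hand, the averaging in step (iii) is routine and yields the stated $1/|\Sigma\setminus\La|$ gain.
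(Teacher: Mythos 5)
Your overall strategy---comparing the transition across $e$ with the long-range transitions $o(e)\arr y$ for $y\in\Sigma\setminus\La$ and then averaging over $y$---is the same idea that drives the paper's proof, but the way you implement the averaging has a genuine gap, and it sits exactly at the point you yourself flag as the technical heart. Your step (ii) produces, for each fixed $y$, an estimate of the shape $\dabs{\nabe(\pi^\La h)}^2_\mu\leq C\,\dabs{\nabla_{o(e)\arr y}h}^2_\mu+C\,\dabs{h}^2_\mu$, and averaging such inequalities over $y\in\Sigma\setminus\La$ yields $\dabs{\nabe(\pi^\La h)}^2_\mu\leq \frac{C}{|\Sigma\setminus\La|}\sum_y\dabs{\nabla_{o(e)\arr y}h}^2_\mu+C\,\dabs{h}^2_\mu$: the gradient terms gain the factor $1/|\Sigma\setminus\La|$, but the $\dabs{h}^2_\mu$ error does not, because an average of terms each of size $O(\dabs{h}^2_\mu)$ is still $O(\dabs{h}^2_\mu)$. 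The statement, however, requires the prefactor $C_{BE}/|\Sigma\setminus\La|$ on the \emph{entire} right-hand side, and this is essential downstream: in Proposition \ref{prop:b2} the $\dabs{h}^2$ contribution is controlled through the spectral gap bound $\dabs{h}^2_\Sigma\leq C|\Sigma|\diam{\Sigma}^2\dabs{\partial_\Sigma h}^2_\sp$, and without division by $|\Sigma\setminus\La|$ the resulting bound on the boundary forms would grow like $|\Sigma_n|\diam{\Sigma_n}^2$ times $|\partial\La_n|^2/|\La_n|^2$, which the tempered-enlargement condition cannot absorb. No per-$y$ comparison followed by averaging of the squared norms can produce this gain.

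The mechanism your sketch is missing is a concentration (law-of-large-numbers) effect built in \emph{before} squaring. The paper inserts the normalized empirical frequency $\Fr^*_\beta=\nu(\beta)^{-1}|\Sigma\setminus\La|^{-1}\sum_{y\in\Sigma\setminus\La}1_{\{\eta_y=\beta\}}$, which satisfies $\pi^\La\Fr^*_\beta=1$, writes $f(\s^e)-f(\s)$ with $f=\pi^\La h$ as a three-term telescoping sum, and then: (a) bounds the two outer terms by the conditional variance of $\Fr^*_\beta$, which is of order $1/(\nu(\beta)|\Sigma\setminus\La|)$ precisely because the indicators are i.i.d.\ under the product measure---this is where the term $\dabs{h}^2_\mu/|\Sigma\setminus\La|$ comes from; and (b) proves for the middle term the exact change-of-variables identity $\pi^\La(\Fr^*_{\beta'}h)(\s^e)=\frac{1}{\nu(\beta)|\Sigma\setminus\La|}\sum_{y}\pi^\La\bigl(1_{\{\eta_y=\beta\}}h_{o(e)\arr y}\bigr)(\s)$ (Lemma \ref{lem: diff}), after which Cauchy--Schwarz for conditional expectations gives the averaged gradient sum. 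In particular the ``good'' sites are exactly those with $\eta_y=\beta$, the state currently at $t(e)$, so the comparison is an identity rather than an approximation; there is no ``wrong value'' event to control, and no appeal to irreducible quantification is needed (the paper's constant $C_{BE}=3(C_{\phi,\nu}|S|^2+|S|)$ uses only finiteness of $S$, positivity of $\nu$, and the product structure of $\mu$). Your proposed use of irreducibility to lower-bound the fraction of sites that move $\eta_{o(e)}$ correctly would in any case not remove the $O(\dabs{h}^2_\mu)$ error without this variance structure, so the argument as outlined does not yield the stated inequality.
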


We will give the proof of Proposition \ref{prop: BE} at the end of this subsection.
We let $\La''\coloneqq\Sigma\setminus\La$.
For any $\beta\in S$, we let $\Fr_\beta\colon S^{\La''}\rightarrow\bbN$
be the map
\[
	\Fr_\beta\coloneqq\sum_{y\in\La''} 1_{\{\eta_y=\beta\}},
\]
where $1_{\{\eta_y=\beta\}}$ for $y\in\La''$ is the indicator function of $\{ \e\in S^{\La''} \mid
\eta_y=\beta \} \subset S^{\La''} $.
By construction, $\Fr_\beta$ calculates the number of the occurrence of $\beta$ in the
components of $\e$.
Since $E_\mu[1_{\{\eta_y=\beta\}}]=\nu(\beta)$ for any $y\in\La''$, 
we have $E_\mu[\Fr_\beta]=\abs{\La''}\nu(\beta)$.
If we let
\begin{equation}\label{eq:F}
	\Fr^*_\beta\coloneqq\nu(\beta)^{-1}\abs{\La''}^{-1}\Fr_\beta,
\end{equation}
then we have $E_\mu[\Fr^*_\beta]=1$.  Moreover, since $\Fr^*_\beta$ is independent of $\cF_\La$,
we have
\[
	\pi^\La\Fr^*_\beta=E[\Fr^*_\beta | \cF_\La ]=E_\mu[\Fr^*_\beta]=1.
\]
By a direct computation, we have 
\begin{equation}\label{eq: calc}
		\pi^\La(\Fr^*_\beta-\pi^\La\Fr^*_\beta)^2 =\frac{1-\nu(\beta)}{\nu(\beta)\abs{\La''}}.
	\end{equation}

The following bound will be used in the proof of Proposition \ref{prop: BE}.

\begin{lemma}\label{lem: CE}
	For any bounded random variables $g,h$ 
	and $\sigma$-algebra $\cG$, we have
	\[
		(E[g|\cG]E[h|\cG]-E[gh|\cG])^2\leq
		E[(g-E[g|\cG])^2|\cG]	E[(h-E[h|\cG])^2|\cG].
	\]
\end{lemma}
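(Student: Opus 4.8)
The plan is to recognize the left-hand side as the square of the conditional covariance of $g$ and $h$ given $\cG$, and the right-hand side as the product of the conditional variances, so that the assertion is simply the conditional form of the Cauchy--Schwarz inequality. Since $S$ is finite, all the functions involved are bounded, so no integrability issues arise and every conditional expectation appearing is an honest $\cG$-measurable random variable.

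First I would set $U\coloneqq g-E[g|\cG]$ and $V\coloneqq h-E[h|\cG]$, so that $E[U|\cG]=E[V|\cG]=0$. Using linearity of the conditional expectation together with the $\cG$-measurability of $E[g|\cG]$ and $E[h|\cG]$, a direct computation gives
\[
	E[UV|\cG]=E[gh|\cG]-E[g|\cG]E[h|\cG],
\]
so the claimed inequality is equivalent to $(E[UV|\cG])^2\leq E[U^2|\cG]\,E[V^2|\cG]$, where $E[U^2|\cG]=E[(g-E[g|\cG])^2|\cG]$ and $E[V^2|\cG]=E[(h-E[h|\cG])^2|\cG]$.

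For the conditional Cauchy--Schwarz inequality I would argue in the standard way: for any bounded $\cG$-measurable random variable $\lambda$, positivity of the conditional expectation yields
\[
	0\leq E[(U-\lambda V)^2|\cG]=E[U^2|\cG]-2\lambda\,E[UV|\cG]+\lambda^2\,E[V^2|\cG].
\]
On the $\cG$-measurable event $\{E[V^2|\cG]>0\}$ one chooses $\lambda\coloneqq E[UV|\cG]/E[V^2|\cG]$ (bounded and $\cG$-measurable there) and rearranges to get $(E[UV|\cG])^2\leq E[U^2|\cG]\,E[V^2|\cG]$ on that event; on the complementary event $\{E[V^2|\cG]=0\}$ one has $V=0$ almost surely, hence $E[UV|\cG]=0$ as well, and the inequality holds trivially. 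Combining the two cases proves the lemma. There is essentially no real obstacle here; the only point requiring a little care is the degenerate event on which the conditional variance of $h$ vanishes, which is dispatched as above, and which is in any case harmless in the finite setting of this paper.
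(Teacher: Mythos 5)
Your proposal is correct and follows essentially the same route as the paper: both set $X=g-E[g|\cG]$, $Y=h-E[h|\cG]$, compute $E[XY|\cG]=E[gh|\cG]-E[g|\cG]E[h|\cG]$, and conclude by the conditional Cauchy--Schwarz inequality. The only difference is that you also supply a proof of the conditional Cauchy--Schwarz inequality (with the degenerate case treated carefully), whereas the paper simply invokes it.
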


\begin{proof}
	By the Schwartz's inequality
	 for conditional expectations, we have
	\begin{equation}\label{eq: CS}
		E[XY|\cG]^2\leq E[X^2|\cG]E[Y^2|\cG]
	\end{equation}
	for any bounded random variables $X$ and $Y$.
	If we let $X=g-E[g|\cG]$ and $Y=h-E[h|\cG]$, then 
	\begin{align*}
		E[XY|\cG]&=E[(g-E[g|\cG])(h-E[h|\cG])|\cG]\\
		&= E[(gh-E[g|\cG] h-g E[h|\cG]+E[g|\cG]E[h|\cG])|\cG]\\
		&=E[gh|\cG]-E[g|\cG]E[h|\cG],
	\end{align*}
	where we have used the fact that $E[g|\cG]$ and $E[h|\cG]$ are $\cG$-measurable.
	Our assertion now follows from \eqref{eq: CS}.
\end{proof}
	
\begin{lemma}\label{lem: prep}
	Let $\beta\in S$ and let $\Fr^*_\beta$ be the function given in \eqref{eq:F}.
	For any function $h\in C(S^\Sigma)$,
	we have
	\[
		\bigl((\pi^\La\Fr^*_{\beta})(\pi^\La h)(\e)-\pi^\La(\Fr^*_{\beta}\, h)(\e)\bigr)^2
		\leq\frac{1-\nu(\beta)}{\nu(\beta)\abs{\La''}}\pi^\La( h- \pi^\La h)^2(\e).
	\]
\end{lemma}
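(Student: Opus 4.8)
The plan is to recognize this statement as an immediate specialization of the conditional Cauchy--Schwarz bound already established in Lemma \ref{lem: CE}. Concretely, I would apply Lemma \ref{lem: CE} with the $\sigma$-algebra $\cG = \cF_\La$ (so that $E[\,\cdot\,|\cG] = \pi^\La$), with the first random variable taken to be $g = \Fr^*_\beta$, and the second random variable taken to be $h$ itself. Since $h\in C(S^\Sigma)$ and $\Fr^*_\beta$ is a bounded function on $S^{\La''}$ with $\La''=\Sigma\setminus\La$, both are integrable, so Lemma \ref{lem: CE} applies verbatim and yields
\[
	\bigl((\pi^\La\Fr^*_\beta)(\pi^\La h)-\pi^\La(\Fr^*_\beta\,h)\bigr)^2
	\leq \pi^\La\bigl[(\Fr^*_\beta-\pi^\La\Fr^*_\beta)^2\bigr]\cdot\pi^\La\bigl[(h-\pi^\La h)^2\bigr].
\]

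The second step is simply to substitute the explicit value of the first factor. By the direct computation recorded in \eqref{eq: calc}, we have $\pi^\La(\Fr^*_\beta-\pi^\La\Fr^*_\beta)^2 = \dfrac{1-\nu(\beta)}{\nu(\beta)\abs{\La''}}$, a constant (independent of the configuration). Plugging this in gives exactly the claimed inequality pointwise in $\s$. I would also remark that $\pi^\La\Fr^*_\beta = 1$ and $E_\mu[\Fr^*_\beta]=1$, which were noted just before the statement; these are not strictly needed for the inequality but clarify that the left-hand side measures the failure of $\pi^\La$ to be multiplicative against the (normalized) frequency function.

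There is essentially no obstacle here: the content of the lemma has been front-loaded into Lemma \ref{lem: CE} and the variance computation \eqref{eq: calc}. The only point requiring a sentence of care is that \eqref{eq: calc} uses that $\mu=\nu^{\otimes X}$ is a product measure, so that $\Fr^*_\beta$ is $\cF_\La$-independent and its conditional variance equals its unconditional variance; this is already implicit in the derivation of \eqref{eq: calc}, so I would just cite it. The resulting bound is then fed into the proof of Proposition \ref{prop: BE}, where the factor $\abs{\La''}^{-1}=\abs{\Sigma\setminus\La}^{-1}$ is precisely the gain that produces the $C_{BE}/\abs{\Sigma\setminus\La}$ prefactor there.
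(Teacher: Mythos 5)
Your proposal is correct and matches the paper's own proof: apply Lemma \ref{lem: CE} with $\cG=\cF_\La$, $g=\Fr^*_\beta$, and $h$ itself, then substitute the constant value of $\pi^\La(\Fr^*_\beta-\pi^\La\Fr^*_\beta)^2$ computed in \eqref{eq: calc}. Nothing further is needed.
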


\begin{proof}
	By Lemma \ref{lem: CE}, we have
	\[
		\bigl((\pi^\La\Fr^*_{\beta})(\pi^\La h)(\e)-\pi^\La(\Fr^*_{\beta}\, h)(\e)\bigr)^2
		\leq\pi^\La( \Fr^*_{\beta} - \pi^\La \Fr^*_{\beta})^2\pi^\La( h- \pi^\La h)^2(\e).
	\]
	Combining this inequality with 
	\eqref{eq: calc},
	we see that 	
	\[
		\bigl((\pi^\La\Fr^*_{\beta})(\pi^\La h)(\e)-\pi^\La(\Fr^*_{\beta}\, h)(\e)\bigr)^2
		\leq\frac{1-\nu(\beta)}{\nu(\beta)\abs{\La''}}\pi^\La( h- \pi^\La h)^2(\e)
	\]
	as desired.
\end{proof}

\begin{lemma}\label{lem: diff}
	Let $h\in C(S^\Sigma)$.
	Then for any $e\in\partial\La\cap E_{\Sigma}$ and $\e\in S^\Sigma$ such that 
	$(\eta_{o(e)},\eta_{t(e)})=(\alpha,\beta)\in S\times S$ and $(\alpha',\beta'):=\phi(\alpha,\beta) \neq (\alpha,\beta)$, we have
	\[
		\pi^{\La}(\Fr^*_{\beta'}\, h)(\e^{e})
		=\frac{1}{\nu(\beta)\abs{\La''}}\sum_{y\in\La''}
		\pi^{\La}(1_{\{\eta_y=\beta\}}h_{o(e)\arr y})(\e).
	\]
\end{lemma}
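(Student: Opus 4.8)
\textbf{Proof proposal for Lemma \ref{lem: diff}.}
The plan is to reduce the identity to an explicit computation with the product measure. Since $e\in\partial\La\cap E_\Sigma$ we have $o(e)\in\La$ and $t(e)\in\La''\coloneqq\Sigma\setminus\La$, and $S^\Sigma=S^\La\times S^{\La''}$. Both $\Fr^*_{\beta'}$ and each indicator $1_{\{\eta_y=\beta\}}$ for $y\in\La''$ are $\cF_{\La''}$-measurable, so because $\mu=\nu^{\otimes\Sigma}$ is a product measure, for any $g\in C(S^\Sigma)$ the conditional expectation $\pi^\La g$ evaluated at a configuration whose $\La$-part is $\zeta$ equals the partial integral $\sum_{\omega\in S^{\La''}}\mu_{\La''}(\omega)\,g(\zeta,\omega)$, where $\mu_{\La''}=\nu^{\otimes\La''}$ is the push-forward of $\mu$. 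I will work throughout with this description.

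First I would prove the per-vertex identity, for each fixed $y\in\La''$,
\[
	\pi^{\La}\bigl(1_{\{\eta_y=\beta\}}\,h_{o(e)\arr y}\bigr)(\s)
	=\frac{\nu(\beta)}{\nu(\beta')}\,\pi^{\La}\bigl(1_{\{\eta_y=\beta'\}}\,h\bigr)(\s^{e}).
\]
To see this, write $\zeta$ for the $\La$-part of $\s$, so $\zeta_{o(e)}=\eta_{o(e)}=\alpha$. Since $o(e)\in\La$, $y\in\La''$ and hence $o(e)\ne y$, on the event $\{\omega_y=\beta\}$ the configuration $(\zeta,\omega)^{o(e)\arr y}$ is obtained by replacing the $o(e)$-component by $\alpha'$ and the $y$-component by $\beta'$ (as $\phi(\alpha,\beta)=(\alpha',\beta')$), all other components unchanged; thus on this event $1_{\{\omega_y=\beta\}}h_{o(e)\arr y}(\zeta,\omega)=1_{\{\omega_y=\beta\}}\,h(\zeta',\omega')$, where $\zeta'$ is precisely the $\La$-part of $\s^e$ (it agrees with $\zeta$ except $\zeta'_{o(e)}=\alpha'$) and $\omega'$ agrees with $\omega$ except $\omega'_y=\beta'$. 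The map $\omega\mapsto\omega'$ is a bijection of $\{\omega\in S^{\La''}\mid\omega_y=\beta\}$ onto $\{\omega\in S^{\La''}\mid\omega_y=\beta'\}$ with $\mu_{\La''}(\omega)=(\nu(\beta)/\nu(\beta'))\,\mu_{\La''}(\omega')$, so carrying out this change of variables in the partial integral defining the left-hand side yields $(\nu(\beta)/\nu(\beta'))\sum_{\omega'\colon\omega'_y=\beta'}\mu_{\La''}(\omega')\,h(\zeta',\omega')$, which is exactly $(\nu(\beta)/\nu(\beta'))\,\pi^\La(1_{\{\eta_y=\beta'\}}h)(\s^e)$ since $\zeta'$ is the $\La$-part of $\s^e$.

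Then I would sum over $y\in\La''$. Using $\sum_{y\in\La''}1_{\{\eta_y=\beta'\}}=\Fr_{\beta'}=\nu(\beta')\,\abs{\La''}\,\Fr^*_{\beta'}$ (from \eqref{eq:F}) this gives
\[
	\sum_{y\in\La''}\pi^{\La}\bigl(1_{\{\eta_y=\beta\}}h_{o(e)\arr y}\bigr)(\s)
	=\frac{\nu(\beta)}{\nu(\beta')}\,\pi^{\La}(\Fr_{\beta'}h)(\s^e)
	=\nu(\beta)\,\abs{\La''}\,\pi^{\La}(\Fr^*_{\beta'}h)(\s^e),
\]
and dividing by $\nu(\beta)\abs{\La''}$ is the claimed identity. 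I do not expect a genuine obstacle here: the proof is a direct computation, and the only delicate points are keeping straight which components of which configuration are modified (with $o(e)\in\La$ and $t(e),y\in\La''$) and the Jacobian factor $\nu(\beta)/\nu(\beta')$ produced by the product structure of $\mu$, both of which become transparent once the event $\{\eta_y=\beta\}$ is isolated. The degenerate cases $\beta=\beta'$ or $\phi(\alpha,\beta)=(\alpha,\beta)$ need no separate treatment, since the same change of variables applies verbatim.
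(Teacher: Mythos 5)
Your proof is correct and takes essentially the same approach as the paper: writing $\pi^\La$ as a partial sum over $S^{\La''}$ for the product measure and performing a change of variables at the pair of sites $(o(e),y)$, with the measure ratio $\nu(\beta)/\nu(\beta')$ accounting for the transition $(\alpha,\beta)\mapsto(\alpha',\beta')$. The only (minor) difference is the direction: the paper starts from $\pi^{\La}(\Fr^*_{\beta'}h)(\s^{e})$ and reconstructs the unique preimage $\tilde\s$ using $\bar\phi\circ\phi=\mathrm{id}$ on non-fixed points, hence treats the case $(\alpha',\beta')=(\alpha,\beta)$ separately, whereas your forward per-vertex identity only pushes configurations through $\phi$ and so needs neither the involution property nor a case split.
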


\begin{proof}
		By definition of $\Fr^*_\beta$ and the conditional expectation $\pi^\La$, we have
	\begin{align*}
		\pi^{\La}(\Fr^*_{\beta'}\,h)(\e^{e})
		&=\frac{1}{\nu(\beta')\abs{\La''}}\sum_{y\in\La''}\pi^{\La}(1_{\{\eta_y=\beta'\}}h)(\e^{e})\\
		&=\frac{1}{\nu(\beta')\abs{\La''}}\sum_{y\in\La''}
		\frac{1}{\mu(\e^e|_{\La})}\sum_{\substack{\e'\in S^\Sigma\\\e'|_{\La}=\e^e|_\La,\,\e'_y=\beta'}}
		h(\e')\mu(\e').
	\end{align*}
	In the above sum, the condition $\e'|_{\La}=\e^e|_\La$ and $\eta_y=\beta'$
	ensures that $\e'=(\eta'_x)$ satisfies $\eta'_x=\eta_x$ for $x\in\La\setminus \{o(e)\}$, $\eta'_{o(e)}=\alpha'$,
	and $\eta'_{y}=\beta'$.
	Since we have assumed that $(\alpha',\beta')\neq(\alpha,\beta)$, 
	we have $(\alpha,\beta)=\ol\phi(\alpha',\beta')$ by the definition of the interaction.
	Hence, for each $y \in \La''$ and $\eta' \in S^{\Sigma}$ satisfying $\e'|_{\La}=\e^e|_\La,\,\e'_y=\beta'$, if we let $\tilde\e=(\tilde \eta_x) \in S^{\Sigma}$ be the configuration such that $\tilde \eta_x\coloneqq \eta'_x$ for $x\neq o(e),y$,
	and $(\tilde \eta_{o(e)}, \tilde \eta_y)\coloneqq(\alpha,\beta)$, then $\tilde\e$
	is the unique element in $S^\Sigma$ satisfying $\tilde\e|_\La=\e|_\La$, $\tilde \eta_y=\beta$, and
	$\e'=\tilde\e^{o(e)\arr y}$.
	This gives	
	\begin{align*}
		\pi^{\La}(\Fr^*_{\beta'}\,h)(\e^{e})
		&=\frac{1}{\nu(\beta')}\sum_{y\in\La''}
		\frac{1}{\mu(\e^e|_{\La})}\sum_{\substack{\tilde\e\in S^\Sigma\\\tilde\e|_{\La}=\e|_\La,\,\tilde\eta_y=\beta}}
		h(\tilde\e^{o(e)\arr y})\mu(\tilde\e^{o(e)\arr y})\\
		&=\frac{1}{\nu(\beta)}\sum_{y\in\La''}
		\frac{1}{\mu(\e|_{\La})}\sum_{\substack{\tilde\e\in S^\Sigma\\\tilde\e|_{\La}=\e|_\La,\,\tilde\eta_y=\beta}}
		h_{o(e)\arr y}(\tilde\e)\mu(\tilde\e)\\
		&=\frac{1}{\nu(\beta)}\sum_{y\in\La''}
		\pi^\La(1_{\{\eta_y=\beta\}} h_{o(e)\arr y})(\e)
	\end{align*}
	as desired. For the second equality, we used the relations $\mu(\e|_{\La})\nu(\alpha')=\mu(\e^e|_{\La})\nu(\alpha)$
	and $\mu(\tilde\e^{o(e)\arr y})\nu(\alpha)\nu(\beta)=\mu(\tilde\e)\nu(\alpha')\nu(\beta')$,
	which follows from the fact that $\mu$ is the product measure $\mu=\nu^{\otimes\Sigma}$.
\end{proof}

We are now ready to prove the boundary estimate.

\begin{proof}[Proof of Proposition \ref{prop: BE}] 
	Let $f=\pi^\La h$.
	By definition, we have
	\begin{equation}\label{eq:24}
		\dabs{\nabe f}_\mu^2
		=\sum_{\e\in S^{\La\cup\{t(e)\}}}(f(\e^e)-f(\e))^2\mu(\e)
		=\sum_{(\alpha,\beta)\in S\times S}
		\sum_{\substack{\e\in S^{\La\cup\{t(e)\}}\\ (\eta_{o(e)},\eta_{t(e)})=(\alpha,\beta)}}(f(\e^e)-f(\e))^2\mu(\e).
	\end{equation}
	Note that for any $\e\in S^{\La\cup\{t(e)\}}$ such that $(\eta_{o(e)},\eta_{t(e)})=(\alpha,\beta)$, 
	noting that $(\pi^\La\Fr^*_{\beta'})=(\pi^\La\Fr^*_{\beta})=1$,
	we may decompose the value $(f(\e^e)-f(\e))$ as
	\begin{align*}
		f(\e^e)-f(\e)=
		(\pi^\La\Fr^*_{\beta'})f (\e^e)-
		(\pi^\La\Fr^*_\beta)f(\e)&=
		(\pi^\La\Fr^*_{\beta'})f(\e^{e})-
		\pi^\La(\Fr^*_{\beta'}\, h)(\e^{e})\\
		&\qquad+\pi^\La(\Fr^*_{\beta'}\, h)(\e^{e})
		-\pi^\La(\Fr^*_\beta\,h)(\e)\\
		&\qquad+\pi^\La(\Fr^*_\beta\,h)(\e)
		-(\pi^\La\Fr^*_\beta) f(\e)
	\end{align*}
	with $(\alpha',\beta')=\phi(\alpha,\beta)$.
	By Schwartz's inequality, we have
		\begin{align}
		(f(\e^e)-f(\e))^2&\leq
		3\bigl((\pi^\La\Fr^*_{\beta'})f(\e^{e})-\pi^\La(\Fr^*_{\beta'}\, h)(\e^{e})\bigr)^2\label{eq:10}\\
		&\qquad+ 3\bigl(\pi^\La(\Fr^*_{\beta'}\, h)(\e^e)-\pi^\La(\Fr^*_\beta\,h)(\e)\bigr)^2\nonumber\\
		&\qquad+3\bigl(\pi^\La(\Fr^*_\beta\,h)(\e)-(\pi^\La\Fr^*_\beta) f(\e)\bigr)^2.\nonumber
	\end{align}
	 We bound each of the terms on the right hand side of \eqref{eq:10} with respect to
	 the sum over $(\alpha,\beta)\in S\times S$ and $\e\in S^{\La\cup\{t(e)\}}$ such that
	 $(\eta_{o(e)},\eta_{t(e)})=(\alpha,\beta)$.
	Concerning the first and the third terms of the right hand side, by Lemma \ref{lem: prep},
	we see that
	\begin{align*}
		\bigl((\pi^\La\Fr^*_{\beta'})(\pi^\La h)(\e^e)-\pi^\La(\Fr^*_{\beta'}\, h)(\e^e)\bigr)^2
		&\leq\frac{1-\nu(\beta')}{\nu(\beta')\abs{\La''}}\pi^\La( h- \pi^\La h)^2(\e^e)\\
		\bigl((\pi^\La\Fr^*_{\beta})(\pi^\La h)(\e)-\pi^\La(\Fr^*_{\beta}\, h)(\e)\bigr)^2
		&\leq\frac{1-\nu(\beta)}{\nu(\beta)\abs{\La''}}\pi^\La( h- \pi^\La h)^2(\e).
	\end{align*}
	For the first term, note that by the tower property,
	$\pi^\La( h- \pi^\La h)^2 = \pi^\La(\pi^{\La\cup\{t(e)\}}( h- \pi^\La h)^2)$.
	The definition of $\pi^\La$ gives
	\begin{align*}
		\sum_{(\alpha,\beta)\in S\times S}&
		\sum_{\substack{\e\in S^{\La\cup\{t(e)\}}\\ (\eta_{o(e)},\eta_{t(e)})=(\alpha,\beta)}}
		\frac{1-\nu(\beta')}{\nu(\beta')}\pi^\La( h- \pi^\La h)^2(\e^e)\mu(\e)\\
		&\leq  	\sum_{(\alpha,\beta)\in S\times S}
		\sum_{\substack{\e\in S^{\La\cup\{t(e)\}}\\ (\eta_{o(e)},\eta_{t(e)})=(\alpha,\beta)}}
		\frac{1}{\nu(\beta')}
		\frac{\mu(\e)}{\mu(\e^e|_{\La})}\sum_{\substack{\e'\in S^{\La\cup\{t(e)\}}\\\e'|_{\La}=\e^e|_{\La}}}
		\pi^{\La\cup\{t(e)\}}( h- \pi^\La h)^2(\e')\mu(\e')\\
		&= 	\sum_{(\alpha,\beta)\in S\times S}\sum_{\substack{\e\in S^{\La\cup\{t(e)\}}\\ (\eta_{o(e)},\eta_{t(e)})=(\alpha,\beta)}}
		\frac{\nu(\alpha)\nu(\beta)}{\nu(\alpha')\nu(\beta')}\sum_{\substack{\e'\in S^{\La\cup\{t(e)\}}\\\e'|_{\La}=\e^e|_{\La}}}
		\pi^{\La\cup\{t(e)\}}( h- \pi^\La h)^2(\e')\mu(\e')
	\end{align*}
	\begin{align*}
	&\leq  C_{\phi,\nu} \sum_{(\alpha,\beta)\in S\times S}\sum_{\substack{\e\in S^{\La\cup\{t(e)\}}\\ (\eta_{o(e)},\eta_{t(e)})=(\alpha,\beta)}}
		\sum_{\substack{\e'\in S^{\La\cup\{t(e)\}}\\\e'|_{\La}=\e^e|_{\La}}}
		\pi^{\La\cup\{t(e)\}}( h- \pi^\La h)^2(\e')\mu(\e')\\
		&= C_{\phi,\nu} \sum_{\e\in S^{\La\cup\{t(e)\}}}
		\sum_{\substack{\e'\in S^{\La\cup\{t(e)\}}\\\e'|_{\La}=\e^e|_{\La}}}
		\pi^{\La\cup\{t(e)\}}( h- \pi^\La h)^2(\e')\mu(\e')\\
		&\leq C_{\phi,\nu} \abs{S}^2 E_\mu[\pi^{\La\cup\{t(e)\}}( h- \pi^\La h)^2]
		=C_{\phi,\nu} \abs{S}^2 E_\mu[( h- \pi^\La h)^2]\\
		&\leq C_{\phi,\nu} \abs{S}^2 E_\mu[h^2]
		=C_{\phi,\nu} \abs{S}^2 \dabs{h}_\mu^2.
	\end{align*}
	For the third term, we have
	\begin{align*}
		\sum_{(\alpha,\beta)\in S\times S}&
		\sum_{\substack{\e\in S^{\La\cup\{t(e)\}}\\ (\eta_{o(e)},\eta_{t(e)})=(\alpha,\beta)}}
		\frac{1-\nu(\beta)}{\nu(\beta)}\pi^\La( h- \pi^\La h)^2(\e)\mu(\e)\\
		&=
		\sum_{(\alpha,\beta)\in S\times S}
		\sum_{\substack{\e\in S^{\La\cup\{t(e)\}}\\ (\eta_{o(e)},\eta_{t(e)})=(\alpha,\beta)}}
		(1-\nu(\beta))\pi^\La( h- \pi^\La h)^2(\e|_{\La})\mu(\e|_{\La})\\
		&\overset{(*)}{=}
		\sum_{\alpha\in S}
		\sum_{\substack{\e \in S^{\La}\\ \eta_{o(e)}=\alpha}}
		(\abs{S}-1)\pi^\La( h- \pi^\La h)^2(\e)\mu(\e)\\
		&=(\abs{S}-1)E_\mu[\pi^\La( h- \pi^\La h)^2]\leq (\abs{S}-1)E_\mu[h^2]=(\abs{S}-1)\dabs{h}^2_\mu,
	\end{align*}
	where $(*)$ follows from the fact that $\sum_{\beta\in S}(1-\nu(\beta))=\abs{S}-1$.
	Finally, we consider the second term of \eqref{eq:10}.
	By Lemma \ref{lem: diff}, 
	for any $\e\in S^{\La\cup\{t(e)\}}$ such that $(\eta_{o(e)},\eta_{t(e)})=(\alpha,\beta)$, we have
	\begin{align*}
		\bigl(\pi^\La(\Fr^*_{\beta'}\, h)(\e^{e})-\pi^\La(\Fr^*_\beta\,h)(\e)\bigr)^2
		& \le \frac{1}{\nu(\beta)^2\abs{\La''}^2}\Bigl(\pi^\La\Bigl(\sum_{y\in\La''}
		1_{\{\eta_y=\beta\}}(h_{o(e)\arr y}-h)\Bigr)(\e)\Bigr)^2
	\end{align*}
	since if $(\alpha',\beta')=(\alpha,\beta)$, then the left-hand side is $0$. 
	By Schwartz's inequality, 
	\[
	\Bigl| \sum_{y\in\La''}1_{\{\eta_y=\beta\}}(h_{o(e)\arr y}-h)\Bigr| \le \sqrt{ \sum_{y\in\La''}1_{\{\eta_y=\beta\}}} \sqrt{ \sum_{y\in\La''}(h_{o(e)\arr y}-h)^2},
	\]
	so by applying Schwartz's inequality for conditional expectation, we have
	\begin{align*}
	\Bigl(\pi^\La\Bigl(\sum_{y\in\La''}1_{\{\eta_y=\beta\}}(h_{o(e)\arr y}-h)\Bigr)(\e)\Bigr)^2 
	& \leq \pi^\La\Bigl(\sum_{y\in\La''}1_{\{\eta_y=\beta\}}\Bigr)(\e) 
	\pi^\La\Bigl(\sum_{y\in\La''} (h_{o(e)\arr y}-h)^2\Bigr)(\e) \\
	& = |\La''| \nu(\beta) \pi^\La\Bigl( \sum_{y\in\La''}(h_{o(e)\arr y}-h)^2\Bigr) (\e).
	\end{align*}
	Hence
	\begin{align*}
		\sum_{(\alpha,\beta)\in S\times S}&
		\sum_{\substack{\e\in S^{\La\cup\{t(e)\}}\\ (\eta_{o(e)},\eta_{t(e)})=(\alpha,\beta)}}
		\frac{1}{\nu(\beta)^2\abs{\La''}^2}
		\Bigl(\pi^\La\Bigl(\sum_{y\in\La''} 1_{\{\eta_y=\beta\}}(h_{o(e)\arr y}-h)\Bigr)(\e)\Bigr)^2\mu(\e)\\
	       &\leq 
		\frac{1}{\abs{\La''}}
		\sum_{(\alpha,\beta)\in S\times S}\sum_{\substack{\e\in S^{\La\cup\{t(e)\}}\\ (\eta_{o(e)},\eta_{t(e)})=(\alpha,\beta)}}\frac{1}{\nu(\beta)}
		\pi^\La\Bigl(\sum_{y\in\La''}(h_{o(e)\arr y}-h)^2\Bigr)(\e) \mu(\eta) \\
		&=
		\frac{1}{\abs{\La''}}
		\sum_{(\alpha,\beta)\in S\times S}
		\sum_{\substack{\e\in S^{\La}\\ \e_{o(e)}=\alpha}}\sum_{y\in\La''}
		\pi^\La\bigl((h_{o(e)\arr y}-h)^2\bigr)(\e)\mu (\eta) \\
		&= \frac{|S|}{\abs{\La''}}
		\sum_{y\in\La''}\sum_{\alpha\in S}
		\sum_{\substack{\e\in S^{\La}\\ \e_{o(e)}=\alpha}}
		\pi^\La\bigl((h_{o(e)\arr y}-h)^2\bigr)(\e) \mu(\e) \\
		& =  \frac{|S|}{\abs{\La''}}
		\sum_{y\in\La''}
		\dabs{\nabla_{o(e)\arr y}h}^2_\mu.
	\end{align*}
	Applying the above inequalities to \eqref{eq:24}, we see that
	\begin{align*}
		\dabs{\nabe f}_\mu^2&\leq \frac{3}{\abs{\La''}}\Bigl(
		(C_{\phi,\nu} \abs{S}^2+|S|-1) \dabs{h}_\mu^2+ |S|
		\sum_{y\in\La''}
		\dabs{\nabla_{o(e)\arr y}h}^2_\mu \Bigr).
	\end{align*}
	Since $C_{\phi,\nu}$ and $|S|$ depends only on $((S,\phi), \nu)$, 
	we see that if we let $C_{BE}\coloneqq 3(C_{\phi,\nu} \abs{S}^2+|S|)>0$, then $C_{BE}$ satisfies 
	the required property.
	
	The second inequality in the statement is shown in the same way.
\end{proof}

As a last result of this subsection, by assuming that $((S,\phi),\nu)$
has a uniformly bounded spectral gap, we prove the following simple estimate. 

\begin{lemma}\label{lem: sgmp}
If the data $((S,\phi),\nu)$ has a \emph{uniformly bounded spectral gap},
	then for any complete locale $(\La,E_\La)$ and 
	function $h\in C(S^\Lambda)$, we have
	\begin{equation}\label{eq: sg}
		\dabs{h}^2_\La\leq C_{SG}^{-1} \abs{\La}\dabs{\partial_\La h}^2_\sp.
	\end{equation}
\end{lemma}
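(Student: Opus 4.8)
The plan is to combine the definition of the spectral gap $C_{SG,(\Lambda,E_\Lambda)}$ with the crude bound relating the $\ell^2$-sum of edge-differentials to the supremum norm $\dabs{\cdot}_\sp$, together with the edge count of a complete graph.

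First I would note that both sides of the claimed inequality depend on $h\in C(S^\Lambda)$ only through its class in $K^0(S^\Lambda)=C(S^\Lambda)/\Ker\partial_\Lambda$: the quantity $\dabs{h}_\Lambda$ is by definition the norm induced on $K^0(S^\Lambda)$, while $\nabe h$ (and hence $\partial_\Lambda h$ and $\dabs{\partial_\Lambda h}_\sp$) vanishes on $\Ker\partial_\Lambda$, since $f\in\Ker\partial_\Lambda$ precisely when $\nabe f=0$ for every $e\in E_\Lambda$. So it suffices to treat $h\in K^0(S^\Lambda)$, and then Definition \ref{def: fsg}, applied to the complete (hence finite) locale $(\Lambda,E_\Lambda)$ with $\mu=\nu^{\otimes\Lambda}$, gives $\dabs{h}^2_\Lambda\le C_{SG,(\Lambda,E_\Lambda)}^{-1}\sum_{e\in E_\Lambda}\dabs{\nabe h}^2_\mu$.

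Next I would bound each summand by $\dabs{\partial_\Lambda h}^2_\sp=\sup_{e\in E_\Lambda}\dabs{\nabe h}^2_\mu$, so that $\sum_{e\in E_\Lambda}\dabs{\nabe h}^2_\mu\le\abs{E_\Lambda}\dabs{\partial_\Lambda h}^2_\sp$; since the locale is complete, $E_\Lambda=(\Lambda\times\Lambda)\setminus\Delta_\Lambda$ and thus $\abs{E_\Lambda}=\abs{\Lambda}(\abs{\Lambda}-1)\le\abs{\Lambda}^2$. Finally I would invoke the uniformly bounded spectral gap hypothesis of Definition \ref{def: SG}, which yields $C_{SG,(\Lambda,E_\Lambda)}\ge C_{SG}\abs{\Lambda}$, hence $C_{SG,(\Lambda,E_\Lambda)}^{-1}\le C_{SG}^{-1}\abs{\Lambda}^{-1}$. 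Chaining the three estimates gives $\dabs{h}^2_\Lambda\le C_{SG}^{-1}\abs{\Lambda}^{-1}\cdot\abs{\Lambda}^2\cdot\dabs{\partial_\Lambda h}^2_\sp=C_{SG}^{-1}\abs{\Lambda}\dabs{\partial_\Lambda h}^2_\sp$, which is the assertion. There is no genuine obstacle here; the only steps that deserve a line of care are the reduction to $K^0(S^\Lambda)$ and the exact edge count $\abs{E_\Lambda}=\abs{\Lambda}(\abs{\Lambda}-1)$ for a complete graph, both entirely routine.
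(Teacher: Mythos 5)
Your proposal is correct and follows essentially the same route as the paper: apply Definition \ref{def: fsg} to the class of $h$ in $K^0(S^\La)$, replace $C_{SG,(\La,E_\La)}^{-1}$ by $C_{SG}^{-1}\abs{\La}^{-1}$ via Definition \ref{def: SG}, and bound the sum over edges by $\abs{E_\La}\dabs{\partial_\La h}^2_\sp$ with $\abs{E_\La}\le\abs{\La}^2$. The only difference is that you spell out the (routine) reduction to $K^0(S^\La)$, which the paper leaves implicit in ``by definition.''
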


\begin{proof}
By definition, 
\[
\dabs{h}^2_\La\leq C_{SG, (\La,E_{\La})}^{-1} \sum_{e \in E_{\La} } \dabs{\nabla_e h}^2_\mu  \le C_{SG}^{-1}|\La|^{-1} \sum_{e \in E_{\La} } \dabs{\nabla_e h}^2_\mu.
\]
Then, $\dabs{\nabla_e h}^2_\mu \leq \dabs{\partial_\La h}^2_\sp$ for any $e \in E_{\La}$ and $|E_{\La}| \le |\La|^2$ implies the assertion.
\end{proof}

Then, the Moving Particle Lemma gives the following.

\begin{proposition}\label{prop: SG}
	If the data $((S,\phi),\nu)$ has a \emph{uniformly bounded spectral gap},
	then 
	for any $\Sigma\in \sI$ and $h\in C(S^\Sigma)$, we have
	\[
		\dabs{h}^2_\Sigma\leq C_{SG}^{-1} C_{M\!P}\abs{\Sigma}\diam{\Sigma}^2\dabs{\partial_\Sigma h}^2_\sp.
	\]
\end{proposition}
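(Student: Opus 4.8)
The statement to be proven is Proposition \ref{prop: SG}: for any $\Sigma\in\sI$ and $h\in C(S^\Sigma)$,
\[
	\dabs{h}^2_\Sigma\leq C_{SG}^{-1} C_{M\!P}\abs{\Sigma}\diam{\Sigma}^2\dabs{\partial_\Sigma h}^2_\sp.
\]
The idea is to bound $\dabs{h}^2_\Sigma$ by passing to the \emph{complete} locale on the vertex set $\Sigma$ (call it $(\Sigma, E^c_\Sigma)$ with $E^c_\Sigma=(\Sigma\times\Sigma)\setminus\Delta_\Sigma$), applying the uniformly bounded spectral gap there via Lemma \ref{lem: sgmp}, and then converting the differentials $\nabla_{\!x\to y}h$ along the extra (non-adjacent) edges back into $\dabs{\partial_\Sigma h}^2_\sp$ using the Moving Particle Lemma (Lemma \ref{lem: MPL}).

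First I would observe that $K^0(S^\Sigma) = C(S^\Sigma)/\Ker\partial_\Sigma$ and the norm $\dabs{\cdot}_\Sigma$ are defined intrinsically from $(S^\Sigma,\Phi_\Sigma)$ and $\mu$, and that the kernel $\Ker\partial_\Sigma$ is unchanged if we enlarge the edge set: since $(\Sigma,E_\Sigma)$ is connected, a function lies in $\Ker\partial_\Sigma$ iff it is constant on each connected component of the configuration graph, and the connected components of $(S^\Sigma,\Phi_\Sigma)$ and of the configuration graph for the complete locale coincide (adding edges $\nabla_{\!x\to y}$ does not create new transitions: each $\s^{x\to y}$ is reachable from $\s$ by a path in $(S^\Sigma,\Phi_\Sigma)$, as used in the proof of Lemma \ref{lem: MPL}). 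Hence $\dabs{h}_\Sigma$ is the same whether computed for $(\Sigma,E_\Sigma)$ or for the complete locale. Now apply Lemma \ref{lem: sgmp} to the complete locale $(\Sigma, E^c_\Sigma)$, which is legitimate since a complete locale is finite:
\[
	\dabs{h}^2_\Sigma \leq C_{SG}^{-1}\abs{\Sigma}\,\dabs{\partial^c_\Sigma h}^2_\sp
	= C_{SG}^{-1}\abs{\Sigma}\,\sup_{x\neq y\in\Sigma}\dabs{\nabla_{\!x\to y}h}^2_\mu,
\]
where $\partial^c_\Sigma$ is the differential on the complete locale, whose edge-components are exactly the $\nabla_{\!x\to y}h$ for $x\neq y$ (up to the alternating sign convention, which does not affect norms).

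Next, for each pair $x\neq y$ in $\Sigma$, the Moving Particle Lemma (Lemma \ref{lem: MPL}) applied to the \emph{original} finite locale $(\Sigma,E_\Sigma)$ gives
\[
	\dabs{\nabla_{\!x\to y}h}^2_\mu \leq C_{M\!P}\diam{\Sigma}^2\,\dabs{\partial_\Sigma h}^2_\sp,
\]
where the right-hand side is independent of $x,y$. Taking the supremum over $x\neq y$ and substituting into the previous display yields
\[
	\dabs{h}^2_\Sigma \leq C_{SG}^{-1}\abs{\Sigma}\,C_{M\!P}\diam{\Sigma}^2\,\dabs{\partial_\Sigma h}^2_\sp,
\]
which is exactly the claimed inequality. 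I expect the only genuinely delicate point to be the justification that passing to the complete locale does not change $\dabs{h}_\Sigma$ — i.e. that $\Ker\partial_\Sigma$ (and hence the quotient norm) is insensitive to adding the non-adjacent transition edges; everything else is a direct concatenation of Lemma \ref{lem: sgmp} and Lemma \ref{lem: MPL}. If one prefers to avoid that subtlety, an alternative is to bound $\dabs{h}_\Sigma$ directly: for $h\in(\Ker\partial_\Sigma)^\perp$ one has $\dabs{h}_\Sigma = \dabs{h}_\mu$, and one can instead invoke the uniformly bounded spectral gap on the complete locale only to extract the inequality $\dabs{h}^2_\Sigma \le C_{SG}^{-1}|\Sigma|^{-1}\sum_{x\neq y}\dabs{\nabla_{\!x\to y}h}^2_\mu$ together with $|E^c_\Sigma|\le|\Sigma|^2$, then apply Lemma \ref{lem: MPL} termwise; this is the same computation organized slightly differently and still hinges on the component-coincidence observation.
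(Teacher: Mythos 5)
Your proposal is correct and follows essentially the same route as the paper's proof: pass to the complete locale $\wh\Sigma$ on the vertex set $\Sigma$, apply Lemma \ref{lem: sgmp} there, and convert the resulting $\sup_{x,y\in\Sigma}\dabs{\nabla_{\!x\arr y}h}^2_\mu$ back into $\dabs{\partial_\Sigma h}^2_\sp$ via the Moving Particle Lemma applied to the original locale $(\Sigma,E_\Sigma)$. The only place you work harder than necessary is the claim that the quotient norm is \emph{unchanged} when passing to the complete locale: the paper uses only the trivial inclusion $\Ker\partial_{\wh\Sigma}\subset\Ker\partial_\Sigma$, which already gives the one-sided bound $\dabs{h}_\Sigma\leq\dabs{h}_{\wh\Sigma}$ that the argument actually needs, so the point you flag as delicate can be bypassed entirely. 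Moreover, your justification of the component coincidence by appeal to the proof of Lemma \ref{lem: MPL} is not quite right, since that proof moves through exchanges $\s^{x,y}$, which for a general interaction are not transitions in $\Phi_\Sigma$; the correct source for that (unneeded) equality would be the standing irreducibly quantified assumption, which connects $\s$ and $\s^{x\arr y}$ inside $(S^\Sigma,\Phi_\Sigma)$ because they share the same conserved quantities, provided $(\Sigma,E_\Sigma)$ is connected.
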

\begin{proof}
	For the locale $(\Sigma,E_\Sigma)$, let 
	$\wh\Sigma\coloneqq\Sigma$ and $E_{\wh\Sigma}\coloneqq(\Sigma\times\Sigma)\setminus\Delta_\Sigma$.
	Then $(\wh\Sigma, E_{\wh\Sigma})$ by construction is a complete graph.
	Note that $\Ker\partial_{\wh\Sigma}\subset\Ker\partial_\Sigma$, hence we have $\dabs{h}_{\Sigma}\leq
	\dabs{h}_{\wh\Sigma}$ for any $h\in C(S^\Sigma)$.
	By Lemma \ref{lem: sgmp} and the definition of the 
	norm $\dabs{\cdot}_\sp$, we have
	\begin{align*}
		\dabs{h}_{\Sigma}\leq\dabs{h}_{\wh\Sigma}&\leq
		C_{SG}^{-1}\abs{\wh\Sigma}\biggl(\sup_{e\in\wh\Sigma}\,\Dabs{(\partial_{\wh\Sigma}h)_e}^2_\mu\biggr)
		=C_{SG}^{-1}\abs{\Sigma}\biggl(\sup_{x,y\in\Sigma}\dabs{\nabla_{\!x\arr y}h}^2_\mu\biggr).
	\end{align*}
	Here, we have used the fact that $\nabla_{\!x\arr y}h=0$ if $x=y$.
	Moreover, by the Moving Particle Lemma (Lemma \ref{lem: MPL}), we have
	\[
		\dabs{h}_{\Sigma}\leq C_{SG}^{-1}\abs{\Sigma}\Bigl(C_{M\!P}\diam{\Sigma}^2\dabs{\partial_\Sigma h}^2_\sp\Bigr)
		=C_{SG}^{-1} C_{M\!P}\abs{\Sigma}\diam{\Sigma}^2\dabs{\partial_\Sigma h}^2_\sp.
	\]
\end{proof}

\begin{remark}
The assumption that $((S,\phi),\nu)$ has a uniformly bounded spectral gap 
is essentially used only Proposition \ref{prop: SG} (and Lemma \ref{lem: sgmp} for this proposition). 
Namely, if the inequality asserted by Proposition \ref{prop: SG} holds,
then the results in the rest of the article proved assuming the existence of a
uniformly bounded spectral gap are also valid, including our main theorem. 
In the literature of the nongradient method, the condition on the spectral gap is typically given as
an assertion of the form of Proposition \ref{prop: SG} 
for the sequence $\{\La_n\}$ of Definition \ref{def: Euclid}. 
\end{remark}

As a conclusion of this subsection, we introduce the boundary differential and give a bound of it, which is crucial in the next section. 

For any $f\in C^0_\col(S^X_\mu)$ and $\La\in\sI$, we define the \emph{boundary differential} by
\begin{equation}\label{eq: boundary}
	\partial^\dagger_\La f\coloneqq \partial(\pi^\La f) - \partial_{\La}(\pi^\La f).
\end{equation}
Since $(\partial(\pi^\La f))_e=(\partial_{\La}(\pi^\La f))_e$ for any $e\in E_\La$
and $(\partial(\pi^\La f))_e=(\partial_{\La}(\pi^\La f))_e=0$ for $e\in E_{X \setminus\La}$,
we have $(\partial^\dagger_\La f)_e\neq 0$ only if $e\in\partial\La\cup\ol{\partial\La}$.
Note that for $f\in \Ker\partial$, we have $\pi^\La f\in\Ker\partial_\La$.
However, in general, $\partial^\dagger_\La f= \partial(\pi^\La f)\neq0$,
since in general, we have $\Ker\partial_\La\not\subset\Ker\partial$.


\begin{proposition}\label{prop:b2}
	Suppose the interaction $(S,\phi)$ is irreducibly quantified and  has a \emph{uniformly bounded spectral gap}.
	Then there exists a constant $C>0$ depending only on $((S,\phi),\nu)$ such that
	for any $\La,\Sigma\in\sI$ satisfying $\La\subset\Sigma$, $\partial \La \subset E_{\Sigma}$ and 
	$h\in(\Ker\partial_\Sigma)^\perp$, we have
	\[
		\dabs{\partial^\dagger_{\La}h}^2_\sp\leq C\frac{\abs{\Sigma}}{\abs{\Sigma\setminus\La}}\diam{\Sigma}^2
		\dabs{\partial_{\Sigma}h}^2_\sp.
	\]
\end{proposition}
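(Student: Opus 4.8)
The plan is to bound $\dabs{\partial^\dagger_\La h}_\sp$ edge by edge. Recall from \eqref{eq: boundary} that $(\partial^\dagger_\La h)_e\neq 0$ only when $e\in\partial\La\cup\ol{\partial\La}$, and for such $e$ we have $(\partial^\dagger_\La h)_e=(\partial(\pi^\La h))_e=\nabe(\pi^\La h)$ since $e\notin E_\La$ (so the $\partial_\La(\pi^\La h)$ contribution vanishes on that edge). Thus $\dabs{\partial^\dagger_\La h}_\sp = \sup_{e\in\partial\La}\max\{\dabs{\nabe(\pi^\La h)}_\mu, \dabs{\nabla_{\bar e}(\pi^\La h)}_\mu\}$, and it suffices to bound each $\dabs{\nabe(\pi^\La h)}_\mu^2$ and $\dabs{\nabla_{\bar e}(\pi^\La h)}_\mu^2$ for $e\in\partial\La\cap E_\Sigma$ uniformly.

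First I would apply the Boundary Estimate (Proposition \ref{prop: BE}) with the given $\La\subset\Sigma$: for $e\in\partial\La\cap E_\Sigma$,
\[
	\dabs{\nabe(\pi^\La h)}_\mu^2 \leq \frac{C_{BE}}{\abs{\Sigma\setminus\La}}\Bigl(\dabs{h}_\mu^2 + \sum_{y\in\Sigma\setminus\La}\dabs{\nabla_{o(e)\arr y}h}_\mu^2\Bigr),
\]
and symmetrically for $\nabla_{\bar e}(\pi^\La h)$ with the sum over $x\in\Sigma\setminus\La$ of $\dabs{\nabla_{x\arr t(e)}h}_\mu^2$. Next I would control the two ingredients on the right. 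For the differentials $\dabs{\nabla_{o(e)\arr y}h}_\mu^2$, the Moving Particle Lemma (Lemma \ref{lem: MPL}) applied inside the locale $(\Sigma, E_\Sigma)$ gives $\dabs{\nabla_{o(e)\arr y}h}_\mu^2 \leq C_{M\!P}\diam{\Sigma}^2\dabs{\partial_\Sigma h}_\sp^2$ for each $y\in\Sigma$; summing over the at most $\abs{\Sigma\setminus\La}$ terms yields $\sum_{y\in\Sigma\setminus\La}\dabs{\nabla_{o(e)\arr y}h}_\mu^2 \leq \abs{\Sigma\setminus\La}\,C_{M\!P}\diam{\Sigma}^2\dabs{\partial_\Sigma h}_\sp^2$. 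For the term $\dabs{h}_\mu^2$, here is where the hypothesis $h\in(\Ker\partial_\Sigma)^\perp$ and the uniformly bounded spectral gap enter: on that orthogonal complement $\dabs{h}_\mu=\dabs{h}_\Sigma$, so Proposition \ref{prop: SG} gives $\dabs{h}_\mu^2=\dabs{h}_\Sigma^2 \leq C_{SG}^{-1}C_{M\!P}\abs{\Sigma}\diam{\Sigma}^2\dabs{\partial_\Sigma h}_\sp^2$.

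Combining these, the factor $\frac{1}{\abs{\Sigma\setminus\La}}$ from Proposition \ref{prop: BE} cancels against the $\abs{\Sigma\setminus\La}$ from the summed Moving Particle bound, while the $\dabs{h}_\mu^2$ term contributes $\frac{C_{BE}}{\abs{\Sigma\setminus\La}}\cdot C_{SG}^{-1}C_{M\!P}\abs{\Sigma}\diam{\Sigma}^2\dabs{\partial_\Sigma h}_\sp^2$, which carries the advertised factor $\frac{\abs{\Sigma}}{\abs{\Sigma\setminus\La}}$. Since $\abs{\Sigma}\geq\abs{\Sigma\setminus\La}$ (so the first contribution is also $\leq \frac{\abs{\Sigma}}{\abs{\Sigma\setminus\La}}$ times something), all terms are bounded by $C\frac{\abs{\Sigma}}{\abs{\Sigma\setminus\La}}\diam{\Sigma}^2\dabs{\partial_\Sigma h}_\sp^2$ for a constant $C$ built from $C_{BE}$, $C_{M\!P}$, and $C_{SG}$, all of which depend only on $((S,\phi),\nu)$. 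Taking the supremum over $e\in\partial\La$ (and over $e$ versus $\bar e$) gives the claim. I do not expect a serious obstacle: the only subtlety is being careful that Proposition \ref{prop: SG} is legitimately applicable to $h\in C(S^\Sigma)$ with the norm $\dabs{\cdot}_\Sigma$, which equals $\dabs{\cdot}_\mu$ precisely on $(\Ker\partial_\Sigma)^\perp$ — exactly the hypothesis imposed — so the spectral gap estimate converts $\dabs{h}_\mu^2$ into a multiple of $\dabs{\partial_\Sigma h}_\sp^2$ as needed.
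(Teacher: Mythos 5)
Your proposal is correct and follows essentially the same route as the paper's proof: bound $(\partial^\dagger_\La h)_e$ edge by edge on $\partial\La\cup\ol{\partial\La}$ via the Boundary Estimate (Proposition \ref{prop: BE}), control the summed terms $\dabs{\nabla_{o(e)\arr y}h}^2_\mu$ by the Moving Particle Lemma and the term $\dabs{h}^2_\mu=\dabs{h}^2_\Sigma$ by the spectral gap estimate (Proposition \ref{prop: SG}), and combine, yielding the constant $C=C_{BE}C_{M\!P}(C_{SG}^{-1}+1)$ exactly as in the paper.
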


\begin{proof}
	Suppose $h\in(\Ker\partial_\Sigma)^\perp$.  Then $\dabs{h}_\Sigma=\dabs{h}_\mu$.
	By the boundary estimate Proposition \ref{prop: BE} and 
	the spectral gap estimate Proposition \ref{prop: SG}, for any $e\in\partial\La$, we have
	\begin{align*}
		\dabs{\nabe(\pi^{\La}h)}^2_\mu&\leq\frac{C_{BE}}{|\Sigma\setminus\La|}
		\Bigl(\dabs{h}_\Sigma^2+\sum_{y\in\Sigma\setminus\La}\dabs{\nabla_{o(e)\arr y}h}^2_\mu\Bigr)\\
		&\leq\frac{C_{BE}}{|\Sigma\setminus\La|}
		\Bigl(C_{SG}^{-1} C_{M\!P}\abs{\Sigma}\diam{\Sigma}^2\dabs{\partial_\Sigma h}^2_\sp
		+\sum_{y\in\Sigma\setminus\La}\dabs{\nabla_{o(e)\arr y}h}^2_\mu\Bigr).
	\end{align*}
	Combining this with the Moving Particle Lemma (Lemma \ref{lem: MPL}), we see that
	\begin{align*}
		\dabs{\nabe(\pi^{\La}h)}^2_\mu&\leq
		\frac{C_{BE}}{|\Sigma\setminus\La|}
		\Bigl(C_{SG}^{-1} C_{M\!P}\abs{\Sigma}\diam{\Sigma}^2\dabs{\partial_\Sigma h}^2_\sp
		+C_{M\!P}\abs{\Sigma\setminus\La} \diam{\Sigma}^2\dabs{\partial_\Sigma h}^2_\sp\Bigr)\\
		&\leq C\frac{\abs{\Sigma}}{\abs{\Sigma\setminus\La}}\diam{\Sigma}^2
		\dabs{\partial_{\Sigma}h}^2_\sp
	\end{align*}
	for some constant $C>0$.  More precisely, noting that $\abs{\Sigma\setminus\La}\leq\abs{\Sigma}$,
	we may take $C\coloneqq C_{BE}C_{M\!P}(C_{SG}^{-1}+1)>0$.  
	The same equality also holds for $e\in\ol{\partial\La}$.
	Our assertion follows from this inequality and the definition of $\partial^\dagger_\La$.
\end{proof}

%
%
\subsection{Criterion for Locality}\label{subsec: local}
%
%

In this subsection, we give a criterion for a $L^2$-function to be a local function.
This result will be used in the proof of Proposition \ref{prop: local}.
We still work over a general locale $(X,E)$.
For any $Y\subset X$, we define an \emph{approximation} $\La_n\uparrow Y$ to be a
family $\{\La_n\}_{n\in\N}$ of finite subsets of $Y$ satisfying $\La_n\subset\La_{n+1}$
for any $n\in\N$ and $Y=\bigcup_{n\in\N}\La_n$.

\begin{proposition}\label{prop: main}
 	Let $\La\in\sI$, and let $Y\subset X$ be an infinite sub-locale such that $\La\cap Y=\emptyset$.
	For any $f\in L^2(\mu)$, suppose that $f$ is $\cF_{\La\cup Y}$-measurable,
	and we have
	\[
		\nabla_e(\pi^{\La\cup\La'}f) =0
	\]
	for any finite $\La'\subset Y$ and
$e\in E_{\La'}$. Then we have $\pi^\La f=f$.  In particular, $f$ is a local function.
 \end{proposition}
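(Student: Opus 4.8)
The plan is to show that the function $f$, which a priori depends on the (infinitely many) coordinates in $\La\cup Y$, actually does not depend on any coordinate in $Y$, so that $\pi^\La f = f$. The hypothesis $\nabla_e(\pi^{\La\cup\La'}f)=0$ for all $e\in E_{\La'}$ says that each conditional expectation $\pi^{\La\cup\La'}f$ is, as a function of the $Y$-coordinates in $\La'$, constant along every transition internal to $\La'$. First I would invoke the fact that $(S,\phi)$ is irreducibly quantified: for a finite sublocale $(\La',E_{\La'})$ of $Y$, two configurations on $\La'$ with the same conserved quantities are joined by a path of transitions, so $\pi^{\La\cup\La'}f$, viewed as a function on $S^{\La}\times S^{\La'}$, depends on the $\La'$-part only through the conserved quantities $\sum_{y\in\La'}\xi^{(i)}(\eta_y)$, $i=1,\ldots,c_\phi$. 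Equivalently, $\pi^{\La\cup\La'}f$ is measurable with respect to the $\sigma$-algebra $\cF_\La\otimes\cG_{\La'}$, where $\cG_{\La'}$ is generated by those finitely many conserved-quantity sums on $\La'$.

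The key step is then a concentration/averaging argument showing that, as $\La'$ grows to exhaust $Y$, the dependence on these conserved-quantity sums washes out. Concretely I would take an increasing sequence $\La'_m\uparrow Y$ of finite connected subsets and consider $g_m\coloneqq\pi^{\La\cup\La'_m}f$. By the martingale convergence theorem (as used earlier in the excerpt to identify $L^2(\mu)$ with $C_{L^2}(S^X_\mu)$), $g_m\to f$ in $L^2(\mu)$ since $f$ is $\cF_{\La\cup Y}$-measurable and $\bigcup_m(\La\cup\La'_m)$ generates $\cF_{\La\cup Y}$. Each $g_m$ is a function of the $\La$-coordinates and of the random variables $\Xi^{(i)}_m\coloneqq\sum_{y\in\La'_m}\xi^{(i)}(\eta_y)$. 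Because $\mu=\nu^{\otimes X}$ is a product measure, for fixed values of the $\La$-coordinates the vector $(\Xi^{(1)}_m,\ldots,\Xi^{(c_\phi)}_m)$ is a sum of $|\La'_m|$ i.i.d.\ bounded random variables, hence satisfies a law of large numbers: after centering and without rescaling, it concentrates. I would argue that the conditional law of the increment $\Xi^{(i)}_{m+1}-\Xi^{(i)}_m$ relative to $\Xi^{(i)}_m$ forces $g_m$ to be asymptotically insensitive to shifting the conserved-quantity values by a bounded amount; passing to the limit this shows $f$ is $\cF_\La$-measurable, i.e.\ $\pi^\La f=f$.

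An alternative, and I think cleaner, route to the same conclusion: fix any single vertex $y_0\in Y$ and any $e\in E$ with $o(e)=y_0$ and $t(e)\in Y$ (such $e$ exists since $Y$ is an infinite, hence not one-point, sublocale and is connected). Apply the hypothesis with $\La'=\{y_0,t(e),\dots\}$ to deduce $\nabla_e(\pi^{\La\cup\La'}f)=0$; combining this with irreducible quantification on larger and larger $\La'$ lets me move the "excess'' conserved quantity at $y_0$ off to infinity inside $Y$. Since $f\in L^2(\mu)$ and the tail $\sigma$-algebra of the product measure is trivial (Kolmogorov $0$--$1$ law), any such infinite displacement of conserved quantity cannot change $f$, forcing $f$ to be independent of $\eta_{y_0}$. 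Running this over all $y_0\in Y$ gives $\pi^\La f=f$; since $\La$ is finite, $f$ is a local function.

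The main obstacle I anticipate is the last step in either route: rigorously converting "the conserved quantities in $Y$ can be pushed to infinity" into "$f$ does not depend on them". The clean way is via triviality of the tail $\sigma$-algebra, but one must be careful that the conditional-expectation identities $\nabla_e(\pi^{\La\cup\La'}f)=0$, which only constrain $f$ after averaging over $X\setminus(\La\cup\La')$, genuinely combine across all finite $\La'\subset Y$ to yield an invariance of $f$ itself and not merely of its finite-dimensional projections. Handling this interchange of limits — martingale convergence $g_m\to f$ together with the exactness-along-transitions of each $g_m$ and the law of large numbers for $\Xi^{(i)}_m$ — is where the real work lies; the irreducible-quantification input and the product structure of $\mu$ are exactly the two ingredients that make it go through.
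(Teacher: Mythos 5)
You have the right first half: using irreducible quantification to conclude that each $\pi^{\La\cup\La'}f$ depends on the $\La'$-coordinates only through the conserved-quantity sums $\bsxi_{\La'}$ is exactly the paper's Lemma \ref{lem: check1}, and the forward martingale convergence $\pi^{\La\cup\La'_m}f\rightarrow f$ is fine. The gap is in the step you yourself flag as ``where the real work lies'': neither of your two mechanisms for washing out the dependence on the conserved quantities actually works. In your first route, the claim that $\Xi^{(i)}_m=\sum_{y\in\La'_m}\xi^{(i)}(\eta_y)$ ``concentrates after centering and without rescaling'' is false — its fluctuations grow like $\sqrt{|\La'_m|}$ — so no law-of-large-numbers concentration is available at the level you need, and the subsequent ``asymptotic insensitivity'' of $g_m$ is never established. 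In your second route, the appeal to triviality of the tail $\sigma$-algebra (Kolmogorov) targets the wrong $\sigma$-algebra: the object one must show is trivial is $\cG_\infty=\bigcap_n\sigma(\bsxi_{\La_n},\cF_{Y\setminus\La_n})$, and this is \emph{not} contained in the tail $\sigma$-algebra (e.g.\ for $S=\{0,1\}$, $\xi(s)=s$, the event that $\bsxi_{\La_n}$ is even for infinitely many $n$ lies in $\cG_\infty$ but is not a tail event, since flipping one coordinate changes it). What it \emph{is} contained in is the exchangeable $\sigma$-algebra of the i.i.d.\ coordinates $(\pr_x)_{x\in Y}$, because each $\bsxi_{\La_n}$ is invariant under permutations of $\La_n$; the correct tool is therefore the Hewitt--Savage $0$--$1$ law, not Kolmogorov's.

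For comparison, the paper closes the argument in dual form: it suffices to show $\pair{f,g}=\pair{f,\pi^\La g}$ for every local $g$; using your (correct) measurability step one gets $\pair{f,g}=\pair{f,E[g\mid\sigma(\cF_\La,\cG_n)]}$ for large $n$, and then the backward martingale convergence theorem gives $E[g\mid\sigma(\cF_\La,\cG_n)]\rightarrow E[g\mid\sigma(\cF_\La,\cG_\infty)]$, which equals $E[g\mid\cF_\La]$ precisely because $\cG_\infty$ is $\mu$-trivial by Hewitt--Savage. Pairing against a fixed local $g$ and running a \emph{backward} martingale argument on the conditioning $\sigma$-algebras avoids having to control the functions $\pi^{\La\cup\La'_m}f$ themselves, which is where your interchange-of-limits worry arises. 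If you replace your concentration/Kolmogorov steps by this exchangeability argument, your outline becomes essentially the paper's proof.
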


We will give the proof of Proposition \ref{prop: main} at the end of this section.
We first review the $0$-$1$ Law of Hewitt and Savage for exchangeable $\sigma$-algebra. 
For any countable set $I$, we say that $\varrho$ is a \emph{finite permutation of $I$},
if $\varrho\colon I\rightarrow I$ is a bijection such that the set $\{x\in I\mid \varrho(x)\neq x\}$
is a finite set.  We denote by $\frS_I$ the set of finite permutations of $I$.
Let $(\Omega,\cF,P)$ be a probability space.
For a fixed $I$, let $(q_x)_{x\in I}$ be a family of independent and identically 
distributed random variables on $\Omega$ parametrized by $I$ with values in a measure
space $E$.  
We denote by $\sigma((q_x)_{x\in I})$ the sub $\sigma$-algebra of $\cF$
generated by $(q_x)_{x\in I}$.   By definition, for any $A\in \sigma((q_x)_{x\in I})$, there exists
measurable $B\subset E^I$ such that $A=\{ (q_x)\in B\}$.
For any finite permutation $\varrho\in\frS_I$, we let $A^\varrho\coloneqq\{(q_{\varrho(x)})\in B\}$.
We define the \emph{exchangeable $\sigma$-algebra} $\mathfrak{E}$ of $\sigma((q_x)_{x\in I})$
by
\begin{equation}\label{eq: exchangeable}
	\mathfrak{E}\coloneqq\{  A\in \sigma((q_x)_{x\in I})\mid A^\varrho=A\,\forall\varrho\in\frS_I\}.
\end{equation}

We say that a $\sigma$-algebra $\cF'$ is \emph{trivial for a probability measure $P$},
or simply \emph{$P$-trivial}, if  we have $P(A)\in\{0,1\}$ for any $A\in\cF'$.
The following Corollary \ref{cor: HS} 
is a key for our proof of Proposition \ref{prop: main}. 
See for example \cite{Kle14}*{Corollary 12.19} for a proof.

\begin{corollary}[$0$-$1$ Law of Hewitt and Savage]\label{cor: HS}
	Let $I$ be a countable set and let $(q_x)_{x\in I}$ be a family of independent and identically 
	distributed random variables on $(\Omega,\cF,P)$ parametrized by $I$.
	Then the exchangeable $\sigma$-algebra $\mathfrak{E}$ of \eqref{eq: exchangeable} is $P$-trivial. 
\end{corollary}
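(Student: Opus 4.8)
The plan is to prove the Hewitt--Savage law by the classical \emph{self-independence} argument: one shows that every exchangeable event $A$ is independent of itself, so $\mu(A)=\mu(A)^2$ and hence $\mu(A)\in\{0,1\}$. The argument uses that $I$ is infinite, which is the only case relevant here (in the application to Proposition \ref{prop: main} the index set parametrizes the vertices of an infinite sublocale). First I would record the measure-preservation underlying everything: since the $v_x$ are i.i.d., for any finite permutation $\varrho\in\frS_I$ the family $(v_{\varrho(x)})_{x\in I}$ has the same law as $(v_x)_{x\in I}$, so $\mu(C^\varrho)=\mu(C)$ for every $C\in\sigma((v_x)_{x\in I})$, and moreover $(C\triangle C')^\varrho=C^\varrho\triangle (C')^\varrho$ directly from the definition of $(\cdot)^\varrho$.

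Next I would invoke the standard cylinder-approximation theorem for product $\sigma$-algebras. The events depending on only finitely many of the $v_x$ form an algebra generating $\sigma((v_x)_{x\in I})$, and the collection of events approximable by such cylinders in $\mu$-symmetric difference is a $\sigma$-algebra; hence for every $A\in\mathfrak{E}$ and every $\varepsilon>0$ there is a finite $J\subset I$ and an event $A_\varepsilon$ measurable with respect to $(v_x)_{x\in J}$ such that $\mu(A\triangle A_\varepsilon)<\varepsilon$. Since $I$ is infinite, I can choose $\varrho\in\frS_I$ with $\varrho(J)\cap J=\emptyset$. Then $A_\varepsilon^\varrho$ is measurable with respect to $(v_y)_{y\in\varrho(J)}$, so by independence of the $v_x$ and the measure-preservation above, $\mu(A_\varepsilon\cap A_\varepsilon^\varrho)=\mu(A_\varepsilon)\,\mu(A_\varepsilon^\varrho)=\mu(A_\varepsilon)^2$. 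Also, since $A\in\mathfrak{E}$ gives $A^\varrho=A$, the measure-preservation yields $\mu(A\triangle A_\varepsilon^\varrho)=\mu(A^\varrho\triangle A_\varepsilon^\varrho)=\mu((A\triangle A_\varepsilon)^\varrho)<\varepsilon$.

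Finally I would combine these with elementary symmetric-difference estimates. Writing $A=A\cap A$ and using $(A\cap A)\triangle(A_\varepsilon\cap A_\varepsilon^\varrho)\subseteq(A\triangle A_\varepsilon)\cup(A\triangle A_\varepsilon^\varrho)$ gives $|\mu(A)-\mu(A_\varepsilon\cap A_\varepsilon^\varrho)|<2\varepsilon$, while $|\mu(A_\varepsilon)^2-\mu(A)^2|\le 2\,\mu(A\triangle A_\varepsilon)<2\varepsilon$. Therefore $|\mu(A)-\mu(A)^2|\le|\mu(A)-\mu(A_\varepsilon\cap A_\varepsilon^\varrho)|+|\mu(A_\varepsilon)^2-\mu(A)^2|<4\varepsilon$, and letting $\varepsilon\downarrow 0$ forces $\mu(A)=\mu(A)^2$, i.e. $\mu(A)\in\{0,1\}$.

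The genuinely delicate point — and the one place infiniteness of $I$ enters — is the selection of a finite permutation $\varrho$ that carries the (growing, as $\varepsilon\to0$) approximating block $J$ entirely off itself; once that is available, the cylinder-approximation theorem and the bookkeeping with symmetric differences are routine, and I would cite the approximation theorem as standard (e.g. along the lines of \cite{Kle14}) rather than reprove it.
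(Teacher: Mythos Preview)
Your argument is the standard self-independence proof and is correct. The paper does not actually prove this corollary at all: it simply states it and refers the reader to \cite{Kle14}*{Corollary 12.19}, so you have supplied a proof where the paper gives none, and the route you chose is precisely the textbook one.
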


We now return to the case where our probability space is $(S^X,\cF_X,\mu)$.
Suppose $Y\subset X$ is an infinite locale, and consider an approximation $\La_n\uparrow Y$. Fix a basis $\xi^{(1)}, \ldots, \xi^{(c_{\phi})}$ of $\Consv^{\phi}(S)$.
For any $n\in\N$, we have a measurable map $\bsxi_{\!\La_n}\colon S^X\rightarrow \bbR^{c_{\phi}}$ given by
\[
\eta \mapsto \left(\sum_{x \in \La_n}\xi^{(1)}(\eta_x), \ldots,\sum_{x \in \La_n}\xi^{(c_{\phi})}(\eta_x) \right).
\]
We let
\begin{equation}\label{eq: Gn}
	\cG_n\coloneqq \sigma\bigl(\bsxi_{\!\La_n}, \cF_{Y\setminus\La_n}\bigr)  
	\subset\cF_X
\end{equation}
be the $\sigma$-algebra generated by the measurable functions
$\bsxi_{\!\La_n}$ and $\cF_{Y\setminus\La_n}$.
Then we have a natural inclusion $\cG_n\supset\cG_{n+1}$ 
for any $n\in\N$.

In order to give the proof of Proposition \ref{prop: main},
we first prove the following lemma.

\begin{lemma}\label{lem: converge}
	Let $g\in C(S^{\La\cup\Sigma})$, such that $\La,\Sigma\in\sI$ and $\La\cap\Sigma=\emptyset$.
	We let $Y$ be an infinite locale such that $\La\cap Y=\emptyset$ and $\Sigma\subset Y$.
	Consider an approximation $\La_n\uparrow Y$.  Then we have
	\[
		\lim_{n\rightarrow\infty}E[g|\sigma(\cF_\La,\cG_n)]=E[g|\cF_\La]
	\]
	in $L^2$, where $\cG_n\coloneqq\sigma\bigl(\bsxi_{\!\La_n},\cF_{Y\setminus\La_n}\bigr)$ 
	as in \eqref{eq: Gn}.
\end{lemma}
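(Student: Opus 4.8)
The plan is to deduce the statement from the backward martingale convergence theorem applied to the decreasing filtration $\sigma(\cF_\La,\cG_n)$, combined with the Hewitt--Savage $0$-$1$ Law (Corollary~\ref{cor: HS}). First I would observe that $\sigma(\cF_\La,\cG_n)\supset\sigma(\cF_\La,\cG_{n+1})$ for all $n$, since $\cG_n\supset\cG_{n+1}$: enlarging $\La_n$ replaces the $\sigma$-algebra generated by the individual coordinates $(\eta_x)_{x\in\La_{n+1}\setminus\La_n}$ together with the aggregate $\bsxi_{\!\La_n}$ by the strictly coarser one generated by $\bsxi_{\!\La_{n+1}}$ alone, and keeps $\cF_{Y\setminus\La_{n+1}}\subset\cF_{Y\setminus\La_n}$. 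Hence $\{E[g\mid\sigma(\cF_\La,\cG_n)]\}_n$ is a backward (reverse) martingale, which converges in $L^2$ (and a.s.) to $E[g\mid\cG_\infty]$, where $\cG_\infty\coloneqq\bigcap_{n}\sigma(\cF_\La,\cG_n)$. So the whole statement reduces to identifying the tail $\sigma$-algebra: it suffices to show $E[g\mid\cG_\infty]=E[g\mid\cF_\La]$, for which it is enough to show that $\cG_\infty$ and $\cF_\La$ agree modulo $\mu$-null sets when restricted to the relevant coordinates, i.e.\ that conditionally on $\cF_\La$, the tail $\bigcap_n\cG_n$ is $\mu$-trivial.

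The key step is therefore: conditionally on $\cF_\La$ (equivalently, since $\mu$ is the product measure $\nu^{\otimes X}$ and $\La\cap Y=\emptyset$, working with the product measure $\nu^{\otimes Y}$ on $S^Y$), the $\sigma$-algebra $\bigcap_{n}\cG_n = \bigcap_n\sigma(\bsxi_{\!\La_n},\cF_{Y\setminus\La_n})$ is $\nu^{\otimes Y}$-trivial. This is where Hewitt--Savage enters. For a fixed event $A\in\bigcap_n\cG_n$ and any finite permutation $\varrho$ of $Y$, I would choose $n$ large enough that the support of $\varrho$ is contained in $\La_n$; then $A\in\cG_n$, and since $\bsxi_{\!\La_n}=\sum_{x\in\La_n}\xi^{(i)}(\eta_x)$ is a symmetric function of the coordinates in $\La_n$ while $\cF_{Y\setminus\La_n}$ is untouched by $\varrho$, the event $A$ is invariant under $\varrho$: $A^\varrho=A$. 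As $n$ and $\varrho$ were arbitrary (with $\varrho$ ranging over all finite permutations of $Y$), $A$ lies in the exchangeable $\sigma$-algebra $\mathfrak{E}$ of the i.i.d.\ family $(\eta_x)_{x\in Y}$ under $\nu^{\otimes Y}$. By Corollary~\ref{cor: HS}, $\mathfrak{E}$ is $\nu^{\otimes Y}$-trivial, so $\mu(A\mid\cF_\La)\in\{0,1\}$ a.s. Since $g\in C(S^{\La\cup\Sigma})$ is a bounded measurable function, this triviality gives $E[g\mid\cG_\infty]=E[g\mid\sigma(\cF_\La,\bigcap_n\cG_n)]=E[g\mid\cF_\La]$, completing the argument.

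The main obstacle I anticipate is the bookkeeping in the tail-triviality step: one must be careful that $\bigcap_n\cG_n$ really does sit inside the exchangeable $\sigma$-algebra, rather than merely some ``finite-dimensional aggregate tail.'' The subtlety is that each $\cG_n$ retains the aggregate information $\bsxi_{\!\La_n}$, which is \emph{not} lost in the tail — but it \emph{is} symmetric under permutations supported in $\La_n$, which is exactly what is needed, and any fixed finite permutation of $Y$ is eventually supported in $\La_n$. One should also note that $g$ being $\cF_{\La\cup\Sigma}$-measurable with $\Sigma\subset Y$ finite means $g$ depends on only finitely many coordinates, so no integrability issues arise and the conditional expectations are genuinely well-defined bounded functions; the $L^2$-convergence is then immediate from the reverse martingale theorem since the family is uniformly bounded. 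A minor point to check is measurability of $\bsxi_{\!\La_n}\colon S^X\to\R^{c_\phi}$ and that $\cG_n$ as defined is genuinely decreasing — both are routine given that $S$ is finite.
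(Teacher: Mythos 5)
Your overall strategy---backward martingale convergence along a decreasing filtration plus the Hewitt--Savage $0$-$1$ law applied to the tail $\bigcap_n\cG_n$---is the same as the paper's, and your verification that $\bigcap_n\cG_n$ lies in the exchangeable $\sigma$-algebra of $(\eta_x)_{x\in Y}$ is exactly the paper's argument. However, there is a genuine gap at the step where you identify the limit. The reverse martingale theorem applied to the filtration $\sigma(\cF_\La,\cG_n)$ gives convergence of $E[g\mid\sigma(\cF_\La,\cG_n)]$ to $E[g\mid\bigcap_n\sigma(\cF_\La,\cG_n)]$, and you then pass, without justification, to $E[g\mid\sigma(\cF_\La,\bigcap_n\cG_n)]$. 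The interchange $\bigcap_n\sigma(\cA,\cB_n)=\sigma(\cA,\bigcap_n\cB_n)$ (mod null sets) is false in general for a decreasing sequence $\cB_n$---this is a classical trap---so ``the tail $\bigcap_n\cG_n$ is trivial (conditionally on $\cF_\La$)'' does not by itself identify $\bigcap_n\sigma(\cF_\La,\cG_n)$ with $\cF_\La$. Some use of the independence of $\cF_\La$ from $\cF_Y$ is indispensable at precisely this point, whereas your write-up invokes the product structure only later, inside the Hewitt--Savage step.

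The gap is repairable, and the repair is essentially the paper's proof: since $g\in C(S^{\La\cup\Sigma})$ is a finite linear combination of products $1_{\s_\La}1_{\s_\Sigma}$, pull out the $\cF_\La$-measurable factor and use that $\cF_\La$ is independent of $\cF_Y\supset\sigma(1_{\s_\Sigma})\vee\cG_n$ (because $\mu$ is a product measure and $\La\cap Y=\emptyset$) to obtain $E[1_{\s_\La}1_{\s_\Sigma}\mid\sigma(\cF_\La,\cG_n)]=1_{\s_\La}\,E[1_{\s_\Sigma}\mid\cG_n]$. Only then apply backward martingale convergence along $\cG_n$ alone, together with the Hewitt--Savage triviality of $\cG_\infty=\bigcap_n\cG_n$, to conclude $E[1_{\s_\Sigma}\mid\cG_n]\rightarrow E_\mu[1_{\s_\Sigma}]$, which yields $E[g\mid\sigma(\cF_\La,\cG_n)]\rightarrow E[g\mid\cF_\La]$ in $L^2(\mu)$. (Alternatively, one can justify the interchange in your formulation by a monotone class argument resting on the same independence; either way, the independence reduction must come before the passage to the limit, and that is the one step missing from your proposal.)
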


\begin{proof}
	Since $\La$ and $\Sigma$ are finite, $S^{\La\cup\Sigma}=S^\La\times S^\Sigma$ is also finite.
	Let $g\in C(S^{\La\cup\Sigma})$.
	Then, $g\in C(S^{\La\cup\Sigma})$ is a linear sum of the 
	indicator functions $1_{(\e_\La, \e_\Sigma)}=1_{\e_\La}1_{\e_\Sigma}$
	on $S^{\La\cup\Sigma}=S^\La\times S^\Sigma$,  
	which defines a function in $L^2(\mu)$.
	It is sufficient to prove our statement when $g$ is
	such function.
	Since $1_{\e_\La}$ is $\cF_{\La}$-measurable, 
	\[
		E[1_{\e_\La}1_{\e_\Sigma}|\sigma(\cF_\La, \cG_n)]=1_{\e_\La}E[1_{\e_\Sigma}|\sigma(\cF_\La, \cG_n)].
	\]
	Since $\cF_{\La}$ is independent from $\cF_Y$, which includes $\sigma( 1_{\e_\Sigma})$ and $\cG_n$, 
	\[
	E[1_{\e_\Sigma}|\sigma(\cF_\La, \cG_n)] = E[1_{\e_\Sigma}|\cG_n],
	\]
	whose proof can be found for example in Chapter 9 of \cite{Wil91}.
	Thus,
	$
		E[1_{\e_\La}1_{\e_\Sigma}|\sigma(\cF_\La, \cG_n)]=1_{\e_\La} E[1_{\e_\Sigma}|\cG_n]
	$
	and similarly
	$
		E[1_{\e_\La}1_{\e_\Sigma}|\cF_\La]=1_{\e_\La} E_\mu[1_{\e_\Sigma}].
	$
	By the convergence theorem of backward martingales, 
	we have
	\[
		\lim_{n\rightarrow\infty}E[1_{\e_\Sigma}|\cG_n]= E[1_{\e_\Sigma}|\cG_\infty]
	\]	
	in $L^2(\mu)$ where $\cG_{\infty}=\cap_n \cG_n$.  Hence in order to prove our theorem, it is sufficient to prove that 
	$E[1_{\e_\Sigma}|\cG_\infty]= E[1_{\e_\Sigma}]$ in $L^2(\mu)$.
	
	Let $\pr_x\colon S^X\rightarrow S$ be the projection $\pr_x(\e)=\eta_x$ for any $\e\in S^X$,
	which we view as a random variable on $S^X$, and we consider the system 
	of random variables $(\pr_x)_{x\in Y}$ of $S^X$.
	Since $\cF_Y= \sigma ( \pr_x, x \in Y)$, we have
	\[
		\cG_n \subset\cF_Y.
	\]
	For any finite permutation $\varrho$ of $Y$, if we take $m$ sufficiently large,
	then we have $\varrho(x)=x$ for any $x\in Y\setminus\La_m$.
	This implies that $\varrho$ induces a permutation of the finite set $\La_m$.
	For any $A\in \cG_m$, by the construction, we have $A^\varrho=A$
	since the conserved quantity $\bsxi_{\!\La_m}$ is invariant under permutations of
	the components of $\La_m$.  This implies in particular that 
	$\cG_\infty=\bigcap_{n\in\N}\cG_n\subset\mathfrak{E}$, where $\mathfrak{E}$ is the
	exchangeable $\sigma$-algebra corresponding to the random variables $(\pr_x)_{x\in Y}$.
	By the $0$-$1$ law of Hewitt-Savage given in Corollary \ref{cor: HS},
	we see that $\mathfrak{E}$ hence the $\sigma$-algebra $\cG_\infty$ is $\mu$-trivial.
	Since $E[1_{\e_\Sigma}|\cG_\infty]$ is measurable for $\cG_\infty$, we see that we have
	$E[1_{\e_\Sigma}|\cG_\infty] = E[E[1_{\e_\Sigma}|\cG_\infty]]=E[1_{\e_\Sigma}]$ almost surely.
	This proves that
	\begin{align*}	
			\lim_{n\rightarrow\infty}E[1_{\e_\La}1_{\e_\Sigma}|\sigma(\cF_\La,\cG_n)]&=
			\lim_{n\rightarrow\infty}1_{\e_\La}E[1_{\e_\Sigma}|\cG_n]
			=1_{\e_\La}E[1_{\e_\Sigma}|\cG_\infty]\\
			&=1_{\e_\La}E[1_{\e_\Sigma}]=E[1_{\e_\La}1_{\e_\Sigma}|\cF_\La]
	\end{align*}
	in $L^2(\mu)$ as desired.
\end{proof}

We next review the implication of the irreducibly quantified condition.

\begin{lemma}\label{lem: check1}
	Assume that $(S,\phi)$ is irreducibly quantified.
	Let $\La\in \sI$ be a finite sublocale.
	Suppose $f\colon S^X\rightarrow\R$ is a \emph{local function} which satisfies $\nabla_e f=0$
	for any $e\in E_\La$.  Then, for any $\La' \in \sI$ such that $\La \subset \La'$ and $f \in C(S^{\La'})$, $f$ is measurable with respect to $\sigma(\bsxi_{\!\La}, \cF_{\La' \setminus\La})$.
\end{lemma}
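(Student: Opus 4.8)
The plan is to show that, under the product decomposition $S^{\La'}=S^{\La}\times S^{\La'\setminus\La}$, the function $f$ depends on its $S^{\La}$-coordinate only through the value of $\bsxi_{\!\La}$; the combinatorial input is precisely the irreducibly quantified hypothesis applied to the finite locale $(\La,E_{\La})$.

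First I would regard $f$ as an element of $C(S^{\La'})=C(S^{\La}\times S^{\La'\setminus\La})$ and, for each fixed $\zeta\in S^{\La'\setminus\La}$, introduce $g_\zeta\in C(S^{\La})$ by $g_\zeta(\beta)\coloneqq f(\beta,\zeta)$. For $e\in E_{\La}\subset E_{\La'}$ the endpoints $o(e),t(e)$ both lie in $\La$, so by the definition of $\s^e$ (and Lemma \ref{lem: symmetric}) the transition $\s\mapsto\s^e$ alters only coordinates in $\La$; in the product decomposition this reads $(\beta,\zeta)^e=(\beta^e,\zeta)$, where $\beta^e$ is the transition of $\beta$ in $(\La,E_{\La})$. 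Hence the hypothesis $\nabla_e f=0$ for every $e\in E_{\La}$ becomes $g_\zeta(\beta^e)=g_\zeta(\beta)$ for all $\beta\in S^{\La}$ and all $e\in E_{\La}$; that is, $g_\zeta$ is constant along every path in the configuration space with transition structure $(S^{\La},\Phi_{\La})$.

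Next I would invoke the irreducibly quantified hypothesis. Since $\xi^{(1)},\dots,\xi^{(c_\phi)}$ is a basis of $\Con(S)$, for $\beta,\beta'\in S^{\La}$ the equality $\bsxi_{\!\La}(\beta)=\bsxi_{\!\La}(\beta')$ holds if and only if $\sum_{x\in\La}\xi(\beta_x)=\sum_{x\in\La}\xi(\beta'_x)$ for every $\xi\in\Con(S)$, i.e.\ $\beta$ and $\beta'$ have the same conserved quantity on $\La$. When this is the case, Definition \ref{def: FQ} applied to the finite locale $(\La,E_{\La})$ yields a path from $\beta$ to $\beta'$ in $(S^{\La},\Phi_{\La})$, so by the previous paragraph $g_\zeta(\beta)=g_\zeta(\beta')$. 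Therefore $g_\zeta$ factors through $\bsxi_{\!\La}$: there exists $h_\zeta\colon\bbR^{c_\phi}\to\bbR$ with $g_\zeta=h_\zeta\circ\bsxi_{\!\La}$. Assembling over $\zeta$, define $H\colon\bbR^{c_\phi}\times S^{\La'\setminus\La}\to\bbR$ by $H(v,\zeta)\coloneqq h_\zeta(v)$; then $f(\s)=H\bigl(\bsxi_{\!\La}(\s),\s|_{\La'\setminus\La}\bigr)$ for all $\s\in S^X$. Since $\bsxi_{\!\La}$ is $\sigma(\bsxi_{\!\La})$-measurable, $\s\mapsto\s|_{\La'\setminus\La}$ is $\cF_{\La'\setminus\La}$-measurable, and $H$ is measurable (being built from functions on the finite set $S^{\La'\setminus\La}$ and from the finitely many values taken by $\bsxi_{\!\La}$), this exhibits $f$ as $\sigma(\bsxi_{\!\La},\cF_{\La'\setminus\La})$-measurable, as required.

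I do not expect any genuine obstacle here: the only point needing care is the bookkeeping that a transition along $e\in E_{\La}$ acts only on coordinates inside $\La$, which reduces the invariance statement for $f$ to one for $g_\zeta$ on $(S^{\La},\Phi_{\La})$; from there, the implication ``$g_\zeta$ invariant under all transitions in $S^{\La}$ $\Rightarrow$ $g_\zeta$ depends only on $\bsxi_{\!\La}$'' is exactly the content of the irreducibly quantified condition. There is no analytic difficulty; the statement follows directly from the definitions.
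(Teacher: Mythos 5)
Your proposal is correct and follows essentially the same route as the paper: fix the coordinates outside $\La$, observe that transitions along $e\in E_\La$ act only inside $\La$, and use the irreducibly quantified condition on the finite locale $(\La,E_\La)$ together with $\nabla_e f=0$ to conclude that $f$ depends on the $\La$-coordinates only through $\bsxi_{\!\La}$. The only difference is presentational (you package the conclusion as an explicit factorization $f=H(\bsxi_{\!\La},\cdot|_{\La'\setminus\La})$, while the paper directly checks $f(\s)=f(\s')$ for configurations agreeing outside $\La$ with equal conserved quantities), which is the same argument.
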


\begin{proof}
	Let $\La' \in \sI$ satisfy $\La \subset \La'$ and $f \in C(S^{\La'})$. 
	Suppose we have $\e,\e'\in S^{\La'}$ such that $\e_x=\e'_x$ for $x \in \La' \setminus \La$ and $\bsxi_\La(\e)=\bsxi_{\!\La}(\e')$. 
	Since $(S,\phi)$ is irreducibly quantified,
	there exists a path $\vec \varphi=
	(\varphi_1,\ldots,\varphi_n)$
	from $\e|_{\La}$ to $\e'|_{\La}$.  We let $\bse=(e_1,\ldots,e_n)$ be the sequence
	of edges $e_i\in E_\La$ defining the path $\vec\varphi$.
	If we let $\e_0=\e$ and $\e_i=\e_{i-1}^{e_i}$ for $i=1,\ldots,n$,
	then $\vec\varphi'=(\varphi'_1,\ldots,\varphi'_n)$ for $\varphi'_i=(\e_i,\e_{i+1})$
	defines a path from $\e$ to $\e'$.  Since $\nabla_e(f)=0$ for $e\in E_\La$,
	we have $f(\e_i)=f(\e_{i+1})$ for any $i=0,\ldots, n-1$.  This shows that $f(\e)=f(\e')$,
	hence we see that $f$ is measurable with respect to $\sigma(\bsxi_{\!\La}, \cF_{\La' \setminus\La})$.
\end{proof}

We now give the proof of Proposition \ref{prop: main}.

\begin{proof}[Proof of Proposition \ref{prop: main}]
	Since $f\in L^2(\mu)$, it is sufficient to prove that
	$
		\pair{f,g}=\pair{\pi^\La f,g}
	$
	for any local function $g\in C_\loc(S^X)$.
	If we note that $\pair{\pi^\La f,g}=\pair{\pi^\La f,\pi^\La g}=\pair{ f,\pi^\La g}$,
	then we see that it is sufficient to prove
	that 
	\[
		\pair{f,g}=\pair{f,\pi^\La g}
	\] 
	for any local function $g$.  
	We fix a local function $g$.
	Since $Y$ is a connected infinite graph, there exists an approximation 
	$\La_n\uparrow Y$ by finite connected sets.  Since $f$ is $\cF_{\La\cup Y}$-measurable,
	we see that $f=\pi^{\La\cup Y}f$ and
	\begin{equation}\label{eq:0}
		\pair{f,g}=\pair{f, \pi^{\La\cup Y} g}.
	\end{equation}
	Since $g$ is a local function, there exists $W\in\sI$ such that $g\in C(S^W)$, and then $(\La\cup Y)\cap W=(\La\cup \La_n)\cap W$ for $n$ sufficiently
	large. Hence for such $n$, since $\mu$ is a product measure and $(Y \setminus \La_n) \cap W =\emptyset$, we have
	\[
	E[g | \cF_{\La \cup Y}]= E[g | \sigma( \cF_{\La \cup \La_n}, \cF_{Y \setminus \La_n})] = E[ g |  \cF_{\La \cup \La_n}]
	\]
	and so
	\begin{equation}\label{eq:1}
		\pair{f,\pi^{\La\cup Y}g}=\pair{f,\pi^{\La\cup\La_n}g}.
	\end{equation}
	Note that for $\La_n$ and any $e\in E_{\La_n}$, our condition implies that we have 
	\[
		\nabla_e(\pi^{\La\cup\La_n}f)=0,
	\]
	hence by Lemma \ref{lem: check1}, this shows that $\pi^{\La\cup\La_n}f$ is 
	measurable for $\sigma(\cF_{\La},\bsxi_{\!\La_n})$.
	Since $\sigma(\cF_\La,\bsxi_{\!\La_n})\subset\sigma(\cF_{\La\cup \La_n})$, 
	we have
	\begin{align*}
		\pair{f, \pi^{\La\cup \La_n}g}
		&=\pair{\pi^{\La\cup \La_n}f,  \pi^{\La\cup \La_n}g}=\pair{E[f|\sigma(\cF_\La, \bsxi_{\!\La_n})],  \pi^{\La\cup \La_n}g}\\
		&=\pair{E[f|\sigma(\cF_\La,\bsxi_{\!\La_n})], E[g|\sigma(\cF_\La,\bsxi_{\!\La_n})]}
		=\pair{f, E[g|\sigma(\cF_\La,\bsxi_{\!\La_n})]}.
	\end{align*}
       Since $(Y \setminus \La_n) \cap W =\emptyset$ and $\mu$ is product again, we have $E[g|\sigma(\cF_\La,\bsxi_{\!\La_n})] = E[g|\sigma(\cF_\La,\cG_n)]$ where
	$\cG_n=\sigma(\bsxi_{\!\La_n}, \cF_{Y\setminus\La_n})$ as in \eqref{eq: Gn}.
	Combining this equality with \eqref{eq:0} and \eqref{eq:1}, we have
	\[
		\pair{f,g}=\pair{f,\pi^{\La\cup Y}g}=\pair{f,\pi^{\La\cup \La_n}g}=\pair{f, E[g|\sigma(\cF_\La,\cG_n)]}=\pair{f, E[\pi^{\La \cup Y} g|\sigma(\cF_\La,\cG_n)]}.
	\]
	Hence in order to prove our assertion, it is sufficient to prove that we have
	\[
		\lim_{n\rightarrow\infty}E[\pi^{\La \cup Y} g|\sigma(\cF_\La,\cG_n)]=E[\pi^{\La \cup Y}g|\cF_{\La}]=\pi^\La g
	\]
	in $L^2(\mu)$. This is simply Lemma \ref{lem: converge} applied to $\pi^{\La\cup Y}g$,
	which is a local function in $C(S^{\La\cup\Sigma})$ for $\Sigma\coloneqq W\cap Y \subset Y$.
\end{proof}

%
%
\section{Construction of a Convergent Sequence}\label{sec: boundary}
%
%

In this section, we assume that the locale $(X,E)=(\Z^d,\E^d)$ is the Euclidean lattice.
We consider an interaction $(S,\phi)$ which is irreducibly quantified, and a product measure 
$\mu=\nu^{\otimes\Z^d}$ on $S^{\Z^d}$ for a probability measure $\nu$ fully supported on $S$.  We consider the trivial weight $r=(r_e)_{e\in\E^d}$
such that $r_e \equiv 1$ for any $e\in\E^d$, and we assume that $G=\Z^d$ acts on $(\Z^d,\E^d)$ by translation.

Let $\omega\in\sC_\mu\coloneqq Z^1_{L^2}(S^{\Z^d}_\mu)^G$ be a shift-invariant closed $L^2$-form.
The purpose of this section is to construct a certain sequence of uniform functions $\{\Psi_n\}$
associated to $\omega$, which will be used to prove Theorem \ref{thm: main}.

\begin{definition}\label{def: Euclid}
	For each $n\in\N$,
	we let
	\[
		\La_n\coloneqq[-n,n]^d=\{(x_1,\ldots, x_d)\in\Z^d \mid \forall j\,\,\abs{x_j}\leq n\}.
	\]
	Then we have $\Z^d=\bigcup_{n\in\N}\La_n$.
	 We call $\{\La_n\}_{n\in\N}$ the \emph{Euclidean approximation of $\Z^d$}.
\end{definition}

Recall that for any $\La\subset \Z^d$, we define the boundary $\partial\La$ of $\La$ by 
\[
	\partial\La = \{e\in E\mid o(e)\in\La, t(e)\not\in\La\}.
\]
Since $\abs{\La_n}=(2n+1)^d$ and $\abs{\partial\La_n}=2d(2n+1)^{d-1}$,
we see that $\La_n$ satisfies
\[
			\lim_{n\rightarrow\infty}\abs{\partial\La_n}/\abs{\La_n}=0.
\]
The Euclidean approximation satisfies the following.

\begin{lemma}\label{lem: Euclid}
	Let $\{\La_n\}_{n\in\N}$ be the Euclidean approximation 
	given in Definition \ref{def: Euclid}.
	Then we have
	\begin{equation}\label{eq: temp}
		\sup_n\frac{\abs{\partial\La_n}^2}{\abs{\La_n}^2}\frac{\abs{\La_{2n}}}{\abs{\La_{2n}\setminus\La_n}}
		\diam{\La_{2n}}^2<\infty.
	\end{equation}
\end{lemma}

\begin{proof}
	Note that $\abs{\La_n}=(2n+1)^d$, $\abs{\partial\La_n}=2d(2n+1)^{d-1}$, $\abs{\La_{2n}}=(4n+1)^d$
	and $\diam{\La_{2n}}=d(4n+1)$.  This shows that
	\[
	\frac{\abs{\partial\La_n}^2}{\abs{\La_n}^2}
		\frac{\abs{\La_{2n}}}{\abs{\La_{2n}\setminus\La_n}}\diam{\La_{2n}}^2
		=\frac{(2d)^2(2n+1)^{2d-2}}{(2n+1)^{2d}}\frac{(4n+1)^{d}}{(4n+1)^{d}-(2n+1)^d}d^2(4n+1)^2.
	\]
	Hence, we have
	\[
		\lim_{n\rightarrow\infty}\frac{\abs{\partial\La_n}^2}{\abs{\La_n}^2}
		\frac{\abs{\La_{2n}}}{\abs{\La_{2n}\setminus\La_n}}\diam{\La_{2n}}^2
		=\frac{4^{d+2}d^4}{(4^{d}-2^d)},
	\]
	which proves the inequality \eqref{eq: temp} as desired.
\end{proof}

In what follows, let $\omega\in\sC_\mu=Z_{L^2}^1(S^{\Z^d}_\mu)^G$.
Since $\omega$ is a closed co-local form, there exists $F\in C^0_\col(S^X_\mu)$ such that
$\partial F=\omega$.  We will use $F$ to construct the uniform functions $\Psi_n$ as follows.

\begin{definition}\label{def: psi}
	Let $\{\La_n\}_{n\in\N}$ be the Euclidean approximation given in Definition \ref{def: Euclid}.
	For any $\omega\in\sC_\mu=Z_{L^2}^1(S^{\Z^d}_\mu)^G$, let $F=(F^\La)\in C^0_\col(S^{\Z^d}_\mu)$ such that
	$\partial F=\omega$, and we let $F_n$ be the projection of $F^{\La_{2n}}$ to 
	$(\Ker\partial_{\La_{2n}})^\perp$.   We let
	\begin{equation}\label{eq: psi}
		\Psi_n\coloneqq\frac{1}{(2n+1)^d}\sum_{\tau\in G}\tau(\pi^{\La_n}F_n).
	\end{equation}
	By construction, $\Psi_n\in C^0_\unif(S^{\Z^d})^G$.
\end{definition}

We will separate $\Psi_n$ into a sum for which, the differential of the first converging to $\omega$, and the differential of the second converging to the boundary term $\omega^\dagger$.
We have
\[
	\partial\Psi_n=\frac{1}{(2n+1)^d}\sum_{\tau\in G}\tau(\partial(\pi^{\La_n}F_n)).
\]
The main difficulty in calculation of the differential stems from the fact that for a general co-local function
$f\in C^0_\col(S^{\Z^d}_\mu)$, we may have $\partial(\pi^{\La} f)\neq \partial_{\La}(\pi^\La f)$.
Recall that for any $f\in C^0_\col(S^{\Z^d}_\mu)$ and $\La\in\sI$, we define the boundary differential by
\[
	\partial^\dagger_\La f = \partial(\pi^\La f) - \partial_{\La}(\pi^\La f).
\]

\begin{definition}\label{def: omega}
	Let the notations be as in Definition \ref{def: psi}.  For any $n\in\N$, let
	\begin{align}\label{eq: omega}
		\omega_n&\coloneqq \frac{1}{(2n+1)^d}\sum_{\tau\in G}\tau(\partial_{\La_n}(\pi^{\La_n}F_n)),&
		\omega^\dagger_n&\coloneqq \frac{1}{(2n+1)^d}\sum_{\tau\in G}
		\tau(\partial^\dagger_{\La_n}(\pi^{\La_n}F_n)).
	\end{align}  
	By \eqref{eq: boundary}, 
	we have $\partial\Psi_n=\omega_n+\omega^\dagger_n$.
	We call $\{\omega^\dagger_n\}_{n\in\N}$ the \emph{boundary forms}.
\end{definition}

\begin{proposition}\label{prop: conv0}
	For the sequence $\{\omega_n\}_{n\in\N}$ of Definition \ref{def: omega}, we have $\omega_n\in C^1_\unif(S^{\Z^d})^G$ and
	\begin{equation}\label{eq: converge}
		\lim_{n\rightarrow\infty}\omega_n=\omega
	\end{equation}
	in $C^1_{L^2}(S^{\Z^d}_\mu)^G$.  
\end{proposition}

\begin{proof}
	We first shows that the sum defines a uniform form in $C^1_\unif(S^{\Z^d}_\mu)^G$.
	Note that $\partial_{\La_n}(\pi^{\La_n}F_n)=\pi^{\La_n}(\partial_{\La_{2n}} F_n)=\pi^{\La_n}\omega^{\La_{2n}}=\omega^{\La_n}$.
	We have $\omega^{\La_n}=(\omega^{\La_n}_e)\in C^1(S^{\La_n})$. 	
	In particular, $\omega^{\La_n}_e\neq 0$ only if $e\in E_{\La_n}$.
	For any $\tau\in G$, we have $\tau(\omega^{\La_n})_e=\tau(\omega^{\La_n}_{\tau^{-1}(e)})$,
	hence
	\[
		\omega_{n,e}\coloneqq\frac{1}{(2n+1)^d}
		\sum_{\tau\in G}\tau(\omega^{\La_n})_e= \frac{1}{(2n+1)^d}\sum_{\substack{\tau\in G\\\tau^{-1}(e)\in E_{\La_n}}}\tau(\omega^{\La_n}_{\tau^{-1}(e)})
	\]
	is a finite sum for any $e\in E$, hence $\omega_{n,e}$ is a local function in $C_\loc(S^{\Z^d})$.
	The fact that $\omega_n$ is $G$-invariant follows from the fact that it is the sum over
	the $G$-translates of $\omega^{\La_n}$.
	This implies in particular that $\omega_n=(\omega_{n,e})\in C^1_\unif(S^{\Z^d})^G$.
	For the convergence, note that
	\begin{equation}\label{eq: terms}
		\omega_e-\omega_{n,e}=\frac{1}{(2n+1)^d}
		\sum_{\substack{\tau\in G\\ \tau^{-1}(e)\in E_{\La_n}}}
		\bigl(\omega_e-\tau\bigl(\omega^{\La_n}_{\tau^{-1}(e)}\bigr)\bigr)
		+\frac{(2n+1)^d-|Ge\cap E_{\La_n}|}{(2n+1)^d}\omega_e
	\end{equation}
	where $Ge=\{\tau e \mid \tau \in G\}$.
	Since $\omega$ is $G$-invariant, we have
	$\tau\bigl(\omega^{\La_n}_{\tau^{-1}(e)}\bigr)=\tau(\omega)_e^{\tau(\La_n)}=\omega_e^{\tau(\La_n)}$.
	From the triangular equality for norms, we have
	\begin{equation}\label{eq: normdiff}
		\dabs{\omega_e-\omega_{n,e}}_\mu\leq 
		\frac{1}{(2n+1)^d}\sum_{\substack{\tau\in G\\ \tau^{-1}(e)\in E_{\La_n}}}
		\bigl\Vert\omega_e-\omega_e^{\tau(\La_n)} \bigr\Vert_\mu
		+\biggl|\frac{(2n+1)^d-|Ge\cap E_{\La_n}|}{(2n+1)^d}\biggr|\dabs{\omega_e}_\mu.
	\end{equation}
	Noting that $|Go(e)\cap \La_n|=(2n+1)^d$, we have
	\[
		\biggl|\frac{(2n+1)^d-|Ge\cap E_{\La_n}|}{(2n+1)^d}\biggr|
		=\frac{|Ge\cap\partial\La_n|}{(2n+1)^d}
		\leq \frac{|\partial\La_n|}{(2n+1)^d}.
	\]
	Hence, noting $|\La_n|=(2n+1)^d$, the second term of the right hand side of
	\eqref{eq: normdiff} converges to \textit{zero} as $n\rightarrow\infty$.
	Finally, we evaluate the first term of the right hand side of \eqref{eq: normdiff}.
	For any $\varepsilon>0$, there exists $\La\in\sI$ such that
	\[
		\dabs{\omega_e-\omega^\La_e}_\mu \le \varepsilon.
	\]
	For this $\La$, let $H_n\coloneqq\{\tau\in G\mid \La\subset\tau\La_n\}$.
	Then
	\begin{align*}
		\sum_{\substack{\tau\in G\\ \tau^{-1}(e)\in E_{\La_n}}}\Dabs{\omega_e-\omega_e^{\tau(\La_n)}}_\mu
		&=\sum_{\substack{\tau\in H_n\\ \tau^{-1}(e)\in E_{\La_n}}}\Dabs{\omega_e-\omega_e^{\tau(\La_n)}}_\mu
		+\sum_{\substack{\tau\in G\setminus H_n\\ \tau^{-1}(e)\in E_{\La_n}}}\Dabs{\omega_e-\omega_e^{\tau(\La_n)}}_\mu\\
		&\leq\sum_{\substack{\tau\in H_n\\ \tau^{-1}(e)\in E_{\La_n}}}\dabs{\omega_e-\omega_e^{\La}}_\mu
		+ |\{\tau\in G\setminus H_n \mid o(e)\in\tau(\La_n) \}|\dabs{\omega_e}_\mu\\
		&\le  |H_n e\cap E_{\La_n}|  \varepsilon
		+ |\{\tau\in G\mid o(e)\in\tau(\La_n),  (\tau(\La_n))^c\cap\La\neq\emptyset \}|\dabs{\omega}_\sp
	\end{align*}
	where for the first inequality, we use $ \Dabs{\omega_e-\omega_e^{\tau(\La_n)}}_\mu \le \Dabs{\omega_e-\omega_e^{\La}}_\mu$ if $\La\subset \tau\La_n$ and $ \Dabs{\omega_e-\omega_e^{\tau(\La_n)}}_\mu \le \Dabs{\omega_e}_\mu$ for any $\tau$. 
	Let $\ell\coloneq\max_{x\in\La} d_X(o(e),x)$
	and $\Sigma^{[\ell]}\coloneqq\{x\in\Sigma \mid d_X(x,\Sigma^c) \le \ell\}$.
	Then we have
	\[
		\frac{1}{ (2n+1)^d}
		\sum_{\substack{\tau\in G\\ \tau^{-1}(e)\in E_{\La_n}}}\Dabs{\omega_e-\omega_e^{\tau(\La_n)}}_\mu
		\le \varepsilon
		+ \frac{\bigl|\La_n^{[\ell]}\bigr|}{ (2n+1)^d}\dabs{\omega}_\sp.
	\]
	By definition, we see that $\bigl|\La_n^{[\ell]}\bigr|\leq (2n+1)^d -(2(n-\ell)+1)^d \le 2\ell d (2n+1)^{d-1}$.
	This shows that the first term of the right hand side of
	\eqref{eq: normdiff} is $\leq \varepsilon$ when $n\rightarrow\infty$.
	This shows that we have $\displaystyle \lim_{n\rightarrow\infty}\dabs{\omega-\omega_n}_\sp=0$ as desired.
\end{proof}

The key point in the proof of Theorem \ref{thm: main} is proving that the 
sequence of boundary forms
$\{\omega_n^\dagger\}_{n\in\N}$ converges weakly to some form $\omega^\dagger$ in $\cC$.
We proved in \S\ref{sec: key-lemmas} certain bounds for norms of differentials of local 
functions which will be used to uniformly bound the norms of $\omega_n^\dagger$.
The existence of the bound
 implies that there exists a subsequence of $\omega_n^\dagger$ which converges weakly to 
some $\omega^\dagger\in\sC_\mu$.
Then in \S\ref{sec: proof}, we show that  $\omega^\dagger$ in fact
is a form in $\cC$ by applying the criterion for the locality shown in  \S\ref{sec: key-lemmas}. We will use this fact to prove Theorem \ref{thm: main}
first for the case of the Euclidean lattice $\Z^d=(\Z^d,\E^d)$.  
Then we will prove the case of a general locale $(\Z^d,E)$
with action of $G=\Z^d$ by translation,
by reducing to the case of the Euclidean lattice.

%
%
\subsection{Convergence of the Boundary Sequence}\label{subsec: convergence}
%
%

Let the notations be as in \S\ref{subsec: MT},
and assume that the locale $(X,E)=(\Z^d,\E^d)$.
We will use the bound of Proposition \ref{prop:b2} to uniformly bound the norms of the boundary forms
$\{\omega^\dagger_n\}$ of Definition \ref{def: omega}.
Consider a closed $G$-invariant $L^2$-form $\omega\in \sC_\mu=C^1_{L^2}(S^{\Z^d})\cap Z^1_\col(S^{\Z^d}_\mu)^G$.
We let $\{\La_n\}_{n\in\N}$ be the Euclidean approximation of Definition \ref{def: Euclid}.
Furthermore, let $F_n$ be as in Definition \ref{def: psi}, and we recall that
\begin{equation*}
	\omega^\dagger_n=\frac{1}{(2n+1)^d}\sum_{\tau\in G}\tau(\partial^\dagger_{\La_n}F_n).
\end{equation*}
Let $G^e_n\coloneqq\{\tau\in G\mid e\in\tau(\partial\La_n)\}$.
Since $\tau(\partial^\dagger_{\La_n}F_n)_e\neq 0$ if and only if $e\in\tau(\partial\La_n)$
or $\bar e\in\tau(\partial\La_n)$, if we let
\begin{align}\label{eq: pm}
	\omega^+_{n,e}&\coloneqq \frac{1}{(2n+1)^d}\sum_{\tau\in G^e_n}\tau(\partial^\dagger_{\La_n}F_n),&
	\omega^-_{n,e}&\coloneqq \frac{1}{(2n+1)^d}\sum_{\tau\in G^{\bar e}_n}\tau(\partial^\dagger_{\La_n}F_n),
\end{align}
then we have $\omega^\dagger_{n,e}=\omega^+_{n,e}+\omega^-_{n,e}$.
We denote $\omega^{\pm}_{n,e}$ to mean $\omega^{+}_{n,e}$ or $\omega^{-}_{n,e}$.
We next prove the existence of a uniform bound of the norms of $\{\omega^{\pm}_{n,e}\}$ for any $e\in E$.

\begin{proposition}\label{prop: conv1}
	For any $e\in E$, let $\omega^\pm_{n,e}$  be the forms defined in \eqref{eq: pm}.
	Then for any $n\in\N$, we have $\omega^\dagger_n\in\prod_{e\in E}L^2(\mu)$, and for any $e\in E$, we have
	\begin{equation}\label{eq: bound}
		\sup_n\dabs{\omega^{\pm}_{n,e}}_\mu<\infty.
	\end{equation}
	This implies in particular that $\{\omega^{\pm}_{n,e}\}$ contains a subsequence which converges weakly to an element $\omega^\pm_{e}$ in $L^2(\mu)$.
\end{proposition}

\begin{proof}
	From the triangular inequality for norms and the fact that the measure is invariant under the action of $G$,
	we have
	\begin{align*}
		(2n+1)^d\dabs{\omega^+_{n,e}}_\mu&\leq
		\sum_{\tau\in G^e_n}
		\Dabs{\tau(\partial^\dagger_{\La_n}F_n)_e}_\mu
		=\sum_{\tau\in G^e_n}
		\Dabs{\tau((\partial^\dagger_{\La_n}F_n)_{\tau^{-1}(e)})}_\mu\\
		&=\sum_{\tau\in G^e_n}
		\Dabs{(\partial^\dagger_{\La_n}F_n)_{\tau^{-1}(e)}}_\mu
		\leq |\partial\La_n|\dabs{\partial^\dagger_{\La_n}F_n}_\sp.
	\end{align*}
	This shows that
	\[
		\dabs{\omega^+_{n,e}}_\mu\leq\frac{\abs{\partial\La_n}}{(2n+1)^d}\Dabs{\partial^\dagger_{\La_n}F_n}_\sp.
	\]
	Combining this with Proposition \ref{prop:b2}, noting that $\partial_{\La_{2n}}F_n=\omega^{\La_{2n}}$
	and $\dabs{\omega^{\La_{2n}}}_\sp\leq\dabs{\omega}_\sp$,
	we have
	\begin{align*}
		\dabs{\omega^+_{n,e}}^2_\mu&\leq C\frac{\abs{\partial\La_n}^2}{(2n+1)^{2d}}
		\frac{\abs{\La_{2n}}}{\abs{\La_{2n}\setminus\La_n}}\diam{\La_{2n}}^2
			\dabs{\omega}^2_\sp.
	\end{align*}
	Hence \eqref{eq: bound} follows from Lemma \ref{lem: Euclid}.	
	The last assertion follows from the fact that any bounded sequence in a Hilbert space 
	has a weakly convergent subsequence.  The statement for $\omega^-_{n,e}$ may be proved in a similar manner.
\end{proof}

In what follows, for each $e\in E$,
we take subsequences of $\{\omega^{\pm}_{n,e}\}_{n\in\N}$ which
converges weakly to $\omega^{\pm}_{e}$ in $L^2(\mu)$.
Since $\omega^\dagger_n$ and $\omega^\pm_n$ by construction is invariant with respect to the action of $G$,
the local functions $\omega^\dagger_{n,e}=\omega^+_{n,e}+\omega^-_{n,e}$ depends only on the class of $e$ in the set of orbits
$E/G$, which is finite.
Hence we may take subsequences $\{\omega^\pm_{n_k}\}_{k\in\N}$ such that 
$\{\omega^\pm_{n_k,e}\}_{k\in\N}$ converges weakly to $\omega^\pm_{e}$ in $L^2(\mu)$
for each $e\in E$.   Since the structure of the Hilbert space on $C^1_{L^2}(S^{\Z^d}_\mu)^G$ is
induced from the inclusion $C^1_{L^2}(S^{\Z^d}_\mu)^G\hookrightarrow\prod_{e\in E/G}L^2(\mu)$,
weakly convergence on each $e\in E$ implies that $\{\omega^\dagger_{n_k}\}$ itself is
weakly convergent.
We let $\omega^\dagger\coloneqq(\omega^\dagger_e)\in C^1_{L^2}(S^{\Z^d}_\mu)^G$
be the weak limit of $\omega^\dagger_{n_k}$.

\begin{proposition}\label{prop: conv2}
	We have $\omega^\dagger\in \sC_\mu=Z^1_{L^2}(S^{\Z^d}_\mu)^G$.
\end{proposition}

\begin{proof}
	By the definition of $\omega^\dagger_n$ and \eqref{eq: boundary}, the form
	$\omega^\dagger$ is the weak limit of a subsequence of the sequence
	\begin{align*}
		\omega^\dagger_n &=\partial\Psi_n-\omega_n,
	\end{align*}
	where $\omega_n$ is defines as in Definition \ref{def: omega}.
	By Proposition \ref{prop: conv0}, we see that $\displaystyle \lim_{n\rightarrow\infty}\omega_n=\omega$
	strongly in $C^1_{L^2}(S^{\Z^d})^G$.  If we let
	\[
		u_n\coloneqq\partial\Psi_n-\omega,
	\]
	then by construction, $u_n\in \sC_\mu=Z^1_{L^2}(S^{\Z^d})^G$. 
	Since a subsequence of $\omega^\dagger_n$ converges weakly to $\omega^\dagger$,
	a subsequence of $u_n$ also converges to $\omega^\dagger$.
	This shows that $\omega^\dagger$ is in the weak closure of 
	$Z^1_{L^2}(S^{\Z^d}_\mu)^G$.  Since by \cite{Yos95}*{Theorem V.1.11}, the weak closure
	of a linear subspace of a Hilbert space coincides with its strong closure,
	this implies that $\omega^\dagger$ is in the strong closure of $Z^1_{L^2}(S^{\Z^d})^G$.
	Our assertion now follows from Lemma \ref{lem: closed} which asserts that 
	$Z^1_{L^2}(S^{\Z^d})^G$ is closed in $\prod_{e\in E/G}L^2(\mu)$.  
\end{proof}

%
%
%
\section{Proof of the Main Theorem}\label{sec: proof}
%
%
%

In this section, we will prove Theorem \ref{thm: main}.
We will first prove that when the underlying locale is the
Euclidean lattice $(\Z^d,\E^d)$,
the form $\omega^\dagger$ in Proposition \ref{prop: conv2}
is in fact a uniform form.  We will then use this fact to prove Theorem \ref{thm: main}
for the case of the Euclidean lattice.  The proof of Theorem \ref{thm: main}
for the case of a general finite range Euclidean lattice $(\Z^d,E)$ 
will be proved by reducing to the case of the Euclidean lattice $(\Z^d,\E^d)$.

%
%
\subsection{Proof of the Main Theorem for the Euclidean Lattice case}\label{subsec : Euclidean}
%
%

Let the notations be as in \S\ref{subsec: convergence}.
In this subsection, we assume that the locale is the Euclidean lattice $(\Z^d,\E^d)$,
with the action of $G=\Z^d$ by translation.
We prove that the form $\omega^\dagger$ 
in Proposition \ref{prop: conv2} obtained as the limit of the boundary forms $\{\omega^\dagger_n\}$ 
is in fact a uniform form.  We will then use this fact to prove Theorem \ref{thm: main}
for the case of the Euclidean lattice.  

For any $e \in \E^d$, $t(e)=o(e)+1_j$ or $t(e)=o(e)-1_j$, where $1_j$ is the element in $\Z^d$
with $1$ in the $j$-th component and $0$ in the other components. Let $o(e)_j$ be the $j$-th 
component of $o(e)$, and
$Y^e\coloneqq\{ (x_1,\ldots,x_d)\in\Z^d\mid x_j \leq o(e)_j\}$ if $t(e)=o(e)+1_j$,
and $Y^e\coloneqq\{ (x_1,\ldots,x_d)\in\Z^d\mid x_j \geq o(e)_j\}$ otherwise.
In other words, if we split the vertices in $\Z^d$ into two parts via the hyperplane 
perpendicular to $e$ so that $o(e)$ and $t(e)$ are in the different parts, then $Y^e$ is defined as the part which contains $o(e)$ and does not contain $t(e)$.

We let $\{\La_n\}_{n\in\N}$ be the Euclidean approximation given
in Definition \ref{def: Euclid}.
For $e \in \E^d$, let
\[
	G^e_n=\{\tau\in G\mid e\in\tau(\partial\La_n)\}
\]
as before, and for $\tilde e\in \E^d$, let $G^{e,\tilde e}_n\coloneqq G^e_n\cap G^{\tilde e}_n$.

\begin{lemma}\label{lem: twoedges}
	Let $\{\La_n\}_{n\in\N}$ be the Euclidean approximation of Definition \ref{def: Euclid}.
	For any $e\in \E^d$ and $\tilde e\in E_{Y^e}=\{e' \in \E^d \mid o(e'),t(e') \in Y^e\}$, we have
	\[
		\lim_{n\rightarrow\infty}\frac{\abs{G^{e,\tilde e}_n}}{\abs{G^e_n}}=0.
	\]
\end{lemma}

\begin{proof}
	Note that $\abs{G^e_n}=\frac{1}{2d}\abs{\partial\La_n}=(2n+1)^{d-1}$.
	If $\tilde e$ is in the same direction as that of $e$, then $G_n^{e,\tilde e}=\emptyset$.
	If $\tilde e$ is in the exactly opposite direction as that of $e$, then $G_n^{e,\tilde e}=\emptyset$
	for $n$ sufficiently large.  Otherwise, $G^{e,\tilde e}_n$ counts the positions of shifts of $\La_n$ touching both $e$ and $\tilde e$ at their origins, 
	hence has $(d-2)$-dimension of freedom.
	This shows that $\abs{G^{e,\tilde e}_n}\leq(2n+1)^{d-2}$, which proves our assertion.
\end{proof}

\begin{remark}\label{rem:finite-range}
We cannot generalize Lemma \ref{lem: twoedges} straightforwardly to models with more general
finite range interactions, namely models on a locale $(\Z^d, E)$ where $E$ is not necessarily $\mathbb{E}^d$. If $\mathbb{E}^d \subset E$, to prove the next proposition, we only need the estimates in Lemma \ref{lem: twoedges} for $e, \tilde{e} \in \mathbb{E}^d$, so we can generalized the main result to this case. However, without the condition $\mathbb{E}^d \subset E$, the generalization is not obvious. 
\end{remark}

Having this estimate, we may now prove that $\omega^\dagger$ is uniform.

\begin{proposition}\label{prop: local}
	We have $\omega^\dagger\in C^1_\unif(S^{\Z^d})^G$.
\end{proposition}

\begin{proof}
	By Proposition \ref{prop: conv2}, the form 
	$\omega^\dagger$ is invariant with respect to the action of $G$.
	Hence
	it is sufficient to prove that $\omega^+_e$ and $\omega^-_e$ are local functions
	for any $e\in E$.
	We first prove that $\omega^+_{e}$ is a local function.
	By construction, for any $n\in\N$,
	the function $\omega^+_{n,e}$ is a function in $C(S^{Y^e\cup\{t(e)\}})$.
	Let  $\La\coloneqq\{o(e),t(e)\}$ and $\La'\subset Y^e\setminus\{o(e)\}$ be a finite set.
	By Proposition \ref{prop: main}, it is sufficient to prove that
	\[
		\nabla_{\!\tilde e}(\pi^{\La\cup\La'}\omega^+_e)=0
	\]
	for any $\tilde e\in E_{\La'}$.  
	By Lemma \ref{lem: bound}, the differential $\nabla_{\!\tilde e}$
	is continuous for local functions. Also, $C(S^{\La \cup \La'})$ is a finite dimensional space and the weak topology and the strong topology are same for a finite dimensional Hilbert spaces. 
	Hence, noting that 
	\[
		\dabs{\nabla_{\!\tilde e}(\pi^{\La\cup\La'}\omega^+_{n,e})}_\mu
		=\dabs{\pi^{\La\cup\La'}(\nabla_{\!\tilde e}(\omega^+_{n,e}))}_\mu
		\leq\dabs{\nabla_{\!\tilde e}\omega^+_{n,e}}_\mu,
	\]
	it is sufficient to prove that
	\[
		\lim_{n\rightarrow\infty}\dabs{\nabla_{\!\tilde e}\omega^+_{n,e}}_\mu=0.
	\]
	Since $\tilde e\in E_{\La'}$, we have $\{o(e),t(e)\}\cap\{o(\tilde e),t(\tilde e)\}=\emptyset$.
	This shows that
	\[
		\nabla_{\!\tilde e}\omega^+_{n,e}=\nabla_{\!\tilde e}\nabe\biggl(\frac{1}{(2n+1)^d}\sum_{\tau\in G^e_n}\tau(F_n)\biggr)
		=\nabe\nabla_{\!\tilde e}\biggl(\frac{1}{(2n+1)^d}\sum_{\tau\in G^e_n}\tau(F_n)\biggr).
	\]
	By Lemma \ref{lem: bound}, for any local function $f$, we have
	\[
		\dabs{\nabe f}_\mu^2\leq 4C_{\phi,\nu}\dabs{f}^2_\mu,
	\]
	where $C_{\phi,\nu} \geq 1$ is the constant given in Definition \ref{def: MB}.
	Hence we have
	\[
		\dabs{\nabla_{\!\tilde e}\omega^+_{n,e}}_\mu
		\leq  \frac{2C_{\phi,\nu}^{1/2}}{(2n+1)^d}\Big\Vert
		\sum_{\tau\in G^e_n}\nabla_{\!\tilde e}(\tau(F_n))\Big\Vert_\mu
		\leq  \frac{2C_{\phi,\nu}^{1/2}}{(2n+1)^d}
		\sum_{\tau\in G^e_n}\dabs{\nabla_{\!\tilde e}(\tau(F_n))}_\mu.
	\]
	Furthermore, if $\tilde e\in E_{\tau (\La_n)}$, then we have $\nabla_{\!\tilde e}(\tau(F_n))=\omega^{\tau(\La_n)}_{\tilde e}$ hence $\dabs{\nabla_{\!\tilde e}(\tau(F_n))}_\mu\leq\dabs{\omega_{\tilde e}}_\mu\leq\dabs{\omega}_\sp$,
	and if $\{o(\tilde e),t(\tilde e)\}\cap\tau(\La_n)=\emptyset$, then we have $\nabla_{\!\tilde e}(\tau(F_n))=0$.
	This shows that
	\begin{equation}\label{eq: term}
		\sum_{\tau \in G^e_n}\dabs{\nabla_{\!\tilde e}(\tau(F_n))}_\mu\leq\abs{G^e_n}\dabs{\omega}_\sp
		+\sum_{\tau\in G^{e,\tilde e}_n}\dabs{\nabla_{\!\tilde e}(\tau(F_n))}_\mu +  \sum_{\tau\in G^{e,\bar{\tilde e}}_n}\dabs{\nabla_{\!\tilde e}(\tau(F_n))}_\mu.
	\end{equation}
	Since $\abs{G^e_n}\leq\abs{\partial\La_n}$,
	the first term of \eqref{eq: term} satisfies
	\[
		\lim_{n\rightarrow\infty}\frac{\abs{G^e_n}}{(2n+1)^d}\dabs{\omega}_\sp
		\leq\lim_{n\rightarrow\infty}\frac{\abs{\partial\La_n}}{\abs{\La_n}}\dabs{\omega}_\sp=0.
	\]
	For the second and third terms of \eqref{eq: term}, the fact that the measure is invariant under 
	the action of the group, we see that
	$\dabs{\nabla_{\tilde e}(\tau(F_n))}_\mu=\dabs{\nabla_{\tau^{-1}(\tilde e)}(F_n)}_\mu$.
	By Proposition \ref{prop:b2}, if $\tau\in G^{e,\tilde e}_n$ or $\tau\in G^{e,\bar{\tilde e}}_n$, we have
	\begin{align*}
		\dabs{\nabla_{\tau^{-1}(\tilde e)}(F_n)}^2_\mu
		\leq\dabs{\partial^\dagger_{\La_n} F_n}^2_\sp
		&\leq C\frac{\abs{\La_{2n}}}{\abs{\La_{2n}\setminus\La_n}}\diam{\La_{2n}}^2\dabs{\omega^{\La_{2n}}}^2_\sp\\
		&\leq C\frac{\abs{\La_{2n}}}{\abs{\La_{2n}\setminus\La_n}}\diam{\La_{2n}}^2\dabs{\omega}^2_\sp.
	\end{align*}
	Hence we have
	\begin{align*}
		\limsup_{n\rightarrow\infty}\dabs{\nabla_{\!\tilde e}\omega^+_{n,e}}^2
		&\leq 4CC_{\phi,\nu}\limsup_{n\rightarrow\infty}
		\frac{(\abs{G^{e,\tilde e}_n} + \abs{G^{e,\bar{\tilde e}}_n})^2}{(2n+1)^{2d}}
		\frac{\abs{\La_{2n}}}{\abs{\La_{2n}\setminus\La_n}}\diam{\La_{2n}}^2\dabs{\omega}^2_\sp\\
		&=4CC_{\phi,\nu}\limsup_{n\rightarrow\infty}
		\frac{(\abs{G^{e,\tilde e}_n} + \abs{G^{e,\bar{\tilde e}}_n})^2}{\abs{G^e_n}^2}\frac{\abs{G^e_n}^2}{(2n+1)^{2d}}
		\frac{\abs{\La_{2n}}}{\abs{\La_{2n}\setminus\La_n}}\diam{\La_{2n}}^2\dabs{\omega}^2_\sp.
	\end{align*}
	By Lemma \ref{lem: Euclid}, noting that 
	$\abs{G^e_n}\leq \abs{\partial\La_n}$ and $(2n+1)^d=\abs{\La_n}$, we see that
	\[
	\sup_n	\frac{\abs{G^e_n}^2}{(2n+1)^{2d}}
		\frac{\abs{\La_{2n}}}{\abs{\La_{2n}\setminus\La_n}}\diam{\La_{2n}}^2\dabs{\omega}^2_\sp<\infty.
	\]
	Hence, Lemma \ref{lem: twoedges} implies
	that
	\[
		\limsup_{n\rightarrow\infty}\dabs{\nabla_{\!\tilde e}\omega^+_e}_\mu=0
	\]
	as desired.  The proof of the locality for $\omega^-_e$ is obtained in a similar fashion,
	by replacing $G^e_n$ with $G^{\bar e}_n$.  This gives our assertion.
\end{proof}

We will use \cref{prop: local} to give a proof of \cref{thm: main} for the case of the Euclidean lattice.
We let $\{\La_n\}_{n\in\N}$ be the Euclidean approximation
of Definition \ref{def: Euclid}.
We assume that $(S,\phi)$ is irreducibly quantified, and that $((S,\phi),\nu)$ has a uniformly bounded 
spectral gap.
In what follows, for $\La \in \sI$, let $C^0(S^\La_\mu)\coloneqq\{f\in C(S^\La)\mid E_\mu[f]=0\}$
and $A^0(S^\La)=C^0(S^\La_\mu)/\Ker\partial_\La$.
If we let 
\[
	A^0_\col(S^{\Z^d})\coloneqq\varprojlim_\La A^0(S^\La),
\] 
then
Proposition \ref{prop: SES} gives the isomorphism $A^0_\col(S^{\Z^d}_\mu)\cong Z^1_\col(S^{\Z^d}_\mu)$, 
hence an isomorphism 
 \[
 	A^0_\col(S^{\Z^d}_\mu)^G\cong Z^1_\col(S^{\Z^d}_\mu)^G.
\]
Let $1_j\in\Z^d$ be the element with $1$ in the $j$-th component and $0$ in the other components.
We may construct an $\R$-linear homomorphism
\begin{equation}\label{eq: delta}
	\delta\colon A^0_\col(S^{\Z^d}_\mu)^G\rightarrow\bigoplus_{j=1}^d H^0_\col(S^{\Z^d}_\mu),   \qquad f\mapsto ((1-\tau_{1_j})f).
\end{equation}
By construction, any element of $C^0_\col(S^{\Z^d}_\mu)^G$ is in $\Ker\delta$.
The map $\delta$ is well-defined on $A^0_\col(S^{\Z^d}_\mu)^G$ 
since $H^0_\col(S^{\Z^d}_\mu)\subset C^0_\col(S^{\Z^d}_\mu)^G$ by \cite{BS21}*{Lemma 4.7}.
We have the following.
\begin{lemma}\label{lem: H}
	The map $\delta$ of \eqref{eq: delta} induces an isomorphism
	\begin{equation}\label{eq: OK}
		\cV\xrightarrow\cong\bigoplus_{j=1}^d\C(S).
	\end{equation}
\end{lemma}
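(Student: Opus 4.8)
The plan is to reduce the whole statement to the formula for $(1-\tau_y)\frA^j_\xi$ proved in Lemma~\ref{lem: tau}, together with the elementary fact that $\xi\mapsto\xi_X$ embeds $\Con(S)$ into $H^0_\col(S^{\Z^d}_\mu)$. First I would make the identification implicit in the statement precise. Each $\frA^j_{\xi}=\sum_{x\in\Z^d}x_j\xi_x$ is a uniform function, hence defines an element of $C^0_\col(S^{\Z^d}_\mu)$, and I claim its class in $K^0_\col(S^{\Z^d}_\mu)\cong C^0_\col(S^{\Z^d}_\mu)/H^0_\col(S^{\Z^d}_\mu)$ (Proposition~\ref{prop: SES}) is $G$-invariant. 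Indeed, by Lemma~\ref{lem: tau} we have $(1-\tau_y)\frA^j_{\xi}=y_j\,\xi_X$ for every $y=(y_1,\dots,y_d)\in\Z^d$, and $\xi_X\in H^0_\col(S^{\Z^d}_\mu)$ since $\partial\xi_X=0$ (the conservation law gives $\nabla_{\!e}\xi_X=0$ on every edge $e$); thus $\tau_y[\frA^j_{\xi}]=[\frA^j_{\xi}]$ in $K^0_\col(S^{\Z^d}_\mu)$, so $\cH$ maps into $K^0_\col(S^{\Z^d}_\mu)^G$, and it is on this image that $\delta$ of \eqref{eq: delta} operates. (That $\delta$ is well defined on $K^0_\col(S^{\Z^d}_\mu)^G$ uses $H^0_\col(S^{\Z^d}_\mu)\subset C^0_\col(S^{\Z^d}_\mu)^G$, which is \cite{BS21}*{Lemma 4.7}.)

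Next I would compute $\delta$ on the spanning set of $\cH$. Taking $y=1_k$ in Lemma~\ref{lem: tau}, whose $j$-th coordinate is $\delta_{jk}$, yields $\delta(\frA^j_{\xi^{(i)}})=\bigl((1-\tau_{1_k})\frA^j_{\xi^{(i)}}\bigr)_{k=1}^d=\bigl(\delta_{jk}\,\xi^{(i)}_X\bigr)_{k=1}^d$, that is, $\xi^{(i)}_X$ in the $j$-th summand and $0$ in the others. Since $\xi^{(1)},\dots,\xi^{(c_\phi)}$ is a basis of $\Con(S)$, the $\R$-span of these elements over $1\le i\le c_\phi$ and $1\le j\le d$ is exactly $\bigoplus_{j=1}^d\Con(S)$, viewed inside $\bigoplus_{j=1}^d H^0_\col(S^{\Z^d}_\mu)$ via $\xi\mapsto\xi_X$ in each summand; this gives $\delta(\cH)=\bigoplus_{j=1}^d\Con(S)$.

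Finally I would verify injectivity. Because the images $\delta(\frA^j_{\xi^{(i)}})$ are concentrated in the single $j$-th summand, their linear independence reduces to that of $\xi^{(1)}_X,\dots,\xi^{(c_\phi)}_X$ inside $H^0_\col(S^{\Z^d}_\mu)$, i.e.\ to injectivity of the map $\Con(S)\to H^0_\col(S^{\Z^d}_\mu)$, $\xi\mapsto\xi_X$. This follows from the uniqueness in Proposition~\ref{prop: expansion2}: $\xi_X=\sum_x\xi_x$ with $\xi_x\in C_{\{x\}}(S^X_*)$ is already its own expansion, so $\xi_X=0$ forces each $\xi_x=0$, whence $\xi=0$. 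Combining, $\delta|_{\cH}$ is bijective onto $\bigoplus_{j=1}^d\Con(S)$, which is the isomorphism \eqref{eq: OK}; as a byproduct the natural map $\cH\to K^0_\col(S^{\Z^d}_\mu)^G$ is itself injective, so nothing is lost in passing to classes. There is no genuine obstacle here: all the substance is already in Lemma~\ref{lem: tau}, and the only point demanding attention is the bookkeeping of the two identifications ($\cH\subset C_\unif$ viewed in $K^0_\col(S^{\Z^d}_\mu)^G$, and $\Con(S)$ viewed in $H^0_\col(S^{\Z^d}_\mu)$ through $\xi\mapsto\xi_X$) and the well-definedness of $\delta$ on these classes, for which \cite{BS21}*{Lemma 4.7} is the required input.
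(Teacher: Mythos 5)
Your proposal is correct and follows essentially the same route as the paper: the whole content is the computation $(1-\tau_{1_k})\frA^j_{\xi^{(i)}}=\delta_{jk}\,\xi^{(i)}_X$ from Lemma \ref{lem: tau}, which identifies $\delta$ restricted to $\cH$ as the inverse of $\sum a_{ij}\xi^{(i)}\mapsto\sum a_{ij}\frA^j_{\xi^{(i)}}$. Your extra bookkeeping (well-definedness of $\delta$ on the classes of $\frA^j_\xi$ via $\xi_X\in H^0_\col(S^{\Z^d}_\mu)$, and injectivity of $\xi\mapsto\xi_X$ via the uniqueness of the expansion in Proposition \ref{prop: expansion2}) just makes explicit identifications the paper leaves implicit.
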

\begin{proof}
	By Lemma \ref{lem: tau}, we have $ (1-\tau_{1_j})\frA^j_{\xi^{(i)}}=\sum_{x\in X}\xi^{(i)}_{x}$, which shows that 
	$\delta$ induces an inverse isomorphism of the map mapping $\sum_{j=1}^d\sum_{i=1}^{c_\phi}a_{ij}\xi^{(i)}$
	in $\bigoplus_{j=1}^d\C(S)$
	to $\sum_{j=1}^d\sum_{i=1}^{c_\phi}a_{ij}\frA^j_{\xi^{(i)}}$.
	This shows that \eqref{eq: OK} is in fact an isomorphism.
\end{proof}

We may now prove the following.

\begin{lemma}\label{lem: DS}
	Let $\sE_\mu= \overline{\partial(C^0_\unif(S^{\Z^d})^G)}$.  Then we have
	\[
		\sE_\mu\cap \partial\cV=\{0\}.
	\]
\end{lemma}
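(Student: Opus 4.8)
Recall from \eqref{eq: delta} the ``drift'' homomorphism $\delta\colon K^0_\col(S^{\Z^d}_\mu)^G\to\bigoplus_{j=1}^d H^0_\col(S^{\Z^d}_\mu)$, $f\mapsto((1-\tau_{1_j})f)_{j}$, which via Proposition \ref{prop: SES} we regard as a map on $\sC_\mu$: if $\omega\in\sC_\mu$ and $F\in C^0_\col(S^{\Z^d}_\mu)$ satisfies $\partial F=\omega$, then $\delta(\omega)_j\coloneqq(1-\tau_{1_j})F$. This is well defined because $\Ker\partial=H^0_\col(S^{\Z^d}_\mu)\subset C^0_\col(S^{\Z^d}_\mu)^G$, so replacing $F$ by $F+h$ with $h\in\Ker\partial$ leaves $(1-\tau_{1_j})F$ unchanged; moreover $[F]$ lies in $K^0_\col(S^{\Z^d}_\mu)^G$ since $\partial(\tau F-F)=\tau\omega-\omega=0$. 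The plan is to prove that $\delta$ is continuous on $\sC_\mu$ for the $L^2$-topology, that it annihilates $\partial(C^0_\unif(S^{\Z^d})^G)$, and that it is injective on $\partial\cH$; since the target, being a finite product of projective limits of finite-dimensional spaces, is Hausdorff, continuity forces $\delta$ to annihilate the closure $\sE_\mu$, and then injectivity on $\partial\cH$ gives $\sE_\mu\cap\partial\cH=\{0\}$.

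For continuity, I would establish a local formula. Fix $\La\in\sI$ and $j$, and take $\La'\in\sI$ sufficiently large (so that $\La\cup\tau_{-1_j}(\La)\subset\La'$). For $\omega\in\sC_\mu$ we have $\pi^{\La'}\omega\in Z^1(S^{\La'})$, and since $\partial F=\omega$ the component-wise description of $\partial$ gives $\partial_{\La'}(\pi^{\La'}F)=\pi^{\La'}\omega$; by \eqref{eq: SES} there is a unique $F'(\omega)\in(\Ker\partial_{\La'})^{\perp}$ with $\partial_{\La'}F'(\omega)=\pi^{\La'}\omega$, and $\pi^{\La'}F-F'(\omega)\in\Ker\partial_{\La'}$. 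By the Mittag--Leffler argument used in the proof of Proposition \ref{prop: SES}, the map $\pi^{\La'}\colon H^0_\col(S^{\Z^d}_\mu)\to\Ker\partial_{\La'}$ is surjective, so every $h\in\Ker\partial_{\La'}$ is of the form $\pi^{\La'}\tilde h$ with $\tilde h$ shift-invariant; using that $\mu$ is a shift-invariant product measure and the tower property, one gets $\pi^{\La}\bigl((1-\tau_{1_j})h\bigr)=\pi^\La\tilde h-\tau_{1_j}\bigl(\pi^{\tau_{-1_j}\La}\tilde h\bigr)=\tilde h^\La-\tilde h^\La=0$. Consequently
\[
	\pi^\La\bigl(\delta(\omega)_j\bigr)=\pi^\La\bigl((1-\tau_{1_j})F'(\omega)\bigr),
\]
and the right-hand side is the composition of the $L^2$-continuous map $\omega\mapsto(\pi^{\La'}\omega_e)_{e\in E_{\La'}}$ (recall $\|\pi^{\La'}\omega_e\|_\mu\le\|\omega_e\|_\mu$) with linear maps of finite-dimensional spaces, hence continuous in $\omega$. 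As $\La$ and $j$ were arbitrary, $\delta|_{\sC_\mu}$ is continuous into the projective limit topology.

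Finally, for any $g\in C^0_\unif(S^{\Z^d})^G$ we may take $F=g$, which is shift-invariant, so $\delta(\partial g)_j=(1-\tau_{1_j})g=0$; hence $\delta$ vanishes on $\partial(C^0_\unif(S^{\Z^d})^G)$, and by continuity and Hausdorffness on $\sE_\mu$. On the other hand, taking $F=\frA^j_{\xi^{(i)}}$ and applying Lemma \ref{lem: tau} gives $\delta(\partial\frA^j_{\xi^{(i)}})_k=(1-\tau_{1_k})\frA^j_{\xi^{(i)}}=\delta_{jk}\,\xi^{(i)}_X$, so that for $\omega^\ddagger=\sum_{i,j}a_{ij}\partial\frA^j_{\xi^{(i)}}$ one has $\delta(\omega^\ddagger)_k=\sum_i a_{ik}\xi^{(i)}_X$; since the $\xi^{(i)}_X$ are linearly independent in $H^0_\col(S^{\Z^d}_\mu)$ (because $\sum_i a_i\xi^{(i)}_X=(\sum_i a_i\xi^{(i)})_X=0$ forces $\sum_i a_i\xi^{(i)}=0$), this is exactly the injectivity of $\delta$ on $\partial\cH$ contained in Lemma \ref{lem: H}. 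Therefore $\omega^\ddagger\in\sE_\mu\cap\partial\cH$ implies $\delta(\omega^\ddagger)=0$, whence all $a_{ij}=0$ and $\omega^\ddagger=0$.

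The main obstacle is the continuity assertion, and within it the claim that the local expression for $\pi^\La(\delta(\omega)_j)$ does not depend on the auxiliary choices: this hinges on the surjectivity $\pi^{\La'}H^0_\col(S^{\Z^d}_\mu)=\Ker\partial_{\La'}$ (so that the indeterminacy of $F$ modulo $\Ker\partial_{\La'}$ is genuinely an indeterminacy by the restriction of a shift-invariant co-local function) together with the fact that $\pi^\La\circ(1-\tau_{1_j})$ kills such restrictions once $\La'$ is large. Everything else is finite-dimensional bookkeeping or a direct appeal to results already established, so no model-specific computation is needed.
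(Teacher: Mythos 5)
Your overall mechanism is the same as the paper's: the drift map $\delta$ of \eqref{eq: delta}, its vanishing on shift-invariant functions, and its injectivity on $\cH$ via Lemma \ref{lem: tau} and Lemma \ref{lem: H}. The paper runs it through an approximating sequence ($\partial f_n\to\partial f$ with $f_n$ shift-invariant, then Lemma \ref{lem: fsg} upgrades this to $\bar f_n^\La\to\bar f^\La$ in $K^0(S^\La)$, and $\delta(\bar f_n)=0$ is passed to the limit componentwise), whereas you package the same content as continuity of $\delta$ on $\sC_\mu$ through the canonical representative $F'(\omega)\in(\Ker\partial_{\La'})^\perp$; these are essentially equivalent organizations of one argument.

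The genuine problem is the step you yourself single out. To see that $\pi^\La(\delta(\omega)_j)$ is unaffected by the ambiguity $\pi^{\La'}F-F'(\omega)\in\Ker\partial_{\La'}$, you claim that every $h\in\Ker\partial_{\La'}$ equals $\pi^{\La'}\tilde h$ for some shift-invariant $\tilde h\in H^0_\col(S^{\Z^d}_\mu)$, ``by the Mittag--Leffler argument used in the proof of Proposition \ref{prop: SES}.'' Mittag--Leffler does not give this: for a projective system of finite-dimensional spaces it only says the decreasing images $\pi^{\La'}(\Ker\partial_{\La''})$ stabilize (equivalently, $\varprojlim^1$ vanishes, which is all Proposition \ref{prop: SES} uses); it does not say the stable image is all of $\Ker\partial_{\La'}$, i.e.\ it does not give surjectivity of $H^0_\col(S^{\Z^d}_\mu)\rightarrow\Ker\partial_{\La'}$. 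So the well-definedness/continuity of your local formula is unproven as written. The cancellation you need is nevertheless true and provable directly, with no surjectivity statement: take $\La'$ connected with $\La\cup\tau_{-1_j}(\La)\subset\La'$; since $(S,\phi)$ is irreducibly quantified, every $h\in\Ker\partial_{\La'}$ is a function of the conserved quantities $\bsxi_{\La'}$, and since $\mu$ is an i.i.d.\ product measure, $\pi^{\La}h$ is the function of $\bsxi_{\La}$ obtained by averaging $h$ over the $|\La'\setminus\La|$ remaining i.i.d.\ coordinates, so it depends on $\La$ only through the cardinality $|\La'\setminus\La|$. Because $|\La'\setminus\La|=|\La'\setminus\tau_{-1_j}(\La)|$, the same function of the conserved quantity arises for $\tau_{-1_j}(\La)$, whence $\pi^\La h=\tau_{1_j}\bigl(\pi^{\tau_{-1_j}(\La)}h\bigr)$ and the $\Ker\partial_{\La'}$-contribution cancels from $\pi^\La\bigl((1-\tau_{1_j})F\bigr)$. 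With this substitution your proof closes (the remaining steps --- contractivity of $\pi^{\La'}$, finite-dimensionality of $C(S^{\La'})$, $\delta(\partial g)=0$ for shift-invariant $g$, the computation $\delta(\partial\frA^j_{\xi^{(i)}})_k=\delta_{jk}\,\xi^{(i)}_X$, and the linear independence of the $\xi^{(i)}_X$ --- are all fine); note that irreducible quantification enters essentially at this point and is not a cosmetic hypothesis.
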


\begin{proof}
	Let $f\in\cV$ such that $\partial f\in\sE_\mu$.
	By definition of the closure, there exists a sequence of functions $\{f_n\}_{n\in\N}$
	in $C^0_\unif(S^{\Z^d})^G$ such that $\displaystyle \lim_{n\rightarrow\infty}\partial f_n=\partial f$ in
	$Z^1_{L^2}(S^{\Z^d}_\mu)^G$.  This implies that for any $\La\in\sI$, we have
	\[
		\lim_{n\rightarrow\infty}\partial_\La f_n^\La
		=\lim_{n\rightarrow\infty}(\partial f_n)^\La=(\partial f)^\La=\partial_\La f^\La
	\]
	in
	$C^1(S^\La)$.
	Lemma \ref{lem: fsg}
	implies that $\displaystyle \lim_{n\rightarrow\infty}\bar f_n^\La=\bar f^\La$
	in $A^0(S^\La)$, where the bar denotes the image of a function in $A^0(S^\La)$.
	By taking the images of $\bar f_n$ and $\bar f$ with respect to the map $\delta$, we see that
	$\delta(\bar f_n)=0$ since $f_n$ is invariant as a function with respect to the action of $G$.
	Since $\displaystyle \delta(\bar f)^\La=\lim_{n\rightarrow\infty}\delta(\bar f_n)^\La=0$ 
	for any $\La\in\sI$, 
	we see that $\delta(\bar f)=0$.
	By Lemma \ref{lem: H}, the homomorphism 
	$\delta$ is injective on $\cV$.  Hence $f\in\cV$ and $\delta(\bar f)=0$ implies that
	$f=0$, which proves that $\partial f=0$ as desired.
\end{proof}
We may now prove Theorem \ref{thm: main}, for the case of the Euclidean lattice $(X,E)=(\Z^d,\E^d)$.

\begin{theorem}\label{thm: Euclidean}
	Theorem \ref{thm: main} is true for the Euclidean lattice $(X,E)=(\Z^d,\E^d)$.
\end{theorem}

\begin{proof}
	Let 
	\[
		\omega=(\omega^\La)\in\sC_\mu= Z^1_{L^2}(S^{\Z^d})^G=C^1_{L^2}(S^{\Z^d})\cap Z^1_\col(S^{\Z^d}_\mu)^G.
	\]
	We let $\Psi_n$ be the shift invariant uniform function of Definition \ref{def: psi}.
	Then by Proposition \ref{prop: conv0} and Proposition \ref{prop: conv1}, we see that
	$\partial\Psi_n$ converges weakly to $\omega_\psi\coloneqq\omega+\omega^\dagger$ in $\sC_\mu$.
	Since weak closure of a subspace of a Hilbert space coincides with its strong closure,
	this implies that 
	\[
		\omega_\psi=\omega+\omega^\dagger\in \sE_\mu=\overline{\partial(C^0_\unif(S^{\Z^d})^G)}.
	\]
	By Proposition \ref{prop: local}, the form $\omega^\dagger$ is a form in $\cC$.
	By Theorem \ref{thm: previous}, there exists $\omega'\in \cE=\partial(C^0_\unif(S^{\Z^d})^G)\subset\sE_\mu$ and
	$\omega^\ddagger\in\partial\cV$ such that
	\[
		\omega^\dagger = \omega'+\omega^\ddagger.
	\]
	Note that $(\omega_\psi-\omega')\in \sE_\mu$. 
	By Lemma \ref{lem: DS}, we see that $\omega=(\omega_\psi-\omega')+(-\omega^\ddagger)$
	is a direct sum,
	hence this gives a decomposition
	\[
		\omega=(\omega_\psi-\omega') +(-\omega^\ddagger)\in
		\sE_\mu\oplus
		\partial\cV
	\]	
	as desired.
\end{proof}

%
%
\subsection{Proof of the Main Theorem}\label{subsec: PMT}
%
%

We will now prove our main theorem, \cref{thm: main}
for a general finite range Euclidean lattice $(\Z^d,E)$.
Let the notations be as in \S\ref{subsec: MT}.
We let $\cX=(\Z^d,E)$ be a general finite range Euclidean lattice,
and for an integer $k>0$, let $\cX_k=(\Z^d,\E^d_k)$ be the Euclidean lattice with finite $k$-range, 
defined by 
\[
	\E^d_k\coloneqq\Bigl\{ (x,y)\in\Z^d\times\Z^d\mid 0<\sum_{j=1}^d |x_j-y_j|\leq k\Bigr\}.
\]
Since $G=\Z^d$ acts on $\cX$ by translation, we see that
$E\subset\E^d_k$ for $k$-sufficiently large.
We denote by $S^\cX$ and $S^{\cX_k}$ the respective configuration space with transition structures,
and by $C^0_\col(S^\cX_\mu)$, $C^0_\col(S^{\cX_k}_\mu)$, $C^1_\col(S^\cX_\mu)$,
$C^1_\col(S^{\cX_k}_\mu)$, etc.\ for
the respected spaces of functions and forms.
We remark that the space of co-local functions $C^0_\col$ and the space
of uniform functions $C^0_\unif$ are independent of the choice of the edges,
hence we have canonically 
\begin{align*}
	C^0_\col(S^\cX)&=C^0_\col(S^{\cX_k}), & C^0_\unif(S^\cX)&=C^0_\unif(S^{\cX_k}).
\end{align*}
Note that the inclusion $E\subset\E^d_k$
induces a natural projection
\begin{align}\label{eq: projection}
	C^1_\col(S^{\cX_k}_\mu)\rightarrow C^1_\col(S^\cX_\mu)
\end{align}
given through \eqref{eq: co-local inclusion} via the projections
\begin{align*}
	\prod_{e\in\E^d_k}C_\col(S^{\cX_k}_\mu)&\rightarrow\prod_{e\in\E^d}C_\col(S^\cX_\mu).
\end{align*}
This projection is compatible with the differentials
\begin{align*}
	\partial_{\cX}\colon C^0_\col(S^\cX_\mu)&\rightarrow C^1_\col(S^\cX_\mu),&
	\partial_{\cX_k}\colon C^0_\col(S^{\cX_k}_\mu)&\rightarrow C^1_\col(S^{\cX_k}_\mu)
\end{align*}
of \cref{prop: compatible}.  We first have the following.

\begin{lemma}\label{lem: co-local isom}
	Suppose $(S,\phi)$ is an interaction which is irreducibly quantified.
	Then there exists a canonical isomorphism $H^0_\col(S^\cX_\mu)\cong H^0_\col(S^{\cX_k}_\mu)$
	and 
	\begin{align*}
		Z^1_\col(S^\cX_\mu)&\cong Z^1_\col(S^{\cX_k}_\mu)
	\end{align*}
	which are compatible with the differentials $\partial_{\cX}$ and $\partial_{\cX_k}$. In particular, this induces a canonical isomorphism 
	\begin{align*}
		Z^1_\col(S^\cX_\mu)^G&\cong Z^1_\col(S^{\cX_k}_\mu)^G.
	\end{align*}
\end{lemma}

\begin{proof}
	Note that if $\La\in\sI$ is connected
	in $\cX$, then it is connected in $\cX_k$.  Hence $\La=(\La,E_\La)$ and $\La'=(\La,E'_\La)$
	for $E'=\E^d_k$
	are finite locales.  By definition, $\Ker\partial_\La$ and $\Ker\partial_{\La'}$ 
	coincide with the subspaces of 
	$C(S^\La)=C(S^{\La'})$ which are respectively
	constant on the connected components of $(S^\La,\Phi_{E_{\La}})$ and $(S^{\La'},\Phi_{E_{\La'}})$.
	Hence a priori, $\Ker\partial_{\La'}\subset\Ker\partial_{\La}$.
	Consider the map
	\[
		\bsxi_\La\colon S^\La\rightarrow\cM\coloneqq\Hom_\R(\C(S),\R)\cong\R^{c_\phi},
	\]
	given by mapping any $\e\in S^\La$ to the map $\xi\mapsto\xi_\La(\e)$ for any $\xi\in\C(S)$.
	Since $(S,\phi)$ is irreducibly quantified, the connected components of $(S^\La,\Phi_{E_{\La}})$ 
	coincide
	bijectively with the image $\bsxi_\La(S^\La)\subset \cM$ (see \cite{BKS20}*{Remark 2.33}).  
	Since $S^\La=S^{\La'}$ as sets,
	and since the map $\bsxi_\La$ is independent of the choice of the set of edges, we see that the connected
	components of $(S^\La,\Phi_{E_{\La}})$ and $(S^{\La'},\Phi_{E_{\La'}})$ coincide.  This shows that
	$\Ker\partial_\La=\Ker\partial_{\La'}$.  
	For any $\La\in\sI$, there exists $\La^*\in\sI$ connected in $\cX=(\Z^d,E)$
	such that $\La\subset \La^*$, passing to the projective limit, we see that
	$H^0_\col(S^\cX_\mu)\cong H^0_\col(S^{\cX_k}_\mu)$ as desired.	
	
	The projection \eqref{eq: projection} is compatible with the differentials,
	hence gives a commutative diagram
	\[
		\xymatrix{
			C^0_\col(S^{\cX_k}_\mu)\ar[r]^{\partial_{\cX_k}}\ar@{=}[d]&C^1_\col(S^{\cX_k}_\mu)\ar[d]\\
			C^0_\col(S^\cX_\mu)\ar[r]^{\partial_{\cX}}&C^1_\col(S^\cX_\mu),
		}
	\]
	which induces a commutative diagram	
	\begin{equation}\label{eq: commute}
		\xymatrix{
			C^0_\col(S^{\cX_k}_\mu)/H^0_\col(S^{\cX_k}_\mu)
			\ar[r]_-\cong^-{\partial_{\cX_k}}\ar@{=}[d]&Z^1_\col(S^{\cX_k}_\mu)\ar[d]\\
			C^0_\col(S^\cX_\mu)/H^0_\col(S^\cX_\mu)\ar[r]_-\cong^-{\partial_{\cX}}&Z^1_\col(S^\cX_\mu).
		}
	\end{equation}
	By \cref{prop: SES}, the horizontal differentials of \eqref{eq: commute}
	are isomorphisms.  This proves that the projection induces an isomorphism
	$Z^1_\col(S^{\cX_k}_\mu)\cong Z^1_\col(S^\cX_\mu)$. Since the projection \eqref{eq: projection}  is commutative with the action of the group $G$, it also induces an isomorphism $Z^1_\col(S^{\cX_k}_\mu)^G \cong Z^1_\col(S^\cX_\mu)^G$ as desired.
\end{proof}

\begin{proposition}\label{lem: L2 isom} 
	Suppose $(S,\phi)$ is an interaction which is irreducibly quantified.
	Let $\cX=(\Z^d,E)$ be any Euclidean lattice with finite range,
	 and let $\cX_k=(\Z^d,\E^d_k)$ be 
	the Euclidean lattice with finite $k$-range such that $E\subset\E^d_k$.
	Then we have a canonical isomorphism
	\begin{equation}\label{eq: L2 isom}
		Z^1_{L^2}(S^\cX_\mu)^G \cong Z^1_{L^2}(S^{\cX_k}_\mu)^G
	\end{equation}
	on the space of shift invariant closed $L^2$-forms, compatible with the differentials $\partial_\cX$
	and $\partial_{\cX_k}$,
	which preserves
	the topology induced from the 
	supremum norms on $Z^1_{L^2}(S^\cX_\mu)^G$ and $Z^1_{L^2}(S^{\cX_k}_\mu)^G$.
\end{proposition}

\begin{proof}
	Since $E\subset\E^d_k$, the natural isomorphism
	$Z^1_\col(S^{\cX_k}_\mu)^G\xrightarrow\cong Z^1_\col(S^\cX_\mu)^G$
	of \cref{lem: co-local isom}
	 induced from the projection
	\eqref{eq: projection} induces an injection 
	$Z^1_{L^2}(S^{\cX_k}_\mu)^G \hookrightarrow Z^1_{L^2}(S^{\cX}_\mu)^G$
	which is continuous for the supremum norm.
	On the other hand, suppose $\omega\in Z^1_{L^2}(S^\cX_\mu)^G$.
	Then by definition of closed $L^2$-forms, we have
	\begin{equation*}
		\omega=(\omega_e)\in Z^1_{\col}(S^\cX_\mu) \cap \prod_{e\in E}L^2(\mu).
	\end{equation*}
	By definition of $Z^1_{\col}(S^\cX_\mu)$, there exists a co-local function $f\in C_\col(S^\cX)$
	such that $\partial_{\cX} f=\omega$.
	Denote by $\omega'$ the image of $\omega$ in $Z^1_{\col}(S^{\cX_k}_\mu)^G$
	through the isomorphism 
	$
		Z^1_\col(S^\cX_\mu)^G \cong Z^1_\col(S^{\cX_k}_\mu)^G
	$
	obtained in \cref{lem: co-local isom}. 
	The compatibility of this isomorphism with the differential shows that we have $\omega'=\partial_{\cX_k} f$,
	hence 
	\[
		\omega'=(\nabla_{e'}f) \in Z^1_\col(S^{\cX_k}_\mu)\subset \prod_{e'\in\E^d_k}C_\col(S^{\cX_k}_\mu).
	\]
	Let $C\coloneqq\max\{d_{\cX}(o(e'),t(e'))\mid e'\in\E^d_k\}$,
	which is finite since $\cX$ and $\cX_k$ are both locally finite and has an action of $G=\Z^d$.
	We have
	$d_{\cX}(x,y)\leq C d_{\cX_k}(x,y)$
	 for any $x,y\in\Z^d$.  Hence for any $e'\in\E^d_k$, there 
	exists a path $\vec p=(e_1,\ldots,e_N)$ in $\cX$ such that $o(e')=o(e_1)$, $t(e')=t(e_N)$ and $N\leq C$.   
	If we let $\La=\{ o(e_1) \}\cup\{t(e_i) \mid i=1,\ldots,N\}$,
	then $(\La,E_\La)$ is a locale satisfying $\operatorname{diam}_\cX{\La}\leq N\leq C$.
	Noting that $\omega'_{e'}=\nabla_{e'}f=\nabla_{o(e')\arr t(e')}f$, the 
	Moving Particle Lemma (\cref{lem: MPL}) gives
	\[
		\dabs{\omega'_{e'}}_\mu
		=\dabs{\nabla_{o(e')\arr t(e')}f}_\mu
		\leq C_{M\!P}^{1/2} N\sup_{e\in E_\La}\dabs{\omega_e}_\mu\leq C''\dabs{\omega}_\sp,
	\]
	where $C''\coloneqq CC_{M\!P}^{1/2}$ is a constant which depends only on $\cX,\cX_k$ and $((S,\phi),\nu)$.
	In particular, we have
	\[
		\dabs{\omega'}_\sp \leq  C''\dabs{\omega}_\sp.
	\]
	This shows that the canonical isomorphism
	of \cref{lem: co-local isom} induces an injection
	$Z^1_{L^2}(S^\cX_\mu)^G \hookrightarrow Z^1_{L^2}(S^{\cX_k}_\mu)^G$
	which is again continuous for the topology induced from the supremum norm.
	This proves our assertion.
\end{proof}

We are now ready to prove \cref{thm: main}.

\begin{proof}[Proof of \cref{thm: main}]
	Let $\cX''\coloneqq(\Z^d,\E^d)$ be the Euclidean lattice with nearest neighbor.
	Let $k$ be sufficiently large so that $E\subset\E^d_k$.
	By applying \cref{lem: L2 isom} 
	first to $\cX=(\Z^d,E)$ and then to $\cX''=(\Z^d,\E^d)$,
	 noting that $\E^d\subset\E^d_k$ and that the 
	isomorphisms are compatible with the differential and the action of $\G$,
	we have a commutative diagram
	\[
		\xymatrix{
			C^0_\unif(S^{\cX''}_\mu)^\G\ar[r]^{\partial_{\cX''}}\ar[d]^\cong&
			Z^1_{L^2}(S^{\cX''}_\mu)^\G\ar[d]^\cong\\
			C^0_\unif(S^{\cX}_\mu)^\G\ar[r]^{\partial_{\cX}}&Z^1_{L^2}(S^{\cX}_\mu)^\G.
		}
	\]
	The isomorphism of \cref{lem: L2 isom} preserves the topology, 
	hence 
	if we let 
	\begin{align*}
		\sC''_\mu&\coloneqq Z^1_{L^2}(S^{\cX''}_\mu)^\G,&
		\sE''_\mu&\coloneqq \overline{\partial_{\cX''}(C^0_\unif(S^{\cX_k})^G)},
	\end{align*}
	then 
	we have $\sC_\mu\cong\sC''_\mu$ and $\sE_\mu\cong\sE''_\mu$
	through \eqref{eq: L2 isom}.
	Furthermore, by definition, we see that \eqref{eq: L2 isom}
	maps $\partial_{\cX}\cV$  bijectively to  $\partial_{\cX''}\cV$.
	Hence our assertion $\sC_\mu=\sE_\mu\oplus\partial_{\cX}\cV$ follows from 
	\cref{thm: Euclidean}, which asserts that $\sC_\mu''=\sE_\mu''\oplus\partial_{\cX''}\cV$ as desired.
\end{proof}

\begin{bibdiv}
	\begin{biblist}
		\bibselect{Bibliography}
	\end{biblist}
\end{bibdiv}

\end{document}